\pgfplotsset{ticks=none}
\pgfplotsset{compat=1.16}
\DeclareMathAlphabet{\mathpzc}{OT1}{pzc}{m}{it}
\newtheorem{theorem}{Theorem}[section]
\newtheorem{corollary}[theorem]{Corollary}
\newtheorem{proposition}[theorem]{Proposition}
\newtheorem{lemma}[theorem]{Lemma}
\newtheoremstyle{named}{}{}{\itshape}{}{\bfseries}{.}{.5em}{\thmname{#1}\thmnumber{ #2}%
\thmnote{#3}}
\theoremstyle{named}
\newtheorem*{namedtheorem}{Theorem} 
\numberwithin{equation}{section}
\theoremstyle{definition}
\theoremstyle{remark}
\newtheorem{remark}[theorem]{Remark}
\newtheorem{remarks}[theorem]{Remarks}
\newtheorem*{remark*}{Remark}
\newcommand{\R}{{\mathbb R}}
\newcommand{\RP}{{\mathbb R}_+}
\newcommand{\N}{{\mathbb N}}
\newcommand{\Sp}{{\mathbb S}}
\newcommand{\BB}{{\mathbb B}}
\newcommand{\tra}{\top}
\newcommand{\os}{\overline{\sigma}^2}
\newcommand{\nvar}{\Upsilon}
\newcommand{\cDtot}{\widehat{\cD}}
\newcommand{\phitot}{\widehat{\phi}}
\newcommand{\sigmatot}{\widehat{\sigma}}
\newcommand{\cZtot}{\widehat{\cZ}}
\newcommand{\TV}{\mathrm{TV}}
\newcommand{\prok}{\mathrm{Prok}}
\newcommand{\indep}{\perp\!\!\!\!\perp} 
\def\d{\,{\rm d}}
\newcommand{\cC}{\mathcal{C}}
\newcommand{\cD}{\mathcal{D}}
\newcommand{\cM}{\mathcal{M}}
\newcommand{\cW}{\mathcal{W}}
\newcommand{\cX}{\mathcal{X}}
\newcommand{\cY}{\mathcal{Y}}
\newcommand{\cZ}{\mathcal{Z}}
\newcommand{\cH}{\mathcal{H}}
\newcommand{\cG}{\mathcal{G}}
\newcommand{\cF}{\mathcal{F}}
\DeclareMathOperator{\Diag}{Diag}
\DeclareMathOperator{\tr}{tr}
\DeclareMathOperator{\Tr}{Tr}
\def\namedlabel#1#2{\begingroup
    (#2)%
    \def\@currentlabel{#2}%
    \phantomsection\label{#1}\endgroup
}
\title[CLT for superdiffusive reflected Brownian motion]{Central limit theorem for superdiffusive reflected Brownian motion} 
\date{\today}
\begin{document}

\author{Aleksandar Mijatovi\'c}
\address{Department of Statistics, University of Warwick \& The Alan Turing Institute, UK}
\email{a.mijatovic@warwick.ac.uk}

\author{Isao Sauzedde}
\address{Department of Statistics, University of Warwick, UK}
\email{isao.sauzedde@warwick.ac.uk}

\author{Andrew Wade}
\address{Department of Mathematical Sciences, Durham University, UK}
\email{andrew.wade@durham.ac.uk}

\begin{abstract}
We study the second-order asymptotics 
around the superdiffusive strong law~\cite{MMW}
 of a multidimensional driftless diffusion
 with oblique reflection from the boundary in a generalised parabolic domain.
In the unbounded direction we prove the limit is Gaussian with the usual diffusive scaling, while in the appropriately scaled cross-sectional slice we establish convergence to the invariant law of a reflecting diffusion in a unit ball. Using the separation of time scales, we also show asymptotic independence between these two components.
The parameters of the limit laws are explicit in the growth rate of the boundary and the asymptotic diffusion matrix and reflection vector field. A phase transition occurs when the domain becomes too narrow, in which case we prove that the central limit theorem for the unbounded component fails.
\end{abstract}

\maketitle

 \noindent
 {\em Key words:} Reflected Brownian motion, Anomalous diffusion, Central limit theorem, Superdiffusion, Limit theorems, Generalized parabolic domains,  Coupling.
 \medskip

 \noindent
 {\em AMS Subject Classification:} 60J60 (Primary)  60F05, 60J65, 60K50 (Secondary).

\section{Introduction}
\label{sec:introduction}
This paper quantifies, via distributional limit theorems, fluctuations of
driftless multidimensional diffusions with multiplicative noise in domains in $\R^{1+d}$ with an unbounded direction and oblique reflections at the boundary. For our class of models, the geometry of the domain and the structure of the reflection vector field is such that the first-order asymptotics are superdiffusive (possibly super-ballistic) in the unbounded direction. We now  describe their multidimensional fluctuations  on appropriate scales.

More precisely, we consider  the unique strong solution $(Z,L)$ to the following 
\emph{stochastic differential equation with reflection} (SDER)
\begin{equation}
\label{eq:SDER} 
Z_t=z_0+\int_0^t \sigma(Z_s) \d W_s+\int_0^t\phi(Z_s)\d L_s, \quad  L_t=\int_0^t \mathbbm{1}_{Z_s\in \partial \cD} \d L_s, \quad t \in \RP,
\end{equation}
driven by a standard Brownian motion $W$ on $\R^{1+d}$,
on the generalised parabolic domain 
\begin{equation}
\label{eq:D-def}
\cD \coloneqq \{(x,y)\in \RP \!\times \R^d: |y|_d\leq b(x) \}.
\end{equation}
Here $|\cdot|_d$ is the standard Euclidean norm on $\R^d$, $\RP\coloneqq[0,\infty)$ and $b:\RP\to\RP$ a $\cC^2$-function.
Assume that, as $x\to \infty$, the reflection vector field $\phi$ on the boundary $\partial \cD$ is asymptotically oblique  with an eventually positive component in the $x$-direction, the diffusion matrix $\sigma$ is asymptotically constant  and  
$b(x) \sim a_\infty x^\beta$ for some $\beta\in (-1,1)$ and $a_\infty>0$. Then, by~\cite[Thm~2.2]{MMW},
the superdiffusive strong law 
$X_t\sim c_1 t^{1/(1+\beta)}$ holds as $t\to\infty$, where $Z=(X,Y)$ and $c_1>0$ is an explicit constant.
In this paper we prove the 
following central limit theorem (CLT).
\begin{namedtheorem}[ (joint distributional convergence)]
\label{thm:1}
 Suppose that $\beta \in (-\frac{1}{3},1)$. Under suitable conditions (see Subsection~\ref{sec:assumptions} below), there exists a distribution $\mu$ on the unit ball $\BB^d$ in $\R^d$ such that, for a centred Gaussian distribution $\mathcal{N}$ on $\R$,  the weak convergence holds:
 \begin{equation}
  \label{eq:mainCVIntro}
  \Big(\frac{X_t-c_1 t^{\frac{1}{1+\beta}}}{\sqrt{t}} , \frac{Y_t}{a_\infty c_1^\beta t^\frac{\beta}{1+\beta}} \Big)  \underset{t\to \infty}\longrightarrow\mathcal{N}\otimes \mu.
  \end{equation}
\end{namedtheorem}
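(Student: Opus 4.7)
The plan is to combine a martingale central limit theorem for the longitudinal coordinate $X_t$ with a stochastic-averaging result for the rescaled cross-section, and to exploit the separation of the two time scales to extract the Gaussian marginal, the ball-valued marginal $\mu$, and their asymptotic independence.

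First, project the SDER~\eqref{eq:SDER} onto the first coordinate to get the semimartingale decomposition
$$X_t = x_0 + M^X_t + A_t, \qquad M^X_t \coloneqq \int_0^t e_1^{\tra}\sigma(Z_s)\d W_s, \qquad A_t \coloneqq \int_0^t \phi_1(Z_s)\d L_s.$$
By~\cite[Thm.~2.2]{MMW}, $A_t \sim c_1 t^{1/(1+\beta)}$ almost surely. Introduce the rescaled transversal process $\widetilde Y_t \coloneqq Y_t/b(X_t)\in \BB^d$: Itô's formula shows that $\widetilde Y$ satisfies an oblique reflecting SDE on $\BB^d$ whose drift, diffusion matrix and reflection vector field stabilise as $X_t\to\infty$ to quantities depending only on $\beta$, $a_\infty$ and the asymptotic values of $\sigma$ and $\phi$. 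The time change $u(t)\coloneqq\int_0^t b(X_s)^{-2}\d s$ turns this into a genuine reflecting diffusion on $\BB^d$, and standard ergodicity results yield a unique invariant probability measure $\mu$ which will be the second marginal of the limit.

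Next I quantify the separation of time scales. The process $\widetilde Y$ mixes on $u$-time of order one, which corresponds to real time of order $t^{2\beta/(1+\beta)}$; since $\beta<1$ this is much smaller than $t$. Partitioning $[0,t]$ into blocks of length $\tau$ with $t^{2\beta/(1+\beta)}\ll\tau\ll t$ and coupling $\widetilde Y$ on each block to a stationary version, blockwise ergodic averages $\tau^{-1}\int f(\widetilde Y_r)\d r$ converge in $L^2$ to $\mu(f)$, uniformly in the slow parameter $X_s$. To produce the Gaussian marginal I then decompose
$$X_t - c_1 t^{\frac{1}{1+\beta}} = M^X_t + \int_0^t \bigl(\phi_1(Z_s)-\bar\phi_1(X_s)\bigr)\d L_s + \Bigl(\int_0^t \bar\phi_1(X_s)\d L_s - c_1 t^{\frac{1}{1+\beta}}\Bigr),$$
where $\bar\phi_1(x)$ denotes the $\mu$-average of $\phi_1(x,b(x)\cdot)$ on $\BB^d$. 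The quadratic variation $\int_0^t e_1^{\tra}\sigma\sigma^{\tra} e_1(Z_s)\d s$ of $M^X_t$ averages to $\os t$; the middle term is rewritten, via the corrector $\psi_x$ solving the transversal Poisson equation $\mathcal{L}_x\psi_x=\phi_1-\bar\phi_1$ on $\BB^d$ with the appropriate oblique boundary condition, as $\psi_{X_t}(\widetilde Y_t)$ plus a martingale (by Itô) with explicit bracket plus lower-order error; and the final term is handled by linearising $b(X_s)^{-1}$ around $c_1 s^{1/(1+\beta)}$, producing a self-consistent linear equation for $\eta_s\coloneqq X_s-c_1 s^{1/(1+\beta)}$ that can be solved explicitly and whose forcing is asymptotically Gaussian. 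The martingale CLT applied to the resulting sum delivers the centred Gaussian $\mathcal N$ with variance explicit in $\os$, $a_\infty$, $\beta$ and the asymptotic values of $\phi$.

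Asymptotic independence follows from the same time-scale separation: pick $s_t$ with $t^{2\beta/(1+\beta)}\ll s_t\ll t$, couple $\widetilde Y$ on $[t-s_t,t]$ to a stationary ball-valued diffusion started from $\mu$ and driven by Brownian increments independent of $\cF_{t-s_t}$, and observe that the normalised fluctuation of $X$ accrued on $[t-s_t,t]$ is $o_{\mathbb P}(1)$ on the $\sqrt t$ scale. The main technical obstacle is the corrector analysis: one must solve the $x$-dependent Poisson problem on $\BB^d$ with oblique boundary condition and bound $\psi_x$ and its $x$-derivatives uniformly in $x$, since these appear in error terms that must be $o(\sqrt t)$. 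It is precisely these estimates that force $\beta>-\tfrac13$: for $\beta\leq-\tfrac13$ the narrow cross-section makes the self-consistent linearisation of $\eta_s$ contribute fluctuations larger than $\sqrt t$, invalidating the CLT and producing the phase transition announced in the abstract.
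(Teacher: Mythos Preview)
Your outline has the right architecture—martingale CLT for $X$, ergodic limit for the rescaled cross-section, time-scale separation for independence—but the central technical step is mishandled, and the paper takes a substantially different route precisely there.

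Your decomposition isolates the local-time integral $\int_0^t(\phi_1(Z_s)-\bar\phi_1(X_s))\d L_s$ and proposes to control it via a corrector solving $\mathcal L_x\psi_x=\phi_1-\bar\phi_1$ on $\BB^d$. This is ill-posed as written: $\phi_1$ is defined only on $\partial\cD$, and the integral is against $\d L_s$, not $\d s$, so the right-hand side is a \emph{boundary} datum, not an interior source. To turn a local-time integral into a martingale plus remainder via It\^o one needs a function whose co-normal derivative $\langle\nabla\psi,\phi\rangle$ matches the prescribed boundary values—a Neumann-type problem, not $\mathcal L\psi=\text{(data)}$. Even granting that correction, your residual term $\int_0^t\bar\phi_1(X_s)\d L_s - c_1 t^{1/(1+\beta)}$ still carries the full fluctuation of the local time $L_t$, which is exactly the object whose second-order behaviour you are trying to establish; the ``self-consistent linear equation for $\eta_s$'' does not dissolve this.

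The paper avoids local-time fluctuations altogether. It applies It\^o not to $X$ but to $\xi_t=B(g(Z_t))$ with $g(x,y)=x+\tfrac{s_0}{2c_0}|y|^2/b(x)$ and $B'(x)=b(x)$. The function $g$ is engineered so that $\langle\nabla g,\phi\rangle=o(x^{(\beta-1)/2})$ on $\partial\cD$, which forces the local-time term in the It\^o decomposition of $\xi$ to be $o(t^{(1+3\beta)/(2+2\beta)})$—below the CLT scale. All the Gaussian fluctuation then sits in the martingale part of $\xi$, whose quadratic variation is an additive functional handled by the ergodic machinery of Section~\ref{sec:ergodicity}. Because $g$ mixes $x$ and $y$, this quadratic variation produces the cross-terms $\Sigma^\infty_{0,j}\int y_j\d\mu$ and $\Sigma^\infty_{j,k}\int y_jy_k\d\mu$ visible in~\eqref{eq:N-var}; your claim that the variance is determined by $\os,a_\infty,\beta$ and the asymptotic $\phi$ alone cannot be right (and note $\langle M^X\rangle_t\to\Sigma^\infty_{0,0}\,t$, not $\os\,t$). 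Finally, the role of $\beta>-\tfrac13$ in the paper is concrete: the quadratic variation grows like $t^{(1+3\beta)/(1+\beta)}$, which diverges only in that range.
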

The variance of $\mathcal{N}$ is given explicitly in Equation~\eqref{eq:N-var} below.
For any $d\in\N$, the distribution $\mu$ is given as the stationary distribution to SDER~\eqref{eq:limitEqJoint} on the $d$-dimensional unit ball $\BB^d$. The precise statement of this result is in Theorem~\ref{th:main1} below. 

A key step in the proof of~\eqref{eq:mainCVIntro} consists of understanding the local behaviour of $Z$ in a moving time window, inside of which we establish the convergence  (as a process) of $Z$, properly rescaled and recentred, to a process in the infinite cylinder $\R\times\mathbb{B}^d$ (see Theorem~\ref{th:main2} below for details).  

\subsection*{A new phase transition at {\texorpdfstring{$\beta=-1/3$}{β=-1/3}}}
Under the asymptotically oblique assumption on the reflection vector field $\phi$ with a positive component in the $x$-direction and stable $\sigma$ as $x\to\infty$,
the asymptotic behaviour of the process $Z$ depends on the parameter $\beta\in\R$ (and not on the dimension $d$). 
Transitions at $\beta\in\{-1,0,1\}$ were previously known.
If $\beta<-1$, the domain is shrinking so fast that the process $Z$ explodes in finite time~\cite[Thm~2.2(i) \& Ex.~2.4]{MMW}. As mentioned above, by~\cite[Thm~2.2(ii)]{MMW}, if  $\beta\in(-1,1)$ then the first component $X$ of $Z$ satisfies a superdiffusive strong law. Moreover,  the value $\beta=0$  corresponds to the transition between expanding and shrinking domains, making the process $X$ either sub-ballistic or super-ballistic~\cite[Thm~2.2(ii)]{MMW}. For $\beta>1$, it is not hard to see that the domain $\cD$ is expanding sufficiently fast that the process does not ``feel the boundary'', making it diffusive. 

This paper identifies a further transition at $\beta=-1/3$  (corresponding to the almost sure behaviour $X_t\sim c_1t^{3/2}$ as $t\to\infty$). Beyond the distributional convergence for $\beta\in(-1/3,1)$  in~\eqref{eq:mainCVIntro} above, we prove that the CLT for $X_t-c_1 t^{1/(1+\beta)}$ does \emph{not} hold for $\beta\in(-1,-1/3)$, see Proposition~\ref{prop:nogo} below.
In this range of $\beta$, the proof of Proposition~\ref{prop:nogo} suggests that the process $X_t-c_1 t^{1/(1+\beta)}$ is of order $t^{1/(1+\beta)-1}\gg t^{1/2}$ as $t\to\infty$.  Furthermore, we expect that the fluctuations of $X_t-c_1 t^{1/(1+\beta)}$ are no longer Gaussian  for $\beta<-1/3$, but described by almost sure convergence to a non-Gaussian random variable. Evidence for a non-Gaussian strong limit in this regime is provided by a toy diffusion model in Subsection~\ref{sec:heuristic} below.
Intuitively, for $\beta\in(-1,-1/3)$, the second order term in the Taylor expansion overwhelms the ergodic fluctuations that drive the limit in~\eqref{eq:mainCVIntro} above  in the regime $\beta\in(-1/3,1)$.
We expect that this phase transition is universal, in that it is typical of any stochastic process with drift  asymptotically  proportional to the value of the process to the power $-\beta$ at large times and noise of order one with tails that are not too heavy.\footnote{It was pointed out to us by Andrey Pilipenko during the Isaac Newton Institute programme on \emph{Stochastic systems for anomalous diffusion} (July--December 2024) that, if the noise is an $\alpha$-stable L\'evy process, we should expect the transition to occur at $\beta=-1/(1+\alpha)$ rather than $\beta=-1/3$. We are grateful for this  observation and his detailed comments on an earlier version of the paper.}
The fact that we establish this phase transition in the case of reflected diffusions, where the trajectories of the drift are not even absolutely continuous,  can be viewed  as evidence for such universality.

In contrast to the behaviour of the first component of $Z$ discussed in the previous paragraph, the local convergence of appropriately rescaled and recentred $Z$ in Theorem~\ref{th:main2} below holds for all $\beta\in(-1,1)$. In particular, this implies that  the convergence of $Y_t/(a_\infty c_1^\beta t^\frac{\beta}{1+\beta})$ in~\eqref{eq:mainCVIntro} remains valid for $\beta\in(-1,-1/3]$. Figure \ref{fig:phases} summarises various behaviours of the process $Z=(X,Y)$.

The \href{https://youtu.be/SVcVtZafrVY?si=_9DPQXjErCcg7sej}{YouTube} presentation
in~\cite{YouTube_talk} describes our main results, including the new phase transition at $\beta=-1/3$, and discusses elements of the proofs.  

\begin{figure}[!ht]
\includegraphics{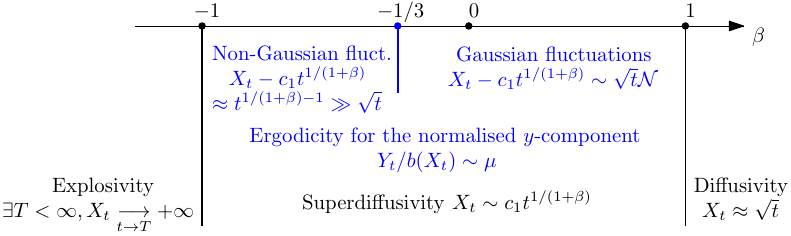}
\caption{\label{fig:phases}
The phase diagram describes the behaviour of the reflected process $Z$ following~\eqref{eq:SDER} at large times. The behaviour of $Z$ is a function of the growth/decay exponent $\beta$ of the boundary function $b$ in the definition of the domain $\mathcal{D}$ in~\eqref{eq:D-def}. The results of the present paper (Theorem~\ref{th:main1} and Proposition~\ref{prop:nogo} below) are in blue (the statements in black  in the diagram are from~\cite{MMW}).}\end{figure}

\subsection*{Literature and motivation}
\label{sec:literature}

Reflected diffusions have been studied extensively
as prototypical examples of stochastic processes with constrained (or controlled, confined) dynamics. The literature is large; we mention~\cite{KPR,lr19,varadhan-williams,lions-sznitman,kang2014characterization,kwon-williams,costantini-kurtz-2019,kang2017submartingale,ernst2021asymptotic,Burdzy17,Burdzy2006} for a selection of classical as well as more recent papers. 
 Motivation comes from heavy-traffic limits of queueing systems~\cite{harrison,hw87,rr}, communication networks~\cite{foschini}, or financial models~\cite{hhl,ipbkf}, for example.
Modern applications include sampling or optimization algorithms in computational statistics and machine learning~\cite{ahlw}, numerical methods for solving Neumann or mixed boundary-value problems~\cite{lst}, or estimation of an unknown set via observations of a reflected diffusion therein~\cite{cflp}.

The most well-studied examples of reflected diffusions in unbounded domains are orthants or cones~\cite{hobson1993recurrence,williams,franceschi2019integral,varadhan-williams}, where, typically, boundary reflections are infrequent and large-scale behaviour remains diffusive. Recurrence and transience for \emph{normally} reflected Brownian motion on generalized parabolic domains of the type specified via~\eqref{eq:D-def} was first studied in~\cite{pinsky2009transcience}. For reflection vector fields that are asymptotically normal in an appropriate sense, the case where the process is stable, with heavy-tailed stationary distributions, is studied in~\cite{bmw}.  A discrete (random walk) relative was studied in~\cite{MMW1}. A structurally similar, but much simpler, CLT for discrete-time process on $\RP$ with drift at $x$ of order $x^{-\beta}$, $\beta \in (0,1)$, is given in~\cite{MW-lamperti,MPW-book}. We note that $\beta <0$ is not considered in these papers.

The direct  antecedent work to the present paper is~\cite{MMW}, which was motivated to consider natural families of domains on which obliquely-reflected diffusion occurs sufficiently frequently
to drive \emph{anomalous diffusion}~\cite{bg,mjcb,oflv}. 
For statistical sampling and learning, processes that explore space faster than ordinary diffusions, or that
adapt their behaviour according to previous learning, can lead to more efficient algorithms.
Part of the motivation behind the present work is to understand more deeply the anomalous diffusion exhibited in~\cite{MMW}. 

Several aspects of our problem and approach appear to be related to the ideas in~\cite{costantini-kurtz-2019,costantini-kurtz-2024}, where reflected multidimensional diffusions in domains with one singularity are also studied. In our setting the singularity is at infinity rather than a cusp at the origin as in~\cite{costantini-kurtz-2024}. 
Furthermore, in our case as in~\cite{costantini-kurtz-2024}, the ergodicity of an embedded process plays a crucial role in the proofs.
Finally we note that~\cite{MR1661309}, 
for tube-like domains with variable widths, 
whether a minimal harmonic function remains a minimal parabolic function depends greatly on the rate of thinning of such domains (see~\cite[Thm~1.6]{MR1661309}).
This behaviour is similar in spirit\footnote{We thank Chris Burdzy for bringing  this analogy to our attention.} to our main result, which shows that the asymptotic behaviour of a reflected Brownian motion in tube-like domains also depends greatly on the rate of thinning in such domains.

\section{Main results}
\label{sec:description}

In this section we present formally the assumptions we use throughout the remainder of the paper (Subsection~\ref{sec:assumptions}) and state precisely the main results (Subsection~\ref{sec:theorems}), including a rigorous statement of the theorem in Section~\ref{sec:introduction}. In Subsection~\ref{sec:heuristic}  we 
discuss a heuristic leading to a one-dimensional toy model,  providing intuition for the key phenomena discussed above. 
Subsection~\ref{sec:outline} presents the structure of the proof and the organisation of the paper.

\subsection{Assumptions}
\label{sec:assumptions}

Recall that $|y|_d$ is the norm of $y\in\R^d$, i.e.~$|y|_d\coloneqq (y_1^2+\dots+y_d^2)^{\frac{1}{2}}$. We denote by $\BB^d$ the (closed) unit ball $\BB^d\coloneqq \{y\in \R^d: |y|_d\leq 1 \}$, and by $\Sp^{d-1}$ the unit sphere $\Sp^{d-1} \coloneqq\partial \BB^d$.  
Let $\cD\subset \R^{1+d}$ be the domain defined in~\eqref{eq:D-def}
for a given $b: \RP\to \RP$. 
In this domain,  $(Z,L,W)$ is (when it uniquely exists) the strong solution to SDER~\eqref{eq:SDER} above.
The process $L$, referred to as the \emph{local time of $Z$} at the boundary $\partial \cD$, is continuous, non-decreasing and starts from~$L_0=0$. 
Both $L$ and $Z$ are adapted to the filtration generated by $W$,   and $Z$ takes values in $\cD$ started at $Z_0 = z_0 \in \cD$. 

Assume that the function $b$ that specifies $\cD$ via~\eqref{eq:D-def} satisfies the following condition, ensuring that the boundary $\partial \cD\coloneqq \{(x,y)\in \RP \!\times \R^d: |y|_d= b(x) \}$ of the domain is $\cC^2$~\cite[Lem.~4.3]{MMW}. The conditions at $0$ in \eqref{hyp:D1} are equivalent to $\partial \mathcal{D}$ being $\mathcal{C}^2$ at the origin.
\begin{description}
\item
[\namedlabel{hyp:D1}{D$_1$}] 
The function $b:\RP\to \RP$ is continuous with $b(0)=0$ and $b(x)>0$ for $x>0$. Suppose $b$ is twice continuously differentiable on $(0,\infty)$, such that $\liminf_{x\to 0} b(x)b'(x)>0$, and $\lim_{x\to 0} b''(x)/b'(x)^3$ exists in $(-\infty,0]$. 
\end{description}

We furthermore assume that the function $b$ satisfies the following at infinity.
\begin{description}
\item
[\namedlabel{hyp:D3}{D$_2^+$}] 
The function $b:\RP\to \RP$ can be expressed as $b(x)= a_\infty x^\beta+f(x)$, where
 $\beta\in (-1,1)$, $a_\infty>0$ and, as $x\to \infty$, $f(x)=o(x^{\frac{3\beta-1}{2} })$, $f'(x)=o(x^{\frac{3\beta-3}{2} })$ and $f''(x)=o(x^{\frac{3\beta-5}{2} })$.
\end{description}
(The name~\eqref{hyp:D3} indicates that this 
is a strengthened version of Assumption~(D$_2$) in~\cite{MMW}.) Domains considered in this paper can either expand or shrink: see Figure~\ref{fig:grow_shrink} below.

Next we impose conditions on the diffusion matrix $\sigma$. From here on, we use the same convention as in \cite{MMW}, that for an  element in $\R\times \R^d$, we use the index $0$ for the coordinate in $\R$ which plays a distinguished role, and we use the indices $1,\dots, d$ for the coordinates in $\R^d$. 
For example, a matrix $M$ in the set of positive symmetric matrices $\cM^{+}_{1+d}$ is expressed as 
$M=(M_{i,j})_{i,j\in \{0,\dots, d\} }$.
Throughout 
$f(x,y)\underset{x\to \infty}\longrightarrow 0$ denotes 
$\lim_{x \to \infty} \sup_{y: (x,y)\in \cD} \|f(x,y)\| = 0$ if $f$ is defined on $\cD$, or $\lim_{x\to\infty} \sup_{y: (x,y)\in \partial \cD} \|f(x,y)\|= 0$ if $f$ is defined on $\partial \cD$ (for an appropriate norm $\|\cdot\|$).
\begin{description}
\item
[\namedlabel{hyp:C}{C$^+$}]
The diffusion matrix $\sigma: \cD\to \cM^{+}_{1+d}$ is bounded, globally Lipschitz and uniformly elliptic.\footnote{Without loss of generality, we assume that $\sigma$ is symmetric. In fact, we could have allowed the driving Brownian motion $W$ in SDER~\eqref{eq:SDER} to be of dimension $m\geq 1+d$, with dispersion matrix $\sigma$ mapping $\R^m$ onto $\R^{1+d}$, without modifying the results or significantly changing the proofs in the paper.} Furthermore, there exists $\Sigma^\infty\in\cM^{+}_{1+d}$ such that $\Sigma\coloneqq \sigma^2: \cD\to \cM^{+}_{1+d}$
satisfies
\[
\Sigma( x,y)-\Sigma^\infty \underset{x\to \infty}\longrightarrow 0
\qquad 
\text{ and }
\qquad
x^{ \frac{1-\beta}{2}} \Big(\sum_{i=1}^d 
 \Sigma_{i,i}(x,y) - \os \Big)\underset{x\to \infty}\longrightarrow 0,
\]
where $\os \coloneqq \Tr(\Sigma^\infty   )-\Sigma^\infty_{0,0}=\sum_{i=1}^d \Sigma^\infty_{i,i}$.
\end{description}

Finally, we impose conditions on the boundary vector field $\phi:\partial \cD\to \R^{1+d}$.  
We write $\phi=(\phi_0,\dots,\phi_d) \in\R^{1+d}$ and $\phi^{(d)}\coloneqq (\phi_1,\dots,\phi_d)\in\R^d$.
Throughout the paper $\langle\,\cdot\,,\cdot\,\rangle$ denotes the standard inner product on a Euclidean space of appropriate dimension. 
Let $n:\partial \cD\to \R^{1+d}$ denote the
vector field orthogonal to $\partial \cD$, inward-pointing with unit norm.

\begin{description}
\item[\namedlabel{hyp:V+}{V$^+$}] The vector field $\phi:\partial \cD\to \R^{1+d}$ is $\cC^2$ and bounded. For all $z\in \partial \cD$, $\langle \phi(z),n(z)\rangle >0$.
There exists a constant $s_0>0$ and a function
$\phi^{(d)}_\infty: \Sp^{d-1}\to \R^d$ such that
\[
\sup_{u\in \Sp^{d-1} }  x^{ \frac{1-\beta}{2} }  \left| \phi(x,b(x) u) - (s_0, \phi^{(d)}_\infty(u) )  \right|_{d+1} \underset{x\to \infty}\longrightarrow 0.
\]
There exists a constant $c_0>0$ such that  $c_0= \langle \phi^{(d)}_\infty(u), -u \rangle>0$ for all $u\in \Sp^{d-1}$.
\end{description}

Conditions \eqref{hyp:C} and \eqref{hyp:V+} imply Assumption  ({\sc A}) from \cite{MMW} (the constant we named $\os$ is $\sigma^2$ in \cite{MMW}).
Conditions \eqref{hyp:D3}, \eqref{hyp:C}, \eqref{hyp:V+},  imply respectively the conditions ({\sc D$_2$}), ({\sc C}), and ({\sc V}) from \cite{MMW}, with the same value of $\beta$.
The conditions here are stronger, mostly in that they require
control of  the rates of convergence of the problem data, not necessary for the first-order asymptotics in~\cite{MMW}.
Under the  assumptions
\eqref{hyp:D1}, \eqref{hyp:D3}, \eqref{hyp:C},  and \eqref{hyp:V+}, Theorem~A.1 and  Theorem~2.2(ii) in \cite{MMW} ensure that the strong solution $(Z,L,W)$ of the SDER \eqref{eq:SDER} exists for all time, is pathwise unique, and that almost surely,
\begin{equation} 
\label{def:c1}
t^{-\frac{1}{1+\beta}} X_t \underset{t\to \infty}\longrightarrow c_1\coloneqq 
 \Bigl( \frac{(1+\beta) s_0\os}{2 a_\infty c_0}\Bigr)^{\frac{1}{1+\beta}}.
 \end{equation}
 
In order to facilitate the $\cC^1$-convergence of the \emph{recentred and rescaled} versions of the coefficients $\sigma$ and $\phi$  defined in~\eqref{def:phix} below, we will use the following additional assumption.
\begin{description}
  \item[\namedlabel{hyp:S}{S}]
  There exists $\epsilon>0$ such that for all $i,j ,k \in \{ 0,\dots, d\}$,
  \[
  \sup_{u\in \mathbb{B}^{d} }x^{\beta+\epsilon} | \partial_i \sigma_{j,k} (x,b(x)u)|  \underset{x\to \infty}\longrightarrow 0
  \quad 
  \mbox{and}
  \quad  \sup_{u\in \Sp^{d-1} } x^{\beta+\epsilon} | \operatorname{grad} \phi_{j} (x,b(x)u) | \underset{x\to \infty}{\longrightarrow} 0,
  \]
  where $\operatorname{grad} \phi_{j}$ is the gradient taken along $\partial \mathcal{D}$.
\end{description}

Assumptions \eqref{hyp:D3}, \eqref{hyp:C} and \eqref{hyp:V+} 
are essential  for the CLT for $X$. Indeed,  removing any of them would in general give rise to additional terms of order smaller than the deterministic first-order approximation of $X$ (given by $c_1t^{1/(1+\beta)}$) but greater than the stochastic second-order correction (proportional to $\sqrt{t}$).
 As a simple example, if $\tilde b(x)=a_\infty x^{\beta}+  x^{\beta(1-\epsilon)}$ for all large $x$ and any $\epsilon\in(0,(1-\beta)/2)$, then there exists $c'_1> 0$ 
 such that almost surely,\footnote{This asymptotic equivalence follows by observing that $\tilde X_t\sim X_t+X_t^{1-\epsilon}$, where $X$ corresponds to the boundary function $b(x)=a_\infty x^\beta$, It\^o's formula and 
the integration-by-parts formula for local time in Subsection~\ref{subsec:loc_time}.} $\tilde X_t-c_1t^{\frac{1}{1+\beta}}\sim c'_1 t^{\frac{1-\epsilon}{1+\beta}}\gg \sqrt{t}$ as $t\to\infty$, violating the CLT.

Assumption \eqref{hyp:S} is of a technical nature. It is used to prove existence and uniqueness of a strong solution to SDER \eqref{eq:limitEqJoint} below, and to guarantee certain continuity properties exploited in our proofs (see Section~\ref{sec:local}). It is in fact possible to replace this condition with the weaker condition with $\epsilon=0$, but assuming $\epsilon>0$ allows us to choose some explicit control functions rather than non-explicit ones, simplifying the exposition.

\begin{figure}[!ht]
\includegraphics[width=\textwidth]{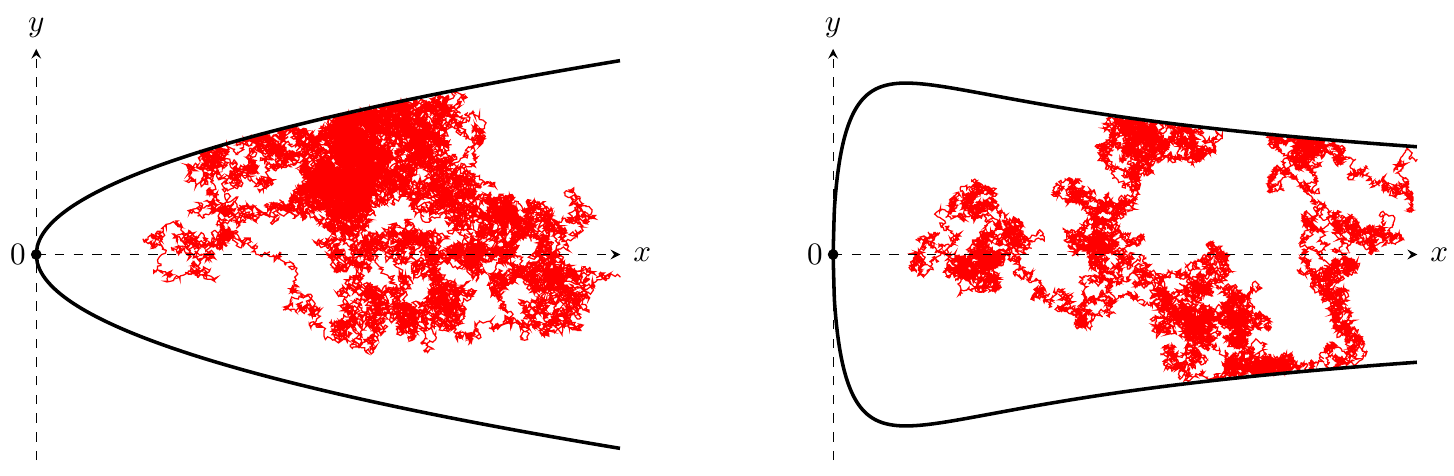}
\caption{\label{fig:grow_shrink}
Domain $\mathcal{D}$ satisfying assumptions~\eqref{hyp:D1} and~\eqref{hyp:D3} can either expand or shrink: the boundary function $b$ on the left (resp. right) is proportional to $b(x)\sim x^{1/2}$ (resp. $b(x)\sim x^{-1/4}$) as $x\to\infty$.
A simulated path of the solution of the SDER in~\eqref{eq:SDER} with the identity dispersion matrix and oblique reflection satisfying assumptions \eqref{hyp:V+} and~\eqref{hyp:S} is also depicted in both graphs.}
\end{figure} 

\subsection{Limit theorems}
\label{sec:theorems}
    To state our main theorem, we introduce the SDER on $ \BB^d$ given by
    \begin{equation} \label{eq:limitEq} \d \cY_{t} = \pi_d \sigma^\infty \d W_{t}+\phi^{(d)}_\infty(\cY_{t}) \d L^{\cY}_{t},\end{equation} 
    where $\sigma^\infty$ is a square root of $\Sigma^\infty$ in \eqref{hyp:C}, the vector field $\phi^{(d)}_\infty$ on $\Sp^{d-1}$ is given in~\eqref{hyp:V+}, $W$ is a standard Brownian motion  on $\R^{1+d}$ and $\pi_d: \R\times\R^d\to \R^d$, $\pi_d(x,y)=y$, is the canonical projection. Here is the precise version of the informal theorem in Section~\ref{sec:introduction} above. 
    \begin{theorem}
    \label{th:main1}
    Suppose that hypotheses \eqref{hyp:D1}, \eqref{hyp:D3}, \eqref{hyp:C}, \eqref{hyp:V+}, and~\eqref{hyp:S} hold with $\beta\in(-\frac{1}{3},1)$. 
    SDER~\eqref{eq:limitEq} admits a unique strong solution with a unique invariant measure $\mu$ on $\BB^d$.
    
    Let $Z=(X,Y)$ be the solution of the SDER in~\eqref{eq:SDER} and recall the constant $c_1$ from~\eqref{def:c1}.  In distribution, as $t \to \infty$, we have
    \[ \biggl( \frac{X_t-c_1 t^{\frac{1}{1+\beta}} }{\sqrt{t}}, \frac{Y_t}{a_\infty c_1^\beta t^\frac{\beta}{1+\beta}} \biggr)\longrightarrow \mathcal{N}\otimes \mu, \]
    where $\mathcal{N}$ is a centred Gaussian law on $\R$ with variance $\nvar$ given by 
    \begin{equation}
        \label{eq:N-var}
    \nvar\coloneqq  \frac{1+\beta}{1+3\beta} \Big( \Sigma^\infty_{0,0}+ 2\frac{ s_0}{c_0} \sum_{j=1}^d  \Sigma_{0,j}^{\infty} \int_{\BB^d} y_j
    \d \mu_y+\frac{s_0^2}{c_0^2} \sum_{j,k=1}^d \Sigma^\infty_{j,k} \int_{\BB^d} y_j y_k
    \d \mu_y\Big).
    \end{equation}
\end{theorem}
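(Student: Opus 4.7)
The plan is to combine the local scaling limit (Theorem~\ref{th:main2}) with an It\^o--Skorokhod decomposition of $X_t$ that, after linearisation around the deterministic law in~\eqref{def:c1}, produces a time-inhomogeneous Ornstein--Uhlenbeck equation whose analysis yields the CLT. First, well-posedness and ergodicity of SDER~\eqref{eq:limitEq} on the compact convex domain $\BB^d$ follow from standard theory for reflected diffusions: smoothness of $\phi^{(d)}_\infty$ and the uniform transversality $\langle\phi^{(d)}_\infty(u),-u\rangle=c_0>0$ give strong existence and pathwise uniqueness, while a Lyapunov/Harris argument on the compact state space $\BB^d$ yields a unique invariant measure $\mu$ and quantitative geometric ergodicity.

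The crucial step is to derive a tractable expression for $dX_t$. Applying It\^o's formula to $|Y_t|^2-b(X_t)^2$, which vanishes on $\partial\cD$, and using the boundary asymptotics from~\eqref{hyp:V+} (namely $\langle Y/|Y|,\phi^{(d)}\rangle\to-c_0$ and $\phi_0\to s_0$), together with $bb'=o(b)$ from~\eqref{hyp:D3} and $\sum_{i=1}^d\Sigma_{i,i}\to\os$ from~\eqref{hyp:C}, one obtains
\[
dL_t=\frac{1}{b(X_t)\,c_0}\,\langle Y_t,\pi_d\sigma(Z_t)\,dW_t\rangle+\frac{\os}{2\,b(X_t)\,c_0}\,dt+\text{(lower order)}.
\]
Substituting into $dX_t=\sum_k\sigma_{0,k}(Z_t)dW_t^k+\phi_0(Z_t)dL_t$, and writing $v(y)\coloneqq(1,(s_0/c_0)y_1,\dots,(s_0/c_0)y_d)\in\R^{1+d}$, one obtains
\[
dX_t\approx v\!\bigl(Y_t/b(X_t)\bigr)^{\tra}\sigma^\infty\,dW_t+\frac{s_0\,\os}{2\,c_0\,b(X_t)}\,dt.
\]
The drift integral evaluated along the strong law~\eqref{def:c1} reproduces $c_1 t^{1/(1+\beta)}$ by the very definition of $c_1$.

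Let $U_t\coloneqq X_t-c_1 t^{1/(1+\beta)}$. A Taylor expansion of $b(X_t)^{-1}$ about $X_t=c_1 t^{1/(1+\beta)}$ yields the linear time-inhomogeneous SDE
\[
dU_t\approx v\!\bigl(Y_t/b(X_t)\bigr)^{\tra}\sigma^\infty\,dW_t-\frac{\beta}{(1+\beta)\,t}\,U_t\,dt,
\]
whose integrating-factor formulation is $d(t^{\beta/(1+\beta)}U_t)=t^{\beta/(1+\beta)}\,v(\cdot)^{\tra}\sigma^\infty\,dW_t$, giving
\[
U_t=t^{-\beta/(1+\beta)}\int_0^t s^{\beta/(1+\beta)}\,v\!\bigl(Y_s/b(X_s)\bigr)^{\tra}\sigma^\infty\,dW_s+o(\sqrt t).
\]
The quadratic variation of the stochastic integral is asymptotically $\int_0^t s^{2\beta/(1+\beta)}\,v(Y_s/b(X_s))^{\tra}\Sigma^\infty v(Y_s/b(X_s))\,ds$, and ergodic averaging (from Theorem~\ref{th:main2} together with the mixing of $\mu$) replaces the integrand by its spatial average $\int_{\BB^d}v(y)^{\tra}\Sigma^\infty v(y)\,d\mu(y)$, which is exactly the bracketed quantity in~\eqref{eq:N-var}. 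Since $\int_0^t s^{2\beta/(1+\beta)}ds\sim\tfrac{1+\beta}{1+3\beta}t^{(1+3\beta)/(1+\beta)}$, dividing by $t^{2\beta/(1+\beta)}$ and by $t$ produces the prefactor $(1+\beta)/(1+3\beta)$ in~\eqref{eq:N-var}; a martingale CLT then gives $U_t/\sqrt t\Rightarrow\mathcal{N}(0,\nvar)$. The restriction $\beta>-1/3$ is exactly what guarantees $1+3\beta>0$, so that the time integral is dominated by its upper endpoint.

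For the joint convergence, a two-scale argument is used: choose $\Delta_t\to\infty$ with $\Delta_t=o(t)$ and large enough that the rescaled cross-section has mixed to $\mu$ on the window $[t-\Delta_t,t]$. The contribution of this window to $U_t$ is of order $\sqrt{\Delta_t}=o(\sqrt t)$, while $Y_t/b(X_t)$ depends (up to mixing errors) only on the process during $[t-\Delta_t,t]$; conditioning on $\mathcal{F}_{t-\Delta_t}$ then decouples the two marginals and yields the product limit $\mathcal{N}\otimes\mu$. The principal technical obstacle is making the formal identity for $dL_t$ rigorous and controlling its errors uniformly: the Skorokhod-type identity is supported on $\partial\cD$ so the behaviour during interior excursions (where $dL_t=0$) must be absorbed, and the subleading terms in $b,b',b''$ from~\eqref{hyp:D3} together with the rate-of-convergence terms in~\eqref{hyp:C} and~\eqref{hyp:V+} (assisted by the $\cC^1$ control of coefficients provided by~\eqref{hyp:S}) must all be shown to be negligible on the $\sqrt t$-scale; combining this with quantitative ergodic averaging of $v^{\tra}\Sigma v$ along the path is where the local convergence result Theorem~\ref{th:main2} is essential.
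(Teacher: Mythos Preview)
Your substitution of the It\^o formula for $|Y_t|^2$ into $dX_t$ is essentially the paper's computation, repackaged. The paper sets $g(x,y)=x+\tfrac{s_0}{2c_0}\tfrac{|y|_d^2}{b(x)}$ and studies $\xi_t=B(g(Z_t))$ with $B(x)=\int_0^x b$. The choice of $g$ is precisely what makes $\langle\phi,\nabla g\rangle$ vanish on $\partial\cD$ to leading order; this is exactly where your ``lower order'' term $-\tfrac{s_0}{2c_0 b(X_t)}\,d|Y_t|^2$ goes (it is absorbed into $dg$, and its integrated size is $O(b(X_t))=O(t^{\beta/(1+\beta)})=o(\sqrt t)$). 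Composing with $B$ then turns the state-dependent drift $\tfrac{s_0\os}{2c_0 b(X_t)}$ of $g(Z_t)$ into the constant $\tfrac{s_0\os}{2c_0}$, which is the state-space analogue of your integrating factor $t^{\beta/(1+\beta)}$; the Taylor inversion $(1+\varepsilon)^{1/(1+\beta)}\approx1+\varepsilon/(1+\beta)$ is postponed to the very end, after the martingale CLT is already in hand. So once one recognises $B\circ g$, the two routes coincide.

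There are, however, two genuine gaps in your version that the paper's organisation avoids. First, your linearisation of $1/b(X_t)$ about the deterministic trajectory has a remainder of order $U_t^2/X_t^{\beta+2}$, and showing that its time-integral is $o(\sqrt t)$ requires an a priori bound on $U_t$ that you do not yet have: this is circular. The paper sidesteps this because the drift of $\xi_t$ equals $\tfrac{s_0\os}{2c_0}+o(X_t^{(\beta-1)/2})$ directly from the hypotheses (Lemma~\ref{le:deterministicEstimations}), with no $U_t$-dependence, so no bootstrapping is needed. Second, the ``ergodic averaging'' you invoke is the main content of Section~\ref{sec:ergodicity} (Proposition~\ref{prop:int}) and is \emph{not} a direct consequence of Theorem~\ref{th:main2}: one must average $\int_0^T X_s^{2\beta}P(Y_s/X_s^\beta)\,ds$ over the entire interval $[0,T]$, across which neither the domain width nor $X$ is even approximately constant. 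The paper does this by slicing $[0,T]$ into windows of length $\sim T_n^{2\beta/(1+\beta)}\mathpzc s_4(T_n)$, coupling each rescaled piece to the stationary cylinder process via Proposition~\ref{prop:seq}, applying Birkhoff on each window, and summing. Your two-scale argument for asymptotic independence is correct and matches Proposition~\ref{prop:joint}. A minor stylistic point: writing $dL_t=\ldots dW_t+\ldots$ is ill-formed since $L$ is non-decreasing; what you mean is the formula for $dg(Z_t)$.
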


The proof of Theorem~\ref{th:main1} is the conclusion of Section~\ref{sec:central} below. We note in passing that in the second component of the weak limit in Theorem~\ref{th:main1},
the space is scaled either down or up depending on the sign of  $\beta$, while in the first component it is scaled down in all cases.

\begin{remark}
\label{rem:BM_limit_on_disk}
Reflected Brownian motion is the special
        case in which $\Sigma$ is the identity, in which case $\nvar$ given by~\eqref{eq:N-var} simplifies to 
        \[ \nvar = \frac{1+\beta}{1+3\beta} \left(1 + \frac{s_0^2}{c_0^2} \int_{\BB^d} | y |_d^2 \mu (\mathrm{d} y) \right) .\]
        If furthermore the projection in the $y$-direction of the reflection vector field $\phi$ is asymptotically normal, i.e.~$\phi^{(d)}_\infty(u)=-c_0 u$ for all $u\in\mathbb{S}^{d-1}$, then  $\mu$ is uniform on $\BB^d$ and 
        \[ \int_{\BB^d} | y |_d^2 \mu (\mathrm{d} y) = \frac{\int_0^1 r^{d+1} \d r}{\int_0^1 r^{d-1} \d r} = \frac{d}{d+2}. \]
        If we further specify $\beta=0$ (corresponding to a half-cylinder), we get
        $\nvar =  1 + \frac{s_0^2}{c_0^2} \frac{d}{d+2}$.
\end{remark}
It is essential for our proof of Theorem~\ref{th:main1} to describe the asymptotic dynamics of $Z$ at the scale where this dynamics is stochastic and non-trivial and can be approximated by a reflected diffusion in an infinite cylinder. 
To this end, we introduce a process $\cZ^T$ which is constructed from the process $Z$ started at a large time $T$ via a $T$-dependent time-change. We centre the process by subtracting (likely, large) $X_T$, to observe local behaviour. Then, as we want the boundary of the domain to stay at an approximately constant distance in the $y$-direction, we rescale by a multiplicative factor $1/ b(X_T) \approx T^{-\frac{\beta}{1+\beta}}$. Finally, as we want stochastic fluctuations to be visible on this space scale, we time change  by a factor $b(X_T)^2 \approx T^{\frac{2\beta}{1+\beta}} \ll T$. Transforming $Z=(X,Y)$ in this way, for any $T\in(0,\infty)$, we arrive at the following process:
\begin{equation}
    \label{eq:Z^T-def}
 \cZ^T \coloneqq (\cZ^T_t)_{t \geq 0},\quad\text{where}\quad  \cZ^T_{t}\coloneqq 
\frac{1}{b(X_T)}  \bigl( X_{T+ b(X_T)^2 t}   - X_T,    Y_{T+ b(X_T)^2 t}  \bigr).
 \end{equation}

\begin{theorem}
\label{th:main2}
    Suppose that hypotheses \eqref{hyp:D1}, \eqref{hyp:D3}, \eqref{hyp:C}, \eqref{hyp:V+}, and~\eqref{hyp:S} hold with $\beta\in(-1,1)$. 
Let $Z=(X,Y)$ be the strong solution to the SDER~\eqref{eq:SDER}.
For every $s>0$, the process
  $(\cZ^T_\mathbf{t})_{\mathbf{t}\in[0,s]}$
  converges weakly (in the uniform topology), as $T\to \infty$. The limit is the law of the strongly unique solution $\cZ=(\cX,\cY)$ of the SDER on $\R\times \BB^d$,
  \begin{equation} 
\label{eq:limitEqJoint}
  \d \cZ_\mathbf{t} =\sigma^\infty \d W_\mathbf{t}+ \bigl( s_0,\phi^{(d)}_\infty(\cY_\mathbf{t}) \bigr) \d L^{\cY}_\mathbf{t}, \qquad \mathbf{t}\in [0,s], \end{equation}
   where $\cX_0=0$ and $\cY_0$, following the invariant law  $\mu$ of~\eqref{eq:limitEq} on $\BB^d$, is independent of $W$ (note that $L^\cY$, the local time of $\cZ$ on $\R\times \Sp^{d-1}$, depends only on $\cY$ but not on $\cX$).
\end{theorem}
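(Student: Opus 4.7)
The plan is to realize $\cZ^T$ as the strong solution of a reflected SDE in a slowly varying domain $\cD^T$ with coefficients converging to those of~\eqref{eq:limitEqJoint}, and then to pass to the limit by combining a stability result for reflected SDEs with an ergodic mixing argument to identify the initial condition and establish asymptotic independence from the future noise.

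First I would verify strong existence and uniqueness for~\eqref{eq:limitEqJoint}. This decouples: the $\cY$-component satisfies the autonomous SDER~\eqref{eq:limitEq} on the smooth domain $\BB^d$, with $\cC^2$ boundary vector field that is uniformly oblique by~\eqref{hyp:V+} (since $\langle \phi^{(d)}_\infty(u), -u\rangle = c_0 > 0$) and uniformly elliptic noise by~\eqref{hyp:C}, so classical Lions--Sznitman/Dupuis--Ishii theory under~\eqref{hyp:S} gives pathwise uniqueness, existence of a unique invariant law $\mu$, and exponential mixing via Harris/Khasminskii-type arguments. Given $(\cY, L^{\cY})$, the first coordinate is obtained by direct integration $\cX_\mathbf{t} = (\sigma^\infty W_\mathbf{t})_0 + s_0 L^{\cY}_\mathbf{t}$.

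For the pre-limit, set $B^T_t \coloneqq b(X_T)^{-1}(W_{T+b(X_T)^2 t} - W_T)$ (a standard Brownian motion) and $\mathsf{L}^T_t \coloneqq b(X_T)^{-1}(L_{T+b(X_T)^2 t} - L_T)$. By Brownian scaling applied to~\eqref{eq:SDER},
\[
d\cZ^T_t = \sigma^T(\cZ^T_t)\, dB^T_t + \phi^T(\cZ^T_t)\, d\mathsf{L}^T_t, \qquad \cZ^T_t \in \cD^T,
\]
with $\sigma^T(x,y) \coloneqq \sigma(X_T + b(X_T) x,\, b(X_T) y)$, $\phi^T(x,y) \coloneqq \phi(X_T + b(X_T) x,\, b(X_T) y)$, and $\cD^T \coloneqq \{(x,y): |y|_d \leq b(X_T + b(X_T) x)/b(X_T)\}$. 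Because $X_T \sim c_1 T^{1/(1+\beta)}$ forces $b(X_T)/X_T \to 0$, a Taylor expansion using~\eqref{hyp:D3} yields $b(X_T + b(X_T) x)/b(X_T) \to 1$ locally uniformly in $x$, so $\cD^T \to \R \times \BB^d$ in a Hausdorff sense. Then~\eqref{hyp:C}, \eqref{hyp:V+}, and~\eqref{hyp:S} deliver $\sigma^T \to \sigma^\infty$ and $\phi^T \to (s_0, \phi^{(d)}_\infty)$ uniformly on compact sets, in $\cC^1$ along the boundary. For tightness in $\cC([0,s], \R \times \R^d)$, the $y$-marginal is a priori confined; boundedness of $\phi$ combined with uniform obliqueness yields an $L^p$ bound on $\mathsf{L}^T_s$ (via test-function computations as in~\cite{MMW}), which controls the finite-variation part of $\cX^T$, while Burkholder--Davis--Gundy controls the martingale part. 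Passing to a Skorokhod-represented subsequence, a stability argument for reflected SDEs with $\cC^1$-convergent coefficients on Hausdorff-converging smooth domains (in the spirit of~\cite{costantini-kurtz-2019}) identifies the limit as a weak solution of~\eqref{eq:limitEqJoint} started from the weak limit of $\cZ^T_0$.

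The main obstacle is to identify this initial law: one must show that $\cY^T_0 = Y_T/b(X_T)$ converges weakly to $\mu$ and that the limit is independent of the Brownian increments driving $\cZ$ on $[0,s]$. The key is the time-scale separation $b(X_T)^2 \ll T$, valid throughout $\beta \in (-1,1)$. For some $\gamma \in (\tfrac{2\beta}{1+\beta}, 1)$, I would extend the construction to the window $[T - T^\gamma,\, T + b(X_T)^2 s]$: on $[T - T^\gamma, T]$ the transverse process $Y/b(X)$ evolves for $T^\gamma/b(X_T)^2 \to \infty$ units of intrinsic time, so the exponential ergodicity of~\eqref{eq:limitEq} pushes the law of $\cY^T_0$ to $\mu$ in total variation irrespective of $Y_{T-T^\gamma}$. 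A coupling driven by the same $W$ on $[T-T^\gamma, T]$ shows that $\cY^T_0$ becomes, in the limit, measurable with respect to $\sigma(W_u - W_{T-T^\gamma} : u \in [T-T^\gamma, T])$, hence independent of the increments of $W$ after time $T$. Establishing a uniform quantitative mixing rate for the family of approximate SDERs for $Y/b(X)$ along the shrinking window — and reconciling it with the slowly varying coefficients $\sigma^T, \phi^T$ and domain $\cD^T$ — is the most delicate piece of the argument.
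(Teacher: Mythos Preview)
Your overall plan is in the right direction, but the paper's proof differs in two technical devices, and the second is precisely what closes the gap you flag as ``most delicate'' but leave open.

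For the convergence of solutions, the paper does not argue via tightness and weak-limit identification. Instead it embeds the one-parameter family $(\cZ^{(x,b(x)y),\cW})_{x>0}$ together with the cylinder limit into a \emph{single} SDER on a $(2+d)$-dimensional ``boot'' domain $\cDtot$, where an extra coordinate $h=x^{-1/\gamma}$ is frozen at all times and the limit corresponds to $h=0$. Assumption~\eqref{hyp:S} is calibrated so that the boundary, diffusion matrix and reflection field of this lifted problem extend $\cC^1$ across $h=0$ (Lemma~\ref{le:smooth}); Dupuis--Ishii continuity in the initial condition, after localisation, then gives convergence \emph{in probability} of $\cZ^T$ to the cylinder process $\cZ^{\infty,T}$ started at the same random point and driven by the same Brownian motion (Lemma~\ref{le:cv1new}), and---via a general diagonal extension (Lemma~\ref{le:extend})---on \emph{growing} time windows $[0,\mathpzc{s}_1(T)]$ with $\mathpzc{s}_1(T)\to\infty$ (Proposition~\ref{prop:long}). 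This is strictly stronger than the fixed-$[0,s]$ weak convergence your Skorokhod argument would produce, and that extra strength is essential for the next step.

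For the initial law, your scheme needs either (i) uniform quantitative mixing for the pre-limit family of transverse processes, or (ii) closeness of pre-limit to limit over a window of \emph{diverging} intrinsic length before time $T$, so that the known ergodicity of the limit can be transferred. Your fixed-window argument delivers neither, and you identify (i) as the hard open step. The paper takes route (ii), and the growing-window result above is exactly what makes it work: one couples $\cZ^{\infty,S(T)}$---not $\cZ^T$ directly---to a stationary copy $\cZ^{\infty,\mu}$ using only the exponential mixing of the \emph{limit} process (Corollary~\ref{cor:mu-existence-convergence}, Proposition~\ref{prop:coupling}), and a deterministic time-shift $S(T)<T$ with $(T-S(T))/b(X_{S(T)})^2\to\infty$ carries this back to $\cZ^T$ on $[0,s]$ (Lemma~\ref{le:sequenceRand}, Proposition~\ref{prop:seq}). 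The paper therefore never needs uniform mixing for the varying family; the embedding-plus-growing-window architecture is designed precisely to break the circularity you have run into.
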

  Theorem~\ref{th:main2} is proved in Section~\ref{sec:local}; see Proposition~\ref{prop:seq} and Remark~\ref{rem:proof_of_main2} below.

\begin{remarks}\ 
\label{rem:beginning}
\begin{enumerate}[label=(\alph*)] 
\item 
\label{rem:beginning(a)}
Equation \eqref{eq:limitEqJoint} projects in the $y$-direction into the autonomous SDER in~\eqref{eq:limitEq}. The component $\cX$ can then be recovered via a stochastic integral as $\cX$ does not feature on the right-hand side of~\eqref{eq:limitEqJoint}. Indeed, we have  $\cX=\pi_0\sigma^\infty W+s_0L^{\cY}$,
where the canonical projection $\pi_0:\R\times\R^d\to\R$ is given by $\pi_0(x,y)=x$, making $\cX$ the sum of a scalar Brownian motion and the local time of $\cY$ at $\partial \mathbb{B}^d$ scaled by the constant $s_0$.
\item 
In the definition of $\cZ^T$ in~\eqref{eq:Z^T-def}, we may replace $b(X_T)$ with $a_\infty c_1^\beta T^{\beta/(1+\beta)}\sim  b(X_T)$ without altering the conclusion of Theorem~\ref{th:main2}. Note also that in the scaling limit of $\cZ^T$ in the theorem, for $\beta>0$
(resp.\ $\beta<0$), we accelerate (resp.\ decelerate) time and scale down (resp.\ up) space. 
\item Unlike Theorem~\ref{th:main1}, Theorem~\ref{th:main2} holds for all $\beta\in(-1,1)$. 
\item For $\beta > -1/3$, the spatial scale in Theorem~\ref{th:main2} on which we observe the dynamics of $X$ around $X_T$ is $T^{\beta/(1+\beta)}$. This scale is much smaller than the scale of the typical fluctuations of $X_T$ itself, which are of order $\sqrt{T}$ by  Theorem~\ref{th:main1}.  Figure~\ref{fig:path} illustrates the various scales  in Theorems~\ref{th:main1} and~\ref{th:main2}.
    \end{enumerate}
\end{remarks}

\begin{figure}[!h]
\includegraphics[width=\textwidth]{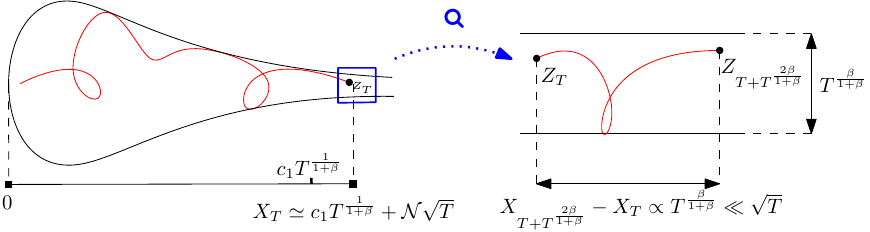}
\caption{\label{fig:path}
The limit in~\eqref{def:c1} from \cite{MMW} states that the typical position of $X_T$ is around $c_1 T^\frac{1}{1+\beta}$.
Theorem~\ref{th:main1} yields the picture on the left (here $\beta>-\frac{1}{3}$): the typical error $X_T- c_1 T^\frac{1}{1+\beta}$ is, at its main order, Gaussian with scale proportional to $T^{\frac{1}{2}}$.
Theorem~\ref{th:main2} describes the behaviour of $Z$ in the magnified picture on the right (which depicts in red the rescaled process $\cZ^T$). Note that since $T$ is finite, the domain in the magnified picture is only  \emph{approximately} a cylinder.}
\end{figure}

\subsection{A one-dimensional heuristic}
\label{sec:heuristic}

Using an estimate (based on a renewal argument in~\cite[p.679]{pinsky2009transcience}) of the effective horizontal drift accumulated via reflection, a heuristic in~\cite[p.1815]{MMW} suggests that the toy model
\begin{equation}
    \label{eq:SDE-toy}
 \d \widetilde{X}_t = c' \tilde{X}_t^{-\beta} \d t + \d \widetilde W_t,
 \end{equation}
 for some constant $c'>0$ and  a Brownian motion $\widetilde W$ in $\R$, ought to have long-time behaviour analogous to the first component $X$ of the reflected diffusion $Z$.
SDE~\eqref{eq:SDE-toy} falls into the class of one-dimensional diffusions studied in~\cite{go13} and indeed exhibits a behaviour very similar to that of $X$, with the same transitions at $\beta\in\{1,-1\}$ and the superdiffusive law of large numbers $\widetilde{X}_t\sim c t^{1/(1+\beta)}$ as $t\to\infty$, where  $c\coloneqq (c'(1+\beta))^{1/(1+\beta)}>0$~\cite[Thm~4.10(i)]{go13}.

The main results in the present paper, Theorems~\ref{th:main1} and~\ref{th:main2},  refine the asymptotics of $X$ in the superdiffusive but non-explosive regime, that is for $\beta\in(-1,1)$.
We now sketch an analysis of fluctuations for the toy model~\eqref{eq:SDE-toy}, which exhibits some of the relevant intuition for our main results, including the significance of the critical value $\beta = -1/3$. We stress however that the intuitive reasoning we are about to sketch for the one-dimensional diffusion $\widetilde X$ is a long way from a rigorous proof of our limit theorems, which crucially depend on the analysis of the fast mixing and ergodicity of the $d$-dimensional component $Y$ of the reflected process $Z=(X,Y)$.

It\^o's formula yields
 $\widetilde{X}_t^{1+\beta} = \widetilde{X}_0^{1+\beta} + c' (1+\beta)t + (\beta(1+\beta)/2)  A_t  + (1+\beta) M_t$,
where the process $A$ and the quadratic variation $[M]$ of the local martingale $M$ are respectively given by 
\[ A_t = \int_0^t \widetilde{X}_s^{\beta-1} \d s  \qquad\text{and}\qquad 
[M]_t =  \int_0^t \widetilde{X}_s^{2\beta} \d s.\]
Since  $\widetilde{X}_t\sim c t^{1/(1+\beta)}$ as $t\to\infty$ almost surely, for $\beta\in(0,1)$ we have 
$-1<(\beta-1)/(\beta+1)<0$
 and hence $A_t \approx t^{\frac{2\beta}{\beta+1}}\ll t$, while for $\beta\in(-1,0)$  we get $(\beta-1)/(\beta+1)<-1$ implying a finite limit $A_t \to A_\infty < \infty$ as $t\to\infty$. Thus
$A_t/t=O(t^{\frac{2\beta^+-\beta-1}{1+\beta}})$ for all $\beta\in(-1,1)$, where $\beta^+=\max\{\beta,0\}$.

For $\beta\in(-1/3,1)$
we get $[M]_t\sim c^{2\beta}\frac{1+\beta}{1+3\beta}t^{\frac{1+3\beta}{1+\beta}}$ as $t\to\infty$ almost surely. Since (by the Dambis--Dubins-–Schwarz theorem) $M$ is equal in law to a Brownian motion time-changed by its quadratic variation $[M]$, we  get weak convergence to a centred Gaussian with variance $c^{2\beta}  \frac{1+\beta}{1+3\beta}$:
\begin{equation}
\label{eq:Mart_conv_toy}
     t^{-\frac{1+3\beta}{2(1+\beta)}} M_t \to \mathcal{N} \big( 0, c^{2\beta} (1+\beta)/(1+3\beta)\big) \quad \text{ for all } -1/3 < \beta <1.
\end{equation}
In particular, in this case we have $M_t/t\to0$ in probability as $t\to\infty$.
For $\beta\in(-1,-1/3)$, the quadratic variation $[M]_t$ converges as $t\to \infty$ almost surely and hence $M_t$ tends almost surely to a non-degenerate random variable $M_\infty$.

The inequality $-1\leq \frac{2\beta^+-\beta-1}{1+\beta}$ for $\beta\in(-1,1)$ implies $t^{-1}=o(t^{\frac{2\beta^+-\beta-1}{1+\beta}})$. 
The semimartingale decomposition of $\widetilde X^{1+\beta}$, $A_t/t=O(t^{\frac{2\beta^+-\beta-1}{1+\beta}})$ for $\beta\in(-1,1)$ and the calculations above yield 
\begin{align*} \widetilde{X}_t  & =  c t^{\frac{1}{1+\beta}}  \Bigl( 1 +  t^{-1}  M_t/c' + O \bigl(t^{\frac{2\beta^+-\beta-1}{1+\beta}} \bigr) \Bigr)^{\frac{1}{1+\beta}} \\
& =  c t^{\frac{1}{1+\beta}} +  t^{-\frac{\beta}{1+\beta}} M_t/c^\beta + O \bigl( t^{\frac{2\beta^+-\beta}{1+\beta}} \bigr)\qquad\text{as $t\to\infty$.}
\end{align*}
Note that $\frac{1}{2}+\frac{\beta}{1+\beta}=\frac{1+3\beta}{2(1+\beta)}$. 
For $-1/3 < \beta <1$   we have 
$\frac{2\beta^+-\beta}{1+\beta}-\frac{1}{2}<0$. Thus~\eqref{eq:Mart_conv_toy} yields 
\[ t^{-1/2} \Bigl( \widetilde{X}_t - c' t^{\frac{1}{1+\beta}} \Bigr)= t^{-\frac{1}{2}-\frac{\beta}{1+\beta}} M_t /c^\beta + O \bigl( t^{\frac{2\beta^+-\beta}{1+\beta}-\frac{1}{2}} \bigr) \underset{t\to \infty}\longrightarrow \mathcal{N} \big( 0,(1+\beta)/(1+3\beta) \big)\]
in distribution.

If $-1 < \beta < -1/3$, then the almost sure limits $A_\infty$ and $M_\infty$ play a role:  as $t\to\infty$ we get
\begin{align*} 
  \widetilde{X}^{1+\beta}_t -   c'(1+\beta)  t  & =   C_{0,\infty}+o(1),\quad\text{where  $C_{0,\infty}\coloneqq \widetilde{X}_0^{1+\beta} + \frac{\beta (1+\beta)}{2} A_\infty + (1+\beta) M_\infty$.}
\end{align*}
Hence $\widetilde{X}_t=ct^{\frac{1}{1+\beta}}(1+\frac{C_{0,\infty}}{c^{1+\beta}}t^{-1}+o(t^{-1}))^{\frac{1}{1+\beta}}=ct^{\frac{1}{1+\beta}}(1+\frac{C_{0,\infty}}{(1+\beta)c^{1+\beta}}t^{-1}+o(t^{-1}))$ as $t\to\infty$, 
implying almost sure convergence to a random variable:
\[ t^{\frac{\beta}{1+\beta}} \Bigl( \widetilde{X}_t -  c t^{\frac{1}{1+\beta}} \Bigr) \underset{t\to \infty}\longrightarrow 
C_{0,\infty}c^{-\beta}/(1+\beta). \]
Since for $-1 < \beta < -1/3$ we have $-1/2<\beta/(1+\beta)$, the quantity  $(\widetilde X_t-c t^{\frac{1}{1+\beta}})/\sqrt{t}$ does not converge weakly to a non-degenerate limit law (as discussed in Section~\ref{sec:introduction}, by Proposition~\ref{prop:nogo} below, the same holds true for $X$).

\noindent\subsection{Skeleton of the proof of Theorem~\ref{th:main1} and structure of the paper}
\label{sec:outline}
The component $X$ of the reflected process $Z=(X,Y)$ satisfies a superdiffusive strong law $X_t/t^{\frac{1}{1+\beta}}\to c_1$ as $t\to\infty$. As indicated in Figure~\ref{fig:path}, the proof that $(X_t-c_1t^{\frac{1}{1+\beta}})/\sqrt{t}$ converges weakly to a centred Gaussian is all about
identifying a time window around $c_1 t^{\frac{1}{1+\beta}}$ of appropriate length smaller than $\sqrt{t}$, during which the domain does not change significantly, while the $Y$ component mixes so that it is near stationarity. More precisely, the proof consists of the following five steps.

\smallskip
\noindent\underline{\emph{Step 1}}. 
Subsection~\ref{sec:cylinder-sde}
of 
Section~\ref{sec:limiting}  constructs the limiting process $\mathcal{Z}=(\mathcal{X},\mathcal{Y})$ satisfying SDER~\eqref{eq:limitEqJoint} in the infinite cylinder and establishes the convergence to the unique invariant measure $\mu$ for the component $\mathcal{Y}$ in the ball $\mathbb{B}^d$.  Subsection~\ref{sec:cylinder-coupling} constructs the (maximal) coupling of the process $\mathcal{Y}$  started at an arbitrary distribution supported on $\BB^d$ with the process started at the invariant measure $\mu$. This coupling construction is designed to extend the original probability space (supporting $Z$ and $W$ in~\eqref{eq:SDER}) and possess certain (conditional) independence properties, see Proposition~\ref{prop:coupling} below. These properties are essential for an effective comparison (in Step~3 of the proof) of the stationary process in the infinite cylinder with the stochastically rescaled (both in space and time) and recentered process $\cZ^T$ in~\eqref{eq:Z^T-def}.

\smallskip
\noindent \underline{\emph{Step 2}}. 
The solutions to the SDER (given in~\eqref{eq:SDER_Skorkhod_Prob} of Subsection~\ref{subsec:def}; see also~\eqref{eq:ZT=ZZT}) satisfied by the rescaled process $\mathcal{Z}^T$ in~\eqref{eq:Z^T-def},  started at a fixed point, are proved to converge in distribution to the distribution of the limiting process $\mathcal{Z}$ as $T\to\infty$. 
The key issue here is that, not only do the reflected processes $\mathcal{Z}^T$ and $\cZ$ satisfy different SDERs, but moreover they also live in different domains.
This issue is resolved in Subsection~\ref{sec:cv} by reducing the continuity of an SDER with respect to a parametric family of coefficients to continuity with respect to the starting point of a single $(d+2)$-dimensional SDER, using the parameter as an additional variable (recall that both $\mathcal{Z}^T$ and $\cZ$ live in $(d+1)$-dimensional domains). Modulo localisation, Subsection~\ref{sec:cv} applies the seminal result from~\cite{Ishii} (which does not require ellipticity) to conclude the continuity in the starting point of the extended system,
implying the desired convergence of the rescaled  process $\cZ^T$ to the limiting process $\cZ$ in the infinite cylinder.

\smallskip
\noindent \underline{\emph{Step 3}}.
 The second step enables a comparison of the rescaled process $\mathcal{Z}^T$ and the limiting process $\mathcal{Z}$ in the infinite cylinder, provided they start at the same point. In Subsection~\ref{subsec:techPart}, we remove this restriction and prove that $\mathcal{Z}^T$ can be coupled to the limiting process $\mathcal{Z}$ in stationarity. This is achieved by first allowing sufficient time for the $\mathcal{Y}^T$ component of  $\mathcal{Z}^T$ to mix, so that it is almost in stationarity. This construction,  based on Proposition~\ref{prop:coupling} discussed in Step~1 above and carried out in the proof of Proposition~\ref{prop:seq} in Subsection~\ref{subsec:techPart} below, constitutes the most technical part of the paper. It enables us to conclude that the second component in the weak limit in Theorem~\ref{th:main1} above converges to the invariant measure $\mu$. 
 The asymptotic independence in Theorem~\ref{th:main1}, established in Subsection~\ref{subsec:Asymptotic_indep},
 essentially follows from the fact that the second component mixes much faster than  the first component fluctuates.

 \smallskip
\noindent \underline{\emph{Step 4}}.
Section~\ref{sec:ergodicity} establishes a limit theorem for certain additive functionals of the process $Z=(X,Y)$, which appear naturally in the proof of the CLT (discussed in Step~5) for the first component in Theorem~\ref{th:main1}. 
The key challenge  is that these additive functionals depend on the superdiffusive component $X$ as well as on the ergodic process $Y/b(X)$.
The strategy of the proof is as follows: we split the additive functional into a sum of integrals over shorter time periods, each of which can be can be controlled via the coupling from Step~3 above (cf.~Proposition~\ref{prop:seq}) between the rescaled process $\mathcal{Z}^T$ and the stationary version of the process $\mathcal{Z}$, together with an application of the ergodic theorem to the process $\mathcal{Z}$.

\smallskip
\noindent  \underline{\emph{Step 5}}.
 The second-order behaviour of $X_T$ as $T\to\infty$ is analysed in Section~\ref{sec:central}. We apply the Lyapunov function from \cite{MMW} to the process $Z$ and use It\^o's formula to find the finite variation and local martingale parts of the transformed process. The finite variation part, including the local-time term driven by $L$, is controlled via  the laws of large numbers for $X_T$ and $L_T$ in~\cite{MMW}. The martingale part leads to the Gaussian contribution in the central limit theorem. The growth rate of its quadratic variation, which is one of the functionals analysed in Step 4,  characterises the asymptotic variance 
 of the first component in the limit of Theorem~\ref{th:main1}.

\section{The limiting process in the infinite cylinder}
\label{sec:limiting}

\subsection{General properties}
\label{sec:cylinder-sde}

In this section we introduce a process $\cZ^{\infty,\mathpzc{z}}$
on the infinite cylinder $\R \times \BB^d$, that will serve as a large-time local approximation to the process $Z$ satisfying~\eqref{eq:SDER}.  
For any  $\mathpzc{z}\in \R\times \BB^d$,
consider the process $\cZ^{\infty,\mathpzc{z}}=(\cX^{\infty,\mathpzc{z}},\cY^{\infty,\mathpzc{z}})$ satisfying
\begin{equation}
\label{eq:limitEqJoint_Integral}
      \cZ^{\infty,\mathpzc{z}}_t =(0,\mathpzc{y})+\int_0^t \sigma^\infty \d W_s+\int_0^t(s_0,\phi^{(d)}_\infty(\cY^{\infty,\mathpzc{z}}_s)) \d L^{\cY}_s, \qquad t\in\RP,
\end{equation}
where $\sigma^\infty$ is a square root of the positive matrix $\Sigma^\infty\in\cM^{+}_{1+d}$ in Assumption~\eqref{hyp:C}, $\phi^{(d)}_\infty$ is the vector field in~\eqref{hyp:V+} mapping $\Sp^{d-1}$ into $\R^d$  and $W$ is a standard Brownian motion in $\R^{1+d}$. The process $\cZ^{\infty,\mathpzc{z}}$
is indexed by $\mathpzc{z}=(\mathpzc{x},\mathpzc{y})\in \R\times \BB^d$,
but it only depends on $\mathpzc{z}$ through the initial condition
$\cZ^{\infty,\mathpzc{z}}_0 = (0,\mathpzc{y})$; 
nevertheless, we retain the $\mathpzc{x}$ (as a component of $\mathpzc{z}$) for compatibility with other notation as it facilitate the comparison in Section~\ref{sec:local} with the rescaled and recentred process $\mathcal{Z}^T$ defined in~\eqref{eq:Z^T-def} above.\footnote{Cursive script $\mathpzc{x}, \mathpzc{y},\mathpzc{z}, \cX,\cY,\cZ,\dots$ will later be used for the ``rescaled'' quantities, while Roman letters $x,y,z,X,Y,Z,\dots$ will be used for the ``original'' quantities. By extension, in Section~\ref{sec:limiting} we also use cursive script for the limits of the rescaled quantities.}  Sometimes it is practical to consider a strong solution to \eqref{eq:limitEqJoint_Integral} driven by a given Brownian motion $W$, in which case this Brownian motion is written as an extra superscript (e.g.~$\cZ^{\infty,\mathpzc{z},W}$).
The process $\cY^{\infty,\mathpzc{z}}$ satisfies the SDER in~\eqref{eq:limitEq}, started at $\mathpzc{y}$, and 
the local time $L^{\cY}$ of
$\cY^{\infty,\mathpzc{z}}$ at $\partial \BB^d=\Sp^{d-1}$ clearly equals the local time $L^{\cZ}$ of
$\cZ^{\infty,\mathpzc{z}}$ at $\partial (\R\times \BB^d)=\R\times \Sp^{d-1}$.

\begin{lemma}
    \label{le:strongSol}
    Suppose that~\eqref{hyp:C} and~\eqref{hyp:V+} hold. Then, for any  $\mathpzc{z}=(\mathpzc{x},\mathpzc{y})\in \R\times \BB^d$, SDER~\eqref{eq:limitEqJoint_Integral} admits a unique strong solution $\cZ^{\infty,\mathpzc{z}}=(\cX^{\infty,\mathpzc{z}},\cY^{\infty,\mathpzc{z}})$.
\end{lemma}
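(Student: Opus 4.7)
The SDER~\eqref{eq:limitEqJoint_Integral} has a triangular structure that will let me reduce the $(1+d)$-dimensional problem to a $d$-dimensional one in the ball plus a trivial integration. Indeed, since $\phi^{(d)}_\infty$ depends only on $\cY^{\infty,\mathpzc{z}}$ and the local time term in~\eqref{eq:limitEqJoint_Integral} is supported on $\{\cY^{\infty,\mathpzc{z}}\in\Sp^{d-1}\}$, the $\cY$-component is an autonomous reflected SDE on $\BB^d$, namely~\eqref{eq:limitEq}, and $L^{\cY}$ coincides with the boundary local time of $\cY^{\infty,\mathpzc{z}}$. Once $(\cY^{\infty,\mathpzc{z}},L^\cY)$ has been constructed as a strong solution, the first component is simply
\[ \cX^{\infty,\mathpzc{z}}_t = \langle e_0,\sigma^\infty W_t\rangle + s_0\,L^\cY_t,\qquad t\in\RP, \]
which is automatically adapted and path-continuous. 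Pathwise uniqueness for the full SDER follows from the same identity, since any two strong solutions must have the same $\cY$-component (by uniqueness for~\eqref{eq:limitEq}), hence the same local time $L^\cY$, hence the same $\cX$-component.

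The first step is therefore to solve~\eqref{eq:limitEq}. The domain $\BB^d$ is a bounded, convex $\cC^\infty$ domain with inward unit normal $n(u)=-u$ on $\Sp^{d-1}$; the diffusion coefficient $\pi_d\sigma^\infty$ is constant (hence Lipschitz) and non-degenerate; and the oblique reflection vector field $\phi^{(d)}_\infty$ satisfies the uniform obliqueness condition $\langle\phi^{(d)}_\infty(u),n(u)\rangle=c_0>0$ from~\eqref{hyp:V+}. The regularity of $\phi^{(d)}_\infty$ on $\Sp^{d-1}$, which is needed to invoke a classical strong-existence and pathwise-uniqueness result (e.g.\ Lions--Sznitman, or Dupuis--Ishii, for oblique reflection in smooth domains), is inherited from the $\cC^2$-regularity of $\phi$ on $\partial\cD$ together with \eqref{hyp:S}: the rescaled and recentred vector fields converge in $\cC^1$ to the limit $\phi^{(d)}_\infty$, in particular making it Lipschitz on $\Sp^{d-1}$. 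Granting this, classical theory yields a unique strong solution $(\cY^{\infty,\mathpzc{z}},L^\cY)$ started at $\mathpzc{y}\in\BB^d$.

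Combining the two steps produces the unique strong solution $\cZ^{\infty,\mathpzc{z}}=(\cX^{\infty,\mathpzc{z}},\cY^{\infty,\mathpzc{z}})$ of~\eqref{eq:limitEqJoint_Integral}. The main (and really the only non-trivial) obstacle is the first step: confirming that the available regularity of the limit reflection field $\phi^{(d)}_\infty$ is sufficient to apply an off-the-shelf oblique-reflection theorem on the smooth convex domain $\BB^d$. Since the $\cC^1$-convergence of the rescaled reflection fields (used pervasively in Section~\ref{sec:local}) will in any case be established from~\eqref{hyp:V+} and~\eqref{hyp:S}, this regularity is at hand, and no further argument is needed for the second, purely integral, step.
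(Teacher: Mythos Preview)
Your approach is essentially identical to the paper's: solve the autonomous $\cY$-equation~\eqref{eq:limitEq} on $\BB^d$ via a Dupuis--Ishii type result (the paper cites \cite[Cor.~5.2, case~2]{Ishii}), then set $\cX^{\infty,\mathpzc{z}}=\pi_0\sigma^\infty W+s_0 L^\cY$, and deduce uniqueness by projecting any candidate solution onto its $\cY$-component. Your observation that~\eqref{hyp:S} is what secures the $\cC^1$ regularity of $\phi^{(d)}_\infty$ needed for this step is consistent with the paper's own remark in Section~\ref{sec:assumptions}, even though the lemma as stated lists only~\eqref{hyp:C} and~\eqref{hyp:V+}.
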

\begin{proof}
We first consider 
$\cY^{\infty,\mathpzc{y}}$; restriction of SDER~\eqref{eq:limitEqJoint_Integral} shows that $\cY^{\infty,\mathpzc{y}}$ ought to satisfy  SDER~\eqref{eq:limitEq} started at $\mathpzc{y}\in\BB^d$,
and thus we construct $\cY^{\infty,\mathpzc{y}}$ first. 
  Since the domain $\BB^d$ is compact and all the data (the boundary of the domain, the diffusion matrix, and the reflection vector field) are $\cC^1$, we can for example apply Corollary~5.2 (case 2) from~\cite{Ishii}, the result of which is indeed strong existence and uniqueness for
  SDER~\eqref{eq:limitEq}.
  
  Let $\cY^{\infty,\mathpzc{y},W }$ be the unique strong solution to~\eqref{eq:limitEq}, and let $L^{\cY}$ be the associated local time. 
Denote by $\pi_0: \R\times \R^d\to \R$ the canonical projection on the first coordinate $\pi_0(x,y)=x$.
  Then $\cZ^{\infty,\mathpzc{z},W}\coloneqq ( \pi_0 \sigma^\infty W+ s_0  L^{\cY  },  \cY^{\infty,\mathpzc{y},W })$ is by construction a strong solution to \eqref{eq:limitEqJoint_Integral}, associated with the same local time. This establishes existence of the solution of SDER~\eqref{eq:limitEqJoint_Integral}.
 
  Furthermore, if $\widetilde{\cZ}^{\infty,\mathpzc{z},W}=(\widetilde{\cX}^{\infty,\mathpzc{z},W},\widetilde{\cY}^{\infty,\mathpzc{z},W})$ is any solution to \eqref{eq:limitEqJoint_Integral} with local time $\widetilde{L}$, then $\widetilde{\cY}^{\infty,\mathpzc{z},W}$
   is a strong solution to  \eqref{eq:limitEq} with local time $\widetilde{L}$. Hence $(\widetilde{\cY}^{\infty,\mathpzc{z},W},\widetilde{L})$ is almost surely equal to $(\cY^{\infty,\mathpzc{y},W }, L^{\cY})$. Moreover, since $\widetilde{\cZ}^{\infty,\mathpzc{z}}$ solves \eqref{eq:limitEqJoint_Integral}, we have
   \[  \widetilde{\cX}^{\infty,\mathpzc{z}}_t= \pi_0 \widetilde{\cZ}^{\infty,\mathpzc{z}}_t=\pi_0 \sigma^\infty W_t +s_0  \widetilde{L}_t
   =\pi_0 \sigma^\infty W_t +s_0  {L}^{\cY}_t
   =\cX^{\infty,\mathpzc{z}}_t \quad\text{
   for all $t\in\RP$,}\]
implying $\widetilde{\cX}^{\infty,\mathpzc{z}}={\cX}^{\infty,\mathpzc{z}}$ almost surely. This establishes uniqueness.
\end{proof}

\begin{remark}[Notation] 
\label{rem:Notation}
When we want to stress that $\cZ^{\infty,\mathpzc{z}}$ is a strong solution of SDER~\eqref{eq:limitEqJoint_Integral}
associated with specific Brownian motion $W$ (see Lemma~\ref{le:strongSol} above), we use the notation $\cZ^{\infty,\mathpzc{z},W}=\cZ^{\infty,\mathpzc{z}}$. 
Furthermore, recalling that there is no dependence on $\mathpzc{x}$ in~\eqref{eq:limitEqJoint_Integral}, we will often write $\cY^{\infty,\mathpzc{y}}$ for $\cY^{\infty,\mathpzc{z}}$, which is itself a strong Markov process on $\BB^d$, satisfying  SDER~\eqref{eq:limitEq}.
 We write $\cZ^{\infty,\nu}$  and $\cY^{\infty,\nu}$ for the process $\cZ^{\infty,(0,\tilde Y_0),W}$ and $\cY^{\infty,\tilde Y_0,W}$, respectively where $\tilde Y_0$ is random, distributed according to a probability measure $\nu$ supported on $\BB^d$ and independent from $W$. 
For any $t\in\RP$, $\mathpzc{y} \in \BB^d$, and Borel set $A\subseteq\BB^d$,
let  $P^\infty_t(\mathpzc{y},A)\coloneqq \mathbb{P}( \cY^{\infty,\mathpzc{y}}_t\in A)$ 
be the Markov kernel associated to $\cY^{\infty,\mathpzc{y}}$.
\end{remark}

The main result of this subsection is the following minorization condition,
which establishes that $\BB^d$ is a \emph{small set} in the sense of~\cite[p.~111]{MCMS} for $P^\infty_t$ for all  $t>0$, i.e., 
$
\inf_{\mathpzc{y}\in \BB^d} P^\infty_t (\mathpzc{y},A) \geq \xi (A)$ for every Borel $A \subseteq \BB^d$
 and some non-trivial measure $\xi$.

\begin{proposition}
    \label{prop:small-set}
    Suppose that~\eqref{hyp:C} and~\eqref{hyp:V+} hold.   For every $\epsilon>0$,
there exists a measure $\xi\neq 0$ supported on $\BB^d$ such that 
    \begin{equation}
    \label{eq:small_set}
 \inf_{t \in[\epsilon,\infty)}   \inf_{\mathpzc{y}\in \BB^d} P^\infty_t (\mathpzc{y},A) \geq \xi (A), \quad \text{for every Borel } A \subseteq \BB^d.
    \end{equation}
    \end{proposition}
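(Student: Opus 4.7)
The strategy is to split the task into a single-time minorization and an extension to all larger times via the Markov property.

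The key observation is that it suffices to exhibit a single time $t_0\in(0,\epsilon]$ and a non-zero measure $\xi$ on $\BB^d$ with $P^\infty_{t_0}(\mathpzc{y}',A)\geq\xi(A)$ for every $\mathpzc{y}'\in\BB^d$ and every Borel $A\subseteq\BB^d$. Indeed, for any $t\geq\epsilon\geq t_0$ and $\mathpzc{y}\in\BB^d$, the Chapman--Kolmogorov identity
\[
P^\infty_t(\mathpzc{y},A)=\int_{\BB^d} P^\infty_{t-t_0}(\mathpzc{y},d\mathpzc{y}')\,P^\infty_{t_0}(\mathpzc{y}',A)\geq \xi(A)\int_{\BB^d} P^\infty_{t-t_0}(\mathpzc{y},d\mathpzc{y}')=\xi(A)
\]
extends the minorization to all $t\geq t_0$ for free, since $P^\infty_{t-t_0}(\mathpzc{y},\cdot)$ is a probability measure on $\BB^d$.

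For the single-time bound at $t_0$, the $\cY$-dynamics in~\eqref{eq:limitEq} are driven by the constant, strictly positive-definite $d\times d$ diffusion matrix $(\pi_d\sigma^\infty)(\pi_d\sigma^\infty)^\top$ (strict definiteness is inherited from $\Sigma^\infty$ via the uniform ellipticity in~\eqref{hyp:C}), the boundary $\partial\BB^d=\Sp^{d-1}$ is smooth, and $\phi^{(d)}_\infty$ is $\cC^1$ with strictly positive inward normal component $c_0>0$ by~\eqref{hyp:V+}. Under these conditions the transition kernel $P^\infty_{t_0}$ admits a jointly continuous and strictly positive density $p_{t_0}(\mathpzc{y},\mathpzc{y}')$ on $\BB^d\times\BB^d$; compactness then gives $c:=\inf p_{t_0}>0$ and one takes $\xi$ to be $c$ times Lebesgue measure on $\BB^d$. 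An alternative route, avoiding direct appeal to heat-kernel estimates, uses the Stroock--Varadhan support theorem together with Feller continuity and compactness to obtain a uniform lower bound $\inf_{\mathpzc{y}\in\BB^d}\mathbb{P}(\cY^{\infty,\mathpzc{y}}_{t_0/2}\in B)\geq\alpha>0$ for some ball $B$ compactly contained in the interior of $\BB^d$, and then combines interior Aronson bounds on $[t_0/2,t_0]$ (where the process coincides with an unreflected uniformly elliptic diffusion until the first boundary hit) with the strong Markov property.

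The main obstacle is the boundary behaviour under \emph{oblique} rather than normal reflection: classical heat-kernel lower bounds for normally reflected diffusions are standard, while the oblique case requires either invoking a specialised result for obliquely reflected diffusions on smooth bounded domains or performing a Girsanov-type reduction to the normal case, exploiting the transversality $\langle\phi^{(d)}_\infty(u),-u\rangle=c_0>0$ guaranteed by~\eqref{hyp:V+}. Once the single-time positivity is in hand, the remaining pieces---semigroup extension and compactness---are routine.
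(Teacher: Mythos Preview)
Your Chapman--Kolmogorov reduction to a single time $t_0\le\epsilon$ is correct and in fact cleaner than the paper's formulation, which instead writes $t'=t+\epsilon$ and proves the bound directly for each $t\ge0$; the content is the same.

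Where the approaches diverge is the single-time minorization. Your primary route asserts a strictly positive, jointly continuous transition density for the \emph{obliquely} reflected diffusion on $\BB^d$; you rightly flag this as the crux, but the suggested fixes are not fleshed out---in particular, a Girsanov change of measure alters the drift, not the reflection direction, so ``Girsanov-type reduction to the normal case'' would need a different mechanism (e.g.\ a local change of coordinates near $\Sp^{d-1}$), and invoking a specialised oblique heat-kernel lower bound would require a precise reference under exactly the regularity you have. Your alternative route is essentially what the paper does, made concrete and elementary: the paper's Lemma~3.5 replaces the support-theorem/Feller-compactness step by a short Lyapunov argument with $f(\mathpzc{y})=1-|\mathpzc{y}|^2$ and It\^o's formula to show $\cY^{\infty,\mathpzc{y}}$ enters $\tfrac12\BB^d$ by time $\epsilon/2$ with probability bounded below uniformly in $\mathpzc{y}$; and Lemma~3.4 replaces interior Aronson bounds by the observation that, until the first boundary hit, the reflected process coincides with free (affine-image of) Brownian motion, so one may bound from below by the Dirichlet heat kernel on the ellipsoid $(\hat\sigma^\infty)^{-1}\BB^d$. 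The payoff of the paper's route is that it is entirely self-contained and sidesteps any delicate oblique-reflection heat-kernel theory; the payoff of your primary route, if the density result can be cited cleanly, is a one-line proof.
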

\begin{remark}
\label{rem:small-set}
    We expect that in~\eqref{eq:small_set} one can take $\xi(A)$ to be a constant multiple of Lebesgue measure, to show a  Doeblin-type mixing condition. However, we only need the (weaker) small set property, for which the proof is simpler.
    It is clear from the proof of Proposition~\ref{prop:small-set} at the end of the present subsection that the measure $\xi$ depends on the choice of $\epsilon>0$. 
\end{remark}
Before giving the proof of Proposition~\ref{prop:small-set} via  Lemmas~\ref{le:minorization} and~\ref{le:reach-core} below, we use it to deduce 
existence and uniqueness of the stationary measure 
and show that $\cY^{\infty,\mathpzc{y}}$ is in fact  uniformly  ergodic. 
Recall  the total variation distance $d_{\TV}(\nu_1, \nu_2)\coloneqq\sup\{|\nu_1(A)-\nu_2(A)|:\text{$A$ Borel}\}$.

\begin{corollary} 
  \label{cor:mu-existence-convergence} 
    Suppose that~\eqref{hyp:C} and~\eqref{hyp:V+} hold.   
There exists a unique probability measure $\mu$ supported on $\BB^d$, which is invariant for the kernel $P^\infty$ associated to SDER~\eqref{eq:limitEq} and defined above, i.e.~$\mu P^\infty_t=\mu$ for all $t\in\R_+$. 
There exist constants $\lambda\in(0,1)$, $C_0\in(0,\infty)$, such that for 
all probability distributions $\nu$ supported on $\BB^d$ we have
  \[ d_{\TV} ( \nu P^\infty_{t} , \mu ) \leq  C_0 \lambda^t\quad\text{for all $t\in\RP$,}\quad\text{where }\nu P^\infty_{s}(\d \mathpzc{y}) \coloneqq\int_{\BB^d} \nu(\mathrm{d} y')P^\infty_{s}(y',\mathrm{d} \mathpzc{y}).\]
\end{corollary}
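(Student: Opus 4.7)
The plan is to derive both the existence/uniqueness of $\mu$ and the exponential contraction by upgrading the uniform minorization from Proposition~\ref{prop:small-set} into a standard Doeblin-type coupling argument; since the small set in Proposition~\ref{prop:small-set} is the whole state space $\BB^d$, this collapses the usual two-step Meyn--Tweedie scheme (return-time to a small set $+$ minorization on it) to just the second step.

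First I would fix any $t_0>0$, e.g.\ $t_0=1$, and apply Proposition~\ref{prop:small-set} with $\epsilon=t_0$ to obtain a nontrivial measure $\xi$ on $\BB^d$ with $P^\infty_{t_0}(\mathpzc{y},\cdot)\geq \xi(\cdot)$ for every $\mathpzc{y}\in\BB^d$. Set $\alpha\coloneqq\xi(\BB^d)\in(0,1]$ (the boundary case $\alpha=1$ is trivial as $P^\infty_{t_0}$ is then constant in $\mathpzc{y}$, so assume $\alpha<1$). Writing $\bar\xi\coloneqq\xi/\alpha$, the non-negative kernel $\mathpzc{y}\mapsto P^\infty_{t_0}(\mathpzc{y},\cdot)-\alpha\bar\xi(\cdot)$ has total mass $1-\alpha$, so it equals $(1-\alpha)R(\mathpzc{y},\cdot)$ for some Markov kernel $R$. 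Since the $\alpha\bar\xi$ part of the decomposition is independent of the starting measure, for any two probabilities $\nu_1,\nu_2$ on $\BB^d$ the classical calculation yields the one-step contraction
\[
d_{\TV}\bigl(\nu_1 P^\infty_{t_0},\nu_2 P^\infty_{t_0}\bigr)\leq (1-\alpha)\, d_{\TV}(\nu_1,\nu_2).
\]
Iterating gives $d_{\TV}(\nu_1 P^\infty_{n t_0},\nu_2 P^\infty_{n t_0})\leq(1-\alpha)^n$ for all $n\in\N$.

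From this contraction, for every $\nu$ the sequence $(\nu P^\infty_{nt_0})_{n\geq 0}$ is Cauchy in total variation, hence converges to a probability measure $\mu_\nu$; the contraction forces $\mu_\nu$ to be independent of $\nu$, so denote it by $\mu$. Continuity of $\eta\mapsto\eta P^\infty_{t_0}$ in total variation implies $\mu P^\infty_{t_0}=\mu$, and uniqueness of the $P^\infty_{t_0}$-invariant probability measure follows from the same contraction. Full semigroup invariance is then a one-liner: for any $s\geq 0$ the measure $\mu P^\infty_s$ is again $P^\infty_{t_0}$-invariant by the semigroup identity $\mu P^\infty_s P^\infty_{t_0}=\mu P^\infty_{t_0} P^\infty_s=\mu P^\infty_s$, so $\mu P^\infty_s=\mu$ by uniqueness.

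Finally, for the continuous-time bound, I would write $t=nt_0+r$ with $n\in\Z_+$ and $r\in[0,t_0)$, and apply the iterated contraction to $\nu P^\infty_r$ and $\mu$:
\[
d_{\TV}\bigl(\nu P^\infty_t,\mu\bigr)=d_{\TV}\bigl((\nu P^\infty_r) P^\infty_{nt_0},\mu P^\infty_{nt_0}\bigr)\leq(1-\alpha)^n\leq \frac{1}{1-\alpha}(1-\alpha)^{t/t_0},
\]
which gives the claim with $\lambda\coloneqq(1-\alpha)^{1/t_0}\in(0,1)$ and $C_0\coloneqq 1/(1-\alpha)$. Since Proposition~\ref{prop:small-set} already carries all of the analytic difficulty (control of the reflected diffusion's transition kernel from below), there is no real obstacle here: the only care needed is to keep the estimates uniform in the starting measure, which is automatic because the one-step contraction factor $1-\alpha$ does not depend on $\mathpzc{y}$.
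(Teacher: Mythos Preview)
Your proof is correct and follows essentially the same approach as the paper: both derive discrete-time geometric ergodicity of the $t_0$-skeleton from the uniform minorization of Proposition~\ref{prop:small-set}, and then pass to continuous time via $t=nt_0+r$. The only cosmetic differences are that you spell out the Doeblin contraction explicitly where the paper cites \cite[Thm~8.7]{MCMS}, and you obtain full semigroup invariance via the commutativity trick $\mu P^\infty_s P^\infty_{t_0}=\mu P^\infty_s$ rather than the paper's direct comparison of the skeleton fixed points~$\mu_t$.
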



\begin{remarks}
\
\begin{enumerate}
    \item[(a)]
 In the special case of Remark~\ref{rem:BM_limit_on_disk} above, when the projection in the $y$-direction of the reflection vector field $\phi$ is asymptotically normal ($\phi^{(d)}_\infty(u)=-c_0 u$ for $u\in\mathbb{S}^{d-1}$) and $\sigma^\infty$ is the identity matrix, the uniform ergodicity in Corollary~\ref{cor:mu-existence-convergence} (i.e.~independence of the constant $C_0$ on the initial condition $\nu$) is well known~\cite{Burdzy2006,MR4064292}. 
  \item[(b)] In the general case when the vector field $\phi$ satisfies the assumptions of Corollary~\ref{cor:mu-existence-convergence}, the results in~\cite[Thm~3 \& Cor.~4]{kang2014characterization} suggest that a density of the measure $\mu$ exists and satisfies the adjoint linear, second-order partial differential equation (PDE) on the ball $\mathbb{B}^d$ with Neumann boundary conditions. Formalising this is beyond the scope of the paper, as our primary interest here is the existence and uniqueness of the invariant probability measure $\mu$. However, we note that this PDE is central for the quantification of the asymptotic variance $\Upsilon$ in~\eqref{eq:N-var} of our CLT in Theorem~\ref{th:main1}. 
\end{enumerate}
\end{remarks}

\begin{proof}[Proof of Corollary~\ref{cor:mu-existence-convergence}]
By Proposition~\ref{prop:small-set} above, for any $t\in(0,\infty)$, $\BB^d$ is a small set for $P^\infty_t$.
By~\cite[Thm~8.7]{MCMS},  there exists a measure $\mu_t$, satisfying  $\mu_tP^\infty_{nt}=\mu_t$ for all $n\in\N$, and a constant $\lambda_t\in(0,1)$ such that for all probability measures $\nu$ supported on $\mathbb{B}^d$, we have
\[ 
\d_{TV}(\nu P^\infty_{nt},\mu_t )\leq 2 \lambda_t^n\quad\text{for all $n\in\N$.}
\]
The measures $(\mu_t )_{t>0}$ are in fact all equal (i.e.~they do not depend on the parameter $t$). Indeed, fix arbitrary times $t,t'>0$ and pick any $\delta>0$. Let  $m\in\N$ be such that $2 \lambda_{t'}^m<\delta$ and let $n\in\N$ satisfy $nt>mt'$. Then, since 
    $\mu_t P_{nt}^\infty=\mu_t$ for all $n\in\N$ and $d_{\TV}(\mu_t P^\infty_s,\mu_{t'}P^\infty_s)\leq d_{\TV}(\mu_t,\mu_{t'}) $ for all $s\in\RP$,
  the semigroup property of $P^\infty$ and the definition of $\mu_{t'}$ imply
    \[ 
    d_{\TV}(\mu_t,\mu_{t'})=   d_{\TV}(\mu_tP^\infty_{nt},\mu_{t'} )=  d_{\TV}((\mu_t P^\infty_{nt-mt'}) P^\infty_{mt'},\mu_{t'} )\leq 2\lambda_{t'}^m\leq \delta.
    \]
Since $\delta >0$ was arbitrary, we conclude that $\mu_t = \mu_{t'}$. Denote  $\mu\coloneqq \mu_t$ for any $t>0$ and note that $\mu P^\infty_t=\mu$ for all $t
\in\RP$. 
Since $\mu=\mu_1$ and the integer part $\lfloor t\rfloor$ of $t>0$ satisfies $t-1\leq\lfloor t\rfloor$, for any probability measure $\nu$ supported in $\BB^d$ and $C_0\coloneqq 2 /\lambda_1$ we have
\[
    d_{\TV}(\nu P^\infty_t,\mu)=
    d_{\TV}(\nu P^\infty_{\lfloor t \rfloor} P^\infty_{t-\lfloor t \rfloor} ,\mu P^\infty_{t-\lfloor t \rfloor} )
    \leq 
    d_{\TV}(\nu P^\infty_{\lfloor t \rfloor},\mu  )
\leq 2 \lambda_1^{\lfloor t \rfloor}\leq C_0 \lambda_1^t.
\qedhere
\]
\end{proof}

We now work towards the proof of Proposition~\ref{prop:small-set}.
The following result gives a lower bound on $P_t^\infty(\mathpzc{y}, \, \cdot\, )$, as is required for~\eqref{eq:small_set},
but only
for starting points $\mathpzc{y}$ not too close to the boundary.
An estimate to show that the process exits quickly a neighbourhood of the boundary,
Lemma~\ref{le:reach-core} below, will provide the other key ingredient in our proof of Proposition~\ref{prop:small-set}.

\begin{lemma}
  \label{le:minorization}
    Assume~\eqref{hyp:C} and~\eqref{hyp:V+} hold. Pick $\epsilon\in(0,\infty)$.
  Let $C\coloneqq r\BB^d$ for a given $r\in(0,1)$. Then there exists a  measure $\xi_r\neq 0$ supported on $\BB^d$  such that for any Borel measurable $A\subseteq \BB^d$,
  \[
  \inf_{\mathpzc{y}\in C } \inf_{t\in [\epsilon/2,\epsilon]} P^\infty_t(\mathpzc{y},A) \geq \xi_r(A).
  \]
\end{lemma}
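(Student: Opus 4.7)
\medskip

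\textbf{Proof proposal for Lemma~\ref{le:minorization}.}
The plan is to exploit the fact that, starting from a point $\mathpzc{y}$ in the strictly interior ball $C = r\BB^d$ (with $r<1$), the reflected process $\cY^{\infty,\mathpzc{y}}$ has a positive chance of coinciding with the unreflected Gaussian process $\mathpzc{y}+B$, where $B_t \coloneqq \pi_d \sigma^\infty W_t$, for the entire time window $[0,\epsilon]$. On that event, the distribution of $\cY^{\infty,\mathpzc{y}}_t$ is explicitly Gaussian-and-killed, and one can read off a uniform lower bound.

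First I would fix an intermediate radius $r' \coloneqq (1+r)/2 \in (r,1)$ and define, on the probability space supporting the driving Brownian motion $W$ for the strong solution given by Lemma~\ref{le:strongSol}, the stopping time $\sigma \coloneqq \inf\{s \geq 0 : |\mathpzc{y} + B_s|_d \geq r'\}$. The uniform ellipticity in~\eqref{hyp:C} implies that the $d\times d$ covariance matrix $\pi_d \Sigma^\infty \pi_d^\top$ is positive definite, so $B$ is a genuine (non-degenerate) Brownian motion on $\R^d$. On $\{\sigma > t\}$, the pair $(s \mapsto \mathpzc{y} + B_s, s\mapsto 0)$ trivially satisfies the defining equation of the reflected SDE~\eqref{eq:limitEq} on $[0,t]$ (the interior of $\mathbb{B}^d$ is not touched, so the local-time term vanishes); by the pathwise uniqueness established in the proof of Lemma~\ref{le:strongSol}, it follows that $\cY^{\infty,\mathpzc{y}}_s = \mathpzc{y} + B_s$ for all $s \leq t$ on that event. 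Consequently, for every Borel $A \subseteq \BB^d$,
\[
P^\infty_t(\mathpzc{y}, A) \;\geq\; \mathbb{P}\bigl(\mathpzc{y} + B_t \in A,\ \sigma > t\bigr) \;=\; \int_A q_t^{r'}(\mathpzc{y}, \mathpzc{y}')\,\d \mathpzc{y}',
\]
where $q_t^{r'}(\mathpzc{y}, \mathpzc{y}')$ denotes the sub-probability transition density of $\mathpzc{y} + B$ killed upon exiting $r'\BB^d$.

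The second step would be to obtain a uniform lower bound on $q_t^{r'}$ on the compact set $r\BB^d \times r\BB^d$ for $t \in [\epsilon/2, \epsilon]$. The cleanest way is via the strong Markov inequality
\[
q_t^{r'}(\mathpzc{y}, \mathpzc{y}') \;\geq\; p_t(\mathpzc{y}, \mathpzc{y}') \;-\; \mathbb{E}_{\mathpzc{y}}\!\left[\mathbf{1}_{\{\sigma \leq t\}}\, p_{t-\sigma}(\mathpzc{y} + B_\sigma, \mathpzc{y}')\right],
\]
where $p_s$ is the Gaussian density of $B_s$. Because $|\mathpzc{y}+B_\sigma|_d = r'$ while $|\mathpzc{y}'|_d \leq r$, the displacement $|\mathpzc{y}+B_\sigma - \mathpzc{y}'|_d$ is bounded below by $r'-r > 0$, and a standard Gaussian estimate shows the subtracted term is strictly smaller than $p_t(\mathpzc{y},\mathpzc{y}')$ provided $\epsilon$ is small enough (or one further localizes by shrinking $r'$ toward $r$). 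Alternatively, one can use the reflection principle in a half-space supporting $r'\BB^d$ at $\mathpzc{y}+B_\sigma$ to get an explicit lower bound. Either way, the continuity and strict positivity of $q_t^{r'}$ on the compact set yields a constant $c_r > 0$ with $q_t^{r'}(\mathpzc{y},\mathpzc{y}') \geq c_r$ uniformly over $\mathpzc{y}, \mathpzc{y}' \in r\BB^d$ and $t \in [\epsilon/2, \epsilon]$.

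Setting $\xi_r(A) \coloneqq c_r\, \mathrm{leb}(A \cap r\BB^d)$ yields the required nonzero measure. The main obstacle I anticipate is step two: turning the pathwise identity $\cY^{\infty,\mathpzc{y}}_s = \mathpzc{y}+B_s$ on $\{\sigma > t\}$ into a genuinely uniform, quantitative lower bound on the killed heat kernel on $r\BB^d \times r\BB^d$. This is a classical estimate but is best handled by choosing $r'$ judiciously so that the dominant Gaussian term $p_t(\mathpzc{y},\mathpzc{y}')$ cleanly beats the ``boundary correction'' from the stopping time $\sigma$; if the direct Gaussian dominance argument proves awkward, a parabolic Harnack inequality on $r'\BB^d$ applied to the caloric function $(s,\mathpzc{y}) \mapsto q_s^{r'}(\mathpzc{y},\mathpzc{y}')$ provides a robust fallback.
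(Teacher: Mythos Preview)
Your approach is essentially the same as the paper's: both reduce to lower-bounding the Dirichlet heat kernel of the (linearly transformed) Brownian motion, using that the reflected process coincides with the free one until the first boundary hit. The paper kills on exit from the full ball $\BB^d$ (after the linear change of variables) and invokes a quantitative heat-kernel lower bound of Serafin, producing a minorizing measure with a density supported on the whole open ball; you instead kill on exit from an intermediate ball $r'\BB^d$ and appeal to joint continuity and strict positivity of the killed kernel on the compact set $[\epsilon/2,\epsilon]\times r\BB^d\times r\BB^d$, producing a measure supported only on $r\BB^d$. Both are adequate for the lemma. One caution: your direct strong-Markov/Gaussian comparison in step two does fail when $\epsilon$ is not small (for large $t$ the free Gaussian $p_t(\mathpzc{y},\mathpzc{y}')$ decays to zero while $\mathbb{P}(\sigma\le t)\to 1$, and no choice of $r'\in(r,1)$ rescues this), so that route should be dropped; your fallback via continuity/compactness or parabolic Harnack is the argument that actually works.
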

\begin{proof}
Recall $\pi_d: \R\times\R^d\to \R^d$, $\pi_d(x,y)=y$.
  On the event
  $\{\forall s\in [0,t],\ \mathpzc{y}+\pi_d \sigma^{\infty}  W_s\in  \BB^d\}$, the processes $\cY^{\infty,\mathpzc{y}}$ and $\mathpzc{y}+ \pi_d \sigma^{\infty} W$ are almost surely equal on $[0,t]$, since $\mathpzc{y}+ \pi_d \sigma^{\infty} W$ exits the (closed) ball $\BB^d$ immediately after its first hitting time of $\Sp^{d-1}$. Thus, it suffices to compare $\cY^{\infty,\mathpzc{y}}$ with the process $\mathpzc{y}+\pi_d\sigma^{\infty}  W$ killed at the boundary $\partial \BB^d$. 
  Recall that $\Sigma^\infty\in\cM_{1+d}^+$ is positive definite  by~\eqref{hyp:C}, implying by Sylvester's criterion that the principal submatrix $\pi_d \Sigma^{\infty}  \pi_d^\tra $ is also positive definite.
  Let $\hat{\sigma}^\infty \in \cM^{+}_{d}$ be such that $\hat{\sigma}^\infty ( \hat{\sigma}^\infty)^\tra = \pi_d \Sigma^{\infty}  \pi_d^\tra $.  Then the process $\pi_d \sigma^{\infty}  W$ is equal in distribution to 
   $\hat{\sigma}^\infty W'$, for a $d$-dimensional standard Brownian motion $W'$. 
   Let $D$ be the convex set \[ D\coloneqq(\hat{\sigma}^\infty)^{-1}  {\BB^d}.\]
  
  For  $t>0$ and all $u, v \in D$, let  $p^D_t(u,v)$ be the Dirichlet heat kernel associated with $D$, that is the unique function continuous in $v$ such that for all $t>0$, all $u \in D$, and all Borel $A \subset \BB^d$,
  \begin{equation}
  \label{eq:dirichlet-kernel}
\mathbb{P}( u + W_t' \in A \mbox{ and } \forall s\in [0,t], u +W_s' \in D ) = \int_A p^D_t(u,v) \d v,
  \end{equation}
  where $W'$ is a standard Brownian motion in $\R^d$ started from $0$.
Let also 
\begin{equation}
\label{eq:delta-D-defs}
\delta_u \coloneqq \inf\{ | u -v |_d : v \in \partial D\}, ~\text{for}~ u \in \BB^d, \text{ and } \delta(A) \coloneqq \inf \{\delta_u : u \in A \}.
  \end{equation}
  Note $\delta(A)>0$ if the closure of  $A$ is contained in the interior of $D$, e.g.~if $A=(\hat{\sigma}^\infty)^{-1}C$. Moreover, for $u, v \in D$ we have 
  $|u-v|_d^2=|(\hat{\sigma}^\infty)^{-1}a-(\hat{\sigma}^\infty)^{-1}b|_d^2\leq \ell_\infty |a-b|_d^2\leq
  4 \ell_\infty$
  for some $a,b\in\BB^d$, where $\ell_\infty$ denotes the largest eigenvalue of the positive definite matrix $(\hat{\sigma}^\infty)^{-1}$.

Fix arbitrary $\epsilon>0$. Then the transition density 
 $p_t (u,v) \coloneqq (2\pi t)^{-d/2} \exp \bigl( - \frac{| u-v|_d^2}{2t} \bigr)$ of a standard Brownian motion on $\R^d$ satisfies
$$p_t (u,v) \geq (2\pi\epsilon)^{-d/2} \mathrm{e}^{-4\ell_\infty/\epsilon} =: q_d\qquad\text{for all $u, v \in D$ and $t \in [\epsilon/2,\epsilon]$.}$$
It is known (see e.g.~\cite{Serafin}) that
  \[
p^D_t(u,v)\geq p_t (u,v) f(t,\min( \delta_u,\delta_v )), \text{ for all } u, v \in D \text{ and } t \in \RP, 
  \]
  where  $f:\RP \times \RP\to \RP $ is a continuous function satisfying  $f>0$ on $(0,\infty)\times(0,\infty)$, with slices $\delta\mapsto f(t,\delta)$ which are non-decreasing for every $t>0$.  We thus obtain
\begin{equation}
      \label{eq:kernel-bound}
p^D_t(u,v)\geq q_d f(t,\min( \delta_u,\delta_v )), \text{ for all } u, v \in D \text{ and } t \in [\epsilon/2,\epsilon]. 
  \end{equation}

For a Borel set $A\subset\BB^d$, $t\in \RP$ and $\mathpzc{y}\in \BB^d$,
the definition of $p^D$ in~\eqref{eq:dirichlet-kernel} 
implies
  \begin{align*}
  P^\infty_t(\mathpzc{y},A)
  &\geq \mathbb{P}( \mathpzc{y}+\hat{\sigma}^\infty W'_t\in A \mbox{ and } \forall s\in [0,t], \mathpzc{y}+\hat{\sigma}^\infty  W'_s\in \BB^d )\\
  &=\mathbb{P}( (\hat{\sigma}^\infty)^{-1} \mathpzc{y}+  W'_t\in (\hat{\sigma}^\infty)^{-1} A \mbox{ and } \forall s\in [0,t], (\hat{\sigma}^\infty)^{-1} \mathpzc{y}+ W'_s\in D )\\
  &= \int_{(\hat{\sigma}^\infty)^{-1} A} p^D_t((\hat{\sigma}^\infty)^{-1} \mathpzc{y},v) \d v.
  \end{align*}
  Recall from the statement of the lemma that $r \in (0,1)$, $C = r \BB^d$ and
  set $\delta := \delta( (\hat{\sigma}^\infty)^{-1} C ) >0$.
   The lower bound in~\eqref{eq:kernel-bound} and the monotonicity of $f(t,\cdot)$  imply
  \begin{align*}
\inf_{\mathpzc{y} \in C}  P^\infty_t(\mathpzc{y},A)
  &\geq q_d \int_{(\hat{\sigma}^\infty)^{-1} A} f ( t, \min ( \delta, \delta_v ))   \d v \\
  & \geq 
  q_d \int_{(\hat{\sigma}^\infty)^{-1} A} f_C ( \delta_v )   \d v \quad\text{ for all } t \in [\epsilon/2,\epsilon],
  \end{align*}
  where $f_C ( \delta') := \inf_{t \in [\epsilon/2,\epsilon]} f ( t,  \min ( \delta, \delta' ))$
  is positive for $\delta' > 0$ (since $f(t,\min(\delta,\delta'))>0$ for all $t\in(0,\infty)$).
With the change of variable $b = \hat{\sigma}^\infty v$, we get
\[ 
\inf_{t \in [\epsilon/2,\epsilon]}
\inf_{\mathpzc{y} \in C}  P^\infty_t(\mathpzc{y},A)
\geq \frac{q_d}{\det ( \hat{\sigma}^\infty) } \int_A f_C (\delta_{(\hat{\sigma}^\infty)^{-1} b} ) \d b.
\]
 The measure $\xi_r$ with density $\frac{q_d}{\det(\hat{\sigma}^\infty) } f_C (\delta_{(\hat{\sigma}^\infty)^{-1} }\ \cdot \ )$ with respect to the Lebesgue measure thus satisfies the minorization property stated in the lemma.
\end{proof}

The next lemma shows that the process $\cY^{\infty,\mathpzc{y}}$ visits the ball $\tfrac{1}{2}\BB^d$ before any time $\epsilon>0$ with positive probability, uniformly bounded from below for all starting points in $\mathpzc{y}\in\BB^d$.

\begin{lemma}
  \label{le:reach-core}
    Suppose that~\eqref{hyp:C} and~\eqref{hyp:V+} hold. For every $\epsilon\in(0,\infty)$,
  there exists $\epsilon_0>0$, depending only on $\Sigma^\infty$ and $\epsilon$,
  such that
  \[ \inf_{\mathpzc{y}\in  \BB^d} \mathbb{P}\big( \exists t\in [0,\epsilon]: \cY^{\infty,\mathpzc{y}}_t\in  \tfrac{1}{2}\BB^d \big) \geq \epsilon_0.
  \]
\end{lemma}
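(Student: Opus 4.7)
The plan is to apply It\^o's formula to the Lyapunov function $V(y)\coloneqq |y|_d^2$ along $\cY\coloneqq\cY^{\infty,\mathpzc{y}}$, exploit the inward character of the reflection provided by~\eqref{hyp:V+}, and reduce the desired hitting event to a reflection-principle estimate for an auxiliary Brownian motion.

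Since $\langle u,\phi^{(d)}_\infty(u)\rangle=-c_0$ for every $u\in\Sp^{d-1}$ by~\eqref{hyp:V+}, It\^o's formula applied to~\eqref{eq:limitEqJoint_Integral} yields
\[ V(\cY_t) = V(\mathpzc{y}) + a\,t + 2 M_t - 2 c_0 L^\cY_t, \qquad t\geq 0, \]
where $a\coloneqq \Tr(\pi_d \Sigma^\infty \pi_d^\tra)>0$ (positive because Sylvester's criterion applied to the uniformly elliptic $\Sigma^\infty$ from~\eqref{hyp:C} makes $\pi_d\Sigma^\infty\pi_d^\tra$ positive definite), and $M_t\coloneqq \int_0^t \langle \cY_s,\pi_d\sigma^\infty\,\d W_s\rangle$ is a continuous martingale with quadratic variation $[M]_t=\int_0^t\langle\cY_s,\pi_d\Sigma^\infty\pi_d^\tra\cY_s\rangle\,\d s$. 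Denoting by $\lambda>0$ the smallest eigenvalue of $\pi_d\Sigma^\infty\pi_d^\tra$, we have $[M]_t\geq\lambda\int_0^t|\cY_s|_d^2\,\d s$.

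The key deterministic step is a Lyapunov comparison. Set $K\coloneqq(3/4+a\epsilon)/2$ and let $A\coloneqq\{\cY_s\notin\tfrac{1}{2}\BB^d\text{ for all } s\in[0,\epsilon]\}$. If $M_\sigma=-K$ at the first time $\sigma\leq\epsilon$, then the identity above together with $V(\mathpzc{y})\leq 1$ and $L^\cY_\sigma\geq 0$ gives $V(\cY_\sigma)\leq 1+a\epsilon-2K=1/4$, i.e.\ $\cY_\sigma\in\tfrac{1}{2}\BB^d$. Hence
\[ A\cap\bigl\{\inf_{s\in[0,\epsilon]}M_s\leq -K\bigr\}=\emptyset. \]
Moreover, on $A$ the strict inequality $|\cY_s|_d>1/2$ for all $s\in[0,\epsilon]$ forces $[M]_\epsilon\geq \lambda\epsilon/4=:c_*$.

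To conclude, I would invoke Dambis--Dubins--Schwarz (extended on an enlarged probability space when $[M]_\infty<\infty$) to produce a standard Brownian motion $B$ with $M_s=B_{[M]_s}$; on $A$, since $[M]_\epsilon\geq c_*$, this gives $\inf_{s\leq\epsilon}M_s\leq\inf_{v\leq c_*}B_v$, so $A\subseteq\{\inf_{v\leq c_*}B_v>-K\}$. The reflection principle for $B$ then yields
\[ \mathbb{P}(A) \leq 1 - 2\Phi\bigl(-K/\sqrt{c_*}\bigr), \]
where $\Phi$ is the standard normal c.d.f., so that
$\mathbb{P}(A^c)\geq 2\Phi\bigl(-(3/4+a\epsilon)/\sqrt{\lambda\epsilon}\bigr)=:\epsilon_0>0$,
uniformly in $\mathpzc{y}\in\BB^d$ and depending only on $\Sigma^\infty$ and $\epsilon$, as required. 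The main obstacle is the coupling between the Lyapunov event $\{\inf M\leq -K\}$ and the bad event $A$: Dambis--Dubins--Schwarz alone is not enough, and one must also use the lower bound $[M]_\epsilon\geq c_*$ valid precisely on $A$ to turn an inf of $B$ over the short window $[0,c_*]$ into an inf of $M$ over $[0,\epsilon]$.
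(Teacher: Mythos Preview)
Your proof is correct and follows essentially the same route as the paper: apply It\^o's formula to $|y|_d^2$ (the paper uses $1-|y|_d^2$), discard the local time term via the inward reflection $\langle u,\phi^{(d)}_\infty(u)\rangle=-c_0$, bound the quadratic variation from below on the bad event using the smallest eigenvalue of $\pi_d\Sigma^\infty\pi_d^\tra$, and conclude via Dambis--Dubins--Schwarz. The only cosmetic difference is that you track the running infimum of $M$ and invoke the reflection principle, whereas the paper bounds the bad event by $\{B_{\epsilon\ell_0}\leq 3/4+\os\epsilon\}$ using only the terminal value; your $\epsilon_0$ is therefore exactly twice theirs, but the arguments are otherwise identical.
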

\begin{proof} 
  Define $f: \BB^d \to [0,1]$ by $f(\mathpzc{y}) :=1-|\mathpzc{y}|_{d}^2$ and
  fix arbitrary $\mathpzc{y} \in \BB^d$.   
  Recall from ~\eqref{hyp:C} that
  $\sum_{i=1}^d \Sigma_{i,i}^\infty = \os$ and from Assumption~\eqref{hyp:V+} that $\langle u,\phi^{(d)}_\infty(u)\rangle \leq 0$ for all $u \in \Sp^{d-1}$.
  Since $f$ is $\cC^2$, by It\^o's formula and SDER~\eqref{eq:limitEq}  we obtain
  \begin{align*}
  f( \cY^{\infty,\mathpzc{y}}_t)
  &=f(\mathpzc{y}) -2 \int_0^t \langle \cY^{\infty,\mathpzc{y}}_s, \pi_d\sigma^\infty \d W_s\rangle -2 \int_0^t \langle\cY^{\infty,\mathpzc{y}}_s,\phi^{(d)}_\infty(\cY^{\infty,\mathpzc{y}}_s)\rangle \d L^{\cY^{\infty,\mathpzc{y}} }_s- t \sum_{i=1}^d \Sigma^\infty_{i,i} \\
  &\geq -2 \int_0^t \langle \cY^{\infty,\mathpzc{y}}, \pi_d\sigma^\infty \d W_s\rangle -   \os t =: M_t- \os t \quad{\text{for all $t\in\RP$,}}
  \end{align*}
  where $\pi_d: \R^{1+d}\to \R^d$ is the canonical projection.
  By the Dambis--Dubins--Schwarz theorem there exists a Brownian motion $B$ such that  the continuous local martingale $M$ is equal to $B_{[ M] }$ with quadratic variation $[ M]=4\int_0^\cdot |(\pi_d\sigma^\infty)^\tra \cY^{\infty,\mathpzc{y}}_s|_d^2 \d s$.

Pick any $\epsilon>0$.  Define $\tau := \inf\{ t\in\RP :  \cY^{\infty,\mathpzc{y}}_t\in \tfrac{1}{2}\BB  \}\in [0,\infty]$ (with convention $\inf\emptyset=
  \infty$). On the event $\{t\leq \tau\}$ we have  $|\cY^{\infty,\mathpzc{y}}_s|_d^2\geq \frac{1}{4}$ for all $s \in [0,t]$, implying $[ M]_t\geq \ell_0 t$,
  where $\ell_0>0$ is the smallest eigenvalue of the symmetric matrix $\pi_d\sigma^\infty(\pi_d\sigma^\infty)^\tra=\pi_d \Sigma^\infty \pi_d^\tra$, which is positive by Sylvester's criterion applied to the matrix $\Sigma^\infty \in \cM^{+}_{1+d}$ from Assumption~\eqref{hyp:C}. 
Thus,
\begin{align*}
\{\tau \geq \epsilon\}
& \subseteq \bigl\{[ M]_\epsilon\geq \epsilon \ell_0 \bigr\}\cap \bigl\{\forall t \in [0,\epsilon],  f( \cY^{\infty,\mathpzc{y}}_t )\leq 3/4 \bigr\}\\
&\subseteq \bigl\{[ M]_\epsilon\geq \epsilon\ell_0 \bigr\}\cap \bigl\{ \forall t \in [0,\epsilon], B_{[ M]_t}\leq 3/4+ \os t \bigr\}\\
& \subseteq \bigl\{[ M]_\epsilon\geq \epsilon \ell_0 \bigr\} \cap \bigl\{  \sup_{t \in [0,\epsilon]} B_{[ M]_t}\leq 3/4+ \os \epsilon\bigr\}\subseteq \bigl\{ B_{\epsilon \ell_0}\leq  3/4+ \os\epsilon  \bigr\},
\end{align*}
where the last inclusion uses the continuity of the quadratic variation $[ M]$.
$\mathbb{P}(B_{\epsilon \ell_0}\leq  3/4+ \os\epsilon )$
 is the probability
that a standard Gaussian random variable is not more than $(3/4 +\os\epsilon)/\sqrt{\epsilon\ell_0}$, and does not depend on the starting point $\mathpzc{y}\in\BB^d$.
Setting 
$\epsilon_0\coloneqq 1-\mathbb{P}(B_{\epsilon \ell_0}\leq  3/4+ \os\epsilon )>0$, the inclusion above implies
$\mathbb{P}(\tau\leq \epsilon)=1-\mathbb{P}(\tau> \epsilon)\geq \epsilon_0$ for all $\mathpzc{y}\in\BB^d$, establishing the lemma.
\end{proof}

Equipped with Lemmas~\ref{le:minorization} and~\ref{le:reach-core}, we can complete the proof of Proposition~\ref{prop:small-set}.

\begin{proof}[Proof of Proposition~\ref{prop:small-set}]
Let $\epsilon>0$.
For each $t \in \RP$ and $\mathpzc{y} \in \BB^d$, 
define the stopping time  $\tau_t^\mathpzc{y} := \inf\{ s \geq t :  \cY^{\infty,\mathpzc{y}}_s \in \tfrac{1}{2}\BB^d  \}$ (with convention $\inf\emptyset=\infty$), taking values in $[t,\infty]$,
and let
    $\mathbb{P}^{\tau,\cY^\infty}_{\mathpzc{y},t}$ denote the probability measure on $[t,t+\epsilon/2] \times \tfrac{1}{2}\BB^d$ given by
    \[ 
   \mathbb{P}^{\tau,\cY^\infty}_{\mathpzc{y},t}\coloneqq \mathbb{P}\bigl( (\tau_t^\mathpzc{y},\cY^{\infty,\mathpzc{y}}_{\tau_t^\mathpzc{y}}) \in \cdot \mid \tau_t^\mathpzc{y} \leq t+\epsilon/2 \bigr).\]
Here we used the fact $\mathbb{P}(\tau^\mathpzc{y}_t \leq t+\epsilon/2)>0$, which holds  by the Markov property at time $t$ and Lemma~\ref{le:reach-core}.
For every $t\in\RP$ and Borel set $A \subseteq \BB^d$,
the strong Markov property of $\cY^{\infty,\mathpzc{y}}$ (see~Theorem~A.1 of~\cite{MMW}) applied at the stopping time $\tau_t^\mathpzc{y}$ yields
    \begin{align*}
    \mathbb{P}(\cY^{\infty,\mathpzc{y}}_{t+\epsilon} \in A)
    &\geq \mathbb{P}(\tau^\mathpzc{y}_t \leq t+\epsilon/2 \text{ and }  \cY^{\infty,\mathpzc{y}}_{t+\epsilon} \in A)\\
    &=\mathbb{P}(\tau^\mathpzc{y}_t \leq t+\epsilon/2) \int_{[t,t+\epsilon/2]\times \tfrac{1}{2}\BB^d} \mathbb{P}(\cY^{\infty,\mathpzc{y}'}_{t+\epsilon-s} \in A) \d \mathbb{P}^{\tau,\cY^{\infty}}_{\mathpzc{y},t} (s,\mathpzc{y}')\\
    &\geq \mathbb{P}(\tau_t^\mathpzc{y}\leq t+\epsilon/2) 
    \inf_{u \in [\epsilon/2,\epsilon],\,  \mathpzc{y}'\in  \tfrac{1}{2}\BB^d}  \mathbb{P}(\cY^{\infty,\mathpzc{y}'}_{u} \in A) \\
    &= \mathbb{P}(\tau_t^\mathpzc{y}\leq t+\epsilon/2) 
    \inf_{u \in [\epsilon/2,\epsilon],\,  \mathpzc{y}'\in  \tfrac{1}{2}\BB^d}  P^\infty_u(\mathpzc{y}', A) \geq   \mathbb{P}(\tau_t^\mathpzc{y}\leq t+\epsilon/2)\xi_{1/2}(A),
    \end{align*}
    where $\xi_{1/2}$ is the measure in Lemma~\ref{le:minorization} (for $r=1/2$ and the present~$\epsilon$). The Markov property at time~$t$ and Lemma~\ref{le:reach-core} (applied with $\epsilon/2$)
    imply that there exists $\epsilon_0>0$ such that  $\mathbb{P}(\tau_t^\mathpzc{y}\leq t+\epsilon/2) \geq \epsilon_0$
    for all $\mathpzc{y} \in \BB^d$ and $t \in \RP$. Thus setting $\xi (A) := \epsilon_0 \xi_{1/2} (A)$ yields~\eqref{eq:small_set}.
    \end{proof}

\subsection{Coupling the process in the cylinder to its stationary version}
\label{sec:cylinder-coupling}
Let $\nu$ be an arbitrary probability measure supported in $\BB^d$. Recall that $\mu$ in Corollary~\ref{cor:mu-existence-convergence}
is the invariant measure of $\mathcal{Y}^{\infty,\mathpzc{y}}$ and, from Remark~\ref{rem:Notation} above, that $\cZ^{\infty, \nu}$ and $\cZ^{\infty, \mu}$
are solutions of SDER~\eqref{eq:limitEqJoint_Integral}, where $\mathcal{Y}^{\infty,\nu}_0$ and $\mathcal{Y}^{\infty,\mu}_0$
follow distributions $\nu$ and $\mu$, respectively.
In the present subsection we construct a coupling of the processes $\cZ^{\infty, \nu}$ and $\cZ^{\infty, \mu}$, i.e.~a probability measure on a measurable space that supports a pair $(\widetilde \cZ^{\infty, \nu},\widetilde\cZ^{\infty, \mu})$, such that   the processes $\widetilde \cZ^{\infty, \nu}$ and $\widetilde\cZ^{\infty, \mu}$ follow the laws of $\cZ^{\infty, \nu}$ and $\cZ^{\infty, \mu}$, respectively. 
If there is no ambiguity, we abuse the notation slightly by referring to the coupled processes again as 
$(\cZ^{\infty, \nu},\cZ^{\infty, \mu})$.
Recall also from Remark~\ref{rem:Notation} above that  $\cZ^{\infty, \mu,\mathcal{W}'}$  denotes the strong solution of SDER~\eqref{eq:limitEqJoint_Integral}, driven by a Brownian motion $\cW'$ with the $\cY$-component started according to the stationary law $\mu$ of Corollary~\ref{cor:mu-existence-convergence}.

\begin{proposition}
  \label{prop:coupling}
  Let $\tilde Y_0$ be a random vector following 
  a probability law $\nu$ supported on $\BB^d$ and let $\cW$ be a Brownian motion independent of $\tilde Y_0$. Denote by $\cZ^{\infty,\nu,\cW}$ the strong solution of SDER~\eqref{eq:limitEqJoint_Integral} started at the random initial point $(0,\tilde Y_0)$ and driven by $\cW$.
  Recall that the measure $\mu$, supported on $\BB^d$, is invariant for  SDER~\eqref{eq:limitEq} and let the constants $\lambda\in(0,1)$ and $C_0\in(0,\infty)$ satisfy the conclusion of Corollary~\ref{cor:mu-existence-convergence} above.
  
  For arbitrary
  $\mathpzc{s}\in(0,\infty)$, 
  there exists a coupling $\mathbb{P}^{\mathpzc{s}}$ of two Brownian motions $\mathcal{W}, \mathcal{W}'$, which extends the initial probability space  $(\Omega_\mathrm{init}, \mathcal{F}_\mathrm{init},\mathbb{P}_{\mathrm{init}})$ supporting $(\tilde Y_0, \cW)$, such that: 
\begin{enumerate}  
\item  the processes $\cZ^{\infty, \nu,\mathcal{W}}$ started at $(0,\tilde{Y}_0)$ and the solution $\cZ^{\infty, \mu,\mathcal{W}'}$  of SDER~\eqref{eq:limitEqJoint_Integral}  satisfy
  \begin{equation}
  \label{eq:bound39}
  \mathbb{P}^{\mathpzc{s}}(\exists x\in \R: \forall t\geq \mathpzc{s}, \cZ^{\infty,\nu,\mathcal{W}}_t-\cZ^{\infty,  \mu,\mathcal{W}'}_t=  (x, 0_{\R^d}  ) )\geq 1-2 C_0 \lambda^\mathpzc{s};
  \end{equation}
    \item the increments of $\mathcal{W}$ and $\mathcal{W}'$ after the time $\mathpzc{s}$ are equal:  
   $\mathbb{P}^\mathpzc{s}((\mathcal{W}'_t- \mathcal{W}'_\mathpzc{s})_{t\in[\mathpzc{s},\infty)}=(\mathcal{W}_t- \mathcal{W}_\mathpzc{s})_{t\in[\mathpzc{s},\infty)})=1$ (making 
  $(\mathcal{W}'_t- \mathcal{W}'_\mathpzc{s})_{t\in[\mathpzc{s},\infty)}$ and $(\tilde Y_0,(\mathcal{W}_t)_{t\in[0,\mathpzc{s}]})$  independent); 
  \item under $\mathbb{P}^{\mathpzc{s}}$, $\tilde Y_0$ is independent from the couple 
  $(\mathcal{W}', 
  \cZ^{\infty, \mu,\mathcal{W}'}_0 )$;
  \item under $\mathbb{P}^{\mathpzc{s}}$, given $(\tilde Y_0,\cW)$, the couple 
  $(\mathcal{W}', \cZ^{\infty, \mu,\mathcal{W}'}_0 )$ is independent of $\mathcal{F}_\mathrm{init}$. 
  \end{enumerate}
\end{proposition}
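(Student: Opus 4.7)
The idea is a two-stage coupling. On $[\mathpzc{s},\infty)$ I synchronise the Brownian motions by setting $\mathcal{W}'_t-\mathcal{W}'_\mathpzc{s}:=\mathcal{W}_t-\mathcal{W}_\mathpzc{s}$, which yields property~(2) by construction. Since $\cX=\pi_0\sigma^\infty W+s_0L^{\cY}$ by Remark~\ref{rem:beginning}(a), strong uniqueness for SDER~\eqref{eq:limitEq} (Lemma~\ref{le:strongSol}) then forces $\cY^{\infty,\nu,\mathcal{W}}$ and $\cY^{\infty,\mu,\mathcal{W}'}$ to remain identical on $[\mathpzc{s},\infty)$ once they meet at time~$\mathpzc{s}$. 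On that meeting event, the local-time increments of the two processes agree on $[\mathpzc{s},\infty)$, so $\cX^{\infty,\nu,\mathcal{W}}-\cX^{\infty,\mu,\mathcal{W}'}$ is constant there, delivering~(1). All the content of the proof thus lies in constructing a coupling on $[0,\mathpzc{s}]$ that makes the two $\cY$-components agree at time~$\mathpzc{s}$ with probability at least $1-2C_0\lambda^\mathpzc{s}$, while respecting the marginal and conditional independence constraints in~(3)--(4).

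To this end I extend $(\Omega_\mathrm{init},\mathcal{F}_\mathrm{init},\mathbb{P}_\mathrm{init})$ by an independent $\tilde Y'_0\sim\mu$ and auxiliary uniform randomness, all independent of $\mathcal{F}_\mathrm{init}$. Applying Corollary~\ref{cor:mu-existence-convergence} to Dirac initial laws and using the triangle inequality through the stationary measure gives
\[
d_{\TV}(\delta_{y_0}P^\infty_\mathpzc{s},\delta_{y'_0}P^\infty_\mathpzc{s})\le 2C_0\lambda^\mathpzc{s}\quad\text{for all }y_0,y'_0\in\BB^d,
\]
which is precisely where the factor~$2$ in~\eqref{eq:bound39} enters. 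For each $(y_0,y'_0)$, fix a maximal coupling $\rho_{y_0,y'_0}$ of $\delta_{y_0}P^\infty_\mathpzc{s}$ and $\delta_{y'_0}P^\infty_\mathpzc{s}$ on $\BB^d\times\BB^d$, whose diagonal carries mass at least $1-2C_0\lambda^\mathpzc{s}$. Setting $V_1\coloneqq\cY^{\infty,\tilde Y_0,\mathcal{W}}_\mathpzc{s}$, I draw $V_2$ from the conditional kernel of $\rho_{\tilde Y_0,\tilde Y'_0}$ given that its first coordinate equals $V_1$ (using the auxiliary randomness), and then draw $\mathcal{W}'|_{[0,\mathpzc{s}]}$ from the regular conditional distribution of the driving Brownian motion of SDER~\eqref{eq:limitEq} started at~$\tilde Y'_0$, given that its time-$\mathpzc{s}$ solution equals~$V_2$. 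Integrating $V_2$ out shows that $(\tilde Y'_0,\mathcal{W}'|_{[0,\mathpzc{s}]})$ has the product law $\mu\otimes\mathrm{BM}$, and gluing in $\mathcal{W}_t-\mathcal{W}_\mathpzc{s}$ for $t\ge\mathpzc{s}$ then promotes $\mathcal{W}'$ to a Brownian motion on $\RP$ independent of $\tilde Y'_0$, via the Markov property of $\mathcal{W}$ at time~$\mathpzc{s}$.

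Properties~(3) and~(4) follow by inspection of this construction. Property~(4) is immediate since the extension uses only randomness independent of $\mathcal{F}_\mathrm{init}$. Property~(3) follows from the observation that, conditionally on $(\tilde Y_0,\tilde Y'_0)=(y_0,y'_0)$, the law of $V_2$ is $\delta_{y'_0}P^\infty_\mathpzc{s}$ by the marginal property of $\rho_{y_0,y'_0}$, and thus the conditional law of $(\tilde Y'_0,\mathcal{W}'|_{[0,\mathpzc{s}]})$ given $\tilde Y_0=y_0$ is $\mu\otimes\mathrm{BM}$ uniformly in $y_0$, making the pair independent of $\tilde Y_0$. The meeting bound
\[
\mathbb{P}\bigl(V_1=V_2\,\big|\,\tilde Y_0,\tilde Y'_0\bigr)\ge 1-d_{\TV}\bigl(\delta_{\tilde Y_0}P^\infty_\mathpzc{s},\delta_{\tilde Y'_0}P^\infty_\mathpzc{s}\bigr)\ge 1-2C_0\lambda^\mathpzc{s}
\]
then yields~\eqref{eq:bound39} upon taking expectations. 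I expect the main technical obstacle to be the measurable path-space disintegration of the driving Brownian motion of~\eqref{eq:limitEq} given the terminal value, together with joint measurability in $(y_0,y'_0)$ of the family of maximal couplings $\rho_{y_0,y'_0}$; both are standard on the Polish spaces $\cC([0,\mathpzc{s}];\R^{1+d})$ and $\BB^d$ (e.g.\ via Kallenberg's transfer theorem), but need to be assembled carefully before the verification of~(1)--(4) reduces to bookkeeping.
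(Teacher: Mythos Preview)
Your proposal is correct and follows essentially the same approach as the paper: a maximal coupling of the two time-$\mathpzc{s}$ marginals $\delta_{y_0}P^\infty_\mathpzc{s}$ and $\delta_{y'_0}P^\infty_\mathpzc{s}$ (measurable in $(y_0,y'_0)$), a bridge reconstruction of $\mathcal{W}'|_{[0,\mathpzc{s}]}$ via the regular conditional law of the driving Brownian motion given the terminal value of~$\cY$, and synchronisation of the increments after time~$\mathpzc{s}$. The paper builds the coupling on a fresh product space first and then transfers it to an extension of $(\Omega_\mathrm{init},\mathcal{F}_\mathrm{init},\mathbb{P}_\mathrm{init})$ via disintegration with respect to $(\tilde Y_0,\cW)$, whereas you construct the extension sequentially; this is a presentational rather than substantive difference, and the technical points you flag (measurable selection of maximal couplings, existence of the bridge disintegration) are exactly those the paper addresses.
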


\begin{proof}
It suffices to construct a 
quadruple $(\tilde{Y}_0, \hat{Y}_0,\cW,\cW')$
on some probability space, such that
$\mathcal{W}$ and $\mathcal{W}'$
are coupled Brownian motions and  $\tilde{Y}_0$, $\hat{Y}_0$ are random vectors in $\BB^d$, distributed respectively as $\nu$ and $\mu$, satisfying properties (1), (2) and~(3) in the proposition (with $\mathcal{Z}^{\infty, \mu,\mathcal{W}'}$ and $\mathcal{Z}^{\infty, \nu,\mathcal{W}}$ following SDER~\eqref{eq:limitEqJoint_Integral}, started from $(0,\hat{Y}_0)$ and $(0,\tilde{Y}_0)$, respectively). 
Indeed, the fact that we can then choose this coupling to extend the initial probability space 
$(\Omega_\mathrm{init}, \mathcal{F}_\mathrm{init},\mathbb{P}_{\mathrm{init}})$
 on which $(\tilde{Y}_0,\cW)$ is originally defined, in such a way that property (4) also holds, follows directly by constructing a regular conditional probability of the coupling (with respect to $(\tilde Y_0,\cW)$)~\cite[Thm~2.3]{Kallenberg} and applying the extension lemma~\cite[Lem.~6.9]{Kallenberg}.
 The remainder of the proof is dedicated to the construction of the coupling  $(\tilde{Y}_0, \hat{Y}_0,\cW,\cW')$ with properties (1), (2) and (3).

By Corollary~\ref{cor:mu-existence-convergence} and the triangle inequality, for any two starting points $\mathpzc{y}_0, \mathpzc{y}_0'\in \mathbb{B}^d$, we have $\mathrm{d}_\TV( \delta_{\mathpzc{y}_0}P^\infty_{\mathpzc{s}}, \delta_{\mathpzc{y}_0'}P^\infty_{\mathpzc{s}})\leq 2 C_0 \lambda^{-\mathpzc{s}}$, where $\delta_{\mathpzc{y}}$ is the Dirac measure at $\mathpzc{y}$. By the existence of maximal couplings under total variation, there exists a probability measure  $\Gamma_{\mathpzc{y}_0,\mathpzc{y}'_0}$, supported on $\mathbb{B}^d\times \mathbb{B}^d$, with marginal distributions 
$\delta_{\mathpzc{y}_0}P^\infty_{\mathpzc{s}}$ and $\delta_{\mathpzc{y}_0'}P^\infty_{\mathpzc{s}}$
and significant mass on the diagonal of the product space $\mathbb{B}^d\times \mathbb{B}^d$:
\begin{equation}
\label{eq:coupling_inequality_marginal}
\Gamma_{\mathpzc{y}_0,\mathpzc{y}'_0}(\{(\mathpzc{y},\mathpzc{y}):\mathpzc{y}\in \mathbb{B}^d\})\geq 1-2   C_0 \lambda^{-\mathpzc{s}}.
\end{equation}
In fact, by~\cite[Lem.~1]{Kulik}, 
$\Gamma_{\mathpzc{y}_0,\mathpzc{y}'_0}(\mathrm{d}\mathpzc{y},\mathrm{d}\mathpzc{y}')$ is a probability kernel (see~\cite[p.~106]{Kallenberg} for definition) on the product 
$\mathbb{B}^d\times \mathbb{B}^d$
with the Borel $\sigma$-field (i.e., for any event $A\subset\mathbb{B}^d\times \mathbb{B}^d$, the map $(\mathpzc{y}_0,\mathpzc{y}'_0)\mapsto \Gamma_{\mathpzc{y}_0,\mathpzc{y}'_0}(A)$ is Borel-measurable). We may thus construct the probability measure $\Gamma$, supported on the product $\mathbb{B}^d\times \mathbb{B}^d\times\mathbb{B}^d\times \mathbb{B}^d$, given by the following formula 
\begin{equation}
\label{eq:def_Gamma_coupling}
\Gamma\coloneqq \int_{ \mathbb{B}^d\times\mathbb{B}^d } \delta_{\mathpzc{y}_0}\otimes \delta_{\mathpzc{y}'_0} \otimes 
	\Gamma_{\mathpzc{y}_0, \mathpzc{y}'_0}\  \nu(\mathrm{d} \mathpzc{y}_0) \mu(\mathrm{d} \mathpzc{y}'_0).
\end{equation}
Under the probability measure $\Gamma$, the projection $(\mathbb{B}^d)^4\to\mathbb{B}^d$ on the first (resp.~second, third, fourth) component follows the law $\nu$ (resp.~$\mu$, $\nu P^\infty_\mathpzc{s}$, $\mu P^\infty_\mathpzc{s}=\mu$); 
see Remark~\ref{rem:Notation} for the definition of the semigroup $P^\infty$.
Moreover, under $\Gamma$, the joint law of the first and third (resp.~second and fourth) components is equal to the law of $(\mathcal{Y}^{\infty,\nu }_0,\mathcal{Y}^{\infty,\nu }_{\mathpzc{s}} )$  (resp. $(\mathcal{Y}^{\infty,\mu }_0,\mathcal{Y}^{\infty,\mu }_{\mathpzc{s}} )$), where $\mathcal{Y}^{\infty,\nu}$ (resp.~$\mathcal{Y}^{\infty,\mu}$) is the solution
of SDER~\eqref{eq:limitEq}.
We shall see below that, under $\Gamma$, the first component is independent from the second and fourth one.

Let the standard Brownian motion $\cW$ be defined as the identity map on the canonical probability space $( \Omega,\mathcal{F}, \mathbb{P})$ with $\Omega= \mathcal{C}_0(\mathbb{R}_+, \mathbb{R}^{1+d})$. 
On the product space $\BB^d\times\Omega$ 
we construct the unique strong solution of SDER~\eqref{eq:limitEq}, started at  $\cY^{\infty,\nu,\cW}_0$
and driven by $\cW$, where the random element $(\cY^{\infty,\nu,\cW}_0,\cW)$ is given as the identity map on $\BB^d\times\Omega$ under the product measure $\nu\otimes\mathbb{P}$.
Then~\cite[Thm~6.3]{Kallenberg}, applied on the product space $\BB^d\times\Omega$ with measure $\nu\otimes\mathbb{P}$, yields a regular conditional probability 
$$\mathbb{P}_{\mathpzc{y}_0,\mathpzc{y}_{\mathpzc{s}}}\coloneqq\nu\otimes\mathbb{P}(\cW\in\ \cdot\ |\cY^{\infty,\nu,\cW}_0=\mathpzc{y}_0,  \ \cY^{\infty,\nu,\cW}_\mathpzc{s}=\mathpzc{y}_\mathpzc{s})\>\text{
for almost every $(\mathpzc{y}_0,\mathpzc{y}_{\mathpzc{s}})\in\BB^d\times\BB^d$.}$$ 
Note that ``almost every'' in the previous display is with respect to the distribution of the pair
$(\cY^{\infty,\nu,\cW}_0,\cY^{\infty,\nu,\cW}_\mathpzc{s})$, 
which is equivalent for example to the probability measure $\nu\otimes \mu$. This fact is not used in the proof of the proposition. In contrast, an important fact in what follows is that 
$\mathbb{P}_{\mathpzc{y}_0,\mathpzc{y}_{\mathpzc{s}}}$ 
is a conditional probability of the measure $\nu\otimes\mathbb{P}$, under which 
$\cY^{\infty,\nu,\cW}_0$ and $\cW$ are independent.


Let the standard Brownian motion 
$(\cW_s')_{s\in[0,\mathpzc{s}]}$ on the time interval $[0,\mathpzc{s}]$ be defined as the identity map on
the probability space $(\Omega',\mathcal{F}', \mathbb{P}')$, where $\Omega'= \mathcal{C}_0([0,\mathpzc{s}], \mathbb{R}^{1+d})$.
As in the previous paragraph, we construct the strong solution of SDER~\eqref{eq:limitEq} using the data given by the coordinates 
$(\cY^{\infty,\mu,\cW'}_0,(\cW'_t)_{t\in[0,\mathpzc{s}]})$ on the product space
$\mathbb{B}^d\times \Omega'$ under the measure $\mu\otimes\mathbb{P}'$.
By~\cite[Thm~6.3]{Kallenberg},  there exists a regular conditional probability on $\mathbb{B}^d\times \Omega'$, such that
$$ \mathbb{P}'_{\mathpzc{y}_0',\mathpzc{y}_{\mathpzc{s}}' }\coloneqq \mu\otimes\mathbb{P}'((\cW_s')_{s\in[0,\mathpzc{s}]}\in\ \cdot\  | \cY^{\infty,\mu,\cW'}_0=\mathpzc{y}_0',  \ \cY^{\infty,\mu,\cW'}_\mathpzc{s}=\mathpzc{y}_\mathpzc{s}')$$
 for almost every $(\mathpzc{y}_0',\mathpzc{y}_{\mathpzc{s}}')\in\BB^d\times\BB^d$.
As in the previous paragraph, 
we note again that $\mathbb{P}'_{\mathpzc{y}_0',\mathpzc{y}_{\mathpzc{s}}'}$ 
is a conditional probability of the measure
$\mu\otimes\mathbb{P}'$
under which $(\cW_s')_{s\in[0,\mathpzc{s}]}$
and 
$\cY^{\infty,\mu,\cW'}_0$
are independent.

On the probability space $ \Omega_0\coloneqq \mathbb{B}^d \times \mathbb{B}^d \times \Omega\times \Omega'$, define the probability measure 
\begin{equation}
\label{eq:definition_coupling_nearly_final}
\mathbb{P}_0\coloneqq \int_{\mathbb{B}^d\times\BB^d\times\BB^d\times\BB^d}   \delta_{\mathpzc{y}_0} 
\otimes \delta_{\mathpzc{y}'_0} \otimes 
\mathbb{P}_{\mathpzc{y}_0,\mathpzc{y}_{\mathpzc{s}} } \otimes 
\mathbb{P}'_{\mathpzc{y}'_0,\mathpzc{y}'_{\mathpzc{s}} }  \Gamma(\d \mathpzc{y}_0, \d \mathpzc{y}'_0, \d \mathpzc{y}_{\mathpzc{s}}, \d  \mathpzc{y}'_{\mathpzc{s}})
\end{equation}
and set the random elements $\tilde{Y}_0$, $\hat{Y}_0$, $\mathcal{W}$ and $(\cW_s')_{s\in[0,\mathpzc{s}]}$
 as the projections onto the first, second, third and fourth  component of 
$ \Omega_0=\mathbb{B}^d \times \mathbb{B}^d\times \Omega\times \Omega'$, respectively.
Then, under $\mathbb{P}_0$, 
 $\mathcal{W}$ is a Brownian motion, $(\cW_s')_{s\in[0,\mathpzc{s}]}$ is a Brownian motion up to time $\mathpzc{s}$, $\tilde{Y}_0$ has distribution $\nu$ and is independent from $\mathcal{W}$, and  
 $\hat{Y}_0$ has distribution $\mu$  and is independent from $(\cW_s')_{s\in[0,\mathpzc{s}]}$. 
 
 We extend $(\cW_s')_{s\in[0,\mathpzc{s}]}$ to a Brownian motion defined on $\mathbb{R}_+$ by setting $\mathcal{W}'_t\coloneqq
\mathcal{W}'_\mathpzc{s}+\mathcal{W}_t-\mathcal{W}_\mathpzc{s}$  for $t\in[\mathpzc{s},\infty)$. 
We now prove that this coupling satisfies  properties (1), (2) and (3) in the proposition. The property (2) is immediate by construction of $\mathcal{W}'$ after the time $\mathpzc{s}$.

The processes $(\cZ^{\infty, \tilde{Y}_0,\mathcal{W}}_{u+\mathpzc{s}})_{u\in \mathbb{R}_+}$  and $(\cZ^{\infty, \hat{Y}_0,\mathcal{W}'}_{u+\mathpzc{s}})_{u\in\mathbb{R}_+}$ 
    satisfy  SDER~\eqref{eq:limitEqJoint_Integral} on the infinite cylinder, driven by the same Brownian motion $ (\mathcal{W}_{u+\mathpzc{s}}-\mathcal{W}_{\mathpzc{s}})_{u\in \mathbb{R}_+}=(\mathcal{W}'_{u+\mathpzc{s}}-\mathcal{W}'_{\mathpzc{s}})_{u\in \mathbb{R}_+}$. By translation invariance of  SDER~\eqref{eq:limitEqJoint_Integral} in the $\mathpzc{x}$ coordinate (cf.~Remark~\ref{rem:beginning}\ref{rem:beginning(a)} above), we deduce that, on the event $\mathcal{Y}^{\infty,\tilde{Y}_0,\mathcal{W}}_{\mathpzc{s}}=\mathcal{Y}^{\infty,\hat{Y}_0,\mathcal{W}'}_{\mathpzc{s}}$, the difference $\cZ^{\infty, (0,\tilde{Y}_0),\mathcal{W}}_t -\cZ^{\infty, (0,\hat{Y}_0),\mathcal{W}'}_t=(\cX^{\infty, (0,\tilde{Y}_0) ,\mathcal{W}}_\mathpzc{s} - \cX^{\infty, (0, \hat{Y}_0),\mathcal{W}'}_\mathpzc{s},0_{\R^d})$ is constant for $t\in[\mathpzc{s},\infty)$ (since on this event we have $\mathcal{Y}^{\infty,\tilde{Y}_0,\mathcal{W}}_{t}=\mathcal{Y}^{\infty,\hat{Y}_0,\mathcal{W}'}_t$ for all $t\in(\mathpzc{s},\infty)$).
    The
coupling $\mathbb{P}_0$ thus satisfies property (1) in the proposition:
\begin{align*}
\mathbb{P}_0&\big(\exists x\in \R: \forall t\geq \mathpzc{s}, \cZ^{\infty,(0,\tilde{Y}_0),\mathcal{W}}_t-\cZ^{\infty,  (0,\hat{Y}_0),\mathcal{W}'}_t=  (x, 0_{\R^d}  ) \big)\\
& =\Gamma(\{(a,b,c,c): a,b,c\in \mathbb{B}^d \} )\\
&\geq \inf_{a,b\in\BB^d} \Gamma_{a,b}(\{(c,c): c\in \mathbb{B}^d \})
\geq 1-2 C_0 \lambda^\mathpzc{s},
\end{align*}
where the last and penultimate inequalities follow from~\eqref{eq:coupling_inequality_marginal} and~\eqref{eq:def_Gamma_coupling}, respectively.

We now prove property (3) in the proposition, i.e.~that, under the probability measure $\mathbb{P}_0$ in~\eqref{eq:definition_coupling_nearly_final}, the random vector 
$\tilde{Y}_0$  and the random element $(\hat{Y}_0,\mathcal{W}')$ are independent.
For measurable $A,B,C\subset\BB^d$, by~\eqref{eq:def_Gamma_coupling} we obtain
\begin{align*}
\Gamma(A\times B\times\mathbb{B}^d\times C )
&= 
 \int_{ \mathbb{B}^d\times\BB^d} \delta_{\mathpzc{y}_0}(A) \delta_{\mathpzc{y}'_0}(B)  
\Gamma_{\mathpzc{y}_0, \mathpzc{y}'_0}(\mathbb{B}^d\times C  )\  \nu(\d \mathpzc{y}_0) \mu(\d \mathpzc{y}'_0)\\
&= 
 \int_{ A\times B}  
(\delta_{\mathpzc{y}'_0} P^\infty_\mathpzc{s}) (C  )\  \nu(\d \mathpzc{y}_0) \mu(\d \mathpzc{y}'_0)\\
&= \nu(A)  \int_{ B}  
(\delta_{\mathpzc{y}'_0} P^\infty_\mathpzc{s}) (C  ) \mu(\d \mathpzc{y}'_0)\\
&=\nu(A)\Gamma(\mathbb{B}^d\times B\times\mathbb{B}^d \times C),
\end{align*}
implying
\begin{equation}
\label{eq:temp:indep}
\Gamma(A, \d \mathpzc{y}'_0 ,  \mathbb{B}^d, \d \mathpzc{y}'_{\mathpzc{s}})
= 
\nu(A) \Gamma( \mathbb{B}^d, \d \mathpzc{y}'_0 ,  \mathbb{B}^d, \d \mathpzc{y}'_{\mathpzc{s}}).
\end{equation}

Let now $A,B$ be measurable subsets of $\mathbb{B}^d$ and $D$ a measurable subset of $\Omega'$.  
Then
\begin{align*}
\mathbb{P}_0(A\times B\times\Omega\times D )
&= 
 \int_{ \mathbb{B}^d\times\BB^d\times\BB^d\times\BB^d} \delta_{\mathpzc{y}_0}(A) \delta_{\mathpzc{y}'_0}(B)
\mathbb{P}'_{\mathpzc{y}'_0,\mathpzc{y}'_{\mathpzc{s}} }(D)  \Gamma(\d \mathpzc{y}_0, \d \mathpzc{y}'_0, \d \mathpzc{y}_{\mathpzc{s}}, \d  \mathpzc{y}'_{\mathpzc{s}} )\\
&= 
 \int_{A \times B\times  \mathbb{B}^d\times\BB^d} 
\mathbb{P}'_{\mathpzc{y}'_0,\mathpzc{y}'_{\mathpzc{s}} }(D)  \Gamma(\d \mathpzc{y}_0, \d \mathpzc{y}'_0, \d \mathpzc{y}_{\mathpzc{s}}, \d  \mathpzc{y}'_{\mathpzc{s}} )\\
&= 
 \int_{ B\times  \mathbb{B}^d} 
\mathbb{P}'_{\mathpzc{y}'_0,\mathpzc{y}'_{\mathpzc{s}} }(D)  \Gamma(A, \d \mathpzc{y}'_0, \mathbb{B}^d, \d  \mathpzc{y}'_{\mathpzc{s}} )\\
&= 
\nu(A) \int_{ B\times  \mathbb{B}^d} 
\mathbb{P}'_{\mathpzc{y}'_0,\mathpzc{y}'_{\mathpzc{s}} }(D)  \Gamma(\BB^d, \d \mathpzc{y}'_0, \mathbb{B}^d, \d  \mathpzc{y}'_{\mathpzc{s}} ) \qquad \qquad \text{(by \eqref{eq:temp:indep}).}
\end{align*}
In particular, for   $A=\mathbb{B}^d$, 
we get 
$\mathbb{P}_0(\mathbb{B}^d\times B\times\Omega\times D )= \int_{ B\times  \mathbb{B}^d} 
\mathbb{P}'_{\mathpzc{y}'_0,\mathpzc{y}'_{\mathpzc{s}} }(D)  \Gamma(\BB^d, \d \mathpzc{y}'_0, \mathbb{B}^d, \d  \mathpzc{y}'_{\mathpzc{s}} )$, implying
\begin{equation}
\label{eq:independnece_in_coupling}
\mathbb{P}_0(A\times B\times\Omega\times D )=\nu(A) \mathbb{P}_0(\mathbb{B}^d\times B\times\Omega\times D ).
\end{equation}
Since $A,B\subset\BB^d$ and $D\subset\Omega'$ in~\eqref{eq:independnece_in_coupling} were arbitrary, 
$\tilde{Y}_0$  and  $(\hat{Y}_0,\mathcal{W}')$ are indeed independent, which concludes the proof. 
\end{proof}

\section{Local convergence of the rescaled process}
\label{sec:local}

In this section, we look at the reflected process $Z$ in the time window $[T,T+ C b(X_T)^2 ]$. Theorem~\ref{th:main2} asserts that this process, when recentred and appropriately rescaled, converges in distribution to the limiting process we studied in  Section~\ref{sec:limiting} above. The main goal of this section is to establish this weak convergence. We start by introducing definitions and notation needed to formulate our approach.

\subsection{Definitions and notation}
\label{subsec:def}
For $x_0\in(0,\infty)$, define  the affine function $\mathbf{a}_{x_0}: \R^{1+d} \to \R^{1+d}$, 
\[\mathbf{a}_{x_0}(x,y)\coloneqq \frac{1}{b(x_0)}(x-x_0,y).\]
The image under $\mathbf{a}_{x_0}$ of the domain $\cD$ defined at~\eqref{eq:D-def} is given by
\begin{equation}
\label{eq:D-transform}
\cD_{x_0}\coloneqq \mathbf{a}_{x_0}(\cD) =
\Bigl\{ (\mathpzc{x},\mathpzc{y})\in \R^{1+d}: 
\mathpzc{x} \geq - \frac{x_0}{b(x_0)}\mbox{ and }
|\mathpzc{y}|_d \leq \frac{b(b(x_0)\mathpzc{x}+x_0 )}{b(x_0)} \Bigr\}.\end{equation}
Recalling that $b(x_0) \sim a_\infty x_0^\beta$ as $x_0 \to \infty$, note that $\cD_{x_0}$ is locally cylinder-like in the following sense: as $x_0\to\infty$  we have
$b(b(x_0)x+x_0)/b(x_0)\sim (x_0^{\beta-1}x+1)^\beta$ and hence 
for fixed $B >0$ and large $x_0$ we have
$\{ ( \mathpzc{x},\mathpzc{y}) \in \cD_{x_0} : -B \leq \mathpzc{x} \leq B \} \approx [ -B, B] \times \BB^{d}$.
Define the vector field  $\phi_{x_0}:\partial \cD_{x_0}  \to  \R^{1+d}$ and the matrix-valued function  $\sigma_{x_0}:\cD_{x_0}\to  \cM^+_{1+d}$ 
as follows:
 \begin{equation}
 \label{def:phix}
\phi_{x_0}(\mathpzc{x},\mathpzc{y}) \coloneqq \phi\circ \mathbf{a}^{-1}_{x_0}(\mathpzc{x},\mathpzc{y})\qquad \mbox{and} \qquad \sigma_{x_0} (\mathpzc{x},\mathpzc{y})  \coloneqq  \sigma\circ \mathbf{a}^{-1}_{x_0}(\mathpzc{x},\mathpzc{y}).
\end{equation} 
Set  $\Sigma_{x_0}\coloneqq\sigma_{x_0}^2$.

The process ${\cZ}^T\eqqcolon ({\cX}^T,{\cY}^T)$, defined in~\eqref{eq:Z^T-def} for any $T>0$,  can now be expressed as 
\[ {\cZ}^T_t=
\mathbf{a}_{X_T}(Z_{T+b(X_T)^2t }),\quad t\geq 0,
\] 
where $(Z,L)$ is the strong solution of the initial SDER in~\eqref{eq:SDER}. 
This process, which we should think of as being a scaled and translated (both in time and space) version of $Z=(X,Y)$, starts from $\cZ_0^T=(\cX_0^T,\cY_0^T)=(0,Y_T/b(X_T) )$ and lies inside the domain $\cD_{X_T}$ given by~\eqref{eq:D-transform}. 
Furthermore, an elementary computation shows that
\[ \{ t\in\RP: {\cZ}^T_t\in \partial \cD_{X_T} \}=\{ t\in\RP: {\cZ}_{T+b(X_T)^{2} t}\in \partial \cD \}.\]
Thus, the finite variation process
$\mathcal{L}^T=(\mathcal{L}^T_t)_{t\in\RP}$, given by $\mathcal{L}^T_t\coloneqq L_{T+b(X_T)^{2}t }-L_T$,
increases only when ${\cZ}^T$ belongs to $\partial \cD_{X_T}$, via 
$\mathcal{L}^T_t=\int_0^t \mathbbm{1}_{{\cZ}^T_\mathpzc{s}\in \partial \cD_{X_T} } \d \mathcal{L}^T_\mathpzc{s}$.
The process $\mathcal{W}^T=(\mathcal{W}^T_t)_{t\in\RP}$, 
\begin{equation}
\label{eq:BM_W^T}
    \cW^T_t \coloneqq \mathbf{a}_{X_T}(W_{T+b(X_T)^2 t })- \mathbf{a}_{X_T}(W_T),  
\end{equation}is a standard Brownian motion in $\R^{1+d}$, independent from $Z_T$ ($W$ is the Brownian motion in SDER~\eqref{eq:SDER} satisfied by $(Z,L)$).
Using definitions in~\eqref{def:phix} and changing variables, we obtain
\begin{align*}
 {\cZ}^T_t
&=\Bigl(0,\frac{Y_T}{b(X_T)}\Bigr)+
\int_T^{T+b(X_T)^{2}t } \sigma(Z_s)\frac{ \d W_s }{b(X_T)}
 +\int_T^{T+b(X_T)^{2}t } \phi(Z_s) \d L_s\\
&=  {\cZ}^T_0+\int_0^t \sigma_{X_T}( \cZ^T_\mathpzc{s}  ) \d  \cW^T_\mathpzc{s}+\int_0^t \phi_{X_T}(  \cZ^T_\mathpzc{s}) \d \mathcal{L}^{T}_\mathpzc{s}.
\end{align*}

Given $(x,y)\in \cD\setminus\{0_{\R^{1+d}}\}$ and a standard Brownian motion $\cW$, we  define the process $( \cZ^{(x,y),\cW},\mathcal{L}^{(x,y),\cW})$ to be the unique strong solution
of the SDER in the domain $\cD_x$,
\begin{align}
\label{eq:SDER_Skorkhod_Prob}
\begin{split}
\cZ^{(x,y)}_t&=\Bigl(0,\frac{y}{b(x)}\Bigr)+\int_0^t \sigma_x( \cZ^{(x,y)}_\mathpzc{s}  ) \d  \cW_\mathpzc{s}+\int_0^t \phi_{x}(  \cZ^{(x,y)}_\mathpzc{s}) \d L^{(x,y)}_\mathpzc{s},\\
\mathcal{L}^{(x,y)}_t&=\int_0^t \mathbbm{1}_{\cZ^{(x,y)}_\mathpzc{s}\in \partial \cD_x } \d \mathcal{L}^{(x,y)}_\mathpzc{s},\quad t\in\RP,
\end{split}
\end{align}
driven by  $\cW$,  with boundary reflection field $\phi_{x}$ and diffusion coefficient $\Sigma_x$ (see~\eqref{def:phix} above), and started from $(0,y/b(x))\in \cD_x$. When it is not relevant to specify which Brownian motion $\cW$ is used in~\eqref{eq:SDER_Skorkhod_Prob}, we drop the corresponding superscript in  $(\cZ^{(x,y),\cW},\mathcal{L}^{(x,y),\cW})$.
In particular, the process ${\cZ}^T= ({\cX}^T,{\cY}^T)$ satisfies the almost sure equality
\begin{equation} 
\label{eq:ZT=ZZT}
( {\cZ}^T,\mathcal{L}^T)= ( {\cZ}^{Z_T,\cW^T},\mathcal{L}^{Z_T,\cW^T}).\end{equation}

Recall that the process $\cZ^{\infty,(\mathpzc{x},\mathpzc{y}),\cW}$ is the strong solution to SDER~\eqref{eq:limitEqJoint_Integral} in the infinite cylinder, started from $(0,\mathpzc{y})$ and driven by the Brownian motion $\cW$. For any $T>0$, we now define
\begin{equation}
\label{eq:def_mathcal_Z^T}
\cZ^{\infty,T}\coloneqq\cZ^{\infty,\mathcal{Z}^T_0,\cW^T}, \quad\text{where $\mathcal{Z}^T_0=(0,Y_T/b(X_T))\in\BB^d$.}
\end{equation}
Since the Brownian motion $\cW^T$
is independent of $Z_T=(X_T,Y_T)$, almost surely the  equalities
$\mathbb{P}(\cW^T\in\cdot)=\mathbb{P}(\cW^T\in\cdot|Z_T)=\mathbb{P}(\cW^T\in\cdot|X_T,\cZ_0^T)$ 
and
$$\mathbb{P}(\cW^T\in\cdot|\cZ_0^T)=\mathbb{E}(\mathbb{P}(\cW^T\in\cdot|X_T,\cZ_0^T)|\cZ_0^T)=\mathbb{P}(\cW^T\in\cdot)$$ 
hold, implying that $\cW^T$ is also independent of the starting point $\cZ_0^T$ and thus making $\cZ^{\infty,T}$
in~\eqref{eq:def_mathcal_Z^T} a  well defined solution of SDER~\eqref{eq:limitEqJoint_Integral}.
Note also that $\mathcal{Z}^T$ and $\mathcal{Z}^{\infty,T}$ start from the same point and are driven by the same Brownian motion, but (by~\eqref{eq:def_mathcal_Z^T}) $\mathcal{Z}^{\infty,T}$ satisfies SDER~\eqref{eq:limitEqJoint_Integral} in the infinite cylinder while (by~\eqref{eq:ZT=ZZT})
$\mathcal{Z}^T$ satisfies SDER~\eqref{eq:SDER_Skorkhod_Prob} in the rescaled domain $\mathcal{D}_{X_T}$ (which approximates an infinite cylinder on an appropriate scale when $T$ is large). 

Finally, for any function  $f:\RP\to\mathbb{R}^k$ and $s>0$, we denote   the uniform norm $$\| f \|_{[0,s]} \coloneqq \sup_{u \in [0,s]} | f(u) |_{k}$$ (if $s=\infty$, the interval $[0,s]$ is taken to equal $[0,\infty)$). 

\subsection{Convergence of the scaled process to the limiting process: the case of deterministic starting points}
\label{sec:cv}

The core of the proof of  Theorem~\ref{th:main2}  essentially amounts to showing that $\cZ^{(x,b(x) y),\cW}$ 
(which lives in $\cD_x$ and starts from $(0,y)$ with $y\in\BB^d$)
converges as $x \to \infty$ to $\cZ^{\infty,(0,y), \cW}$ (which satisfies SDER~\eqref{eq:limitEqJoint_Integral} on the cylinder $\R \times \BB^d$, is driven by the same Brownian motion $\cW$ as $\cZ^{(x,b(x) y),\cW}$, and starts from $(0,y)$). This looks quite challenging, since we would need some kind of continuity property of the SDER with respect to perturbation of the diffusion matrix, the reflection vector field, and  the domain itself. Fortunately, it is only a one-parameter family of data, indexed by $x$, that we need to consider for such a continuity property.  
To proceed, we embed
$\cZ^{(x,b(x) y),\cW}$ and $\cZ^{\infty,(0,y), \cW}$ into a richer space, based on a domain $\cDtot \subset \R^{1+1+d}$ in which the first coordinate 
$h\in(0,\infty)$ corresponds to $x^{-\frac{1}{\gamma}}$. 
Throughout $\gamma$ is fixed and should be thought of as being very large: namely, we assume $\gamma\geq \frac{2}{1-\beta}$ and $\gamma>\frac{1}{\epsilon}$ where $\epsilon$ is the one in Assumption~\eqref{hyp:S}, say for example 
$ \gamma\coloneqq \max(2/(1-\beta),2/\epsilon)$.
We then compactify smoothly near $h=0$ by defining
\begin{equation}
    \label{eq:Dtot-def}
\cDtot\coloneqq \Big( (-1,0]  \times \R\times \BB^d\Big) \cup \Big( \bigcup_{h\in(0,\infty)} \big( \{h\} \times \cD_{h^{-\gamma} } \big)\Big)\subset \mathbb{R}^{1+1+d}.
\end{equation}
 See Figure~\ref{fig:Dtot} below for a visualization of the boundary of $\cDtot\subset\R^3$, i.e.~the case $d=1$.
\begin{figure}[ht]
\includegraphics[width=0.6\textwidth]{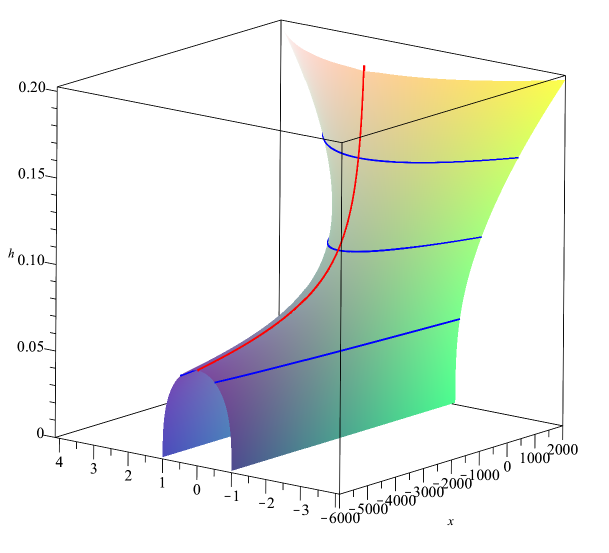}
\caption{
\label{fig:Dtot}
A visualization of the qualitative features of the ``boot'' domain~$\cDtot \subseteq \R^{1+1+1}$ as defined at~\eqref{eq:Dtot-def}; the coloured surface is the boundary of $\cDtot$.
The blue contours illustrate (the boundaries of)  
sections in the $(x,y)$ plane for fixed $h >0$, which are the domains $\cD_{h^{-\gamma}}$ defined at~\eqref{eq:D-transform}, i.e., transformed versions of $\cD$ from~\eqref{eq:D-def} that resemble cylinders in a neighbourhood of $x \approx 0$.
For $h \leq 0$ the corresponding sections are all the cylinder $\R \times \BB^d$.
The red curve is the ridge in the $y=0$ plane that is the locus of maximal~$h$. 
 The exponent $\gamma$, by being large enough, ensures a smooth transition at $h=0$ on the $(h,y)$ plane. Here $\beta>0$. When $\lim_{x\to\infty} b(x) =0$, the $h>0$ slices contract as $x \to \infty$, rather than expand.}
\end{figure}

In the richer  space  $\cDtot$, 
we consider for $x>0$ the augmented process $( x^{-\frac{1}{\gamma}},  \cZ^{(x,b(x)y),\cW}   )$, 
the first coordinate of which remains constant at $h = x^{-\frac{1}{\gamma}}$ at all times, indicating that
$\cZ^{(x,b(x)y),\cW}$ remains in 
$\cD_{h^{-\gamma}} = \cD_x$. 
The problem now reduces to showing  that the pair $( x^{-\frac{1}{\gamma}},  \cZ^{(x,b(x)y),\cW})$ converges as $x \to \infty$ to $(0,\cZ^{\infty,(0,y), \cW} )$.
This is achieved by establishing
continuity with respect to the initial condition of the solutions of  an SDER on $\cDtot$,
for which we  
 rely on well-known results in~\cite{Ishii}. Applying~\cite{Ishii} requires establishing first certain regularity properties of the space $\cDtot$ and the coefficients of the SDER
 satisfied by the process $( x^{-\frac{1}{\gamma}},  \cZ^{(x,b(x)y),\cW})$.  This is the purpose of Lemma~\ref{le:smooth} below.

We emphasize that the coefficients of the SDERs on $\cDtot$ are somewhat degenerate,
since the ``artificial'' first coordinate of the process remains constant. However this is not an obstacle for~\cite{Ishii}, and the main technical challenge in Lemma~\ref{le:smooth}  is in terms of the smoothness of the coefficients near $h =0$.

\begin{lemma}
\label{le:smooth}
The following statements hold.
\begin{enumerate}[label=(\roman*)]
\item
\label{le:smooth-i}
Under Assumption \eqref{hyp:D3}, the intersection of the boundary $\partial \cDtot$ with the open half-space $(-1,\infty)\times \R^{1+d}$ is $\cC^1$.
\item
\label{le:smooth-ii}
Let $\sigmatot:\cDtot \to \cM_{2+d}$ be the diffusion matrix given by
\[
    \sigmatot (h,x,y)\coloneqq
\begin{cases}    
      \begin{pmatrix}
        0 & 0\\
        0 & \sigma^\infty
      \end{pmatrix}
      & \text{if } h\leq 0, \\[1.4em]
      \begin{pmatrix}
        0 & 0\\
        0 & \sigma(   \mathbf{a}_{h^{-\gamma}}^{-1} (x,y) )
      \end{pmatrix}
& \text{if } h > 0.
    \end{cases}
\]
Under Assumptions \eqref{hyp:D3} and \eqref{hyp:S}, $\sigmatot$
is $\cC^1$.
\item
\label{le:smooth-iii}
Let $\phitot:\cDtot\to\R^{1+1+d}$ be the vector field defined on $\partial\cDtot $ by $\phitot(h,x, y)=(0,s_0, \phi^{(d)}_\infty(y) )$ for $h\leq 0$ and
$\phitot(h,x, y)= (0,\phi(   \mathbf{a}_{h^{-\gamma}}^{-1} (x,y)    ))$ for $h>0$.
Under Assumptions~\eqref{hyp:D3} and \eqref{hyp:S}, this vector field is~$\cC^1$.
\end{enumerate}
\end{lemma}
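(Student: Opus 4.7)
The lemma concerns $\cC^1$-regularity at the single transition level $h = 0$; away from that level, smoothness is routine. For $h > 0$, the $\cC^2$-regularity of $b$ from \eqref{hyp:D1}, \eqref{hyp:D3} and the $\cC^2$/Lipschitz hypotheses on $\sigma$, $\phi$ from \eqref{hyp:C}, \eqref{hyp:V+} make everything smooth. For $h \leq 0$, $\sigmatot$ and $\phitot$ are constants and $\partial \cDtot$ is the flat cylinder $\R \times \Sp^{d-1}$. The plan is thus, in each of (i)--(iii), to verify by direct computation that the $h \to 0^+$ limits of the relevant partial derivatives match the zero values on the $h \leq 0$ side.

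For part~(i), I would describe $\partial \cDtot \cap \{h > 0\}$ as the zero set of $G(h, x, y) = |y|_d^2 - F(h, x)^2$, where $F(h, x) := b(b(h^{-\gamma}) x + h^{-\gamma})/b(h^{-\gamma})$. A Taylor expansion of the outer $b$, together with the decay rates on $f, f', f''$ from \eqref{hyp:D3}, yields $F(h, x) - 1 = b'(h^{-\gamma}) x + O(b(h^{-\gamma})^2 \sup|b''|) = O(h^{\gamma(1-\beta)})$. Each $h$-derivative costs one factor of $h^{-1}$, so the condition $\gamma(1-\beta) \geq 2$ makes both $F - 1$ and $\partial_h F$ vanish at $h = 0^+$, gluing smoothly to $F \equiv 1$ on the other side; the implicit function theorem then gives (i).

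For (ii) and (iii), the plan is to differentiate the explicit formulas $\sigmatot(h, x, y) = \sigma(\mathbf{a}_{h^{-\gamma}}^{-1}(x, y))$ and $\phitot(h, x, y) = (0, \phi(\mathbf{a}_{h^{-\gamma}}^{-1}(x, y)))$ by the chain rule, and to check that every partial vanishes as $h \to 0^+$. The $(x, y)$-partials produce a factor $b(h^{-\gamma}) \cdot \partial \sigma$ (resp.\ $b(h^{-\gamma}) \cdot \operatorname{grad} \phi$) evaluated on a section $(X, b(X) \tilde u)$ with $X \sim h^{-\gamma}$ and $\tilde u \in \BB^d$, so assumption \eqref{hyp:S} bounds them by $o(h^{\gamma \epsilon}) \to 0$. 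The $\partial_h$-partial is the delicate one: it picks up the factor $-\gamma h^{-\gamma - 1}$ from differentiating $h^{-\gamma}$. The contributions hitting the $y$-coordinate of $\mathbf{a}_{h^{-\gamma}}^{-1}$ carry an extra factor $b'(h^{-\gamma}) = O(h^{\gamma(1-\beta)})$ and, after \eqref{hyp:S}, reduce to $o(h^{\gamma \epsilon - 1}) \to 0$ by the choice $\gamma \epsilon > 1$.

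The main technical obstacle is the bookkeeping for the $\partial_h$-contribution coming from the derivative of the \emph{first} coordinate of $\mathbf{a}_{h^{-\gamma}}^{-1}$, where the blow-up $h^{-\gamma - 1}$ is not offset by a $b'$ factor. A pointwise application of \eqref{hyp:S} alone leaves a residual of order $h^{\gamma(\beta + \epsilon - 1) - 1}$, which is insufficient when $\beta + \epsilon < 1$. Handling this requires combining \eqref{hyp:S} with the convergence $\Sigma \to \Sigma^\infty$ from \eqref{hyp:C}: by using both together one upgrades the pointwise gradient bound to an integrated modulus-of-continuity statement for $\sigma$ along the $x$-direction near infinity, which furnishes the extra decay needed to cancel the $h^{-\gamma - 1}$ blow-up. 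An equivalent route is to verify $\cC^1$-regularity in charts adapted to the local structure of $\cDtot$ near $h = 0$, in which the problematic term acquires an additional compensating factor from the rescaling of the first coordinate. The parallel argument for $\phitot$ is easier, since the relevant gradient in \eqref{hyp:S} is already tangential along $\partial\cD$. In all cases the explicit choice $\gamma = \max(2/(1-\beta), 2/\epsilon)$ is essential for closing the power-counting.
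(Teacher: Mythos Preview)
Your treatment of part~(i) coincides with the paper's: parametrize the boundary through $F(h,x) = b(b(h^{-\gamma})x + h^{-\gamma})/b(h^{-\gamma})$ and show $\partial_h F,\partial_x F \to 0$ as $h\to 0^+$ using the rates in~\eqref{hyp:D3} together with $\gamma \geq 2/(1-\beta)$. For parts~(ii)--(iii) you also follow the paper's chain-rule route, and you correctly isolate the only genuinely delicate term: the contribution to $\partial_h\sigmatot$ through the \emph{first} coordinate of $\mathbf{a}_{h^{-\gamma}}^{-1}$, which carries the factor $\gamma h^{-\gamma-1}$ without a compensating $b'$. The paper disposes of this in a single line (``is proved by also using the fact that $\gamma>1/\epsilon$''), which amounts to the same naive estimate $o(h^{\gamma(\beta+\epsilon-1)-1})$ that you rightly flag as insufficient when $\beta+\epsilon\leq 1$.

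Your proposed repair, however, does not close the gap. Assumption~\eqref{hyp:C} gives no rate on the full $\Sigma-\Sigma^\infty$ (the $x^{(1-\beta)/2}$ rate concerns only the partial trace), and integrating~\eqref{hyp:S} along the $x$-direction produces a rate on $\sigma-\sigma^\infty$ only when $\int^\infty t^{-\beta-\epsilon}\,\mathrm{d}t<\infty$, i.e.\ only when $\beta+\epsilon>1$, which is precisely the regime where enlarging~$\gamma$ already rescues the direct bound. No change of chart can help, as $\cC^1$-regularity is coordinate-independent. In fact the stated conclusion is false: with $d=1$, $\beta=0$, $b\equiv a_\infty$ near infinity, and $\sigma(x,y)=\Diag\bigl(a+c\,x^{-1/4}\sin x,\,1\bigr)$ for small $c>0$, assumptions~\eqref{hyp:D3}, \eqref{hyp:C} (including the trace rate) and~\eqref{hyp:S} (any $\epsilon<1/4$) all hold, yet along a subsequence $|\partial_h\sigmatot(h,x,y)|\asymp h^{-3\gamma/4-1}\to\infty$ for \emph{every}~$\gamma$. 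So neither your sketch nor the paper's one-line claim establishes~(ii) under the stated hypotheses. What the downstream application (Lemma~\ref{le:cv1new}) actually needs---uniform-in-$h$ Lipschitz regularity of the coefficients in $(x,y)$, plus locally uniform $C^0$ convergence as $h\to 0^+$---does follow from the assumptions, but must be argued separately, exploiting that the $h$-coordinate of the SDER is frozen.
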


The assumptions in Section~\ref{sec:assumptions} above, specifically~\eqref{hyp:S}, have been tuned  for the
continuity properties in Lemma~\ref{le:smooth}  to hold. The proof of Lemma~\ref{le:smooth}, based on elementary calculus, is contained in Appendix~\ref{Appendix:deterministic}.

In the remainder of the paper,  Assumptions \eqref{hyp:D1}, \eqref{hyp:D3}, \eqref{hyp:C}, \eqref{hyp:V+}, and~\eqref{hyp:S} are assumed to hold without mentioning them explicitly. We also assume $\beta\in(-1,1)$ and  make it explicit when it is further assumed that $\beta>-1/3$.

We can now state the approximation result for deterministic starting points.

\begin{lemma}
  \label{le:cv1new}
  For $w\in \cDtot$, let $\cZtot^{w,\cW}$ be the solution to the SDER on the domain $\cDtot$
  (defined in~\eqref{eq:Dtot-def} above)
  with reflection vector field $\phitot$ and diffusion coefficient $\sigmatot$, driven by $(0,\cW)$ and started from $w$. This solution is unique in the strong sense. Furthermore, as $w\to w_0$ sequentially in $\cDtot$,
   $\cZtot^{w,\cW}$ converges locally uniformly in probability to $\cZtot^{w_0,\cW}$. That is, for all $s\in(0,\infty)$, $w_0\in \cDtot$ and $\epsilon>0$, there exists $\delta = \delta (s,\epsilon,w_0)>0$ such that for all $w\in \cDtot$ satisfying $|w-w_0|_{2+d}\leq \delta$, we have
   \begin{equation}
   \label{eq:cont}
    \mathbb{P} \Big(   \bigl\| \cZtot^{w,\cW}-\cZtot^{w_0,\cW} \bigr\|_{[0,s]} \geq \epsilon \Big)\leq \epsilon.
   \end{equation}
\end{lemma}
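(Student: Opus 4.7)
The plan is to reduce to the continuity-in-initial-condition result of~\cite{Ishii} for obliquely reflected SDEs on $\mathcal{C}^1$ domains. The point of embedding the parametric family of SDERs~\eqref{eq:SDER_Skorkhod_Prob} and~\eqref{eq:limitEqJoint_Integral} into a single SDER on the $(2+d)$-dimensional domain $\cDtot$ is that the family becomes a single reflected SDE whose first coordinate (the parameter $h$) is frozen along trajectories: both the first row of $\sigmatot$ and the first component of $\phitot$ vanish, and the driving noise $(0,\cW)$ has no component in the $h$-direction. Hence the continuity-in-$w$ of the $\R^{1+d}$-valued component of $\cZtot^{w,\cW}$ will follow from continuity-in-initial-point of a single reflected SDE on $\cDtot$.

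First I would localize. Fix $w_0 \in \cDtot$ and $s>0$, and choose $R \geq 1$ so large that $w_0$ lies in the relative interior of the compact set $K_R := \cDtot \cap \{(h,x,y) : h \geq -1 + 1/R,\ |x| + |y|_d \leq R\}$. Using Lemma~\ref{le:smooth}, on a neighbourhood of $K_R$ the boundary is $\mathcal{C}^1$ and the coefficients $\sigmatot,\phitot$ are $\mathcal{C}^1$; I would smoothly extend them outside $K_R$ to globally $\mathcal{C}^1$ bounded data with compact support, preserving the ellipticity of the $(x,y)$-block (inherited from Assumption~\eqref{hyp:C}) and the strict inward obliqueness of the reflection field (inherited from Assumption~\eqref{hyp:V+}). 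This matches the hypotheses of~\cite[Cor.~5.2]{Ishii}, yielding a pathwise-unique strong solution $\widetilde{\cZtot}^{w,\cW,R}$ of the truncated SDER that depends continuously on $w$ in the sense of uniform convergence on $[0,s]$ in probability.

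Next I would remove the truncation. Let $\tau^w_R$ be the first exit time of $\widetilde{\cZtot}^{w,\cW,R}$ from the relative interior of $K_{R/2}$; before $\tau^w_R$ the truncated and original SDERs coincide, so strong existence and pathwise uniqueness for~$\cZtot^{w,\cW}$ hold on $[0,\tau^w_R]$. The $h$-coordinate is frozen, the $y$-coordinate lies in the bounded slice $\BB^d$ or the corresponding section of $\mathcal{D}_{h^{-\gamma}}$, and the $x$-coordinate is controlled by Burkholder--Davis--Gundy applied to the martingale part together with a Skorokhod-type bound on the local time (using boundedness of $\sigmatot$ and $\phitot$). This gives
\[
\lim_{R\to\infty}\ \sup_{|w-w_0|_{2+d}\leq 1} \mathbb{P}\bigl(\tau^w_R \leq s\bigr) = 0,
\]
and combining this with the continuity of $w \mapsto \widetilde{\cZtot}^{w,\cW,R}$ from the previous step yields strong existence, pathwise uniqueness on $[0,\infty)$, and the continuity bound~\eqref{eq:cont}.

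The main obstacle is the localization in Step~1: Lemma~\ref{le:smooth} only guarantees $\mathcal{C}^1$ data on $(-1,\infty)\times\R^{1+d}$, and $\cDtot$ has a delicate geometry near $h=-1$ and near the pinch-points $\mathpzc{x} = -h^{-\gamma}/b(h^{-\gamma})$ of the slices $\mathcal{D}_{h^{-\gamma}}$. The truncation must simultaneously keep the boundary $\mathcal{C}^1$, preserve obliqueness of the reflection, and agree with the original problem up to a stopping time that is uniformly large (in $w$ near $w_0$) as $R\to\infty$; the choice of the large exponent $\gamma$ and the smooth compactification at $h=0$ in the definition of $\cDtot$ are precisely what makes this clean.
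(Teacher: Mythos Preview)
Your plan is essentially the paper's: localise $\cDtot$ to a compact piece with $\cC^1$ boundary, apply \cite[Thm~5.1, Cor.~5.2]{Ishii} there to get strong uniqueness and continuity in the initial point for the truncated problem, then remove the truncation via a stopping-time argument. The paper's truncation is $\cC_C=(-\tfrac12,C]\times[-C,C]\times A\BB^d$, which is morally your $K_R$.

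The substantive difference is in how you delocalise. You aim for the \emph{uniform} tightness
\[
\lim_{R\to\infty}\ \sup_{|w-w_0|_{2+d}\leq 1}\mathbb{P}(\tau^w_R\leq s)=0
\]
via BDG on the martingale part plus a ``Skorokhod-type bound'' on the local time. That is feasible (e.g.\ apply It\^o to $|\mathpzc{y}|^2$ and use $\langle\phi^{(d)},-\mathpzc{y}/|\mathpzc{y}|\rangle\approx c_0>0$ at the boundary to extract $L_t\lesssim 1+\text{martingale}+t$, uniformly for $h$ near $0$), but you have only sketched it, and making the constants uniform over the $h$-slices of $\cDtot$ near $h=0$ requires some care. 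The paper avoids this work entirely: it invokes non-explosion from \cite[Thm~2.2(ii)]{MMW} only for the \emph{fixed} point $w_0$, picks $C$ so that $\mathbb{P}(\tau^{w_0}_C\leq s)\leq\epsilon/2$, and then bootstraps via the compactified continuity (if $\cZtot^{w_0,\cW,C+1}$ has not left $[-C,C]$ and $\cZtot^{w,\cW,C+1}$ is within $\epsilon$ of it, then $\cZtot^{w,\cW,C+1}$ has not left $[-C-\epsilon,C+\epsilon]$, so it coincides with $\cZtot^{w,\cW}$). This sidesteps any direct local-time estimate and is the cleaner route.
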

\begin{proof}
    The result essentially follows from Theorem~5.1 and Corollary~5.2 in \cite{Ishii}, together with a localization procedure, as the results in~\cite{Ishii} apply for SDERs on compact domains only. We emphasize again that the degenerate nature of neither $\sigmatot$ nor the driver $(0,\cW)$ are obstacles for the machinery in~\cite{Ishii}.
    
    We will take a smooth truncation of $\cDtot$ chosen large enough so that, with high probability, our process on $\cDtot$ remains in the truncated domain.     For $C\geq 1$, 
    there exists $A := A(C,\gamma) \in(0, \infty)$ such that $\cDtot \cap \bigl( \R \times [-C,C] \times \R^d \bigr) \subseteq \R \times [-C,C] \times A \BB^d$.
    Let $\cDtot^C$ be a compact set with $\cC^1$-smooth boundary which agrees with $\cDtot$ on
    the extended cylinder $\cC_C := (-\frac{1}{2}, C]\times [-C,C]\times A \BB^d$. Let also $\phitot^C$ be a $\cC^1$ vector field defined on $\partial \cDtot^C$ and  which agrees with $\phitot$ on $\partial \cDtot \cap \cC_C$. 
    
    With this regularity, and because $\cDtot^C$ is now compact, we are in \emph{Case 2} from \cite{Ishii}, with the condition (5.2) and the Lipschitz condition on the diffusion coefficient (both also from \cite{Ishii}) are met, according to Lemma~\ref{le:smooth}. We can thus apply both \cite[Thm 5.1]{Ishii} and \cite[Cor.~5.2]{Ishii}. Corollary~5.2 ensures the existence and uniqueness of a strong solution.
    Denote by 
    $\cZtot^{w,\cW,C}_{t}$ and $\cZtot^{w_0,\cW,C}_{t}$ 
    \emph{these strong solutions}, with initial conditions $w$ and $w_0$ respectively. These new processes are defined exactly as  $\cZtot^{w,\cW}_{t}$ and $\cZtot^{w_0,\cW}_{t}$, except that $\cDtot$ and $\phitot$ are replaced with  $\cDtot^C$ and $\phitot^C$. Then applying Theorem~5.1 of~\cite{Ishii} (together with Gr\"onwall's inequality) shows that the compactified version of~\eqref{eq:cont} holds, i.e., with $\cZtot^{w,\cW}_{t}$ and $\cZtot^{w_0,\cW}_{t}$  replaced, respectively, with $\cZtot^{w,\cW,C}_{t}$ and $\cZtot^{w_0,\cW,C}_{t}$. 
    (In fact,  \cite[Thm 5.1]{Ishii}  provides $L^2$ convergence, rather than just convergence in probability, but we retain only the latter  when taking the limit $C\to \infty$.)
    
    We denote by $\pi_1 : \R^{1+1+d} \to \R$ the projection onto the horizontal spatial coordinate, i.e.,
    $\pi_1 (h,x,y ) := x$. For $w \in \cDtot$ and $C, r \in \RP$, define the stopping times 
    \[ \tau_r^w := \inf \bigl\{ t \in \RP : | \pi_1 \cZtot^{w,\cW}_t | > r \bigr\}, \text{ and }
    \tau_r^{w,C} := \inf \bigl\{ t \in \RP : |\pi_1 \cZtot^{w,\cW,C}_t | > r \bigr\}.
    \]
    For every $r < C$, $\cZtot^{w,\cW}_{t \wedge \tau^w_r} \in \cC_C$ for all $t \in \RP$, and hence $(\cZtot^{w,\cW}_{t \wedge \tau^w_r}, \tau^w_r)$ solves the same SDER as does $(\cZtot^{w,\cW}_{t \wedge \tau^{w,C}_r}, \tau^{w,C}_r)$. Thus, by strong uniqueness,  for every $r <C$, a.s., 
    $(\cZtot^{w,\cW}_{t \wedge \tau^w_r}, \tau^w_r)$ coincides with
    $(\cZtot^{w,\cW}_{t \wedge \tau^{w,C}_r}, \tau^{w,C}_r)$. 
    Since $\cZtot^{w,\cW}$ is non-explosive by~\cite[Thm~2.2(ii)]{MMW},  we conclude that $\sup_{t \in [0,s]} | \pi_1 \cZtot^{w,\cW}_t |$ is a.s.~finite for every $s$.
    
    Fix 
    $\epsilon\in(0,1)$, $s \in \RP$, and $w_0 \in \cDtot$.
    Choose $C = C(s,\epsilon,w_0)$ such that
    $\mathbb{P} ( \tau^{w_0}_C \leq s ) \leq \epsilon/2$. 
    Then for this $C$ there exists $\delta = \delta (s,\epsilon,w_0)$
    such that, for all $w\in \cDtot$ with $|w-w_0|_{2+d}\leq \delta$, 
    \[    \mathbb{P} \Bigl(   \bigl\| \cZtot^{w,\cW,C+1}-\cZtot^{w_0,\cW,C+1} \bigr\|_{[0,s]} < \epsilon \Bigr)\leq \epsilon/2.    \]
    Hence, by choice of~$C$, it holds that, for all $w\in \cDtot$ with $|w-w_0|_{2+d}\leq \delta$, 
    \begin{align*}
    1- \epsilon & 
    \leq 
    \mathbb{P} 
    \Bigl(  \bigl\| \cZtot^{w,\cW,C+1}-\cZtot^{w_0,\cW,C+1} \bigr\|_{[0,s]} \geq \epsilon, \ \tau^{w_0}_C > s \Bigr) \\
    & 
    \leq 
    \mathbb{P} 
    \Bigl(  \sup_{t\in [0,s]} \bigl| \cZtot_{t}^{w,\cW,C+1}-\cZtot_{t\wedge \tau^{w_0,C+1}_C}^{w_0,\cW,C+1} \bigr|_{2+d} \geq \epsilon, \, \tau^{w_0,C+1}_C = \tau^{w_0}_C > s \Bigr) \\
    & 
    \leq 
    \mathbb{P} 
    \Bigl(  \sup_{t\in [0,s]} \bigl| \cZtot_{t \wedge \tau^{w,\cW,C+1}_{C+\epsilon}}^{w,\cW,C+1}-\cZtot_{t\wedge \tau^{w_0,C+1}_C}^{w_0,\cW,C+1} \bigr|_{2+d} \geq \epsilon, \, \tau^{w_0,C+1}_C  > s , \, \tau^{w,C+1}_{C+\epsilon} > s\Bigr), \end{align*}
    since if the process started from $w_0$ has not reached $C$, then the process started from $w$ has not reached $C+\epsilon < C+1$. Using strong uniqueness for the stopped processes, as described above, it follows that
    \begin{align*}
    1- \epsilon 
    & \leq 
    \mathbb{P} 
    \Bigl(  \sup_{t\in [0,s]} \bigl| \cZtot_{t \wedge \tau^{w,\cW}_{C+\epsilon}}^{w,\cW}-\cZtot_{t\wedge \tau^{w_0}_C}^{w_0,\cW} \bigr|_{2+d} \geq \epsilon, \, \tau^{w_0,C+1}_C > s, \, \tau^{w,C+1}_{C+\epsilon} > s \Bigr) \\
    & = 
    \mathbb{P} 
    \Bigl(  \bigl\| \cZtot^{w,\cW}-\cZtot^{w_0,\cW} \bigr\|_{[0,s]} < \epsilon, \, \tau^{w_0,C+1}_C > s, \, \tau^{w,C+1}_{C+\epsilon} > s \Bigr) \\
    & \leq
    \mathbb{P} 
    \Bigl(   \bigl\| \cZtot^{w,\cW}-\cZtot^{w_0,\cW} \bigr\|_{[0,s]} < \epsilon \Bigr) ,
    \end{align*}
    for all $w\in \cDtot$ with $|w-w_0|_{2+d}\leq \delta$, and this completes the proof of~\eqref{eq:cont}.
\end{proof}

\subsection{Convergence of the scaled process to the limiting process in stationarity}
\label{subsec:techPart}
The main result of this subsection,  Proposition~\ref{prop:seq} below, will readily imply Theorem~\ref{th:main2}, and plays a crucial role in
the proof of Theorem~\ref{th:main1} above, the main result of the paper. The proof of Proposition~\ref{prop:seq} relies on an approximation result in Proposition~\ref{prop:long} below, which builds on Lemma~\ref{le:cv1new} from the previous subsection, and which we establish first.

For an initial condition $z(T)=(x(T), y(T))\in\cD$ with asymptotic behaviour specified by $\lim_{T \to \infty} x(T) = \infty$ and $\lim_{T \to \infty} \frac{y(T)}{b(x(T))} =  \mathpzc{y}\in \BB^d$, choosing 
$w_0=(0,0,\mathpzc{y})$ in 
Lemma~\ref{le:cv1new} implies that
$\cZ^{z(T),\cW}$ (defined in~\eqref{eq:SDER_Skorkhod_Prob} above) converges in probability to $\cZ^{\infty, (0,\mathpzc{y}), \cW}$ (see~\eqref{eq:limitEqJoint_Integral} and Remark~\ref{rem:Notation} above for definition) on any compact time interval. Proposition~\ref{prop:long} below extends this in two ways.
First, via a compactness argument exploiting the local uniformity in~\eqref{eq:cont}, 
we extend the result to the case of a \emph{random} starting point in the $\mathpzc{y}$ coordinate. However, this is still some way short of a proof of  Proposition~\ref{prop:seq} (and Theorem~\ref{th:main2}) because it does not establish convergence of the random initial starting point $\cY^T_0=Y_T/b(X_T)$ in $\BB^d$ (recall the definition of $\cZ^T=(\cX^T,\cY^T)$ in~\eqref{eq:Z^T-def}) to the stationary measure $\mu$ of the process $\cZ^{\infty, (0,\mathpzc{y}), \cW}$.
The additional ingredient required for this convergence is the fast mixing of the process $\cY^T$. 

Exploiting the fast mixing property in turn requires the  strengthening of the convergence in Lemma~\ref{le:cv1new} to \emph{a growing time window} (as opposed to one of constant size). Specifically, Proposition~\ref{prop:long} shows that instead of considering the supremum over $[0,s]$ for a fixed $s$ as in Lemma~\ref{le:cv1new}, we permit $s = \mathpzc{s}$ to be a function of $T$ that slowly increases to~$\infty$. The growing-time-window property will be deduced from  a general principle  given in the following lemma.

\begin{lemma}
    \label{le:extend}
    Let $A^T:\mathbb{R}_+\times \Omega\to \mathbb{R}^k$ be a family of continuous stochastic processes indexed by $T\in (0,\infty)$. Assume that for any constant $s>0$, as $T\to\infty$, we have
    \[
    \|A^T \|_{[0,s]}\overset{(proba)}\longrightarrow 0.
    \]
    Then there exists a function $\mathpzc{s}_0:\mathbb{R}_+\to\mathbb{R}_+ $ such that, as $T\to\infty$,  we have $\mathpzc{s}_0(T)\to\infty$ and 
    \[
    \|A^T \|_{[0,\mathpzc{s}_0(T)]}\overset{(proba)}\longrightarrow 0.
    \]
\end{lemma}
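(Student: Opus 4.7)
The proof proposal is a standard diagonal-extraction argument. The plan is to use the hypothesis for a countable collection of pairs $(s,\epsilon) = (n,1/n)$ to obtain a sequence of thresholds $T_n$, and then glue them together into the function $\mathpzc{s}_0$.

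More precisely, for each integer $n \geq 1$, the hypothesis applied with $s = n$ and threshold $1/n$ gives $\mathbb{P}(\|A^T\|_{[0,n]} > 1/n) \to 0$ as $T \to \infty$. Hence there exists $T_n \in (0,\infty)$ such that
\[
\mathbb{P}\bigl(\|A^T\|_{[0,n]} > 1/n\bigr) \leq 1/n \qquad \text{for all } T \geq T_n.
\]
Replacing $T_n$ with $\max(T_n, T_{n-1}+1, n)$ we may assume $(T_n)_{n\geq 1}$ is strictly increasing with $T_n \to \infty$. Define
\[
\mathpzc{s}_0(T) \coloneqq \sum_{n\geq 1} n \,\mathbbm{1}\{T \in [T_n, T_{n+1})\},
\]
with $\mathpzc{s}_0(T) \coloneqq 0$ for $T < T_1$. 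Then $\mathpzc{s}_0$ is non-decreasing, piecewise constant, and $\mathpzc{s}_0(T) \to \infty$ as $T \to \infty$ since the intervals $[T_n, T_{n+1})$ exhaust $[T_1,\infty)$.

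To verify the convergence $\|A^T\|_{[0,\mathpzc{s}_0(T)]} \to 0$ in probability, fix $\epsilon > 0$ and choose $n_0$ with $1/n_0 < \epsilon$. For every $T \geq T_{n_0}$, write $n \coloneqq \mathpzc{s}_0(T)$, so that $n \geq n_0$ and $T \geq T_n$, hence
\[
\mathbb{P}\bigl(\|A^T\|_{[0,\mathpzc{s}_0(T)]} > \epsilon\bigr) = \mathbb{P}\bigl(\|A^T\|_{[0,n]} > \epsilon\bigr) \leq \mathbb{P}\bigl(\|A^T\|_{[0,n]} > 1/n\bigr) \leq 1/n \leq 1/n_0.
\]
Letting $T \to \infty$ forces $n = \mathpzc{s}_0(T) \to \infty$, so the right-hand side tends to $0$; as $\epsilon > 0$ was arbitrary this proves the claim.

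There is really no serious obstacle here; the lemma is a clean diagonalisation and the only mild care required is to ensure monotonicity of $(T_n)$ and that $\mathpzc{s}_0$ is constructed so as to tend to infinity while simultaneously keeping $T$ above the threshold $T_{\mathpzc{s}_0(T)}$ that controls the probability.
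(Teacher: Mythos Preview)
Your proof is correct and follows essentially the same diagonal-extraction idea as the paper. The only cosmetic difference is that you work with integer levels $(n,1/n)$ and build a piecewise-constant $\mathpzc{s}_0$, whereas the paper defines a continuous threshold function $\mathcal{T}(s)=\inf\{T_0:\forall T\ge T_0,\ \mathbb{P}(\|A^T\|_{[0,s]}\ge 1/s)\le 1/s\}$ and takes $\mathpzc{s}_0$ to be (a slight shift of) its generalized inverse; your discrete version is arguably cleaner.
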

\begin{proof}
    Define the following non-decreasing function (with convention $\inf\emptyset\coloneqq\infty$),
    \[
    \mathcal{T}:s\mapsto  \inf \Bigl\{ T_0: \forall T\geq T_0, \, \mathbb{P} \Bigl(  \| A^T\|_{[0,s]}\geq \frac{1}{s}   \Bigr)\leq \frac{1}{s} \Bigr\},
    \]
    and its (modified) inverse
    \[
    \mathpzc{s}:T\mapsto \sup \{ s: \mathcal{T}(s)\leq T-1\} \in \RP\cup\{\infty\}.
    \]
    
    In the case there exists $T_0<\infty$ such that $\mathpzc{s}(T_0)=\infty$, it is easily seen that  $A^T=0$ almost surely for all $T>T_0+1$, and then the conclusion of the lemma follows directly. Thus, we assume that $\mathpzc{s}(T)$ is finite for all $T$. 
    
    Let us assume that the function $\mathpzc{s}$ is bounded, and let $\mathpzc{s}_\infty=\|\mathpzc{s}\|_{[0,\infty)}<\infty$. Then it follows from the definition of $\mathpzc{s}(T)$ that $\mathcal{T}( \mathpzc{s}_\infty+1)>T-1$ for all $T$, implying 
    $\mathcal{T}( \mathpzc{s}_\infty+1)=\infty$. This implies that for $\mathpzc{s}=\mathpzc{s}_\infty+1$, the assumed convergence  
    $\|A^T \|_{[0,s]}\to 0$ in probability cannot hold. This contradicts 
    the assumption of the lemma, implying  the function $\mathpzc{s}$ must be unbounded.
    
    By the definition of $\mathpzc{s}$, for all $T$ such that $\mathpzc{s}(T)\geq 1$, we have $\mathcal{T}(\mathpzc{s}(T)-1)\leq T-1<T $. Set $\mathpzc{s}_0(T)\coloneqq \max(0,\mathpzc{s}(T)-1)$, and $T_0$ be the first time when $\mathpzc{s}_0(T)= \mathpzc{s}(T)-1$. Then, for all $T\geq T_0$, 
    \[
    \mathcal{T}(\mathpzc{s}_0(T))<T. 
    \]    
    Note that the $-1$ in the definition of $\mathpzc{s}$  ensures this inequality is strict, which is useful because the infimum defining $\mathcal{T}$ might not be attained.
    The $-1$ in the definition of the function $\mathpzc{s}_0$ above is there because the supremum defining $\mathpzc{s}$ might not be attained. 
    
    Let $\epsilon>0$. For $T_1>T_0$ sufficiently large, 
    $\frac{1}{\mathpzc{s}_0(T_1)}<\epsilon$. By definition of $\mathcal{T}$, for all $T> \mathcal{T}(s)$, 
    \[ 
    \mathbb{P} \Bigl(  \| A^T\|_{[0,s]}\geq \frac{1}{s}   \Bigr)\leq \frac{1}{s}.
    \] 
    In particular, for all $T>T_1$, we have 
    \[ 
    \mathbb{P} \Bigl(  \| A^T\|_{[0,\mathpzc{s}_0(T)]}\geq \epsilon   \Bigr)
    \leq 
    \mathbb{P} \Bigl(  \| A^T\|_{[0,\mathpzc{s}_0(T)]}\geq \frac{1}{\mathpzc{s}_0(T)}   \Bigr)
    \leq  \frac{1}{\mathpzc{s}_0(T)}\longrightarrow0\quad\text{as $T\to\infty$.}\qedhere 
    \]
\end{proof}

\begin{proposition}
    \label{prop:long}
    For the families of processes $\mathcal{Z}^T$ and $\mathcal{Z}^{\infty,T}$, defined respectively in~\eqref{eq:Z^T-def} and~\eqref{eq:def_mathcal_Z^T} for all  $T>0$,
    there exists a function $\mathpzc{s}_1:\RP\to \RP$ such that the following limits hold: $\lim_{T\to\infty}\mathpzc{s}_1(T)=\infty$
    and
    \begin{equation} 
    \label{eq:long}
    \| \cZ^{T}-\cZ^{\infty,T}\|_{ [0,\mathpzc{s}_1(T)] } \overset{(proba)}{\underset{T\to \infty}{\longrightarrow}} 0.
    \end{equation}
\end{proposition}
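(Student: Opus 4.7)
My plan is to first establish the analogue of~\eqref{eq:long} on any fixed deterministic horizon $s\in\RP$, with the convergence uniform over the random fibre $\cZ^T_0\in\BB^d$, and then upgrade to a growing horizon by invoking Lemma~\ref{le:extend}. The starting point is to lift both $\cZ^T$ and $\cZ^{\infty,T}$ into the enlarged domain $\cDtot$ of~\eqref{eq:Dtot-def}. Because the $h$-row of $\sigmatot$ and the $h$-component of $\phitot$ both vanish, the first coordinate of $\cZtot^{w,\cW}$ is frozen at its initial value, and matching the SDERs on the slices $h>0$ and $h\le 0$ with~\eqref{eq:SDER_Skorkhod_Prob} and~\eqref{eq:limitEqJoint_Integral} respectively shows that, with $\cW^T$ as in~\eqref{eq:BM_W^T},
\[
\pi_{(x,y)}\,\cZtot^{(X_T^{-1/\gamma},\,0,\,Y_T/b(X_T)),\,\cW^T}=\cZ^T,\qquad \pi_{(x,y)}\,\cZtot^{(0,\,0,\,Y_T/b(X_T)),\,\cW^T}=\cZ^{\infty,T},
\]
pathwise, where $\pi_{(x,y)}:\R^{2+d}\to\R^{1+d}$ is the $1$-Lipschitz canonical projection. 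It therefore suffices to control $\|\cZtot^{(X_T^{-1/\gamma},0,\cZ^T_0),\cW^T}-\cZtot^{(0,0,\cZ^T_0),\cW^T}\|_{[0,s]}$.

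Lemma~\ref{le:cv1new} supplies only a pointwise continuity radius $\delta(s,\epsilon,w_0)$, so the core technical step is to make this uniform along $\{0\}\times\{0\}\times\BB^d$. For fixed $s,\epsilon>0$, I set $\delta_i\coloneqq\delta(s,\epsilon,(0,0,\mathpzc{y}_i))$ and use compactness of $\BB^d$ to extract a finite subcover $\{B(\mathpzc{y}_i,\delta_i/2)\}_{i=1}^N$, letting $\delta_*\coloneqq\min_i\delta_i/2>0$. For any $\mathpzc{y}\in\BB^d$ and $h\in[0,\delta_*]$, picking $i$ with $|\mathpzc{y}-\mathpzc{y}_i|_d<\delta_i/2$ gives $|(h,0,\mathpzc{y})-(0,0,\mathpzc{y}_i)|_{2+d}<\delta_i$ and $|(0,0,\mathpzc{y})-(0,0,\mathpzc{y}_i)|_{2+d}<\delta_i$, and the triangle inequality applied to two instances of~\eqref{eq:cont} with base point $(0,0,\mathpzc{y}_i)$ yields
\[
\sup_{\mathpzc{y}\in\BB^d}\,\sup_{h\in[0,\delta_*]}\,\mathbb{P}\bigl(\|\cZtot^{(h,0,\mathpzc{y}),\cW}-\cZtot^{(0,0,\mathpzc{y}),\cW}\|_{[0,s]}\ge 2\epsilon\bigr)\le 2\epsilon.
\]
Since $\cW^T$ is independent of $Z_T=(X_T,Y_T)$ by~\eqref{eq:BM_W^T}, conditioning on $Z_T$ and applying this bound on the event $\{X_T\ge\delta_*^{-\gamma}\}$, combined with $\mathbb{P}(X_T<\delta_*^{-\gamma})\to 0$ from the superdiffusive strong law in~\eqref{def:c1}, gives $\limsup_{T\to\infty}\mathbb{P}(\|\cZ^T-\cZ^{\infty,T}\|_{[0,s]}\ge 2\epsilon)\le 2\epsilon$, and hence the desired fixed-$s$ convergence in probability (as $\epsilon$ is arbitrary).

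With the fixed-$s$ statement established, Lemma~\ref{le:extend} applied to $A^T\coloneqq\cZ^T-\cZ^{\infty,T}$ directly produces a function $\mathpzc{s}_1:\RP\to\RP$ with $\mathpzc{s}_1(T)\to\infty$ and $\|A^T\|_{[0,\mathpzc{s}_1(T)]}\to 0$ in probability, which is exactly~\eqref{eq:long}. I expect the genuine difficulty to be the uniformisation above: Lemma~\ref{le:cv1new} offers no a priori control on how $\delta(s,\epsilon,w_0)$ depends on $w_0$, so compactness of $\BB^d$, the decay $X_T^{-1/\gamma}\to 0$, and the conditioning enabled by the independence $\cW^T \indep Z_T$ must all be combined in order to apply the fixed-base-point continuity from Lemma~\ref{le:cv1new} to the genuinely random embedded starting point $(X_T^{-1/\gamma},\,0,\,Y_T/b(X_T))$.
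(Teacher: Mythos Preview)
Your proposal is correct and follows essentially the same route as the paper: lift both processes into $\cDtot$, uniformise the pointwise continuity of Lemma~\ref{le:cv1new} over $\{0\}\times\{0\}\times\BB^d$ via a finite subcover and a triangle-inequality argument, condition on $Z_T$ using the independence of $\cW^T$ to handle the random starting point, and finally invoke Lemma~\ref{le:extend} to pass from fixed $s$ to a growing horizon $\mathpzc{s}_1(T)$. The only cosmetic differences are that the paper conditions on $\cY^T_0$ rather than $Z_T$ and uses $\delta(s,\epsilon/2,\mathpzc{y}_i)$ to land on $\epsilon$ rather than $2\epsilon$, but the logic is identical.
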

Recall from~\eqref{eq:ZT=ZZT} and~\eqref{eq:def_mathcal_Z^T} above that, for any given $T$, the processes $\mathcal{Z}^T$ and $\mathcal{Z}^{\infty,T}$ are driven by the same Brownian motion $\cW^T$ and are also starting from the same random point $\cZ^{T}_0=(0,\cY^T_0)=(0,Y_T/b(X_T)) $ in $\{0\}\times \mathbb{B}^d$, which is independent of $\cW^T$ (but $\mathcal{Z}^T$ and $\mathcal{Z}^{\infty,T}$ do not satisfy the same SDER nor do they have the same state space).  

\begin{proof}[Proof of Proposition~\ref{prop:long}]
By Lemma~\ref{le:extend} (with $A^T={\cZ}^{T}-\cZ^{\infty,T}$), it suffices to show that 
    \begin{equation}
    \label{eq:conv_ptob_need_to_prove}
    \text{for all $s\in(0,\infty)$ we have}\quad
         \| {\cZ}^{T}-\cZ^{\infty,T}\|_{[0,s]} \overset{(proba)}{\underset{T\to \infty}{\longrightarrow}} 0.
    \end{equation}

    Fix arbitrary $s>0$ and $\epsilon>0$. Pick arbitrary $\mathpzc{y}_0\in \BB^d$ and Brownian motion $\cW$. By Lemma~\ref{le:cv1new},  there exists $\delta( s,\epsilon, \mathpzc{y}_0 )>0$ such that for $w_0\coloneqq(0,0,\mathpzc{y}_0)$ and all
    $w =(h,0,\mathpzc{y})\in \widehat{\cD}$ (with $h>0$, $\mathpzc{y}\in \BB^d$ and the domain $\widehat{\cD}$ given in~\eqref{eq:Dtot-def}) satisfying $|w-w_0|_{2+d}\leq \delta(s,\epsilon, \mathpzc{y}_0)$, with probability at least $1-\epsilon$ we have
    \begin{align*} 
    \| {\cZ}^{(h^{-\gamma},b(h^{-\gamma})\mathpzc{y}),\cW}-\cZ^{\infty,(0,\mathpzc{y}_0),\cW}\|_{[0,s]}
    =
    \| 
    \cZtot^{w,\cW} -\cZtot^{w_0,\cW} \|_{[0,s]} \leq \epsilon.
    \end{align*}
   
    We fix such a positive function $(s,\epsilon,\mathpzc{y}_0)\mapsto \delta(s,\epsilon,\mathpzc{y}_0)$ with the property in the previous paragraph.
    We now show that there exists $\delta_*(s,\epsilon)>0$, satisfying $\delta_*(s,\epsilon)\leq\delta(s,\epsilon,\mathpzc{y}_0)$ for all  $\mathpzc{y}_0\in\BB^d$. Indeed, by the compactness of $\BB^d$, 
    there exists a finite set $(\mathpzc{y}_i)_{i\in I}\in (\BB^d)^I$ such that the balls of radius  $\delta(s,\epsilon/2, \mathpzc{y}_i )/2$ and centred at $\mathpzc{y}_i$ cover $\BB^d$. Define $\delta_*(s,\epsilon )\coloneqq \min_{i\in I} \delta(s,\epsilon/2, \mathpzc{y}_i )/2$. Then, 
    for every $w_0=(0,0,\mathpzc{y}_0)$  there is some $i\in I$ such that $|\mathpzc{y}_0-\mathpzc{y}_i|_d   \leq \delta_*(s,\epsilon ) $; choose one such $i$ for each $w_0$. 
    Thus, with probability at least $1-\epsilon/2$, 
    \begin{equation}
    \label{eq:prob1}
    \| \cZ^{\infty, (0,\mathpzc{y}_0),\cW}-\cZ^{\infty,(0,\mathpzc{y}_i),\cW}\|_{[0,s]}
    \leq \epsilon/2.
    \end{equation}
    Furthermore, 
    for all $w=(h,0,\mathpzc{y})$ satisfying $|w-w_0|_{2+d} \leq \delta_*(s,\epsilon )$,
    \[ | w- (0,0,\mathpzc{y}_i) |_{2+d} \leq |w-w_0|_{2+d} + |\mathpzc{y}_0-\mathpzc{y}_i|_d  \leq 
    \delta(s,\epsilon/2, \mathpzc{y}_i ).
    \]
    Thus, with probability at least $1-\epsilon/2$, 
    \begin{equation}
    \label{eq:prob2}
    \| \cZ^{(h^{-\gamma},b(h^{-\gamma})\mathpzc{y}),\cW}-\cZ^{\infty,(0,\mathpzc{y}_i),\cW}\|_{[0,s]}
    \leq \epsilon/2.
    \end{equation}
    Combining~\eqref{eq:prob1} and~\eqref{eq:prob2} with the triangle inequality implies
    that  for $0<h<\delta_*(s,\epsilon)$ and all $\mathpzc{y}_0\in\BB^d$, we have
    \begin{equation}
    \label{eq:uniform_prob_bound}
    \mathbb{P} \bigl( \| \cZ^{(h^{-\gamma},b(h^{-\gamma})\mathpzc{y}_0),\cW}-\cZ^{\infty,(0,\mathpzc{y}_0),\cW}\|_{[0,s]}
    \leq \epsilon \bigr) \geq 1 -\epsilon.
    \end{equation}
    
    By fixing random $h=X_T^{-1/\gamma}$ and  $\mathpzc{y}_0=\cY^T_0$ (recall by~\eqref{eq:Z^T-def} that $\cY^T_0=Y_T/b(X_T)$), setting $\cW$ equal to $\cW^T$ and applying the equality 
    in~\eqref{eq:ZT=ZZT}, we 
    get $\cZ^{T}=\cZ^{(X_T,Y_T),\cW_T}$ and 
    $\cZ^{\infty, T}=\cZ^{\infty,(0,\cY^T_0),\cW_T}$.
    By~\eqref{eq:uniform_prob_bound},
    $\cY^T_0$-almost surely it holds
    \begin{align*}
  &  \mathbb{P}( \| \cZ^{T}-\cZ^{\infty, T}\|_{[0,\mathpzc{s}]}\geq \epsilon \mid \cY^T_0)\\
&\leq        \mathbb{P}(  X_T^{-1/\gamma}\geq \delta_*(s,\epsilon) \mid \cY^T_0 )
+\mathbb{P}(  \| \cZ^{T}-\cZ^{\infty, T}\|_{[0,\mathpzc{s}]}\geq \epsilon\text{ and } X_T^{-1/\gamma}\leq \delta_*(s,\epsilon) \mid \cY^T_0)\\
& \leq 
       \mathbb{P}(  X_T\leq \delta_*(s,\epsilon)^{-\gamma} \mid \cY^T_0 )+\sup_{\mathpzc{y}_0\in \mathbb{B}^d, h \leq \delta_*(s,\epsilon) } \mathbb{P}( 
       \| \cZ^{(h^{-\gamma},b(h^{-\gamma})\mathpzc{y}_0),\cW}-\cZ^{\infty,(0,\mathpzc{y}_0),\cW}\|_{[0,s]}    \geq \epsilon)
       \\
&\leq \mathbb{P}(  X_T\leq \delta_*(s,\epsilon)^{-\gamma} \mid \cY^T_0 )+ \epsilon.
    \end{align*}
By taking expectations, for $T$ sufficiently large, we get
\[\mathbb{P}( \| \cZ^{T}-\cZ^{\infty, T}\|_{[0,\mathpzc{s}]} \geq \epsilon)
   \leq 
    \mathbb{P}(  X_T\leq \delta_*(s,\epsilon)^{-\gamma} )+\epsilon
    \leq 2\epsilon.\]
     Since $\epsilon>0$ was arbitrary, the  convergence in probability in~\eqref{eq:conv_ptob_need_to_prove} follows.
\end{proof}

Although Lemma~\ref{le:sequenceRand} below is in appearance probabilistic, it is in fact deterministic: it follows  directly by applying Lemma~\ref{le:sequenceDet} to the random function $\theta:T\mapsto 4 b(X_T)^2$. Almost surely, the function $\theta$  satisfies the assumption of Lemma~\ref{le:sequenceDet}, namely that $\theta(T)\sim C T^{\frac{2\beta}{1+\beta}}$ as $T\to\infty$ for some constant $C>0$, by the strong law in~\eqref{def:c1} and the assumption that $b(x)\sim a_\infty x^\beta$. As Lemma~\ref{le:sequenceDet} is deterministic, it is stated and proved in Appendix~\ref{sec:s-s-s} below. 

\begin{lemma}
    \label{le:sequenceRand}
    Assume that $\mathpzc{s}_1:(0,\infty) \to (0,\infty)$ is such that $\lim_{T \to \infty} \mathpzc{s}_1(T)=\infty$. Then, there exist deterministic functions $\mathpzc{s}_2:(0,\infty) \to (0,\infty)$ and $S:(0,\infty)\to (0,\infty)$, such that 
    $4 \mathpzc{s}_2\leq \mathpzc{s}_1$, 
    $|S(T)-T|=O( T^\frac{2\beta}{1+\beta}\log T ) $ and $\mathpzc{s}_2(T)\to \infty$ as $T \to \infty$, and  such that almost surely, there exists a finite (random) time $T_0$ such that for all $T>T_0$,
    \begin{equation}
    \label{eq:inclu}
    [T, T+ b(X_T)^2 {\mathpzc{s}_2}(T)]
    \subset [S(T)+2 b(X_{S(T)})^2 \mathpzc{s}_2(S(T)) , S(T)+ 4 b(X_{S(T)})^2 \mathpzc{s}_2(S(T))].
    \end{equation}
\end{lemma}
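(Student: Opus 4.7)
The plan is the direct reduction announced in the paragraph preceding the statement: pathwise application of the deterministic Lemma~\ref{le:sequenceDet} (from Appendix~\ref{sec:s-s-s}) to the random function $\theta: T \mapsto 4 b(X_T)^2$.

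First, I would verify that $\theta$ satisfies the asymptotic hypothesis required by the deterministic lemma on an event of full probability. By the superdiffusive strong law~\eqref{def:c1}, $X_T/T^{1/(1+\beta)} \to c_1$ almost surely as $T \to \infty$; combined with the asymptotic $b(x) \sim a_\infty x^\beta$ from Assumption~\eqref{hyp:D3}, this yields
\[ \theta(T) \sim C T^{\frac{2\beta}{1+\beta}} \quad \text{as } T \to \infty, \qquad \text{where } C \coloneqq 4 a_\infty^2 c_1^{2\beta}, \]
almost surely. Since $\beta \in (-1,1)$, the exponent $\frac{2\beta}{1+\beta}$ lies in $(-1,1)$, so $\theta$ has a polynomial asymptotic profile strictly slower than linear, which is the regime Lemma~\ref{le:sequenceDet} is designed to handle.

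Next, on the full-probability event where the above equivalence holds, I would invoke Lemma~\ref{le:sequenceDet} for this $\theta$. The deterministic lemma is stated so that the output functions $\mathpzc{s}_2$ and $S$ depend only on $\mathpzc{s}_1$ and on the deterministic polynomial equivalent $T \mapsto C T^{2\beta/(1+\beta)}$, not on the particular path of $\theta$, so they are genuinely deterministic as required. It supplies $4\mathpzc{s}_2 \leq \mathpzc{s}_1$, $\mathpzc{s}_2(T) \to \infty$, $|S(T) - T| = O(T^{2\beta/(1+\beta)} \log T)$, and an inclusion for $[T, T + \theta(T)\mathpzc{s}_2(T)/4]$ inside $[S(T) + \theta(S(T))\mathpzc{s}_2(S(T))/2, \; S(T) + \theta(S(T))\mathpzc{s}_2(S(T))]$. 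Substituting $\theta(\cdot)/4 = b(X_{\cdot})^2$ recovers exactly~\eqref{eq:inclu}; the threshold $T_0$ above which the inclusion holds for a given sample path is random only through the rate at which the equivalence $\theta(T) \sim C T^{2\beta/(1+\beta)}$ stabilises on that path, matching the statement.

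The genuine content is thus deferred to Lemma~\ref{le:sequenceDet}, which is where I expect the main (but purely analytic) obstacle to lie. Its construction must choose $\mathpzc{s}_2(T)$ diverging slowly enough that the window width $\theta(T)\mathpzc{s}_2(T)$ is $o(T)$, so that the slowly varying factor $\theta(S(T))/\theta(T)$ tends to $1$ across windows of such width; and then define $S(T)$ so that $T - S(T) \approx 3 \theta(S(T))\mathpzc{s}_2(S(T))/4$, with the logarithmic slack in $|S(T) - T|$ absorbing both the discretisation error and the $o(1)$ deviation of $\theta$ from its polynomial asymptote. Given this deterministic input, Lemma~\ref{le:sequenceRand} follows by the substitution above with no further probabilistic work.
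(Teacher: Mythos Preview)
Your proposal is correct and is exactly the paper's approach: the paper states in the paragraph preceding the lemma (and again in the remark after Lemma~\ref{le:sequenceDet}) that Lemma~\ref{le:sequenceRand} follows by applying the deterministic Lemma~\ref{le:sequenceDet} pathwise to $\theta(T)=4b(X_T)^2$, with the constant $C=4a_\infty^2 c_1^{2\beta}$ fixed by the strong law~\eqref{def:c1} and Assumption~\eqref{hyp:D3}, so that $\mathpzc{s}_2$ and $S$ are deterministic while $T_0$ is random. Your substitution $\theta/4=b(X_\cdot)^2$ recovering~\eqref{eq:inclu} from~\eqref{eq:incluDet} is precisely the intended translation.
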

The interval inclusion~\eqref{eq:inclu} is illustrated in Figure~\ref{fig:sequence}. 

\begin{proposition}
    \label{prop:seq}
   There exist a function ${\mathpzc{s}}_3:(0,\infty)\to (0,\infty)$,  with $\lim_{T\to \infty} {\mathpzc{s}}_3(T)=\infty$, and a family of probability measures $(\mathbb{P}^T)_{T>0}$, each extending the probability space on which $(Z,L)$ and $W$ satisfy SDER~\eqref{eq:SDER}, such that the following holds: 
    under $\mathbb{P}^T$, there exists a random vector $\tilde{Y}^T$ in $\mathbb{B}^d$ with law $\mu$ and 
    a Brownian motion $\tilde \cW^T$, independent of $\tilde{Y}^T$, such that  the strong solution $\cZ^{\infty,(0,\tilde{Y}^T),\tilde{\mathcal{W}}^T}$ of SDER~\eqref{eq:limitEqJoint_Integral}, driven by $\tilde \cW^T$, satisfies 
    \[ \|\cZ^{T} -  \cZ^{\infty,(0,\tilde{Y}^T),\tilde{\mathcal{W}}^T}  \|_{[0,\mathpzc{s}_3(T)]} \overset{(proba)}{\underset{T\to \infty}\longrightarrow} 0.\]
\end{proposition}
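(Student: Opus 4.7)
\smallskip

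\noindent\textbf{Proof plan for Proposition~\ref{prop:seq}.}

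\emph{Strategy.} The idea is to couple $\cZ^T$ with a stationary version of the cylinder process by going \emph{backwards in time} to the earlier time $S(T)$ supplied by Lemma~\ref{le:sequenceRand}, allowing the limiting cylinder process to mix on the time interval of length $2 \mathpzc{s}_2(S(T))$ before $t=T$ is reached. The ingredients are exactly those developed so far: Proposition~\ref{prop:long} (approximation of $\cZ^{S(T)}$ by $\cZ^{\infty,S(T)}$), Proposition~\ref{prop:coupling} (coupling of $\cZ^{\infty,S(T)}$ with a $\mu$-stationary cylinder process after mixing), and Lemma~\ref{le:sequenceRand} (the window $[T, T+b(X_T)^2 \mathpzc{s}_2(T)]$ lies in the ``tail'' $[S(T)+2 b(X_{S(T)})^2 \mathpzc{s}_2(S(T)), S(T)+ 4b(X_{S(T)})^2 \mathpzc{s}_2(S(T))]$).

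\emph{Construction of the coupling.} First, apply Proposition~\ref{prop:long} to obtain a sequence $\mathpzc{s}_1$; then apply Lemma~\ref{le:sequenceRand} to get $\mathpzc{s}_2, S$ with $4\mathpzc{s}_2 \leq \mathpzc{s}_1$. Conditionally on $\mathcal{F}_{S(T)}$, apply Proposition~\ref{prop:coupling} with $\nu = \delta_{\cY^{S(T)}_0}$ and mixing time $\mathpzc{s} = 2 \mathpzc{s}_2(S(T))$: this extends the probability space, supplies the Brownian motion $\cW^{S(T)}$ together with a second Brownian motion $\cW'^{S(T)}$, a random vector $\hat{Y}^{S(T)}_0 \sim \mu$ independent of $\cW'^{S(T)}$, and ensures that the $\cY$-coordinates of $\cZ^{\infty, S(T)} = \cZ^{\infty, (0,\cY^{S(T)}_0), \cW^{S(T)}}$ and $\cZ^{\infty,\mu, \cW'^{S(T)}} := \cZ^{\infty,(0,\hat Y^{S(T)}_0),\cW'^{S(T)}}$ coincide for all times $\geq 2\mathpzc{s}_2(S(T))$ with probability at least $1 - 2C_0 \lambda^{2\mathpzc{s}_2(S(T))} \to 1$, while the $\cX$-coordinates differ by a (random) time-independent shift. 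Combined with Proposition~\ref{prop:long} applied at $S(T)$ over the window $[0, 4\mathpzc{s}_2(S(T))] \subseteq [0, \mathpzc{s}_1(S(T))]$, we conclude that on $[2\mathpzc{s}_2(S(T)), 4 \mathpzc{s}_2(S(T))]$, the rescaled process $\cZ^{S(T)}$ is uniformly close in probability to the stationary cylinder process $\cZ^{\infty, \mu, \cW'^{S(T)}}$, up to a (random) constant shift in the $\cX$-coordinate.

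\emph{Transfer to the $T$-scale and identification of the limit.} From the very definition of the rescaling~\eqref{eq:Z^T-def}, the pathwise identity
\[
\cZ^T_t = \tfrac{b(X_{S(T)})}{b(X_T)} \cZ^{S(T)}_{\tau_0(t)} - \tfrac{1}{b(X_T)} (X_T-X_{S(T)},0_{\R^d}), \qquad \tau_0(t) := \tfrac{T-S(T)}{b(X_{S(T)})^2} + \bigl(\tfrac{b(X_T)}{b(X_{S(T)})}\bigr)^{\!2} t,
\]
holds, and by Lemma~\ref{le:sequenceRand} we have $\tau_0([0,\mathpzc{s}_2(T)]) \subseteq [2\mathpzc{s}_2(S(T)), 4\mathpzc{s}_2(S(T))]$ almost surely for large $T$. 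Using $b(x) \sim a_\infty x^\beta$, the strong law $X_t \sim c_1 t^{1/(1+\beta)}$ from~\eqref{def:c1}, and $|S(T) - T| = O(T^{2\beta/(1+\beta)} \log T)$, both the multiplicative prefactor $b(X_{S(T)})/b(X_T)$ and the time-dilation factor $(b(X_T)/b(X_{S(T)}))^2$ tend to $1$ almost surely, with $|(b(X_T)/b(X_{S(T)}))^2-1| = O(T^{(\beta-1)/(1+\beta)} \log T)$. Define
\[
\tilde Y^T := \cY^{\infty, \mu, \cW'^{S(T)}}_{\tau_0(0)}, \qquad \tilde\cW^T_s := \cW'^{S(T)}_{\tau_0(0)+s} - \cW'^{S(T)}_{\tau_0(0)}.
\]
By stationarity of $\cY^{\infty,\mu,\cW'^{S(T)}}$ (Corollary~\ref{cor:mu-existence-convergence}), $\tilde Y^T \sim \mu$; by the strong Markov property of $(\cZ^{\infty,\mu,\cW'^{S(T)}}, \cW'^{S(T)})$ at the $\mathcal{F}_{S(T)}$-measurable time $\tau_0(0)$, $\tilde\cW^T$ is a Brownian motion independent of $\tilde Y^T$. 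The shifted process $\cZ^{\infty, (0,\tilde Y^T), \tilde\cW^T}_s := \cZ^{\infty, \mu, \cW'^{S(T)}}_{\tau_0(0)+s} - (\cX^{\infty, \mu, \cW'^{S(T)}}_{\tau_0(0)}, 0_{\R^d})$ solves SDER~\eqref{eq:limitEqJoint_Integral} with initial condition $(0, \tilde Y^T)$ and driving noise $\tilde \cW^T$.

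\emph{Choice of $\mathpzc{s}_3$ and the main obstacle.} Combining the previous displays, $\| \cZ^T_t - \cZ^{\infty,(0,\tilde Y^T),\tilde \cW^T}_t\|_{[0,\mathpzc{s}_3(T)]}$ is controlled by the sum of: (i) the Proposition~\ref{prop:long} error on $[0, 4\mathpzc{s}_2(S(T))]$; (ii) the coupling failure probability $2 C_0 \lambda^{2 \mathpzc{s}_2(S(T))}$; (iii) $|b(X_{S(T)})/b(X_T) - 1|$ times the uniform norm of $\cZ^{\infty,\mu,\cW'^{S(T)}}$ on $[0, 4\mathpzc{s}_2(S(T))]$ (which grows at most polylogarithmically); and (iv) $|(b(X_T)/b(X_{S(T)}))^2-1|\mathpzc{s}_3(T)$ times the modulus of continuity of $\cZ^{\infty,\mu,\cW'^{S(T)}}$. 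Each of (i)--(iv) tends to zero in probability provided $\mathpzc{s}_3$ grows slowly enough; one can make this quantitative via Lemma~\ref{le:extend}, taking $\mathpzc{s}_3 \leq \mathpzc{s}_2$ and further slowing it to absorb the polynomially small dilation errors in (iii)--(iv). The main obstacle is the bookkeeping in (iii)--(iv): the affine distortion introduced by translating between the $T$- and $S(T)$-rescalings (which operate on different spatial and temporal scales) must be controlled uniformly on the growing window $[0, \mathpzc{s}_3(T)]$, which is why we need both the asymptotics of $X_T$, $b(X_T)$, and Lemma~\ref{le:sequenceRand}'s logarithmic control of $|S(T)-T|$ to compete with the growth of $\mathpzc{s}_3$.
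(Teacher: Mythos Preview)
Your strategy is the paper's: pull back to $S=S(T)$, apply Proposition~\ref{prop:long} on $[0,4\mathpzc{s}_2(S)]$, apply Proposition~\ref{prop:coupling} at the mixing time $2\mathpzc{s}_2(S)$, and then transfer the conclusion from the $S$-scale to the $T$-scale using the affine relation between $\cZ^T$ and $\cZ^S$. The pathwise identity you write and the error decomposition (i)--(iv) correspond exactly to the paper's reparametrisation identities and the six error terms $A_1,A_2,A_3,B_1,B_2,B_3$.

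There is, however, a genuine gap in the step ``$\tilde Y^T\sim\mu$ and $\tilde\cW^T$ is a Brownian motion independent of $\tilde Y^T$''. You invoke stationarity of $\cY^{\infty,\mu,\cW'^{S(T)}}$ and the strong Markov property at $\tau_0(0)$, but $\tau_0(0)=(T-S)/b(X_S)^2$ is a \emph{random} time depending on $X_S$, and it is not a stopping time in the filtration of $\cW'^{S(T)}$. Stationarity only gives $\cY^{\infty,\mu}_t\sim\mu$ for \emph{deterministic} $t$; to use it at $\tau_0(0)$ you must first show that $X_S$ is independent of the pair $(\hat Y^{S}_0,\cW'^{S})$. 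This is not automatic: the coupling $(\hat Y^S_0,\cW'^S)$ is constructed from $\cY^S_0=Y_S/b(X_S)$ via Proposition~\ref{prop:coupling}, so it could in principle retain dependence on $X_S$. The paper isolates this as a separate claim and proves it using properties~(3) and~(4) of Proposition~\ref{prop:coupling} together with the chain rule for conditional independence: first $X_S\indep_{\cY^S_0}\cW^S$ (since $\cW^S\indep Z_S$), then property~(4) gives $X_S\indep_{\sigma(\cY^S_0,\cW^S)}(\hat Y^S_0,\cW'^S)$, the chain rule combines these to $X_S\indep_{\cY^S_0}(\hat Y^S_0,\cW'^S)$, and finally property~(3) removes the conditioning. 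Your phrase ``conditionally on $\mathcal{F}_{S(T)}$, apply Proposition~\ref{prop:coupling}'' gestures at a workaround, but making it rigorous requires essentially the same conditional-independence bookkeeping.

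A second, lesser point: the paper handles your terms (iii)--(iv) not by direct estimates on the growing window $[0,\mathpzc{s}_3(T)]$, but by first proving the convergence for each \emph{fixed} $s>0$ and then invoking Lemma~\ref{le:extend}. This matters for (iii): at fixed $s$, $\|\cX^{\infty,(0,\tilde Y^T),\tilde\cW^T}\|_{[0,2s]}$ has a $T$-independent law (since the process is a copy of $\cZ^{\infty,\mu}$), so tightness suffices to kill the product with $|b(X_S)/b(X_T)-1|\to 0$; your direct ``polylogarithmic'' bound on the growing window needs a further growth estimate on $\cX^{\infty,\mu}$ that the paper avoids. Similarly for (iv), at fixed $s$ the modulus-of-continuity argument is clean because the law of $\cZ^{\infty,(0,\tilde Y^T),\tilde\cW^T}$ on $[0,s+1]$ does not depend on $T$.
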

To be specific, what is meant here by the convergence in probability is that for all $\epsilon>0$, for all $T$ large enough, 
\[ 
\mathbb{P}^T( \|\cZ^{T} -  \cZ^{\infty,(0,\tilde{Y}^T),\tilde{\mathcal{W}}^T}  \|_{[0,\mathpzc{s}_3(T)]} \geq \epsilon  )\leq \epsilon.
\]
\begin{remark}
\label{rem:proof_of_main2}
For every $T>0$, under $\mathbb{P}^T$, the process 
    $\cZ^{\infty,(0,\tilde{Y}^T),\tilde{\mathcal{W}}^T}$ (see Remark~\ref{rem:Notation} above for the definition of this process) is a copy of the stationary process
    $\cZ^{\infty,\mu}$.
    Since $\lim_{T\to \infty} {\mathpzc{s}}_3(T)=\infty$,
Proposition~\ref{prop:seq} readily implies Theorem~\ref{th:main2}.
\end{remark}

Since by definition~\eqref{eq:Z^T-def} we have 
$\cZ_0^T=(0,Y_T/b(X_T) )$,
taking $t=0$ in Proposition~\ref{prop:seq} yields 
weak convergence of $Y_T/b(X_T)$ to the distribution $\mu$.
Note that, by the strong law in~\eqref{def:c1} and  Assumption~\eqref{hyp:D3} on the boundary function $b$, we have $a_\infty c_1^\beta T^{\beta/(1+\beta)}/b(X_T)\to1$ almost surely as $T\to\infty$.  Thus the the following corollary holds.

\begin{corollary}
    \label{coro:Ycv} For $\beta\in(-1,1)$, as $T\to \infty$,
        the random vectors  $Y_T/(a_\infty c_1^\beta T^{\beta/(1+\beta)})$ and $Y_T/b(X_T)$ converge in distribution to the law
    $\mu$ supported on the ball $\mathbb{B}^d$
    (see Corollary~\ref{cor:mu-existence-convergence} above for the characterisation of $\mu$).
\end{corollary}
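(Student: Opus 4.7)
The plan is to deduce Corollary~\ref{coro:Ycv} directly from Proposition~\ref{prop:seq}, together with the asymptotic behaviour of the boundary function $b$ at infinity and the superdiffusive strong law~\eqref{def:c1}. The whole argument is a routine consequence of what has already been established; no further estimation is required.

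First I would specialise Proposition~\ref{prop:seq} to the initial time $t=0$. Since $\mathpzc{s}_3(T)>0$ for all $T$ large enough, the uniform convergence in probability on $[0,\mathpzc{s}_3(T)]$ implies in particular that, under the extended probability $\mathbb{P}^T$, $|\cZ^T_0-(0,\tilde Y^T)|_{1+d}\to 0$ in probability as $T\to\infty$. By the definition~\eqref{eq:Z^T-def} of $\cZ^T$ we have $\cZ^T_0=(0,Y_T/b(X_T))$, so this reads
\[ \frac{Y_T}{b(X_T)} - \tilde Y^T \underset{T\to\infty}\longrightarrow 0 \quad \text{in } \mathbb{P}^T\text{-probability}, \]
with $\tilde Y^T$ distributed according to $\mu$ under $\mathbb{P}^T$. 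Slutsky's theorem then gives that, under $\mathbb{P}^T$, $Y_T/b(X_T)$ converges weakly to $\mu$. Because $\mathbb{P}^T$ is an extension of the original probability space supporting $(Z,L,W)$, the $\mathbb{P}^T$-law of $Y_T/b(X_T)$ coincides with its law under the original measure, so the weak convergence $Y_T/b(X_T)\Rightarrow \mu$ holds as stated.

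To transfer this to the deterministic normalisation $a_\infty c_1^\beta T^{\beta/(1+\beta)}$, I would use Assumption~\eqref{hyp:D3}, which gives $b(x)=a_\infty x^\beta+o(x^{(3\beta-1)/2})=a_\infty x^\beta(1+o(1))$ as $x\to\infty$, together with the strong law~\eqref{def:c1} which ensures that $X_T\to\infty$ almost surely and $X_T^\beta \sim c_1^\beta T^{\beta/(1+\beta)}$. Combining these gives almost sure convergence
\[ \frac{b(X_T)}{a_\infty c_1^\beta T^{\beta/(1+\beta)}} \underset{T\to\infty}\longrightarrow 1. \]
Writing
\[ \frac{Y_T}{a_\infty c_1^\beta T^{\beta/(1+\beta)}} = \frac{Y_T}{b(X_T)}\cdot \frac{b(X_T)}{a_\infty c_1^\beta T^{\beta/(1+\beta)}}, \]
another application of Slutsky's theorem yields the second weak convergence claimed.

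The only point that requires any care is the translation between $\mathbb{P}^T$ and the original measure in the first step; since the extension preserves the joint law of the original variables, no difficulty arises there. Consequently there is no essential obstacle: the corollary is really just the $t=0$ slice of Proposition~\ref{prop:seq}, rescaled via~\eqref{hyp:D3} and~\eqref{def:c1}.
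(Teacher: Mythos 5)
Your argument is correct and is exactly the paper's: the corollary is obtained by taking $t=0$ in Proposition~\ref{prop:seq}, noting $\cZ^T_0=(0,Y_T/b(X_T))$ and that $\tilde Y^T$ has law $\mu$, and then replacing $b(X_T)$ by $a_\infty c_1^\beta T^{\beta/(1+\beta)}$ via the strong law~\eqref{def:c1} and Assumption~\eqref{hyp:D3} together with Slutsky's theorem. Your extra remark about the $\mathbb{P}^T$-law of $Y_T/b(X_T)$ agreeing with its law on the original space is a sensible (and correct) precision that the paper leaves implicit.
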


We prove Proposition~\ref{prop:seq} in two steps. In the first step, using Proposition~\ref{prop:long}, we find a positive function $\mathpzc{s}_1$ tending to infinity and a coupling of $\mathcal{Z}^S$ with $\mathcal{Z}^{\infty,S}$ (respectively given in~\eqref{eq:ZT=ZZT} and~\eqref{eq:def_mathcal_Z^T}) such that we can control the distance between the processes up to time $\mathpzc{s}_1(S)$. We then find, for all $T$, a corresponding $S=S(T)$, which is smaller than $T$, and $\mathpzc{s}_2(S)$ such that inclusion~\eqref{eq:inclu} holds. By Proposition~\ref{prop:coupling}, there exists a coupling of $\mathcal{Z}^{\infty,S}$ with  $\mathcal{Z}^{\infty,\mu, S}$, where $\mathcal{Z}^{\infty,\mu, S}$ is some $S$-dependent copy of $\mathcal{Z}^{\infty,\mu}$, such that we can control the distance between $\mathcal{Z}^{\infty,S}$ and $\mathcal{Z}^{\infty,\mu, S}$ up to time $\mathpzc{s}_2$. Thus we control the distance between $\mathcal{Z}^S$ and $\mathcal{Z}^{\infty,\mu,S}$ on the smaller of the two intervals in inclusion~\eqref{eq:inclu}. 
 In the second step of the proof, we show that the distance between  $\mathcal{Z}^T$ and a random time-shift  of  the process $\mathcal{Z}^{\infty,\mu, S}$, which we prove is still in stationarity (and  thus a copy of  $\mathcal{Z}^{\infty,\mu}$), is a small perturbation of the distance between $\mathcal{Z}^S$ and $\mathcal{Z}^{\infty,\mu, S}$. The latter distance is small by  the first step of the proof. 
  
\begin{proof}[Proof of Proposition~\ref{prop:seq}]
By Proposition~\ref{prop:long} there exists a function $\mathpzc{s}_1:(0,\infty)\to(0,\infty)$ satisfying $\mathpzc{s}_1(S)\underset{S\to \infty}\longrightarrow \infty$ and such that
    \begin{equation}
    \label{eq:appliaciton_of_Prop_4.4}
    \| \cZ^{S}-\cZ^{\infty,{S}}\|_{[0, \mathpzc{s}_1({S})]}\overset{(proba)}{\underset{S\to \infty}\longrightarrow}  0.
    \end{equation}
    Recall $\mathcal{Z}^S$ and $\mathcal{Z}^{\infty,S}$, given in~\eqref{eq:ZT=ZZT} and~\eqref{eq:def_mathcal_Z^T} respectively, are driven by the same Brownian motion $\mathcal{W}^S$, defined in~\eqref{eq:BM_W^T} in Subsection~\ref{subsec:def} above. 
    
    By Lemma~\ref{le:sequenceRand} applied to the function $\mathpzc{s}_1$, there exist deterministic functions $\mathpzc{s}_2,S:(0,\infty)\to(0,\infty)$, and  a random time $T_0$ satisfying 
    $4\mathpzc{s}_2\leq \mathpzc{s}_1$, $|S(T)-T|=O(T^{\frac{2\beta}{1+\beta}}\log T)$ and $\mathpzc{s}_2(T)\to\infty$ as $T\to\infty$ and, for all $T>T_0$, inclusion~\eqref{eq:inclu} holds.
    By definition~\eqref{eq:BM_W^T}, we have  
    \begin{equation}  
    \label{eq:def_of_W^S}
    \mathcal{W}^{S}_s-\mathcal{W}^{S}_{2 \mathpzc{s}_2(S)}=(W_{S+b(X_S)^2s}-W_{S+2b(X_S)^2\mathpzc{s}_2(S)})/b(X_S)\quad\text{for all $s\geq 2 \mathpzc{s}_2(S)$,}
    \end{equation}
    where $W$ is the Brownian motion driving the initial SDER~\eqref{eq:SDER} for $(Z,L)$.
    Note that, conditional on $Z_S=(X_S,Y_S)$, the law of $(\mathcal{W}^{S}_{s+ 2 \mathpzc{s}_2(S)}-\mathcal{W}^{S}_{2 \mathpzc{s}_2(S)})_{s\in\RP}$ 
    is (by e.g.~L\'evy's characterisation) that of a standard Brownian motion, making the process independent of $Z_S=(X_S,Y_S$).
    More generally, the Brownian motion  $(\mathcal{W}^{S}_{s+ 2 \mathpzc{s}_2(S)}-\mathcal{W}^{S}_{2 \mathpzc{s}_2(S)})_{s\in\RP}$ is independent of the pair of processes  $((Z_u)_{u\leq S},(\mathcal{W}^{S}_s)_{s\leq 2 \mathpzc{s}_2(S)})$.

By Proposition~\ref{prop:coupling} applied at time $\mathpzc{s}= 2\mathpzc{s}_2(S)$ to the process $\mathcal{Z}^{\infty,S}$ defined in~\eqref{eq:def_mathcal_Z^T} (started at $(0,Y_S/b(X_S))$ and driven by the Brownian motion $\cW^S$ defined in~\eqref{eq:BM_W^T}) and supported  on the probability space on which $Z$ and $W$ were initially defined, there exists 
an extended probability space with probability measure $\mathbb{P}_S\coloneqq\mathbb{P}^{2 \mathpzc{s}_2(S)}$, supporting  
\begin{equation}
\label{eq:procs_from_Cor}
\text{a Brownian motion $\mathcal{W}'^S$ and a copy $\cZ^{\infty,(0,\widehat Y^S),\mathcal{W}'^S}$ of the process $\cZ^{\infty,\mu}$}
\end{equation}
(with $\widehat Y^S$ following the invariant distribution $\mu$, cf.~Remark~\ref{rem:Notation} above). Moreover, $\mathbb{P}_S$ satisfies properties (1)--(4) in  Proposition~\ref{prop:coupling} at time $2\mathpzc{s}_2(S)$, where $\nu$ is the law of $Y_S/b(X_S)$ and $\mathcal{Z}^{\infty,\nu,\mathcal{W}^S}=\mathcal{Z}^{\infty,S}$.

Since $2\mathpzc{s}_2(S)\to\infty$ as $S\to\infty$, by property (1) in  Proposition~\ref{prop:coupling}, we get
\begin{align}
\nonumber
\mathbb{P}_S & \big(   \forall s\geq  2 \mathpzc{s}_2(S),  \ \cZ^{\infty,{S}}_s-\cZ^{\infty,(0,\widehat Y^S),\mathcal{W}'^S}_s = (\cX^{\infty,{S}}_{2\mathpzc{s}_2(S)}   -\cX^{\infty,(0,\widehat Y^S),\mathcal{W}'^S}_{2\mathpzc{s}_2(S)}, 0_{\R^d} )  \big)\\
&\geq 1-2C_0\lambda^{2 \mathpzc{s}_2(S)} \underset{S\to \infty}\longrightarrow 1. 
\label{eq:application_of_cor_at_infty}
\end{align}
   
    For all $s_0,s \in [2 \mathpzc{s}_2(S),4 \mathpzc{s}_2(S)]$, the triangle inequality implies
\begin{align*}
& |\cZ^{S}_s -  \cZ^{\infty,(0,\widehat Y^S),\mathcal{W}'^S}_s - ({\cX}^{ S}_{s_0 } -{\cX}^{\infty,(0,\widehat Y^S),\mathcal{W}'^S}_{s_0 }, 0_{\R^d}    )   |_{1+d} \\
&\qquad \leq 
|\cZ^{S}_s -  \cZ^{\infty,S}_s  |_{1+d}
+|\cX^{S}_{s_0} -  \cX^{\infty,S}_{s_0}  |\\
& \qquad  \qquad+
|\cZ^{\infty,S}_s - \cZ^{\infty,(0,\widehat Y^S),\mathcal{W}'^S}_s - ({\cX}^{\infty, S}_{s_0}-{\cX}^{\infty,(0,\widehat Y^S),\mathcal{W}'^S}_{s_0}, 0_{\R^d} )|_{1+d}\\
&\qquad \leq 2 \sup_{u \leq 4 \mathpzc{s}_2(S) }|\cZ^{S}_u -  \cZ^{\infty,S}_u  |_{1+d}\\
& \qquad \qquad +\sup_{u\geq 2 \mathpzc{s}_2(S) }|\cZ^{\infty,S}_u - \cZ^{\infty,(0,\widehat Y^S),\mathcal{W}'^S}_s - ({\cX}^{\infty, S}_{s_0}-{\cX}^{\infty,(0,\widehat Y^S),\mathcal{W}'^S}_{s_0}, 0_{\R^d} )|_{1+d}.
\end{align*}
For any $\epsilon>0$, we thus obtain
    \begin{align}
    &\mathbb{P}_S\big( \sup_{s_0,s\in [2\mathpzc{s}_2(S),4 \mathpzc{s}_2(S)]}
    |\cZ^{S}_s -  \cZ^{\infty,(0,\widehat Y^S),\mathcal{W}'^S}_s - ({\cX}^{ S}_{s_0 } -{\cX}^{\infty,(0,\widehat Y^S),\mathcal{W}'^S}_{s_0 }, 0_{\R^d}    )   |_{1+d}  \geq  \epsilon  \big)\nonumber\\
         &\leq \mathbb{P}_S\big( \exists s\leq 4 \mathpzc{s}_2(S):  |\cZ^{S}_s -  \cZ^{\infty,S}_s  |_{1+d}  \geq \epsilon /2 \big)\nonumber \\
     &+\mathbb{P}_S\big( \exists s\geq 2 \mathpzc{s}_2(S):   \cZ^{\infty,S}_s - \cZ^{\infty,(0,\widehat Y^S),\mathcal{W}'^S}_s \neq ({\cX}^{\infty, S}_{2 \mathpzc{s}_2(S)}-{\cX}^{\infty,(0,\widehat Y^S),\mathcal{W}'^S}_{2 \mathpzc{s}_2(S)}, 0_{\R^d}    )   \big) \underset{S\to \infty}\longrightarrow 0, \label{eq:cvGamma4}
    \end{align}
    where the first and second summands in~\eqref{eq:cvGamma4} tend to zero by~\eqref{eq:appliaciton_of_Prop_4.4} and~\eqref{eq:application_of_cor_at_infty}, respectively.
    Note that~\eqref{eq:cvGamma4} looks much like the conclusion of the proposition we are proving, with a major difference nonetheless: here the range in the supremum does not start at $s=0$, but rather at $2\mathpzc{s}_2(S)$, which tends to infinity as $S\to\infty$. As we shall see, this issue will be remedied by the function $S=S(T)$, obtained from  the application of Lemma~\ref{le:sequenceRand}.

Define a random time 
\begin{equation}
\label{eq:defs0}
s_0=s_0(T)\coloneqq (T-S(T))b(X_{S(T)})^{-2},
\end{equation} 
which (by the inclusion in Lemma~\ref{le:sequenceRand}) satisfies $s_0\in [2 \mathpzc{s}_2(S),4 \mathpzc{s}_2(S)]$
for all large $T$ almost surely.

\smallskip

\paragraph*{\textbf{Claim 1}} \label{para:Claim_1} For $s_0$ in~\eqref{eq:defs0}, the random vector  
\begin{equation}
\label{eq:stationary_procesess_cylinder}
    \tilde{Y}^T\coloneqq \mathcal{Y}^{\infty, \widehat Y^S , \mathcal{W}'^S}_{s_0}\quad\text{ follows the law $\mu$ supported on $\BB^d$.}
\end{equation}
The Brownian motion $\tilde{\mathcal{W}}^T=(\tilde{\mathcal{W}}^T_t)_{t\in\RP}$, given by $\tilde{\mathcal{W}}^T_t\coloneqq  {\mathcal{W}}'^S_{t+s_0} -  {\mathcal{W}}'^S_{s_0}$, is independent of $\tilde{Y}^T$, making 
the process $\mathcal{Y}^{\infty, \tilde{Y}^T ,\tilde{\mathcal{W}}^T }$  a stationary solution of SDER~\eqref{eq:limitEq} started at $\tilde{Y}^T$.

\smallskip

\noindent \underline{Proof of \nameref{para:Claim_1}}.
Since $s_0$ is a function of $X_S$ and thus random, the claim in~\eqref{eq:stationary_procesess_cylinder} does not follow directly from the stationarity of the process
$\mathcal{Y}^{\infty, \widehat Y^S , \mathcal{W}'^S}$ defined in~\eqref{eq:procs_from_Cor}.
We first establish the following fact:
\begin{equation}
\label{eq:X_S_independent_of_Y^S_hat}
\text{$X_S$ and  $(\widehat Y^S,\cW'^S)$ are independent.}
\end{equation}

Recall the chain rule~\cite[Prop.~6.8]{Kallenberg} for sub-$\sigma$-fields $\cH$, $\cG$, $\cF_1,\cF_2$ in a probability space:
\begin{equation}
    \label{eq:chain_rule}
\cH\indep_{\cG}\sigma(\cF_1,\cF_2) \iff \cH\indep_{\cG}\cF_1\quad \& \quad \cH\indep_{\sigma(\cG,\cF_1)}\cF_2,
\end{equation}
where $\sigma(\cF_1,\cF_2)$ is the $\sigma$-field generated by the subsets in $\cF_1\cup\cF_2$ and $\cH\indep_\cG\cF_1$ denotes the conditional independence of $\cH$ and $\cF_1$, given $\cG$ (see e.g.~\cite[p.~109]{Kallenberg} for definition). If $\cG$ is trivial, then~\eqref{eq:chain_rule} states
that $\cH\indep\sigma(\cF_1,\cF_2)$ if and only if $\cH\indep\cF_1$ and $\cH\indep_{\cF_1}\cF_2$.

Recall that $\tilde Y_0=Y_S/b(X_S)$ and the definition of $\cW^S$ in~\eqref{eq:BM_W^T} above. Since $\cW^S$ is independent of $Z_S=(X_S,Y_S)$, we have
$\cW^S\indep\sigma(\tilde Y_0, X_S)$. Thus $X_S\indep_{\tilde Y_0}\cW^S$ by~\eqref{eq:chain_rule} (with trivial $\cG$, $\cF_1=\sigma(\tilde Y_0)$, $\cF_2=\sigma(X_S)$ and $\cH=\sigma(\cW^S)$). Furthermore, by property~(4) in Proposition~\ref{prop:coupling} and definition~\eqref{eq:procs_from_Cor} of $(\widehat Y^S,\cW'^S)$, we have 
$X_S\indep_{\sigma(\tilde Y_0,\cW^S)}\sigma(\widehat Y^S,\cW'^S)$.
Combining the two conditional independence statements via~\eqref{eq:chain_rule} (with $\cG=\sigma(\tilde Y_0)$, $\cH=\sigma(X_S)$, $\cF_1=\sigma(\cW^S)$ and $\cF_2=\sigma(\widehat Y^S,\cW'^S)$), we obtain $X_S\indep_{\tilde Y_0}\sigma(\widehat Y^S,\cW'^S,\cW^S)$. In particular, the following holds:
\begin{equation}
\label{eq:cond_main_indep}
X_S\indep_{\tilde Y_0} \sigma(\widehat Y^S,\cW'^S).   
\end{equation}
By property~(3) in Proposition~\ref{prop:coupling} and definition~\eqref{eq:procs_from_Cor} above, we have 
$\tilde Y_0\indep\sigma(\widehat Y^S,\cW'^S)$. This independence, together with~\eqref{eq:cond_main_indep}, yields~\eqref{eq:X_S_independent_of_Y^S_hat} via the chain rule in~\eqref{eq:chain_rule} (with trivial $\cG$).

By definition~\eqref{eq:procs_from_Cor},
the process $\cY^{\infty,\widehat Y^S,\cW'^S}$
is in stationarity. 
Since the time $s_0=s_0(T)=(T-S)/b(X_S)^2$ is a deterministic function of
$X_S$,
by~\eqref{eq:X_S_independent_of_Y^S_hat}, the unique strong solution $\cY^{\infty,\widehat Y^S,\cW'^S}$ of 
SDER~\eqref{eq:limitEq}  
 is independent of 
the time $s_0$, making 
the random vector 
$\tilde Y^T=\cY^{\infty,\widehat Y^S,\cW'^S}_{s_0}$ follow the stationary law $\mu$.
L\'evy's characterisation implies that 
$\tilde{\mathcal{W}}^T$ is a Brownian motion.
The strong uniqueness of solutions of 
SDER~\eqref{eq:limitEq}, together with~\eqref{eq:stationary_procesess_cylinder}, implies the final statement in~\nameref{para:Claim_1}.
\hfill$\diamondsuit$

\smallskip

Define $s(t,T)\coloneqq s_0(T)+t \frac{b(X_{T})^2}{b(X_{S(T)})^2}$, where we recall
$s_0(T)=(T-S)/b(X_S)^2$. Unless otherwise stated, to simplify the notation, we suppress the dependence on $T$ in these functions (e.g., $S=S(T), s_0=s_0(T), s(t)=s(t,T)$).
For $T\geq T_0$ and $t\in [0,{\mathpzc{s}_2}(T)]$, by inclusion~\eqref{eq:inclu} in Lemma~\ref{le:sequenceRand} the following inequalities hold:  $2\mathpzc{s}_2(S)\leq s(t,T)\leq 4\mathpzc{s}_2(S)$, see Figure~\ref{fig:sequence}. 
    In particular, since by Lemma~\ref{le:sequenceRand}, $S(T)\to\infty$ and $\mathpzc{s}_2(S)\to\infty$ as $S\to\infty$, we have $s_0(T)\to\infty$ almost surely as $T\to\infty$.

 By definition, the times $s(t)$ and $s_0$ satisfy  $T+b(X_T)^2 t=S+b(X_S)^2 s(t)$ and $T=S+b(X_S)^2 s_0$. Moreover, by definition~\eqref{eq:Z^T-def}, we get
    \begin{equation}
    \label{eq:reparam_y_component}
  \mathcal{Y}^T_t=  b(X_T)^{-1} Y_{T+tb(X_T)^2}=\frac{b(X_S)}{b(X_T)} \mathcal{Y}^S_{s(t)}.
    \end{equation}
    Similarly, again by~\eqref{eq:Z^T-def}, we have 
$\mathcal{X}^T_t=\frac{1}{b(X_T)}(X_{T+b(X_T)^2 t} -X_T)=\frac{1}{b(X_T)}(X_{S+b(X_S)^2 s(t)} -X_T)$,
$\mathcal{X}^S_{s(t)}=\frac{1}{b(X_S)}(X_{S+b(X_S)^2 s(t)} -X_S)$ and
$\mathcal{X}^S_{s_0}=\frac{1}{b(X_S)}(X_{T} -X_S)$,
from which we deduce 
\begin{equation}
    \label{eq:reparam2}
\mathcal{X}^T_t= \frac{b(X_S)}{b(X_T)}(\mathcal{X}^S_{s(t)}-\mathcal{X}^S_{s_0}).
\end{equation}

\begin{figure}
    \centering
    \includegraphics[width=\textwidth]{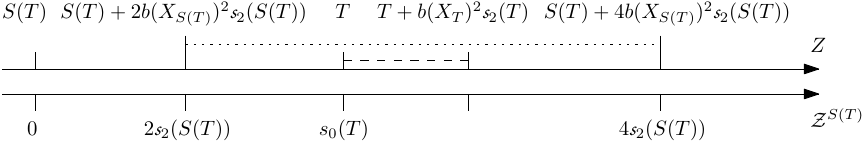}
\caption{Diagram of the interval inclusion in~\eqref{eq:inclu}, together with various times involved in the proof of Proposition~\ref{prop:seq}. On the top (resp.\ bottom), are marked times corresponding to the original process~$Z$ (resp.\ rescaled process~$\mathcal{Z}^{S}$). The diagram shows the ordering of times on top and bottom, not their scale (e.g.~$T$ is different from $s_0(T)= (T-S(T))b(X_{S(T)})^{-2}$ in~\eqref{eq:defs0}).
The smaller (resp.\ larger) interval  in the inclusion in~\eqref{eq:inclu} is indicated by the dashed (resp.\ dotted) line. 
Proposition~\ref{prop:long} controls the distance between $
\mathcal{Z}^{S(T)}$ and $
\mathcal{Z}^{\infty,S(T)}$ on the large interval $[0,4\mathpzc{s}_2(S(T))]$, which corresponds to the interval $[S(T), S(T)+ 4 b(X_{S(T)})^2 \mathpzc{s}_2(S(T))]$ for the original process $Z$.
In the proof of Proposition~\ref{prop:seq} we control the distance between $
\mathcal{Z}^{T}$ and $
\mathcal{Z}^{\infty, \mu}$ (the process started from the invariant measure $\mu$
of SDER~\eqref{eq:limitEq}, see Corollary~\ref{cor:mu-existence-convergence} above) on the smaller interval $[0,\mathpzc{s}_2(T)]$, which corresponds to the interval $[T, T+  b(X_{T})^2 \mathpzc{s}_2(T)]$ (resp.\  $[s_0(T),s_0(T)+\mathpzc{s}_2(T)b(X_T)^2/b(X_{S(T)})^2]$) for the original process $Z$ (resp. rescaled process $\cZ^S$).
}
\label{fig:sequence}
\end{figure}

Note that  SDER~\eqref{eq:limitEqJoint_Integral}  possesses strong uniqueness, implying
that $(\tilde Y^T,\tilde \cW^T)$ in~\nameref{para:Claim_1} satisfies
    \begin{equation*}
    \mathcal{Z}^{\infty, (0,\tilde{Y}^T) ,\tilde{\mathcal{W}}^T }_t= \mathcal{Z}^{\infty, (0,\widehat Y^S) , \mathcal{W}'^S}_{t+s_0}- (\mathcal{X}^{\infty, (0,\widehat Y^S) , \mathcal{W}'^S}_{s_0} ,0_{\R^d})\qquad\text{for all $t\in\RP$,}
    \end{equation*}
where $(\widehat Y^S,\cW'^S)$ are defined in~\eqref{eq:procs_from_Cor}.
Since $s(t)-s_0=\frac{b(X_T)^2}{b(X_S)^2}t$, we deduce 
\begin{equation} 
\label{eq:reparam}
\mathcal{Y}^{\infty, \widehat Y^S,\mathcal{W}'^S}_{s(t)}={\mathcal{Y}}^{\infty, \tilde{Y}^T ,\tilde{\mathcal{W}}^T }_{ \frac{b(X_T)^2}{b(X_S)^2} t }, \qquad 
\mathcal{X}^{\infty, (0,\widehat Y^S),\mathcal{W}'^S}_{s(t)}-\mathcal{X}^{\infty, (0,\widehat Y^S),\mathcal{W}'^S}_{s_0}={\mathcal{X}}^{\infty, (0,\tilde{Y}^T) ,\tilde{\mathcal{W}}^T }_{ \frac{b(X_T)^2}{b(X_S)^2} t }.
\end{equation}

We now prove that $\|\cZ^{T} -  \cZ^{\infty,(0,\tilde{Y}^T),\tilde{\mathcal{W}}^T}  \|_{[0,\mathpzc{s}_3(T)]} \overset{(proba)}{\underset{T\to \infty}\longrightarrow} 0$
for some function $\mathpzc{s}_3:(0,\infty)\to(0,\infty)$ satisfying $\mathpzc{s}_3(T)\to\infty$ as $T\to\infty$.
By Lemma~\ref{le:extend}, it suffices to prove 
$$\|\cZ^{T} -  \cZ^{\infty,(0,\tilde{Y}^T),\tilde{\mathcal{W}}^T}  \|_{[0,s]} \overset{(proba)}{\underset{T\to \infty}\longrightarrow} 0$$
for every fixed $s\in(0,\infty)$.
We first consider the $\mathpzc{y}$ coordinate of the processes in the last display.
By~\eqref{eq:reparam_y_component} and~\eqref{eq:reparam} we obtain
    \begin{align*}
    \| \mathcal{Y}^T -  \mathcal{Y}^{\infty, \tilde{Y}^T ,\tilde{\mathcal{W}}^T }  \|_{[ 0, s]} 
    & \leq 
    \sup_{t\in[ 0, s]} \bigl( | \mathcal{Y}^T_t
    - \mathcal{Y}^{\infty, \widehat Y^S,\mathcal{W}'^S}_{s(t)} |_d+ |
    \mathcal{Y}^{\infty, \tilde{Y}^T ,\tilde{\mathcal{W}}^T } _t- \mathcal{Y}^{\infty, \widehat Y^S,\mathcal{W}'^S}_{s(t)} |_d \bigr) \\
    & \leq
    \sup_{t\in[ 0, s]}  \Bigl| \frac{b(X_S)}{b(X_T)} \mathcal{Y}^S_{s(t)}
    - \mathcal{Y}^{\infty, \widehat Y^S,\mathcal{W}'^S}_{s(t)}  \Bigr|_d \\
  &\qquad \qquad + 
   \sup_{t\in[ 0, s]} \Bigl|
    \mathcal{Y}^{\infty, \tilde{Y}^T ,\tilde{\mathcal{W}}^T } _t- \mathcal{Y}^{\infty, \tilde{Y}^T ,\tilde{\mathcal{W}}^T } _{ \frac{b(X_T)^2}{b(X_S)^2} t } \Bigr|_d \\
    & \leq  A_1(T)+A_2(T)+A_3(T),
    \end{align*}
    where the first summands equals
    $A_1(T)\coloneqq |\frac{b(X_S)}{b(X_T)}-1|  \sup_{t\in[ 0, s]}| \mathcal{Y}^{\infty, \widehat Y^S,\mathcal{W}'^S}_{s(t)}|_d$, the second summand is given by
    $A_2(T)\coloneqq \frac{b(X_S)}{b(X_T)}  \sup_{t\in[ 0, s]} | \mathcal{Y}^S_{s(t)} - \mathcal{Y}^{\infty,  \widehat Y^S,\mathcal{W}'^S}_{s(t)}  |_d$
    and the third is
    \begin{equation}
    \label{eq:A_3(T)}
    A_3(T)\coloneqq  \sup_{t\in[ 0, s]} \Bigl|
    \mathcal{Y}^{\infty, \tilde{Y}^T ,\tilde{\mathcal{W}}^T } _t- \mathcal{Y}^{\infty, \tilde{Y}^T ,\tilde{\mathcal{W}}^T }_{ \frac{b(X_T)^2}{b(X_S)^2} t } \Bigr|_d.
    \end{equation}
    Since $\| \mathcal{Y}^{\infty, \widehat Y^S,\mathcal{W}'^S}\|_{[0,\infty)}\leq 1$, we have $A_1(T)\to0$ almost surely as $T\to  \infty$ by the strong law in~\eqref{def:c1}, Lemma~\ref{le:sequenceRand} (which implies  $S(T)/T\sim1$ as $T\to\infty$) and the asymptotic behaviour of $b$ in Assumption~\eqref{hyp:D3}.
    In addition, by~\eqref{eq:cvGamma4},  for $T$ sufficiently large so that $\mathpzc{s}_2(T)\geq s$, we have
    \begin{align} 
    0\leq A_2(T)& \leq 2 \sup_{t\in [ 0, \mathpzc{s}_2(T)]} | \mathcal{Y}^S_{s(t)} - \mathcal{Y}^{\infty,\widehat Y^S,\mathcal{W}'^S}_{s(t)}  |_d +\infty \mathbbm{1}_{\frac{ b(X_S)}{b(X_T)}\geq 2  } \label{eq:temp:bratio}\\ 
    &\leq 
    \| \mathcal{Y}^S - \mathcal{Y}^{\infty,\widehat Y^S,\mathcal{W}'^S}  \|_{[ 2\mathpzc{s}_2(T), 4\mathpzc{s}_2(T) ]}+\infty \mathbbm{1}_{\frac{ b(X_S)}{b(X_T)}\geq 2  } \underset{T\to \infty}{\overset{(proba)}{\longrightarrow}} 0,\nonumber
    \end{align}
    since  $b(X_S)/b(X_T)\to1$ almost surely as $T\to\infty$.

\paragraph*{\textbf{Claim 2}} \label{para:Claim_2} $A_3(T)\underset{T\to \infty}{\overset{(proba)}{\longrightarrow}} 0$, where $A_3(T)$ is defined in~\eqref{eq:A_3(T)} above.

\smallskip

\noindent \underline{Proof of \nameref{para:Claim_2}}. Recall that, for any $\delta>0$ and $m>0$,  the modulus of continuity of a function $f:[0,\infty)\to \mathbb{R}^k$ is given by
    \[ \omega_{\delta,m}(f)\coloneqq  \sup \{ |f(u)-f(v)|_k: u,v \in [0,m], |u-v|\leq \delta\}.
    \]
     Since $\mathcal{Z}^{\infty, \mu}$ is almost surely continuous, for any given $s>0$ we have  $\omega_{\delta,s}(\mathcal{Z}^{\infty, \mu})\underset{\delta\to 0} \longrightarrow 0$ almost surely, hence also in probability. By~\eqref{eq:stationary_procesess_cylinder} in \nameref{para:Claim_1}, for every $T>0$, the law of $\mathcal{Z}^{\infty, (0,\tilde{Y}^T) ,\tilde{\mathcal{W}}^T }$
     equals that of $\mathcal{Z}^{\infty, \mu}$ (cf.~Remark~\ref{rem:Notation} above).
     Thus, for every $\epsilon>0$ and $s>0$, there exists $\delta>0$ such that
    \[ 
   \mathbb{P}_S(\omega_{\delta,s}(\mathcal{Z}^{\infty, (0,\tilde{Y}^T) ,\tilde{\mathcal{W}}^T }) \geq \epsilon )=
    \mathbb{P}(\omega_{\delta,s}(\mathcal{Z}^{\infty, \mu}) \geq \epsilon )\leq \epsilon\quad\text{for all $T>0$.}
    \]

    For any constant $s>0$ and $\delta\in(0,1)$, on the event that $|\frac{b(X_T)^2}{b(X_S)^2}-1| s\leq \delta$, it holds
    \begin{align*} \sup_{t\in[ 0, s]} \Bigl|
    \mathcal{Z}^{\infty, (0,\tilde{Y}^T) ,\tilde{\mathcal{W}}^T }_t- \mathcal{Z}^{\infty, (0,\tilde{Y}^T) ,\tilde{\mathcal{W}}^T }_{ \frac{b(X_T)^2}{b(X_S)^2} t } \Bigr|_{1+d} & \leq \omega_{  \bigl|\frac{b(X_T)^2}{b(X_S)^2}-1\bigr| s,s+\delta } (\mathcal{Z}^{\infty, \tilde{Y}^T ,\tilde{\mathcal{W}}^T } )\\
    & \leq   \omega_{\delta, s+1 } (\mathcal{Z}^{\infty, (0,\tilde{Y}^T) ,\tilde{\mathcal{W}}^T } ). 
    \end{align*}
    Since $S=S(T)\sim T$ as $T\to\infty$ by Lemma~\ref{le:sequenceRand}, by
   decomposing the probability space depending on whether $|\frac{b(X_T)^2}{b(X_S)^2}-1| s$ is larger or smaller than $ \delta$, we obtain
    \begin{align*}    
    &\limsup_{T\to \infty}\mathbb{P} \Bigl(  \sup_{t\in[ 0, s]} \bigl|
    \mathcal{Z}^{\infty, (0,\tilde{Y}^T) ,\tilde{\mathcal{W}}^T }_t- \mathcal{Z}^{\infty, (0,\tilde{Y}^T) ,\tilde{\mathcal{W}}^T }_{ \frac{b(X_T)^2}{b(X_S)^2} t } \bigr|_{1+d}\geq \epsilon  \Bigr) \\
    & \qquad \leq 
    \limsup_{T\to \infty} \mathbb{P}\Bigl( \Bigl| \frac{b(X_T)^2}{b(X_S)^2}-1\Bigr| s \geq \delta\Bigr)
     + \mathbb{P} \bigl(\omega_{\delta, s+1} \bigl(\mathcal{Z}^{\infty, (0,\tilde{Y}^T) ,\tilde{\mathcal{W}}^T } \bigr)\geq \epsilon\bigr) \\ 
     & \qquad =  \mathbb{P}\bigl(\omega_{\delta,s}(\mathcal{Z}^{\infty, \mu})\geq \epsilon\bigr)\leq \epsilon.
    \end{align*}
    Hence for any $s>0$ we get
    \begin{equation} 
    \label{eq:Zcontinuitymod}
    \sup_{t\in[ 0, s]} |
   \mathcal{Z}^{\infty, (0,\tilde{Y}^T) ,\tilde{\mathcal{W}}^T }_t- \mathcal{Z}^{\infty, (0,\tilde{Y}^T) ,\tilde{\mathcal{W}}^T }_{ \frac{b(X_T)^2}{b(X_S)^2} t }|_{1+d}\overset{(proba)}{\underset{T\to \infty}\longrightarrow} 0,
   \end{equation}
   implying that $A_3(T)$, defined in~\eqref{eq:A_3(T)}, converges to zero in probability. \hfill$\diamondsuit$

   \smallskip

    As for the $\mathpzc{x}$ coordinate, we proceed similarly. Recall $(\tilde Y^T,\tilde \cW^T)$ introduced in~\nameref{para:Claim_1} above.
Using the triangle inequality twice and then~\eqref{eq:reparam2} and~\eqref{eq:reparam},  we deduce 
\begin{align}
\bigl| \mathcal{X}^T_t - \mathcal{X}^{\infty,(0,\tilde{Y}^T),\tilde{\mathcal{W}}^T }_t \bigr|
 & \leq 
\Bigl| \mathcal{X}^T_t- \mathcal{X}^{\infty,(0,\tilde{Y}^T),\tilde{\mathcal{W}}^T }_{\frac{b(X_T)^2}{b(X_S)^2}t} \Bigr| + \Bigl| \mathcal{X}^{\infty,(0,\tilde{Y}^T),\tilde{\mathcal{W}}^T }_{\frac{b(X_T)^2}{b(X_S)^2}t}- \mathcal{X}^{\infty,(0,\tilde{Y}^T),\tilde{\mathcal{W}}^T }_t \Bigr| \nonumber\\
 & \leq 
 \Bigl|\frac{b(X_S)}{b(X_T)}-1 \Bigr|\cdot \Bigl| 
\mathcal{X}^{\infty,(0,\tilde{Y}^T),\tilde{\mathcal{W}}^T }_{\frac{b(X_T)^2}{b(X_S)^2}t} \Bigr| +
\Bigl|\mathcal{X}^T_t- \frac{b(X_S)}{b(X_T)} \mathcal{X}^{\infty,(0,\tilde{Y}^T),\tilde{\mathcal{W}}^T }_{\frac{b(X_T)^2}{b(X_S)^2}t}\Bigr| \nonumber \\
& {} \qquad + \Bigl| \mathcal{X}^{\infty,(0,\tilde{Y}^T),\tilde{\mathcal{W}}^T }_{\frac{b(X_T)^2}{b(X_S)^2}t}- \mathcal{X}^{\infty,(0,\tilde{Y}^T),\tilde{\mathcal{W}}^T }_t \Bigr| \nonumber\\
 &=
 \Bigl| \frac{b(X_S)}{b(X_T)}-1 \Bigr| \cdot \Bigl| \mathcal{X}^{\infty,(0,\tilde{Y}^T),\tilde{\mathcal{W}}^T }_{\frac{b(X_T)^2}{b(X_S)^2}t} \Bigr|+ \Bigl| \mathcal{X}^{\infty,(0,\tilde{Y}^T),\tilde{\mathcal{W}}^T }_{\frac{b(X_T)^2}{b(X_S)^2}t}- \mathcal{X}^{\infty,(0,\tilde{Y}^T),\tilde{\mathcal{W}}^T }_t \Bigr| \nonumber \\
 &\qquad + \frac{b(X_S)}{b(X_T)}
\big|(\mathcal{X}^S_{s(t)}-\mathcal{X}^S_{s_0})
-(\mathcal{X}^{\infty, (0,\widehat Y^S),\mathcal{W}'^S}_{s(t)}-\mathcal{X}^{\infty, (0,\widehat Y^S),\mathcal{W}'^S}_{s_0})\big| .
  \label{eq:temp:b123}
\end{align}
Taking the supremum over $t\in [0,s]$ in~\eqref{eq:temp:b123}, we deduce
\[
    \| \mathcal{X}^T - \mathcal{X}^{\infty,(0,\tilde{Y}^T),\tilde{\mathcal{W}}^T } \|_{[ 0, s]}
\leq B_1(T)+B_2(T) +B_3(T),\]
where $B_1(T),B_2(T),B_3(T)$ are the suprema over $t\in [0,s]$ of the three summands in~\eqref{eq:temp:b123}. 

    The term $B_3(T)$ tends to $0$ in probability as $T \to \infty$ by~\eqref{eq:Zcontinuitymod}. 
    The term $B_2(T)$ also tends to $0$ in probability  by~\eqref{eq:cvGamma4} since, for all $T$ such that $\mathpzc{s}_2(T)>s$, $t\in [ 0, s]$ implies
    $s(t)\in [2\mathpzc{s}_2(S(T)),4\mathpzc{s}_2(S(T)) ]$ and the following limits hold by Lemma~\ref{le:sequenceRand}:  as $T\to \infty$ we have
    $S=S(T)\to \infty$ and $\mathpzc{s}_2(T)\to\infty$.

    The convergence $B_1(T)\to 0$ is less immediate than that of $A_1(T)\to0$ above: unlike $\mathcal{Y}^{\infty,(0,\tilde{Y}^T),\tilde{\mathcal{W}}^T}$, the process $\mathcal{X}^{\infty,(0,\tilde{Y}^T),\tilde{\mathcal{W}}^T}$ is not bounded. We conclude the proof of the proposition by establishing
    \begin{equation}
    \label{eq:conv_B_1(T)}
    B_1(T)= \Bigl|\frac{b(X_S)}{b(X_T)}-1 \Bigr| \sup_{t\in[ 0, s]} \Bigl| \mathcal{X}^{\infty,(0,\tilde{Y}^T),\tilde{\mathcal{W}}^T }_{\frac{b(X_T)^2}{b(X_S)^2}t} \Bigr|\underset{T\to \infty}{\overset{(proba)}{\longrightarrow}} 0.
    \end{equation}

    By~\nameref{para:Claim_1}, the distribution of the process $\mathcal{X}^{\infty, (0,\tilde{Y}^T),\tilde{\mathcal{W}}^T }$ does not depend on $T$. In particular, 
    $\|\mathcal{X}^{\infty, (0,\tilde{Y}^T),\tilde{\mathcal{W}}^T }\|_{[0,2s]}$ is an almost-surely finite random variable whose distribution does not depend on $T$.
    Hence, for every $\epsilon>0$, there exists $C_\epsilon\in(0,\infty)$ such that the event $E_{\epsilon,T}\coloneqq \{\|\mathcal{X}^{\infty, (0,\tilde{Y}^T),\tilde{\mathcal{W}}^T }\|_{[0,2s]}>C_\epsilon\}$ has small probability uniformly in the parameter $T$: $\mathbb{P}_S(E_{\epsilon,T})\leq \epsilon/2$ for all $T>0$.
    By decomposing the probability space according to $E_{\epsilon,T}$,
    we obtain
    \begin{align*}
    \mathbb{P}_S \Bigl( \Bigl|\frac{b(X_S)}{b(X_T)}-1 \Bigr|\cdot \| \mathcal{X}^{\infty, (0,\tilde{Y}^T),\tilde{\mathcal{W}}^T }\|_{[ 0, 2 s]}&\geq\epsilon\Bigr)\leq \mathbb{P}_S(E_{\epsilon,T})+
    \mathbb{P}_S\Bigl( \Bigl|\frac{b(X_S)}{b(X_T)}-1\Bigr|\geq\epsilon/C_\epsilon\Bigr)\\
    & \leq \epsilon/2 +   \mathbb{P}_S \Bigl(\Bigl|\frac{b(X_S)}{b(X_T)}-1\Bigr|\geq\epsilon/C_\epsilon\Bigr).
    \end{align*}
Since furthermore $|\frac{b(X_S)}{b(X_T)}-1|$ converges in probability to $0$ as $T\to\infty$, it follows that
    \[
    \Bigl|\frac{b(X_S)}{b(X_T)}-1 \Bigr|\cdot \| \mathcal{X}^{\infty, (0,\tilde{Y}^T),\tilde{\mathcal{W}}^T }\|_{[ 0, 2 s]} \overset{(proba)}{\underset{T\to \infty}\longrightarrow} 0.
    \]
    
    As in \eqref{eq:temp:bratio} above, since 
    $\frac{b(X_S)^2}{b(X_T)^2}\leq 2$ with probability tending to $1$ as $T\to \infty$, we deduce that  the inequality $$\sup_{t\in[ 0, s]}\Bigl| \mathcal{X}^{\infty,(0,\tilde{Y}^T),\tilde{\mathcal{W}}^T }_{\frac{b(X_T)^2}{b(X_S)^2}t}\Bigr|\leq \| \mathcal{X}^{\infty, (0,\tilde{Y}^T),\tilde{\mathcal{W}}^T }\|_{[ 0, 2 s]}$$
   also holds with  probability tending to $1$ as $T\to\infty$, implying~\eqref{eq:conv_B_1(T)}.
\end{proof}

\subsection{Asymptotic independence}
\label{subsec:Asymptotic_indep}
As we have established the convergence of the rescaled random vector $Y_T/b(X_T)$ in Corollary~\ref{coro:Ycv}, we now consider the \emph{joint} convergence (after appropriate centring and scaling) of the vector $(X_T,Y_T)$, even though we have not yet established the convergence of the first component. (The latter we do in Section~\ref{sec:central}.) The reason for establishing Proposition~\ref{prop:joint}  in the present subsection, rather than later, is that its proof relies on arguments and results developed earlier in Section~\ref{sec:local} above.

\begin{proposition}
    \label{prop:joint} 
    Assume that 
    there exists a probability distribution $\rho$ on the real line such that, as $T\to \infty$, the quotient 
    $\frac{X_T- c_1 T^{\frac{1}{1+\beta}}}{\sqrt{T}}$ converges weakly to $\rho$.
    Then, as $T\to \infty$, the couple $$\Big(\frac{X_T- c_1 T^{\frac{1}{1+\beta}}}{\sqrt{T}},  \frac{Y_T}{a_\infty c_1^\beta T^{\beta/(1+\beta)}}\Big)$$ converges jointly in distribution to the product $\rho\otimes \mu$ (where $\mu$ is the stationary measure of SDER~\eqref{eq:limitEq}, see Corollary~\ref{cor:mu-existence-convergence} above).
\end{proposition}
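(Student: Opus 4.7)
The strategy is to exploit the separation of time scales between the fast mixing of $Y/b(X)$ on the $\BB^d$-cross-section and the slow fluctuations of $X$ of order $\sqrt{T}$. Concretely, I will use Proposition~\ref{prop:seq} to realise $\mathpzc{y}_T$ asymptotically as a random vector $\tilde{Y}^T$ with law $\mu$ that is independent, under the extended probability $\mathbb{P}^T$, of $X_S$ for a suitable deterministic time $S = S(T) < T$. Precisely, Proposition~\ref{prop:seq} evaluated at $t=0$ provides $\tilde{Y}^T \sim \mu$ with $Y_T/b(X_T) - \tilde{Y}^T \to 0$ in $\mathbb{P}^T$-probability (and hence, using the a.s.\ limit $b(X_T)/(a_\infty c_1^\beta T^{\beta/(1+\beta)}) \to 1$, also $\mathpzc{y}_T - \tilde{Y}^T \to 0$), while \nameref{para:Claim_1} inside that proof additionally yields $\tilde{Y}^T \indep X_S$ under $\mathbb{P}^T$, where $|S - T| = O(T^{2\beta/(1+\beta)} \log T)$ by Lemma~\ref{le:sequenceRand}.

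Granting this, set $\mathpzc{x}'_T := (X_S - c_1 T^{1/(1+\beta)})/\sqrt{T}$. For bounded Lipschitz $f: \R \to \R$ and $g: \BB^d \to \R$, the independence $\tilde{Y}^T \indep X_S$ gives
\[
\mathbb{E}^T\bigl[f(\mathpzc{x}'_T)\, g(\tilde{Y}^T)\bigr] = \mathbb{E}^T[f(\mathpzc{x}'_T)]\cdot \mathbb{E}^T[g(\tilde{Y}^T)] = \mathbb{E}^T[f(\mathpzc{x}'_T)] \cdot \int g \,\d\mu.
\]
If in addition $\mathpzc{x}_T - \mathpzc{x}'_T \to 0$ in $\mathbb{P}^T$-probability, then by Slutsky the hypothesis $\mathpzc{x}_T \Rightarrow \rho$ propagates to $\mathpzc{x}'_T \Rightarrow \rho$, and one can replace $(\mathpzc{x}'_T, \tilde{Y}^T)$ by $(\mathpzc{x}_T, \mathpzc{y}_T)$ inside $\mathbb{E}^T$ up to $o(1)$ errors. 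A standard class argument then yields the desired weak convergence of $(\mathpzc{x}_T, \mathpzc{y}_T)$ to $\rho \otimes \mu$. The whole task thus reduces to showing $(X_T - X_S)/\sqrt{T} \to 0$ in probability.

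This last step is the main obstacle. The SLLN $X_t \sim c_1 t^{1/(1+\beta)}$ is too coarse here, since its error term is $o(t^{1/(1+\beta)})$ rather than $o(\sqrt{t})$. I plan instead to apply Proposition~\ref{prop:long} at time $S$: setting $\tau := (T-S)/b(X_S)^2$, Lemma~\ref{le:sequenceRand} ensures $\tau \in [2\mathpzc{s}_2(S), 4\mathpzc{s}_2(S)] \subseteq [0, \mathpzc{s}_1(S)]$ almost surely for large $T$, so $(X_T - X_S)/b(X_S) = \mathcal{X}^S_\tau = \mathcal{X}^{\infty, S}_\tau + o_P(1)$. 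By Remark~\ref{rem:beginning}\ref{rem:beginning(a)}, the limit process decomposes as $\mathcal{X}^{\infty, S}_\tau = \pi_0 \sigma^\infty \mathcal{W}^S_\tau + s_0\, L^{\mathcal{Y}^{\infty, S}}_\tau$, where $s_0$ is the constant from~\eqref{hyp:V+}. The Brownian term is Gaussian with variance $\tau\,\Sigma^\infty_{0,0}$, hence $O_P(\sqrt{\tau})$. Applying It\^o's formula to $|\mathcal{Y}|^2$ as in the proof of Lemma~\ref{le:reach-core} and using $\langle u, \phi^{(d)}_\infty(u)\rangle = -c_0$ on $\Sp^{d-1}$ yields the starting-point-uniform bound $\mathbb{E}\bigl[L^{\mathcal{Y}^{\infty, S}}_\tau \mid \tau \bigr] \leq (1 + \tau\,\os)/(2 c_0)$, so the local-time contribution is $O_P(\tau)$. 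Multiplying by $b(X_S) \sim a_\infty c_1^\beta T^{\beta/(1+\beta)}$ and recalling $T - S = O(T^{2\beta/(1+\beta)} \log T)$, one obtains
\[
X_T - X_S = O_P\bigl((T-S)/b(X_S) + \sqrt{T-S}\bigr) = O_P\bigl(T^{\beta/(1+\beta)} \sqrt{\log T}\bigr),
\]
which is $o(\sqrt{T})$ since $\beta/(1+\beta) < 1/2$ whenever $\beta < 1$, completing the proof.
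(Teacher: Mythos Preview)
Your argument is correct and tracks the paper's overall strategy---use the deterministic shift $S=S(T)$ from Lemma~\ref{le:sequenceRand}, control $X_T-X_S$ on the $\sqrt{T}$ scale, and exploit that the $\BB^d$-component at time $T$ is asymptotically independent of $\mathcal{F}_S$---but the technical implementation differs in two places.

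For the bound $(X_T-X_S)/\sqrt{T}\to 0$, the paper (Lemma~\ref{le:claimtech}) writes $\tilde X_T-\tilde X_S$ as four pieces and handles the dominant one via the law of large numbers $\tfrac{1}{s}\mathcal{X}^{\infty,S}_s\to\ell$ for the cylinder process, which it obtains by localising and invoking~\cite[Thm~2.2]{MMW}. Your route is more elementary: you bound $\mathcal{X}^{\infty,S}_\tau$ directly as Brownian plus local time, and control $L^{\cY}_\tau$ by the It\^o identity for $|\cY|^2$ as in Lemma~\ref{le:reach-core}. This avoids the localisation and the appeal to~\cite{MMW}. One cosmetic slip: the dominant term in your final display is $(T-S)/b(X_S)=O_P(T^{\beta/(1+\beta)}\log T)$, not $T^{\beta/(1+\beta)}\sqrt{\log T}$; either way it is $o(\sqrt{T})$.

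For the asymptotic independence, the paper runs a separate Prokhorov/total-variation argument (Lemma~\ref{le:claimtech2}) comparing the law of $(\tilde X_{S_n},\cY^{\infty,S_n}_{\tau})$ to $\rho_n\otimes\mu$ via Corollary~\ref{cor:mu-existence-convergence}. You instead read off $\tilde Y^T\indep X_S$ directly from the coupling in Proposition~\ref{prop:seq}: this is indeed justified, since~\eqref{eq:X_S_independent_of_Y^S_hat} gives $X_S\indep(\widehat Y^S,\cW'^S)$, the process $\cY^{\infty,\widehat Y^S,\cW'^S}$ is stationary, and $s_0$ is $\sigma(X_S)$-measurable, so conditioning on $X_S$ leaves $\tilde Y^T\sim\mu$. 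Your factorisation $\mathbb{E}^T[f(\mathpzc{x}'_T)g(\tilde Y^T)]=\mathbb{E}^T[f(\mathpzc{x}'_T)]\int g\,\d\mu$ then replaces the whole of Lemma~\ref{le:claimtech2}. This is a genuine shortcut, at the cost of leaning on an intermediate statement inside the proof of Proposition~\ref{prop:seq} rather than on its stated conclusion.
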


 We will  prove in Subsection~\ref{subsec:Conclusion} below that the weak convergence of $\tilde X$, assumed in  Proposition~\ref{prop:joint}, holds when $\beta>-\frac{1}{3}$ with $\rho$ the centred Gaussian distribution with variance in~\eqref{eq:N-var} above. This final step will conclude the proof of Theorem~\ref{th:main1}.

By the strong law in~\eqref{def:c1} and  Assumption~\eqref{hyp:D3} on the boundary function $b$, we have $a_\infty c_1^\beta T^{\beta/(1+\beta)}/b(X_T)\to1$ almost surely as $T\to\infty$.
It thus suffices to prove that the random vector 
\begin{equation}
\label{eq:X_tilde_Y_tilde}
(\tilde{X}_T,\tilde{Y}_T)\coloneqq \big(\frac{X_T- c_1 T^{\frac{1}{1+\beta}}}{\sqrt{T}},  \frac{Y_T}{b(X_T)}\big)
\end{equation}
has, as $T\to\infty$, the limit distribution given in Proposition~\ref{prop:joint}.

Our proof of Proposition~\ref{prop:joint}, which essentially states that $\tilde{X}_T$ and $\tilde{Y}_T$ are asymptotically independent, relies on the fact that the mixing time of $\tilde{X}$ is much larger than the mixing time of~$\tilde{Y}$. In the time windows from $S$ to $T$, where (by Lemma~\ref{le:sequenceRand} above) $S=S(T)$ 
satisfies $|S(T)-T|=O( T^\frac{2\beta}{1+\beta}\log T )$,
we will show that $X$ hardly fluctuates (see Lemma~\ref{le:claimtech} below). On the contrary, since the mixing time of $\tilde{Y}$ is of order $T^{\frac{2\beta}{1+\beta}}$, which is much smaller than $T-S$ (recall from Lemma~\ref{le:sequenceRand} that $T-S\geq 2\mathpzc{s}_2(S)b(X_S)^2$ is lower bounded by a multiple of $\mathpzc{s}_2(S)T^{\frac{2\beta}{1+\beta}}$ as $T\to\infty$ and $\mathpzc{s}_2(S)\to\infty$), the value of $\tilde{Y}_T$ is almost independent from $Z_S=(X_S,Y_S)$ (cf.\  Lemma~\ref{le:claimtech2} below). In particular, since $\tilde{X}_T$ is approximately equal to $\tilde{X}_S$,  
the asymptotic independence between $\tilde X_T$ and $\tilde Y_T$ follows.

Throughout the present subsection, we fix $\mathpzc{s}_1$ the function given by Proposition~\ref{prop:long}, and the functions $\mathpzc{s}_2$ and $S$  produced by Lemma~\ref{le:sequenceRand}, when applied to that function $\mathpzc{s}_1$. When a sequence $(T_n)_{n\in \mathbb{N}}$ is given, we define $S_n\coloneqq S(T_n)$. 

\begin{lemma}
\label{le:claimtech}
Let $(T_n)_{n\in \mathbb{N}}$ be a deterministic sequence such that $T_n\underset{n\to \infty}\longrightarrow \infty$. Assume that 
$\tilde{X}_{S_n}$, defined in~\eqref{eq:X_tilde_Y_tilde}, converges in distribution, as $n\to \infty$. Then, 
in probability,    \[
    \tilde{X}_{T_n}-\tilde{X}_{S_n}\underset{n\to \infty}\longrightarrow 0.
    \]
\end{lemma}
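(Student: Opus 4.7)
The plan is to write
\begin{equation*}
\tilde X_{T_n} - \tilde X_{S_n} = \frac{X_{T_n} - X_{S_n}}{\sqrt{T_n}} - c_1 \frac{T_n^{1/(1+\beta)} - S_n^{1/(1+\beta)}}{\sqrt{T_n}} - \tilde X_{S_n}\Bigl(1 - \sqrt{S_n/T_n}\Bigr)
\end{equation*}
and to show that each of the three summands tends to $0$ in probability. The key quantitative input throughout is the bound $|T_n - S_n| = O(T_n^{2\beta/(1+\beta)} \log T_n)$ supplied by Lemma~\ref{le:sequenceRand}, together with $S_n/T_n \to 1$.

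The last two summands are straightforward. Since $\tilde X_{S_n}$ is tight by assumption and $\sqrt{S_n/T_n}\to 1$, the third summand vanishes. For the second, the mean value theorem gives $|T_n^{1/(1+\beta)} - S_n^{1/(1+\beta)}| = O(T_n^{-\beta/(1+\beta)} |T_n - S_n|) = O(T_n^{\beta/(1+\beta)} \log T_n)$, and dividing by $\sqrt{T_n}$ yields $O(T_n^{(\beta-1)/(2(1+\beta))} \log T_n)$, which is $o(1)$ since $\beta<1$ forces $(\beta-1)/(2(1+\beta))<0$.

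The main summand $(X_{T_n} - X_{S_n})/\sqrt{T_n}$ will be controlled by passing to the rescaled process. Writing $X_{T_n} - X_{S_n} = b(X_{S_n})\, \cX^{S_n}_{s_0(T_n)}$ with $s_0(T_n) \coloneqq (T_n - S_n)/b(X_{S_n})^2$, the strong law~\eqref{def:c1} combined with~\eqref{hyp:D3} yields $b(X_{S_n})^2 \sim a_\infty^2 c_1^{2\beta} S_n^{2\beta/(1+\beta)}$ almost surely, and hence $s_0(T_n) = O(\log T_n)$ almost surely. Lemma~\ref{le:sequenceRand} also gives $s_0(T_n) \leq 4\mathpzc{s}_2(S_n) \leq \mathpzc{s}_1(S_n)$ for $n$ large, so Proposition~\ref{prop:long} yields $|\cX^{S_n}_{s_0(T_n)} - \cX^{\infty,S_n}_{s_0(T_n)}| \to 0$ in probability. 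The representation $\cX^{\infty,S_n}_t = \pi_0 \sigma^\infty \cW^{S_n}_t + s_0 L^{\cY^{\infty,S_n}}_t$ from Remark~\ref{rem:beginning}\ref{rem:beginning(a)} (with the constant $s_0$ of~\eqref{hyp:V+}), together with a linear-in-time bound $L^{\cY^{\infty,S_n}}_t = O_p(t)$ uniform in the starting point, then gives $\cX^{\infty,S_n}_{s_0(T_n)} = O_p(\log T_n)$. Combining, the main summand is $O_p(T_n^{(\beta-1)/(2(1+\beta))} \log T_n) \to 0$.

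The main obstacle is the uniform $O_p(t)$ bound on $L^{\cY^{\infty,S_n}}_t$. This will follow by applying It\^o's formula to $y\mapsto 1-|y|_d^2$ exactly as in the proof of Lemma~\ref{le:reach-core}, exploiting the inward reflection $\langle u,\phi^{(d)}_\infty(u)\rangle = -c_0 < 0$ on $\Sp^{d-1}$ from~\eqref{hyp:V+} to isolate the local time term with a negative coefficient; Markov's inequality applied to the resulting martingale then delivers a bound that is uniform over $\cY^{\infty,S_n}_0 \in \BB^d$.
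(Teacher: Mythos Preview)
Your decomposition and the handling of the two easy summands are the same as in the paper. The difference lies only in the term $\frac{b(X_{S_n})}{\sqrt{T_n}}\cX^{\infty,S_n}_{s_0(T_n)}$. The paper rewrites this as $\frac{T_n-S_n}{\sqrt{T_n}\,b(X_{S_n})}\cdot\frac{1}{s}\cX^{\infty,S_n}_s$ with $s=s_0(T_n)\to\infty$, and then argues that $\frac{1}{s}\cX^{\infty,S_n}_s$ converges in probability to a deterministic constant $\ell$, by localising the infinite cylinder near its closed end so that \cite[Thm~2.2]{MMW} applies with exponent $\beta=0$. Your route is more direct and self-contained: from the It\^o expansion of $1-|\cY|_d^2$ and the identity $\langle u,\phi^{(d)}_\infty(u)\rangle=-c_0$ on $\Sp^{d-1}$ you read off $2c_0\,L^{\cY^{\infty,\mathpzc{y}}}_t = (1-|\cY^{\infty,\mathpzc{y}}_t|_d^2)-(1-|\mathpzc{y}|_d^2)+M'_t+\os\,t$ with $[M']_t\le C t$, giving $L^{\cY^{\infty,\mathpzc{y}}}_t=O_p(t)$ uniformly in $\mathpzc{y}\in\BB^d$; since $s_0(T_n)$ is $\sigma(Z_{S_n})$-measurable and $\cW^{S_n}$ is independent of $Z_{S_n}$, conditioning (or monotonicity of $L$) transfers the bound to the random time. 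Your argument avoids the localisation step and the external reference, at the cost of not identifying the limit $\ell$; since the lemma only needs $o_p(1)$, this is a gain in simplicity. One cosmetic point: what you call ``Markov's inequality applied to the resulting martingale'' is really taking expectations in the It\^o identity (the martingale has mean zero) to get $\mathbb{E}[L_t]=O(t)$, and then Markov on the nonnegative $L_t$.
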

\begin{proof}
    Let $T>0$ and $S=S(T)$.
    Recall that $\mathcal{Z}^{\infty,S}$,
defined in~\eqref{eq:def_mathcal_Z^T}, solves SDER~\eqref{eq:limitEqJoint_Integral}
    with initial condition $(0,Y_S/b(X_S))$, and its first component is denoted by~$\mathcal{X}^{\infty,S}$.
    Recall also that the $x$-component of $\mathcal{Z}^S_t$, defined  in~\eqref{eq:Z^T-def},   equals $\mathcal{X}^S_t=(X_{S+ t b(X_S)^2 }-X_S)/b(X_S)$.
    Rearranging the terms in the difference $\tilde{X}_T-\tilde{X}_S$, we obtain 
    \begin{align*}
    \tilde{X}_T-\tilde{X}_S
    =&\Big( \frac{\sqrt{S}}{\sqrt{T}}-1\Big) \tilde{X}_S +\frac{c_1}{\sqrt{T}}(S^{\frac{1}{1+\beta}} -T^{\frac{1}{1+\beta}} )\\
    & +
     \frac{b(X_S)}{\sqrt{T}} (\cX^S_{\frac{T-S}{b(X_S)^2}}- \cX^{\infty,S}_{\frac{T-S}{b(X_S)^2}} )+   \frac{b(X_S)}{\sqrt{T}} \cX^{\infty,S}_{\frac{T-S}{b(X_S)^2}}.
    \end{align*}
    
    Recall that, as $T\to \infty$, we have $S/T\to1$, 
    $T-S=O(T^\frac{2\beta}{1+\beta} \log(T))$ (this follows from the properties of $S$ given by Lemma~\ref{le:sequenceRand}), that 
    $X_T\sim X_S~\sim c_1 T^{\frac{1}{1+\beta}}$  almost surely by the strong law in~\eqref{def:c1}. Therefore $b(X_S)T^{-\frac{\beta}{1+\beta}}\to a_\infty c_1^\beta$ as $T\to \infty$. We now show that the  summands in  the last display tend to $0$ in probability, along the sequence $T=T_n$, as $n\to \infty$. 
    \begin{enumerate}
    \item[(i)] Since we know $\tilde{X}_{S_n}$ converges in distribution, we have
    \[ \Big( \frac{\sqrt{S_n}}{\sqrt{T_n}}-1\Big) \tilde{X}_{S_n}\underset{n\to \infty}{\overset{(proba)}\longrightarrow} 0. \]
    
    \item[(ii)] Since $T-S=O(T^\frac{2\beta}{1+\beta} \log(T))$ as $T\to\infty$ and $\beta\in(-1,1)$,
    we deduce 
    \[
    T^{\frac{1}{1+\beta}}
    - S^{\frac{1}{1+\beta} }= \frac{1}{1+\beta}T^{\frac{1}{1+\beta}-1}(T-S+o(T-S))  =  O(T^{\frac{\beta}{1+\beta}}  \log(T) )\quad\text{as $T\to\infty$,} 
    \]
    hence, as $\beta/(1+\beta)<1/2$, we obtain
    \begin{equation}
    \label{eq:temp:estim}
    \frac{c_1}{\sqrt{T_n}}\bigl(S_n^{\frac{1}{1+\beta}} -T_n^{\frac{1}{1+\beta}} \bigr) \underset{n\to \infty}{\longrightarrow} 0.
    \end{equation}
    
    \item[(iii)] Since, by~\eqref{eq:inclu}, we have $T\leq T+b(X_T)^2  \mathpzc{s}_2(T)\leq S+4b(X_S)^2 \mathpzc{s}_2(S)\leq S+b(X_S)^2 \mathpzc{s}_1(S)$ and hence
    \[\frac{T-S}{b(X_S)^2 }\in [0,\mathpzc{s}_1(S)].\]
  It follows that (recall $\mathpzc{s}_1$ was chosen so that~\eqref{eq:long} in Proposition~\ref{prop:long} holds)
    \[
    \Bigl| \cX^S_{\frac{T-S}{b(X_S)^2}}- \cX^{\infty,S}_{\frac{T-S}{b(X_S)^2}} \Bigr|
    \leq 
    \|\cX^S_{\frac{T-S}{b(X_S)^2}}- \cX^{\infty,S}_{\frac{T-S}{b(X_S)^2}} \|_{[0, \mathpzc{s}_1(S)]}\underset{T\to \infty}{\overset{(proba)}\longrightarrow}0.
    \]
    Since $b(X_{S_n})/\sqrt{T_n}\to0$ (hence is bounded), we deduce 
    \[\frac{b(X_{S_n})}{\sqrt{T_n}}
    \Bigl| \cX^{S_n}_{\frac{T_n-S_n}{b(X_{S_n})^2}}- \cX^{\infty,S}_{\frac{T_n-S_n}{b(X_{S_n})^2}} \Bigr|
    \underset{n\to \infty}{\overset{(proba)}\longrightarrow}0.\]
    \item[(iv)] 
    Write \[ \frac{b(X_S)}{\sqrt{T}} \cX^{\infty,S}_{\frac{T-S}{b(X_S)^2}}
    =
     \frac{T-S}{\sqrt{T} b(X_S)}\Big(\frac{1}{s} \cX^{\infty,S}_{s} \Big) \qquad \text{with} \qquad  s\coloneqq \frac{T-S}{b(X_S)^2}.\]
    Since  almost surely $b(X_S)^{-1}= O( T^{-\frac{\beta}{1+\beta}})$ as $T\to\infty$, we get
    \begin{equation}
     \label{eq:temp:estim2}
    \frac{T-S}{b(X_{S}) \sqrt{T}}= b(X_{S})^{-1} O\bigl(T^{\frac{2\beta}{1+\beta}-\frac{1}{2} }  \log T \bigr)
    =
    O \bigl( T^{\frac{\beta}{1+\beta}-\frac{1}{2}} \log T \bigr)\underset{T\to \infty}{\overset{(proba)}\longrightarrow} 0.
    \end{equation} 
    By the inclusion in~\eqref{eq:inclu}, for $T$ sufficiently large, it holds $T-S\geq 2b(X_S)^2\mathpzc{s}_2(S)$. Thus, $s\geq 2 \mathpzc{s}_2(S)\to\infty$ as $T\to \infty$.
    We now claim that $\frac{1}{s}\cX^{\infty,  S}_s$ converges in probability to a deterministic limit as $s\to \infty$. 
    This follows from \cite[Thm~2.2]{MMW} and  a localisation argument detailed in the final paragraph of this proof, below. Assuming this convergence, and using~\eqref{eq:temp:estim2}, we deduce 
    \[\frac{b(X_{S_n})}{\sqrt{T_n}} \cX^{\infty,S}_{\frac{T_n-S_n}{b(X_{S_n})^2}}\underset{n\to \infty}{\overset{(proba)}\longrightarrow} 0.\]
  \end{enumerate}  
  Points (i)--(iv), together with the first display of the proof, imply that $\tilde{X}_{T_n}-\tilde{X}_{S_n}$ converges in probability to $0$ as $n\to\infty$.
  
    It only remains to deduce the claimed convergence in probability claimed in point~(iv). Let $\mathcal{D}_0$ be a domain with smooth boundary lying inside $\mathbb{R}\times \mathbb{B}^d$ and satisfying $\mathcal{D}_0\cap(\mathbb{R}_+\times \mathbb{R}^d)=\mathbb{R}_+\times \mathbb{B}^d$ with intersection  $\mathcal{D}_0\cap(\mathbb{R}_-\times \mathbb{R}^d)$ being compact. We consider on $\mathcal{D}_0$ the SDER with smooth coefficients which extend those of $\mathcal{Z}^{\infty,S}$ on  $\mathbb{R}_+\times \mathbb{B}^d$. For $c>0$, let $\mathcal{Z}^{c}=(\mathcal{X}^{c},\mathcal{Y}^{c})$ be a strong solution, driven by the same Brownian motion as the one driving $\mathcal{Z}^{\infty,S}$, and with starting point $\mathcal{Z}^{c}_0=(c,Y_S/b(X_S))$. Let $\tau_c\in \mathbb{R}_+\cup\{\infty\}$ be the first time when $\mathcal{X}^{c}$ hits $0$. Then, for $t<\tau_c$, $\mathcal{Z}^{\infty,S}_t= \mathcal{Z}^c_t -c $. Thanks to the localisation, the process $\mathcal{Z}^c$
    is in the family of processes studied in \cite{MMW} (with the exponent $\beta=0$, as we are working here in an asymptotic cylinder). By \cite[Thm~2.2]{MMW}, there exists $\ell\in(0,\infty)$ which does not depend on $c$ and such that  almost surely, $\frac{1}{s} \mathcal{X}^{c}_s\longrightarrow \ell $.  
    Thus, almost surely on the event $\{\tau_c=\infty\}$, 
    $\frac{1}{s} \mathcal{X}^{\infty,\mu}_s$ converges to $\ell$ as $s\to \infty$. 
    By \cite[Prop.~5.3]{MMW}, the probability of $\{\tau_c=\infty\}$ goes to $1$ as $c\to \infty$. Hence, almost surely, $\frac{1}{s} \mathcal{X}^{\infty,S}_s$ converges to~$\ell$ as $s\to \infty$, as claimed.
\end{proof}

\begin{lemma}
    \label{le:claimtech2}
    Let $(T_n)_{n\in \mathbb{N}}$ be a deterministic sequence such that $T_n\underset{n\to \infty}\longrightarrow \infty$. Assume that 
    $ \tilde{X}_{S_n}$, defined in~\eqref{eq:X_tilde_Y_tilde} above, converges in law to a distribution $\rho$, as $n\to \infty$. Then 
    the couple 
    $(\tilde{X}_{S_n}, \mathcal{Y}^{\infty,S_n}_{(T_n-S_n)/b(X_{S_n})^2} )$ 
    converges in law to the product distribution $\rho\otimes \mu$.
\end{lemma}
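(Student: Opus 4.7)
The plan is to exploit three ingredients that are already in hand: the conditional-independence structure of $\mathcal{Z}^{\infty,S_n}$ given $Z_{S_n}$, the uniform exponential mixing of the kernel $P^\infty_t$ from Corollary~\ref{cor:mu-existence-convergence}, and the lower bound on the elapsed rescaled time coming from the interval inclusion in Lemma~\ref{le:sequenceRand}. Since $\tilde{X}_{S_n}$ is $\sigma(Z_{S_n})$-measurable while $\mathcal{Y}^{\infty,S_n}_{t_n}$, conditional on $Z_{S_n}$, is close in total variation to $\mu$, asymptotic independence and the claimed product limit will follow by a standard conditional expectation argument.

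First, I would recall from Subsection~\ref{subsec:def} that the Brownian motion $\mathcal{W}^{S_n}$ appearing in~\eqref{eq:def_mathcal_Z^T} is independent of $Z_{S_n}=(X_{S_n},Y_{S_n})$. Since $\mathcal{Y}^{\infty,S_n}$ is by construction the strong solution of the autonomous SDER~\eqref{eq:limitEq} started at $Y_{S_n}/b(X_{S_n})$ and driven by $\mathcal{W}^{S_n}$, the Markov kernel $P^\infty_t$ from Remark~\ref{rem:Notation} describes its conditional law: writing $t_n\coloneqq (T_n-S_n)/b(X_{S_n})^2$, which is $\sigma(Z_{S_n})$-measurable, one has
\[ \mathbb{P}\bigl(\mathcal{Y}^{\infty,S_n}_{t_n}\in A \,\big|\, Z_{S_n}\bigr) = P^\infty_{t_n}\bigl(Y_{S_n}/b(X_{S_n}),A\bigr) \quad \text{a.s., for every Borel } A\subseteq \mathbb{B}^d. \]
The inclusion~\eqref{eq:inclu} in Lemma~\ref{le:sequenceRand} (applied with $T=T_n$) gives $t_n\geq 2\mathpzc{s}_2(S_n)$ almost surely for all $n$ large enough, and since $S_n\sim T_n\to\infty$ and $\mathpzc{s}_2(S)\to\infty$ as $S\to\infty$, we deduce $t_n\to\infty$ almost surely.

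Combining these two observations with Corollary~\ref{cor:mu-existence-convergence}, for any bounded continuous $g:\mathbb{B}^d\to\mathbb{R}$ we have
\[ \Bigl|\mathbb{E}\bigl[g(\mathcal{Y}^{\infty,S_n}_{t_n}) \,\big|\, Z_{S_n}\bigr] - \textstyle\int_{\mathbb{B}^d}g\,\d\mu\Bigr| \leq 2\|g\|_\infty C_0 \lambda^{t_n} \xrightarrow[n\to\infty]{\text{a.s.}} 0, \]
and since the left-hand side is bounded by $2\|g\|_\infty$, dominated convergence yields the same convergence in $L^1$. Then, for any bounded continuous $f:\mathbb{R}\to\mathbb{R}$, using that $\tilde{X}_{S_n}$ is $\sigma(Z_{S_n})$-measurable and the assumed convergence $\tilde{X}_{S_n}\Rightarrow\rho$,
\[ \mathbb{E}\bigl[f(\tilde{X}_{S_n}) g(\mathcal{Y}^{\infty,S_n}_{t_n})\bigr] = \mathbb{E}\Bigl[f(\tilde{X}_{S_n}) \mathbb{E}\bigl[g(\mathcal{Y}^{\infty,S_n}_{t_n})\,\big|\,Z_{S_n}\bigr]\Bigr] \xrightarrow[n\to\infty]{} \int_{\mathbb{R}} f\,\d\rho \cdot \int_{\mathbb{B}^d} g\,\d\mu, \]
which is precisely joint weak convergence to $\rho\otimes\mu$.

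The only delicate point in this argument is ensuring that evaluating $P^\infty_{t_n}$ at the $Z_{S_n}$-measurable random time $t_n$ is legitimate; this is where the independence of $\mathcal{W}^{S_n}$ from $Z_{S_n}$ is used, so I would spell that step out explicitly. Apart from that, there is no real obstacle: the machinery from Section~\ref{sec:limiting} (uniform ergodicity) together with the time-scale separation provided by Lemma~\ref{le:sequenceRand} does all the work.
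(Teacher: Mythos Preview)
Your proof is correct and follows essentially the same route as the paper's: both rest on the observation that, conditional on $Z_{S_n}$, the law of $\mathcal{Y}^{\infty,S_n}_{t_n}$ is exactly $P^\infty_{t_n}(Y_{S_n}/b(X_{S_n}),\cdot)$, and that this is uniformly close to $\mu$ in total variation because $t_n\to\infty$ via Lemma~\ref{le:sequenceRand} and Corollary~\ref{cor:mu-existence-convergence}. The paper expresses this by writing the joint law $\Gamma_n$ as a mixture $\int(\delta_{p_n(z)}\otimes(\delta_{y_n(z)}P^\infty_{s_n(z)}))\,\d\nu_n(z)$ and bounding $d_{\TV}(\Gamma_n,\rho_n\otimes\mu)$ directly, splitting on whether $s_n(z)$ exceeds a large threshold; you express it via test functions and the tower property, using $t_n\to\infty$ a.s.\ together with dominated convergence. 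The two packagings are equivalent, and yours is arguably a bit more streamlined.
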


\begin{proof}
Recall that 
$\mathcal{Y}^{\infty,S_n}$ is the $y$-component of 
    $\mathcal{Z}^{\infty,S_n}$
defined in~\eqref{eq:def_mathcal_Z^T}.
    Let $\rho_n$ and $\nu_n$ be the distributions of $\tilde{X}_{S_n}$ and $Z_{S_n}$, respectively. Let
    $\Gamma_n$ be the joint distribution of the random vector
    $(\tilde{X}_{S_n}, \mathcal{Y}^{\infty,S_n}_{(T_n-S_n)/b(X_{S_n})^2}  )$. Define the following deterministic functions: $p_n((x,y ))\coloneqq  (x-c_1 S_n^{\frac{1}{1+\beta}})/\sqrt{S_n}$,
    $s_n((x,y))\coloneqq (T_n-S_n)/b( x )^2  $
    and $y_n(x,y)\coloneqq y/b(x)$.
    Then we have: 
    $\tilde{X}_{S_n}=p_n(Z_{S_n})$, 
    the process $\mathcal{Y}^{\infty,S_n}$ starts at $y_n(Z_{S_n})$ and $(T_n-S_n)/b(X_{S_n})^2 =s_n(Z_{S_n})$.
    Thus,
    \[
    \rho_n=\int  \delta_{p_n(z)} \d \nu_n (z)
    \quad  \text{and} \quad
    \Gamma_n=\int ( \delta_{p_n(z)}\otimes (\delta_{y_n(z)} P^\infty_{s_n(z)}) ) \d \nu_n (z).\]
    
    The convergence we seek to establish is equivalent to  the convergence of $d_{\prok}(\Gamma_n,\rho\otimes \mu)\to0$ as $n\to\infty$, where  
    $d_{\prok}$ is the Prokhorov distance, metrising weak convergence. Recall the following elementary facts about the Prokhorov distance: for any measures $\mu_1,\mu_2,\mu_3$, it holds $d_\prok(\mu_1,\mu_2)\leq d_\TV(\mu_1,\mu_2)$
    and    $d_\prok(\mu_1\otimes \mu_3,\mu_2\otimes \mu_3)\leq d_\prok(\mu_1,\mu_2)  $.
    In particular, 
    \begin{align*}
    d_{\prok}(    \Gamma_n, \rho\otimes \mu)
   & \leq 
    d_{\prok}(    \Gamma_n, \rho_n\otimes \mu) 
    +
    d_{\prok}(    \rho_n\otimes \mu, \rho\otimes \mu) \\
   & \leq 
    d_{\TV}(    \Gamma_n, \rho_n\otimes \mu) 
    +
    d_{\prok}(    \rho_n, \rho).
    \end{align*}
    By assumption, $d_{\prok}(    \rho_n, \rho)\underset{n \to \infty}\longrightarrow 0$, so it suffices to show that 
    \begin{equation*} 
    d_{\TV}(    \Gamma_n, \rho_n\otimes \mu) \longrightarrow 0\quad\text{as $n\to\infty$.}
    \end{equation*}
    Let $\lambda<1$ and $C_0<\infty$ be given by Corollary~\ref{cor:mu-existence-convergence}. Fix $\epsilon>0$, and let $s$ be such that 
    $C_0 \lambda^s\leq \epsilon/2$. 
    For $n$ large enough, with large probability, $(T_n-S_n)/b(X_{S_n})^2 \geq  2 \mathpzc{s}_2(S_n)$, and $2 \mathpzc{s}_2(S_n)\to\infty$ as $n\to \infty$, so there exists 
    $n_0\in\N$  such that  $\mathbb{P}(  (T_n-S_n)/b(X_{S_n})^2\leq s )\leq \epsilon/2$ for all $n\geq n_0$. Then, for all $n\geq n_0$, it holds 
    \begin{align*}
    d_{\TV}(   & \Gamma_n,  \ \rho_n\otimes \mu)
    =
    d_{\TV} \Bigl(    \int (\delta_{p_n(z)}\otimes (\delta_{y_n(z)} P^\infty_{s_n(z)}) ) \d \nu_n (z),
    \int ( \delta_{p_n(z)}\otimes \mu )\d \nu_n (z)
    \Bigr)\\    
    &\leq
    \int d_{\TV}( \delta_{p_n(z)}\otimes (\delta_{y_n(z)} P^\infty_{s_n(z)})  ,   \delta_{p_n(z)}\otimes \mu )\d \nu_n (z)\\
    &=
    \int d_{\TV}(\delta_{y_n(z)} P^\infty_{s_n(z)},\mu )  \d \nu_n (z)\\
    &= \int \mathbbm{1}_{s_n(z) \leq s} d_{\TV}(\delta_{y_n(z)} P^\infty_{s_n(z)},\mu )  \d \nu_n (z) 
    + \int \mathbbm{1}_{s_n(z) > s} d_{\TV}(\delta_{y_n(z)} P^\infty_{s_n(z)},\mu )  \d \nu_n (z)\\
    &\leq \int \mathbbm{1}_{s_n(z) \leq s} \d \nu_n (z) 
    + \sup_{z:s_n(z) > s} d_{\TV}(\delta_{y_n(z)} P^\infty_{s_n(z)},\mu )\\
    &\leq 
    \mathbb{P}(  (T_n-S_n)/b(X_{S_n})^2\leq s )+   C_0 \lambda^s  \leq \epsilon/2+\epsilon/2= \epsilon,
    \end{align*}
    which concludes the proof of the lemma.
\end{proof}

\begin{proof}[Proof of Proposition~\ref{prop:joint}] 
    It is sufficient to show that the weak convergence of the pair $(\tilde X_T,\tilde Y_T)$, defined in~\eqref{eq:X_tilde_Y_tilde} above, holds along any arbitrary sequence $(T_n)_{n\in\N}$, satisfying $T_n\to \infty$ as $n\to\infty$. We fix such a sequence $(T_n)_{n\in\N}$. 
    
    
    By the assumption of convergence in Proposition~\ref{prop:joint}, it holds that $\tilde{X}_{S_n}$ converges in distribution to $\rho$ as $n\to \infty$ (recall that $S_n=S(T_n)\to\infty$). By Lemma~\ref{le:claimtech},  $\tilde{X}_{S_n}-\tilde{X}_{T_n}$ tends to $0$ in probability as $n\to \infty$. Thus, by Slutsky's theorem, 
    \[
    (\tilde{X}_{T_n},\tilde{Y}_{T_n} )\underset{n\to \infty}{\overset{(d)}\longrightarrow} \rho\otimes \mu
    \iff 
    (\tilde{X}_{S_n},\tilde{Y}_{T_n} )\underset{n\to \infty}{\overset{(d)}\longrightarrow} \rho\otimes \mu.
    \]
    We now prove that the right-hand side holds, concluding the proof of the proposition. 

    Let $\Gamma_n$ be the distribution of $ (\tilde{X}_{S_n},\tilde{Y}_{T_n} )$.
    By Corollary~\ref{coro:Ycv}, $\tilde{Y}_{T_n}=Y_{T_n}/b(X_{T_n})$ converges in distribution to $\mu$ as $n\to \infty$. As both marginal distributions of $\Gamma_n$ converge weakly, it follows that  the family $(\Gamma_n)_{n\in\N}$ is tight. 
    By Prokhorov's theorem, it suffices to show that for any convergent subsequence $(\Gamma_{n_k})_{k\in\N}$, 
    the limit law $\tilde{\Gamma}$, supported on $\R\times\BB^d$, equals  $\rho\otimes \mu$. We fix such a subsequence, which for simplicity we call $n$. To summarise, we are given a sequence $(T_n)_{n\in\N}$, such that $T_n\to\infty$ and  
    \[ 
    (\tilde{X}_{S_n},\tilde{Y}_{T_n} )\underset{n\to \infty}\longrightarrow \tilde{\Gamma} \qquad \text{and} \qquad 
    \tilde{X}_{S_n} \underset{n\to \infty}\longrightarrow \rho,
    \]
    in distribution.
    It then suffices to prove that $\tilde{\Gamma}=\rho\otimes \mu$. 
    
    By the inclusion~\eqref{eq:inclu}, for all $n$ sufficiently large, we have 
    \[ 
    T_n\leq S_n+4 b(X_{S_n})^2 \mathpzc{s}_2(S_n)\leq S_n+ b(X_{S_n})^2 \mathpzc{s}_1(S_n), 
    \quad \text{and thus} 
    \quad \frac{T_n-S_n}{b(X_{S_n})^2} \in[0,\mathpzc{s}_1(S_n)].\] 
    It follows from Proposition~\ref{prop:long} that
    \[
    \bigl| \mathcal{Y}^{\infty,S_n}_{(T_n-S_n)/b(X_{S_n})^2}- \mathcal{Y}^{S_n}_{(T_n-S_n)/b(X_{S_n})^2} \bigr|\overset{(proba)}{\underset{n\to \infty}\longrightarrow} 0.
    \] 
    Furthermore, by the definition in~\eqref{eq:Z^T-def} of $\mathcal{Y}^{S_n}$, we obtain
    \[
    \bigl| \mathcal{Y}^{S_n}_{(T_n-S_n)/b(X_{S_n})^2}- \tilde{Y}_{T_n} \bigr| = \Big| \frac{Y_{T_n}}{b(X_{S_n})} -\frac{Y_{T_n}}{b(X_{T_n})} \Big|\leq \Big| \frac{b(X_{T_n})}{b(X_{S_n})} -1 \Big| \overset{(proba)}{\underset{n\to \infty}\longrightarrow} 0.
    \]
    By triangle inequality,  we deduce $|\mathcal{Y}^{\infty,S_n}_{(T_n-S_n)/b(X_{S_n})^2}- \tilde{Y}_{T_n} |$ also converges to $0$ in probability.
    Using Slutsky's theorem again, we deduce that the triple
    $(\tilde{X}_{S_n},\tilde{Y}_{T_n}, \mathcal{Y}^{\infty,S_n}_{(T_n-S_n)/b(X_{S_n})^2}  )$ converges in distribution to $(N,Y^1,Y^2)$, with $Y^1=Y^2$ and $(N,Y^1)$ distributed as $\tilde{\Gamma}$. 
    By Lemma~\ref{le:claimtech2}, the distribution of $(N,Y^2)$ 
is $\rho\otimes \mu$, implying $\tilde{\Gamma}=\rho \otimes \mu$.
\end{proof}

\section{Ergodicity}
\label{sec:ergodicity}

Our goal in this section is to establish  asymptotic behaviour of certain additive functionals of the reflected process $Z=(X,Y)$ following SDER~\eqref{eq:SDER} in terms of the invariant measure $\mu$, supported on $\BB^d$ and characterised in Corollary~\ref{cor:mu-existence-convergence} above. In the proof of the  central limit theorem for $X$ in Section~\ref{sec:central} below, integrals such as the one in the following proposition appear naturally through It\^o's formula. 
\begin{proposition}
\label{prop:int}
  Let $P=P_0+P_1+P_2$ be a polynomial in $d$ variables of degree $2$, where $P_p$ is homogeneous of degree $p\in\{0,1,2\}$. Define $\tilde{P}\coloneqq P_0+a_\infty P_1+a_\infty^2 P_2$ (see Assumption~\eqref{hyp:D3} for the coefficient $a_\infty>0$), assume  $\beta>-\frac{1}{3}$ 
and recall the constant $c_1$ from~\eqref{def:c1}.  Then, 
  \[ T^{-1-\frac{2\beta}{1+\beta}} \int_0^T X_t^{2\beta} P(\frac{Y_t}{X_t^\beta})\d t \underset{T\to \infty}{\overset{(proba)}\longrightarrow} \frac{1+\beta}{1+3\beta} c_1^{2\beta} \int_{\BB^d} \tilde{P} \d \mu,  \]
  where $\mu$ is the unique invariant measure of SDER~\eqref{eq:limitEq} (cf.~Corollary~\ref{cor:mu-existence-convergence} above).
\end{proposition}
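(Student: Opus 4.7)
The plan is to reduce the integrand to a canonical form involving the normalised transverse coordinate $Y_t/b(X_t) \in \BB^d$, to decompose $[0,T]$ into mesoscopic windows on which the rescaled process $\cZ^{T_k}$ approximates the stationary limit, and to conclude via ergodicity of that limit.

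\emph{Step 1 (normalisation).} Assumption~\eqref{hyp:D3} gives $b(x)/x^\beta \to a_\infty$ as $x \to \infty$. Writing $Y_t/X_t^\beta = (b(X_t)/X_t^\beta) \cdot (Y_t/b(X_t))$ and exploiting the homogeneity of each $P_p$, I would get
\[ P(Y_t/X_t^\beta) = \tilde{P}(Y_t/b(X_t)) + \epsilon_t, \]
where $\epsilon_t \to 0$ uniformly in $Y_t/b(X_t) \in \BB^d$. Since $\int_0^T X_t^{2\beta}\, dt \sim c_1^{2\beta}\frac{1+\beta}{1+3\beta}T^{(1+3\beta)/(1+\beta)}$ by~\eqref{def:c1} (an identity that needs $\beta > -1/3$ to match the scaling in the statement), the error term contributes negligibly after normalisation. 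It therefore suffices to analyse $\int_0^T X_t^{2\beta}\, \tilde{P}(Y_t/b(X_t))\, dt$.

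\emph{Step 2 (mesoscopic decomposition and coupling).} Fix a slowly diverging scale $\mathpzc{s} = \mathpzc{s}(T) \to \infty$ and partition $[0,T]$ recursively by $T_0 = 0$, $T_{k+1} = T_k + b(X_{T_k})^2 \mathpzc{s}$. The change of variables $t = T_k + b(X_{T_k})^2 u$, using that $X_t \sim X_{T_k}$ and $b(X_t) \sim b(X_{T_k})$ for $t \in [T_k, T_{k+1}]$ and the definition~\eqref{eq:Z^T-def} of $\cZ^{T_k}$, yields
\[ \int_{T_k}^{T_{k+1}} X_t^{2\beta}\tilde{P}(Y_t/b(X_t))\, dt = X_{T_k}^{2\beta} b(X_{T_k})^2 \int_0^{\mathpzc{s}} \tilde{P}(\cY^{T_k}_u)\, du \cdot (1+o(1)). \]
Provided $\mathpzc{s} \leq \mathpzc{s}_3(T_k)$, which holds uniformly in $k$ for sufficiently slowly growing $\mathpzc{s}$, Proposition~\ref{prop:seq} couples $\cY^{T_k}$ with a stationary copy $\cY^{\infty,\mu,k}$ of $\cY^{\infty,\mu}$, with uniform error small in probability on $[0,\mathpzc{s}]$.

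\emph{Step 3 (ergodic theorem and Riemann sum).} Uniqueness of the invariant measure $\mu$ (Corollary~\ref{cor:mu-existence-convergence}) together with Birkhoff's theorem, quantified via the exponential mixing bound of Corollary~\ref{cor:mu-existence-convergence}, yields $\mathpzc{s}^{-1}\int_0^{\mathpzc{s}}\tilde{P}(\cY^{\infty,\mu}_u)\, du \to \int_{\BB^d}\tilde{P}\, d\mu$ in $L^2$ as $\mathpzc{s} \to \infty$. Substituting and summing,
\[ \int_0^T X_t^{2\beta}\tilde{P}(Y_t/b(X_t))\, dt \approx \Bigl(\sum_k X_{T_k}^{2\beta} b(X_{T_k})^2 \mathpzc{s}\Bigr) \int_{\BB^d}\tilde{P}\, d\mu \approx \int_0^T X_t^{2\beta}\, dt \cdot \int_{\BB^d}\tilde{P}\, d\mu, \]
the last step being a Riemann-sum approximation with mesh size $b(X_{T_k})^2 \mathpzc{s} \ll T_k$. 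Combined with Step~1 and the asymptotics of $\int_0^T X_t^{2\beta}\, dt$, this yields the claim.

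\emph{Main obstacle.} The number of windows is of order $T^{(1-\beta)/(1+\beta)}/\mathpzc{s}$, which diverges, so uniform control of the accumulated coupling and ergodic errors is the key technical challenge. Using boundedness of $\tilde{P}$ on $\BB^d$, the quantitative mixing from Corollary~\ref{cor:mu-existence-convergence}, and a Markov-inequality argument on the aggregated $L^1$ errors weighted by $X_{T_k}^{2\beta} b(X_{T_k})^2$, a judicious choice of $\mathpzc{s}(T)$ (slow enough to stay below $\min_k \mathpzc{s}_3(T_k)$ yet fast enough for the ergodic average to stabilise) should balance these competing requirements.
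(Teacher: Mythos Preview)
Your overall strategy---mesoscopic decomposition, coupling on each window via Proposition~\ref{prop:seq}, ergodic theorem, Riemann sum---is exactly the paper's. Two technical choices you make, however, differ from the paper's and create difficulties you have not resolved.

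\textbf{Random versus deterministic windows.} Your partition $T_{k+1}=T_k+b(X_{T_k})^2\mathpzc{s}$ depends on the path of $X$, so each $T_k$ is a stopping time. Proposition~\ref{prop:seq} is stated for a \emph{deterministic} $T$: the extended probability space $\mathbb{P}^T$ and the coupled stationary process depend on $T$, and there is no claim that these extensions can be glued together or applied at random times. The paper sidesteps this entirely by using the deterministic recursion $T_{n+1}=T_n+C^2T_n^{2\beta/(1+\beta)}\mathpzc{s}_4(T_n)$ with $C=a_\infty c_1^\beta$ (see~\eqref{eq:T_n}). Each block integral $I_n$ is then a random variable on the \emph{original} space; Lemma~\ref{le:integral} (proved via one application of Proposition~\ref{prop:seq} at the fixed time $T_n$) shows $I_n\to c_4$ in probability on that space, and the couplings never need to be combined.

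\textbf{Accumulated error.} You correctly flag this as the crux, but your proposed Markov-inequality argument on aggregated $L^1$ errors is not enough: you have $\sim T^{(1-\beta)/(1+\beta)}/\mathpzc{s}$ blocks, each with an error that tends to zero only in probability, and no quantitative rate from Proposition~\ref{prop:seq} to sum. The paper's device is to localise on $E_R\coloneqq\{\sup_{T\geq1}T^{-1/(1+\beta)}X_T\leq R\}$, on which every block integrand is uniformly bounded. Then $I_n\mathbbm{1}_{E_R}$ is bounded \emph{and} converges in probability, hence converges in $L^2$; the $L^2$ bound $\|(I_n-c_4)\mathbbm{1}_{E_R}\|_{L^2}\leq\epsilon$ for $n\geq n_0$ can now be summed against the deterministic weights $(T_{k+1}-T_k)T_k^\alpha$, with the Riemann-sum remainder controlled by the elementary Lemma~\ref{le:estimTechnic}. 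Finally one lets $R\to\infty$ using $\mathbb{P}(\bigcup_R E_R)=1$.

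Your Step~1 normalisation is fine and slightly different from the paper's (which absorbs the $a_\infty$ factors by writing $X_t^{2\beta}P(Y_t/X_t^\beta)=X_t^{(2-p)\beta}P(Y_t)$ for each homogeneous piece), but both routes work.
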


The proof of Proposition~\ref{prop:int} subdivides the time interval $[0,T]$  into pieces of adequate length, so that the process $X$ has little variation over any of these subintervals. To this end, we use a function $\mathpzc{s}_3:\RP\to\RP$  for which Proposition~\ref{prop:seq} holds, and which we can assume is bounded below by $1$. We then take a function $\mathpzc{s}_4\leq \mathpzc{s}_3$, also bounded below by $1$, satisfying $\mathpzc{s}_4(T)\to \infty$ and $\mathpzc{s}_4(T)=O(\log(T))$, as $T\to \infty$, and, in particular, $T\sim_{T\to \infty} T+T^\frac{2\beta}{1+\beta} \mathpzc{s}_4(T)$. The key step in the proof, given in Lemma~\ref{le:integral}, consists of estimating the integral in Proposition~\ref{prop:int} when the interval of integration $[0,T]$ is replaced by
$[T, T+C^2T^\frac{2\beta}{1+\beta} \mathpzc{s}_4(T)]$ for some $C>0$.

\begin{lemma}
\label{le:integral}
  Let $q\geq 0$ be a non-negative real number and let $P$ be a homogeneous polynomial in $d$ variables of degree $p$, i.e.~$P(\lambda y)=\lambda^p P(y)$ for all $\lambda\geq0$ and $y\in \mathbb{R}^d$.
  Let $C\coloneqq a_\infty c_1^{\beta}$, where $a_\infty$ and $c_1$ are defined in Assumption~\eqref{hyp:D3} and~\eqref{def:c1}, respectively. 
  As $T\to \infty$, in probability,
  \begin{equation}
  \label{eq:le:conv}
  \frac{1}{C^2 T^{\frac{2\beta+p\beta+q}{1+\beta}}  \mathpzc{s}_4(T)} \int_T^{T+C^2 T^{\frac{2\beta}{1+\beta}} \mathpzc{s}_4(T)} X_s^q P(Y_s) \d s\longrightarrow  a_\infty^p c_1^{q+p\beta} \int_{\BB^d} P \d \mu =:c_4.
  \end{equation}
\end{lemma}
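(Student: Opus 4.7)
The plan is to substitute $s = T + b(X_T)^2 t$ in order to express the integral in terms of the rescaled process $\mathcal{Z}^T = (\mathcal{X}^T, \mathcal{Y}^T)$ from~\eqref{eq:Z^T-def}, and then invoke Proposition~\ref{prop:seq} together with the ergodic theorem for the stationary process $\mathcal{Y}^{\infty,\mu}$. After the substitution, using the homogeneity $P(Y_{T+b(X_T)^2 t}) = b(X_T)^p P(\mathcal{Y}^T_t)$, the integral becomes
\[
b(X_T)^{2+p} \int_0^{\tau(T)} (X_T + b(X_T)\mathcal{X}^T_t)^q\, P(\mathcal{Y}^T_t)\, \d t,
\]
where $\tau(T) \coloneqq C^2 T^{2\beta/(1+\beta)} \mathpzc{s}_4(T)/b(X_T)^2$. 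By the strong law~\eqref{def:c1} and Assumption~\eqref{hyp:D3}, $b(X_T)^2 \sim C^2 T^{2\beta/(1+\beta)}$ almost surely, so $\tau(T)/\mathpzc{s}_4(T) \to 1$ in probability. A direct computation shows $X_T^q b(X_T)^{2+p}/(C^2 T^{(2\beta+p\beta+q)/(1+\beta)}) \to a_\infty^p c_1^{q+p\beta}$, identifying the claimed prefactor~$c_4$.

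It then remains to prove that $(X_T + b(X_T)\mathcal{X}^T_t)^q/X_T^q \to 1$ uniformly in $t \in [0, \mathpzc{s}_4(T)]$, and that
\[
\frac{1}{\mathpzc{s}_4(T)} \int_0^{\mathpzc{s}_4(T)} P(\mathcal{Y}^T_t)\, \d t \overset{(proba)}{\underset{T\to\infty}{\longrightarrow}} \int_{\BB^d} P\, \d\mu.
\]
For the second convergence, by Proposition~\ref{prop:seq} (together with $\mathpzc{s}_4 \leq \mathpzc{s}_3$) and the Lipschitz continuity of $P$ on $\BB^d$, one can replace $\mathcal{Y}^T$ by the stationary process $\mathcal{Y}^{\infty,(0,\tilde{Y}^T),\tilde{\mathcal{W}}^T}$, which is a copy of $\mathcal{Y}^{\infty,\mu}$. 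Since Corollary~\ref{cor:mu-existence-convergence} gives uniqueness of $\mu$, the process $\mathcal{Y}^{\infty,\mu}$ is ergodic, and Birkhoff's theorem along the deterministic scale $\mathpzc{s}_4(T) \to \infty$ yields the desired limit.

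The main obstacle is controlling the $X$-factor: a pointwise bound is insufficient since $\mathcal{X}^T_t$ typically grows linearly with $t$, so I need a uniform bound over the growing time window $[0,\mathpzc{s}_4(T)]$. Using Proposition~\ref{prop:seq} to couple $\mathcal{X}^T$ with $\mathcal{X}^{\infty,\mu}$, and exploiting the linear-growth law of large numbers for $\mathcal{X}^{\infty,\mu}$ established in the proof of Lemma~\ref{le:claimtech}(iv), I obtain $\sup_{t \leq \mathpzc{s}_4(T)} |\mathcal{X}^T_t| = O_{\mathbb{P}}(\mathpzc{s}_4(T))$. The ratio $b(X_T)\sup_{t\le\mathpzc{s}_4(T)}|\mathcal{X}^T_t|/X_T$ is then $O_{\mathbb{P}}(T^{(\beta-1)/(1+\beta)} \log T)$, which vanishes because $\beta < 1$ and $\mathpzc{s}_4(T) = O(\log T)$. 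A uniform Taylor expansion of $x \mapsto (1+x)^q$ near $0$ then gives $(X_T + b(X_T)\mathcal{X}^T_t)^q = X_T^q(1 + o_{\mathbb{P}}(1))$ uniformly in $t$, completing the proof. The logarithmic choice of $\mathpzc{s}_4$ is exactly what enables this uniform control while preserving divergence to infinity.
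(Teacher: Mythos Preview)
Your proof is correct and reaches the same conclusion, but the handling of the $X$-factor differs from the paper's. The paper first replaces $X_s$ by $X_T$ directly on the original time scale, using only the strong law $X_s\sim c_1 s^{1/(1+\beta)}$ together with $T+C^2T^{2\beta/(1+\beta)}\mathpzc{s}_4(T)\sim T$; this immediately yields $\sup_{s}|X_s^q-X_T^q|=o(T^{q/(1+\beta)})$ over the integration window and reduces the problem to $q=0$. Only after that reduction does the paper pass to the rescaled variable $t$. Your route is more circuitous: you substitute first, then control $\sup_{t\le\mathpzc{s}_4(T)}|\mathcal{X}^T_t|$ by coupling to $\mathcal{X}^{\infty,\mu}$ via Proposition~\ref{prop:seq} and invoking the linear growth of the limiting process from Lemma~\ref{le:claimtech}(iv). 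This works, but it imports the full strength of the coupling for a step where the bare strong law for $X$ suffices; the paper's approach is shorter and decouples the $X$-estimate from Proposition~\ref{prop:seq} entirely.

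One small gap worth noting: after observing $\tau(T)/\mathpzc{s}_4(T)\to 1$, you silently replace the integral over $[0,\tau(T)]$ by the integral over $[0,\mathpzc{s}_4(T)]$. This is harmless since the integrand $P(\mathcal{Y}^T_t)$ is bounded, so the difference divided by $\mathpzc{s}_4(T)$ is $O_{\mathbb P}(|\tau(T)/\mathpzc{s}_4(T)-1|)\to 0$; the paper handles the analogous issue explicitly by estimating the integral between $T+b(X_T)^2\mathpzc{s}_4(T)$ and $T+C^2T^{2\beta/(1+\beta)}\mathpzc{s}_4(T)$.
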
  

\begin{proof} 
 As $T\to\infty$, we have
     $X_T\sim c_1 T^{\frac{1}{1+\beta}}$ (by~\eqref{def:c1}) and  $T+C^2 T^{\frac{2\beta}{1+\beta}} \mathpzc{s}_4(T)\sim T$.
     Thus,
    \begin{equation}
    \label{eq:temp:boundX}
    \sup_{s\in [T, T+ C^2  T^{\frac{2\beta}{1+\beta}} \mathpzc{s}_4(T) ]} T^{-\frac{q }{1+\beta}}  |X_s^q-X_T^q|  \underset{T\to\infty}{\longrightarrow} 0.
    \end{equation}
    Furthermore, by~\eqref{hyp:D3},~\eqref{def:c1} and~\eqref{eq:temp:boundX}, we have
    \begin{equation}\label{eq:temp:boundbX}
    \sup_{s\in [T, T+C^2 T^{\frac{2\beta}{1+\beta}}  \mathpzc{s}_4(T) ]} |T^{-\frac{\beta }{1+\beta}} b(X_s)-C|
    \underset{T\to\infty}{\longrightarrow} 0.\end{equation}
    Limits~\eqref{eq:temp:boundX} and~\eqref{eq:temp:boundbX} hold if we replace
    $[T, T+C^2 T^{\frac{2\beta}{1+\beta}}  \mathpzc{s}_4(T) ]$ with  $[T,T+ b(X_T)^2 \mathpzc{s}_4(T) ]$.

    To prove the convergence in~\eqref{eq:le:conv}, we would like to be able to replace the $X_s$ with $X_T$, and we first show that the error so committed goes to $0$ as $T\to \infty$. 
    Since $P$ is homogeneous of degree $p$ and $|Y_s|_d/b(X_s)$ is bounded by $1$, we have
    \begin{align*}
    &\frac{1}{C^2T^{\frac{2\beta + p\beta+q}{1+\beta} }   \mathpzc{s}_4(T)   } \int_T^{T  + C^2 T^{\frac{2\beta }{1+\beta} }   \mathpzc{s}_4(T)  } |X_s^q -X_T^q||P(Y_s)|\d s\\
    &\leq \frac{ 1}{T^{\frac{ p\beta+q}{1+\beta} }     } \sup_{t\in [T,T  +  C^2T^{\frac{2\beta}{1+\beta} } \mathpzc{s}_4(T)  ]}   |X_t^q -X_T^q| P^* b(X_t)^p \\
    &=  P^* \sup_{t\in [T,T  + C^2 T^{\frac{2\beta}{1+\beta} } \mathpzc{s}_4(T)  ]} T^{- \frac{q}{1+\beta}  } |X_t^q -X_T^q|   \sup_{t\in [T,T  +  C^2T^{\frac{2\beta}{1+\beta} } \mathpzc{s}_4(T)  ]} T^{\frac{-p\beta}{1+\beta}} b(X_t)^p\\
    &\underset{T\to \infty}\longrightarrow 0,
    \end{align*}
    where  $P^*=\sup_{y\in \BB^d}|P(y)|$ and the last convergence follows from \eqref{eq:temp:boundX} and \eqref{eq:temp:boundbX}. It thus suffices to prove \eqref{eq:le:conv} with the factor $X_s$ replaced with $X_T$. Since $X_T\sim c_1 T^{\frac{\beta}{1+\beta}}$, it is then easily seen that the case $q=0$ implies the general case. In the remainder of the proof we assume $q=0$.
    
    Now, we wish to replace the integral bound $T+C^2 T^{\frac{2\beta}{1+\beta}}\mathpzc{s}_4(T)$ with $T+b(X_T)^2  \mathpzc{s}_4(T)$ in the left-hand side of \eqref{eq:le:conv}. Let us estimate the error:
    \begin{align*}
    &\frac{1}{C^2 T^{\frac{(2+p)\beta}{1+\beta} }   \mathpzc{s}_4(T)   } \Big|\int_{T+b(X_T)^2 \mathpzc{s}_4(T)}^{T  +C^2  T^{\frac{2\beta }{1+\beta} }   \mathpzc{s}_4(T)  } P(Y_s) \d s\Big|\\
    &\leq \frac{P^*}{C^2 T^{\frac{(2+p)\beta}{1+\beta} }      }
    \Big|b(X_T)^2 - C^2T^{\frac{2\beta }{1+\beta} }\Big|
    \sup_{t\in\Big[T, T+ \mathpzc{s}_4(T)   \max\big( C^2T^{\frac{2\beta}{1+\beta} },  b(X_T)^2 \big)\Big]} b(X_t)^p  \\
    &\underset{T\to \infty}\sim P^* C^{p-2} \Big| b(X_T)^2T^{-\frac{2\beta}{1+\beta}}-C^2 \Big|
    \underset{T\to \infty}\longrightarrow 0.
    \end{align*}
    Thus, it suffices to show that in probability,
    \[
    \frac{1}{C^2 T^{\frac{2\beta+p\beta}{1+\beta}}  \mathpzc{s}_4(T)} \int_T^{T+b(X_T)^2   \mathpzc{s}_4(T)} P(Y_s) \d s \underset{T\to \infty}\longrightarrow  C^p \int_{\BB^d} P \d \mu.\]
Recall that by definition~\eqref{eq:Z^T-def} of $\cZ^T=(\cX^T,\cY^T)$, we have  $Y_{T+b(X_T)^2 t}=b(X_T) \cY^{T}_t$.  Since $P$ is homogeneous of degree $p$, we obtain
    \begin{align*}\frac{1}{C^2T^{\frac{(2+p)\beta}{1+\beta} }   \mathpzc{s}_4(T)   }\int_T^{T+ b(X_T)^2\mathpzc{s}_4(T)}  P(Y_s) \d s
    &=\frac{b(X_T)^{2} }{C^2T^{\frac{(2+p)\beta}{1+\beta} }   \mathpzc{s}_4(T)   }\int_0^{  \mathpzc{s}_4(T)}     P(Y_{T+ b(X_T)^2 \mathpzc{s} }) \d \mathpzc{s}\\
    &=\frac{b(X_T)^{p+2} }{C^2 T^{\frac{(2+p)\beta}{1+\beta} }   \mathpzc{s}_4(T)   }\int_0^{  \mathpzc{s}_4(T)}      P(\cY^{T}_\mathpzc{s} ) \d \mathpzc{s}\\
    &\underset{T\to \infty}{\sim} \frac{C^{p}}{\mathpzc{s}_4(T)} \int_0^{ \mathpzc{s}_4(T)}      P(\cY^{T}_\mathpzc{s} ) \d \mathpzc{s}.
    \end{align*}
    
    To conclude, it thus suffices to prove
    \begin{equation}
    \label{eq:conv_in_prob_nearly_final}
    \frac{1}{\mathpzc{s}_4(T)} \int_0^{ \mathpzc{s}_4(T)}      P(\cY^{T}_\mathpzc{s} ) \d \mathpzc{s} \underset{T\to \infty}{\overset{(proba)}\longrightarrow} \int_{\BB^d} P \d \mu.
    \end{equation}
    Since $\mathpzc{s}_4(T)\to\infty$ as $T\to\infty$, by Birkhoff ergodic theorem (see e.g.~\cite[Thm~2.8]{Eberle}) 
    applied to the stationary solution $\cY^{\infty,\mu}$ of SDER~\eqref{eq:limitEq}, started from its unique invariant measure $\mu$ (cf.~Corollary~\ref{cor:mu-existence-convergence}), we obtain 
    \begin{equation}
        \label{eq:Birkhoff_consequence}
    \frac{1}{\mathpzc{s}_4(T) }   \int_{0}^{\mathpzc{s}_4(T) } P(\cY^{\infty, \mu}_\mathpzc{s}) \d \mathpzc{s} \underset{T\to \infty}\longrightarrow \int_{\BB^d} P \d \mu\qquad\text{almost surely,}
    \end{equation}
    and hence in probability. Thus,~\eqref{eq:conv_in_prob_nearly_final} follows if there exist couplings $(\mathbb{P}^T)_{T> 0}$ of $\cY^{T}$ and $\cY^{\infty, \mu}$ such that for any $\epsilon>0$ we have 
    \[ \mathbb{P}^T\Big( \frac{1}{\mathpzc{s}_4(T)} \int_0^{ \mathpzc{s}_4(T)}     |  P(\cY^{T}_\mathpzc{s} )-P(\cY^{\infty, \mu}_\mathpzc{s})|  \d \mathpzc{s}  \geq \epsilon \Big)\leq \epsilon\quad\text{for all sufficiently large $T$.} \] 
    The couplings $\mathbb{P}^T$ of the  pairs of processes $(\cZ^T,\cZ^{\infty,(0,\tilde Y^T,\tilde \cW^T})$, $T>0$, in Proposition~\ref{prop:seq} satisfy this requirement: recall that $\cZ^{\infty,(0,\tilde Y^T),\tilde \cW^T}$ has the same law as $\cZ^{\infty,\mu}$
    and that for any $\epsilon>0$, there exists  a deterministic time $T_0>0$ such that for all $T\geq T_0$ we have
    \[ \mathbb{P}^T( \| \cZ^T- \cZ^{\infty,\mu} \|_{[0,\mathpzc{s}_4(T)]} \geq \epsilon )\leq \epsilon.\]
    Here and in the remainder of the proof we identify $\cZ^{\infty,(0,\tilde Y^T),\tilde \cW^T}$ and $\cZ^{\infty,\mu}$ and recall that its $y$-component $\cY^{\infty,\mu}$ is stationary,  following SDER~\eqref{eq:limitEq} on $\BB^d$ with invariant measure $\mu$.
On the event $\| \cZ^T- \cZ^{\infty,\mu} \|_{[0,\mathpzc{s}_4(T)]}\leq \epsilon$, 
we have $|\cY^T_t|_d\leq |\cY^T_t- \cY^{\infty,\mu}_t|_d +1\leq\epsilon+1$
for all $t\in[0,\mathpzc{s}_4(T)]$.
Thus, on this event, it also holds $|P(\cY^T_t)- P(\cY^{\infty,\mu}_t)|\leq \epsilon K_\epsilon$  for all $t\in [0, s_4(T)]$, where  $K_\epsilon>0$ is a Lipschitz constant of the polynomial $P$ restricted to the ball around the origin in $\R^d$ with radius $(1+\epsilon)$.
Thus, for an arbitrary $\epsilon\in(0,1)$, there exists $T_0$ such that for all $T\geq T_0$ with probability greater than $1-\epsilon$,
    \[  \frac{1}{\mathpzc{s}_4(T) }   \int_{0}^{\mathpzc{s}_4(T) } |P(\cY^{\infty, \mu}_t) -  P(\cY^T_t) |\d t
    \leq \epsilon K_1.
    \]
    Thus, in probability,
    \begin{equation*}
    \frac{1}{\mathpzc{s}_4(T) }   \int_{0}^{\mathpzc{s}_4(T) } |P(\cY^{\infty, \mu}_t) -  P(\cY^T_t) | \d t \underset{T\to \infty}\longrightarrow 0,
    \end{equation*}
    which, together with~\eqref{eq:Birkhoff_consequence}, implies~\eqref{eq:conv_in_prob_nearly_final} concluding the proof of the lemma.
\end{proof}

\begin{remark}
    There is an alternative approach to Lemma~\ref{le:integral}: instead of  coupling $\mathcal{Y}^T$ with the stationary process and $\mathcal{Y}^{\infty,\mu}$ using Proposition~\ref{prop:seq}, we could  couple it with  the process $\mathcal{Y}^{\infty,T}$ using Proposition~\ref{prop:long}. This is appealing as the proof of Proposition~\ref{prop:seq} is much more involved than that of Proposition~\ref{prop:long}. However, the latter approach would require a stronger form of the ergodic theorem, namely one that is uniform in the starting distribution: 
    \[ 
    \forall \epsilon>0, \ \sup_{\nu} \mathbb{P} \Big( \Big|   \frac{1}{\mathpzc{s}}\int_0^\mathpzc{s} P(\mathcal{Y}^{\infty,\nu}_u)\d u -\int_{\mathbb{B}^d} P \d \mu \Big|\geq \epsilon \Big) \underset{s\to \infty}\longrightarrow0.
    \]
    This could be established via the usual Birkhoff ergodic theorem (i.e.~using \eqref{eq:Birkhoff_consequence}) and  the coupling
 $(\mathcal{Y}^{\infty,T},\mathcal{Y}^{\infty,\mu})$ provided by Proposition~\ref{prop:coupling} (with $\nu$ the law of $Y_T/b(X_T)$), so that with large probability $\mathcal{Y}^{\infty,T}_u=\mathcal{Y}^{\infty,\mu}_u$ for all $u\geq t$, where $t=t(\mathpzc{s})$ is chosen such that  $1\ll t \ll \mathpzc{s}$ (i.e.~$t(\mathpzc{s})\underset{\mathpzc{s}\to \infty}\longrightarrow\infty$ and $t(\mathpzc{s})/\mathpzc{s}\underset{\mathpzc{s}\to \infty}\longrightarrow 0$). 
 We stress that Proposition~\ref{prop:seq} is nevertheless essential in the proof of our main result, as it is used to imply the convergence of the second component in the limit in Theorem~\ref{th:main1} (see also Corollary~\ref{coro:Ycv}).
\end{remark}

Recall $C= a_\infty c_1^{\beta}$ from Lemma~\ref{le:integral} and define recursively an increasing sequence $(T_n)_{n\in\N}$, 
\begin{equation}
\label{eq:T_n}
T_1\coloneqq 1\quad\text{and}\quad T_{n+1}\coloneqq T_n+C^2T_n^\frac{2\beta}{1+\beta} \mathpzc{s}_4(T_n)\quad \text{for $n\in\N$.}
\end{equation} 
Note that for $T=T_n$ in Lemma~\ref{le:integral}, the upper bound in the integral is exactly $T_{n+1}$. As $\mathpzc{s}_4$ is bounded below by $1$, $T_n\to \infty$ as $n\to \infty$ for all $\beta\in (-1,1)$ (or even $\beta\in\R$). For $\beta>0$ this is clear. For $\beta<0$,  if $T_n\leq D$ for all $n\in\N$ and some constant $D\in(0,\infty)$,  then the increments satisfy 
$T_{n+1}-T_n\geq C^2D^{\frac{2\beta}{1+\beta}}$ for all $n$, making $T_n$ in fact unbounded. 

The  elementary result in Lemma~\ref{le:estimTechnic} is established in Appendix~\ref{Appendix:52} below. The proof relies on the fact that $\mathpzc{s}_4(T)$ grows more slowly than any positive power of $T$, as $T\to \infty$.

\begin{lemma}
\label{le:estimTechnic}
Assume $\beta>-\frac{1}{3}$ and let $\alpha\coloneqq \frac{2\beta}{1+\beta}$ (then $1+\alpha>0$). For $(T_n)_{n\in\N}$ in~\eqref{eq:T_n}, as $n\to \infty$, we have
\[ 
\sum_{k=1}^n (T_{k+1}-T_k) |T^\alpha_k-T^\alpha_{k+1}|=o(T_n^{1+\alpha})\quad
\text{and}\quad 
\sum_{k=1}^n (T_{k+1}-T_k) T^\alpha_k =\frac{T_n^{1+\alpha}}{1+\alpha} (1+o(1)).
\]
\end{lemma}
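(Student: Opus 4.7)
The plan is to treat both sums as Riemann-sum approximations to $\int^{T_n} t^\alpha\,dt$, using the fact that the increments $\Delta_k := T_{k+1}-T_k = C^2 T_k^\alpha \mathpzc{s}_4(T_k)$ are small relative to $T_k$. As preliminary observations, I would note that $1+\alpha = (1+3\beta)/(1+\beta)>0$ precisely when $\beta>-1/3$ (confirming the target asymptotic is well-defined), and that $\Delta_k/T_k = C^2 T_k^{\alpha-1}\mathpzc{s}_4(T_k) \to 0$ as $k\to\infty$, because $\beta<1$ forces $\alpha<1$ while $\mathpzc{s}_4(T) = O(\log T)$ is sub-polynomial. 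In particular $T_{n+1}/T_n \to 1$, and hence $T_{n+1}^{1+\alpha}\sim T_n^{1+\alpha}$.

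For the first sum, I would apply the mean value theorem to $t\mapsto t^\alpha$ on $[T_k,T_{k+1}]$: since $\alpha-1<0$, one obtains $|T_k^\alpha-T_{k+1}^\alpha| \leq |\alpha|\,T_k^{\alpha-1}\Delta_k$, and therefore
\[
\sum_{k=1}^n \Delta_k\,|T_k^\alpha - T_{k+1}^\alpha| \leq |\alpha|\,C^2 \sum_{k=1}^n T_k^{2\alpha-1}\mathpzc{s}_4(T_k)\,\Delta_k.
\]
A standard Riemann-sum comparison, justified by $\Delta_k/T_k\to 0$, identifies the right-hand side (up to a factor $1+o(1)$) with $\int_1^{T_n} t^{2\alpha-1}\mathpzc{s}_4(t)\,dt$. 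Using $\mathpzc{s}_4(t)=O(\log t)$, I would close the bound by a short case analysis on the sign of $2\alpha-1$: if $\alpha>0$ the integral is $O(T_n^{2\alpha}\log T_n)=o(T_n^{1+\alpha})$ because $\alpha<1$; if $\alpha=0$ it is $O((\log T_n)^2)=o(T_n)$; and if $\alpha<0$ it is bounded, hence $o(T_n^{1+\alpha})$ because $1+\alpha>0$.

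The second estimate then follows from the first: the per-step Riemann error
$\bigl|\Delta_k T_k^\alpha - \int_{T_k}^{T_{k+1}}t^\alpha\,dt\bigr| \leq \Delta_k\,|T_{k+1}^\alpha - T_k^\alpha|$
is exactly the summand from the first claim, so $\sum_{k=1}^n \Delta_k T_k^\alpha = \int_1^{T_{n+1}} t^\alpha\,dt + o(T_n^{1+\alpha}) = (T_{n+1}^{1+\alpha}-1)/(1+\alpha) + o(T_n^{1+\alpha})$, and substituting $T_{n+1}^{1+\alpha}=T_n^{1+\alpha}(1+o(1))$ from the preliminary step gives the stated asymptotic.

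The main care required will be the case analysis on $\operatorname{sgn}(\alpha)$: all three regimes $\alpha<0$, $\alpha=0$, $\alpha>0$ occur as $\beta$ ranges in $(-1/3,1)$, and $\int t^{2\alpha-1}\,dt$ is convergent, logarithmic, or polynomial accordingly. Once this is handled uniformly using only $1+\alpha>0$, $\alpha<1$, and the sub-polynomial growth of $\mathpzc{s}_4$, the rest is deterministic calculus.
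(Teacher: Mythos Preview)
Your argument is correct and shares its skeleton with the paper's proof: both deduce the second estimate from the first via the Riemann-sum error $\bigl|\Delta_k T_k^\alpha-\int_{T_k}^{T_{k+1}}t^\alpha\,dt\bigr|\le \Delta_k|T_{k+1}^\alpha-T_k^\alpha|$, and both begin the first estimate with the mean-value bound $|T_{k+1}^\alpha-T_k^\alpha|\lesssim T_k^{\alpha-1}\Delta_k$. The genuine difference lies in the regime $\alpha<0$. The paper at that point proves a growth lower bound $T_k\ge C_2\,k^{(1+\beta)/(1-\beta)}$ (via the auxiliary sequence $a_k=T_k^{(1-\beta)/(1+\beta)}$), substitutes it into $\Delta_k^2 T_k^{\alpha-1}\lesssim T_k^{3\alpha-1+2\epsilon}$, and obtains a summable series $\sum k^\theta$ with $\theta<-1$. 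You sidestep this entirely: by writing the bound as the Riemann sum $\sum T_k^{2\alpha-1}\mathpzc{s}_4(T_k)\Delta_k$ and comparing with $\int_1^{\infty} t^{2\alpha-1}\log t\,dt<\infty$ (using only $T_{k+1}/T_k\to1$), you never need to know how fast $T_k$ grows in $k$. This is cleaner and treats all three sign cases of $\alpha$ uniformly.

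One small point of order: the Riemann-sum comparison ``up to $1+o(1)$'' with $\int t^{2\alpha-1}\mathpzc{s}_4(t)\,dt$ is not quite justified as written, because no regularity (monotonicity, continuity) of $\mathpzc{s}_4$ is assumed. Apply the bound $\mathpzc{s}_4(T_k)\le C'\log T_k$ \emph{before} the Riemann comparison rather than after; then the integrand $t^{2\alpha-1}\log t$ is regularly varying, so $T_{k+1}/T_k\to1$ is exactly what makes the comparison valid in every case.
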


\begin{proof}[Proof of Proposition~\ref{prop:int}]
By linearity we may assume $P=P_p$ for $p\in \{0,1,2\}$. We let again $\alpha=\frac{2\beta}{1+\beta}$. By homogeneity, the integral we are trying to estimate can be expressed as
\[ 
T^{-1-\alpha}\int_0^T X_t^{2\beta}P(\frac{Y_t}{X_t^\beta})\d t =
T^{-1-\alpha}\int_0^T X_t^{(2-p)\beta}P(Y_t)\d t.
\]
    For $R>0$, let $E_R$ be the event
    $\sup_{T\geq 1} T^{-\frac{1}{1+\beta}} X_T \leq R$.
Recall $(T_n)_{n\in\N}$ in~\eqref{eq:T_n} and let 
    \[I_n\coloneqq \frac{1}{T_{n+1}-T_n} T_n^{-\alpha } \int_{T_n}^{T_{n+1}} X_t^{(2-p)\beta} P(Y_t) \d t
    = \frac{T_n^{-2\alpha}}{C^2  \mathpzc{s}_4(T_n)} \int_{T_n}^{T_{n+1}} X_t^{(2-p)\beta} P(Y_t) \d t
    .\]
    
    On the event $E_R$, the sequence $ ( T_n^{-\alpha} \sup_{t\in [T_n,T_{n+1}] } X_t^{(2-p)\beta} P(Y_t) )_{n\in\N}$ is bounded, say by $C'$. It follows directly that the sequence $(I_n)_{n\in\N}$ is also bounded by $C'$ on $E_R$. Furthermore, as noted above, the sequence $(T_n)_{n\in\N}$ is deterministic and diverges to $\infty$, which allows us to apply the limit in~\eqref{eq:le:conv} of Lemma~\ref{le:integral}  (with $q={(2-p)\beta}$), ensuring that
    $I_n$ converges in probability to $c_4$. Thus, $I_n\mathbbm{1}_{E_R}$ converges in $L^2$ to $c_4 \mathbbm{1}_{E_R}$.
    Let $\epsilon>0$ and $n_0\in\N$ such that for all $n\geq n_0$, $\|(I_n-c_4)\mathbbm{1}_{E_R}\|_{L^2}\leq \epsilon$.
    Using the identities
    \[
    \int_1^{T_n} X_t^{(2-p)\beta} P(Y_t) \d t=\sum_{k=1}^{n-1}
    (T_{k+1}-T_k) T_k^\alpha I_k,\quad 
    \frac{1}{1+\alpha} T_n^{1+\alpha}=
    \frac{1}{1+\alpha} + \sum_{k=1}^{n-1}\int_{T_k}^{T_{k+1} } t^\alpha \d t ,
    \]
    we get
    \begin{align*}
    &\Big\| \mathbbm{1}_{E_R}\Big(
    \int_1^{T_n} X_t^{(2-p)\beta} P(Y_t) \d t - c_4 \frac{1}{1+\alpha} T_n^{1+\alpha} \Big) \Big\|_{L^2}\\
    &\leq \frac{|c_4|}{1+\alpha} +
    \sum_{k=1}^{n-1}  \big\| \mathbbm{1}_{E_R} \big( (T_{k+1}-T_k) T_k^\alpha I_k  - c_4 \int_{T_k}^{T_{k+1} } t^\alpha \d t \big) \big\|_{L^2} \\
    &\leq \frac{|c_4|(1+\beta)}{1+q+(1+p)\beta} +
    \sum_{k=1}^{n_0-1}  \big\| \mathbbm{1}_{E_R} \big( (T_{k+1}-T_k) T_k^\alpha I_k  - c_4 \int_{T_k}^{T_{k+1} } t^\alpha \d t \big) \big\|_{L^2} \\
    &+
    \sum_{k=n_0}^{n-1} \Big( \big\| \mathbbm{1}_{E_R}  (T_{k+1}-T_k) T_k^\alpha (I_k  - c_4)\big\|_{L^2}+
    |c_4| | 
    (T_{k+1}-T_k) T_k^\alpha-
    \int_{T_k}^{T_{k+1} } t^\alpha \d t \big) |\Big) \\
    &\leq
    O(1) + \sum_{k=n_0}^{n-1} \Big( (T_{k+1}-T_k) T_k^\alpha \epsilon
    +
    |c_4|
    |(T_{k+1}-T_k) T_k^\alpha - \int_{T_k}^{T_{k+1} } t^\alpha\d t| \Big)\\
    &\leq O(1)+\epsilon \sum_{k=n_0}^{n-1} (T_{k+1}-T_k) T_k^\alpha +
        |c_4| \sum_{k=n_0}^{n-1}   (T_{k+1}-T_k) |T_{k+1}^\alpha - T_k^\alpha |\\
        &\leq \frac{\epsilon}{1+\alpha}T_n^{1+\alpha}+o(T_n^{1+\alpha}),
    \end{align*}
    where the last inequality follows from Lemma~\ref{le:estimTechnic}.
    Here the $O(1)$ depends on $\epsilon$ but not on $n$. 
We deduce
\[ 
\Big\| \mathbbm{1}_{E_R}\Big(
    \int_1^{T_n} X_t^{(2-p)\beta} P(Y_t) \d t - c_4 \frac{1}{1+\alpha} T_n^{1+\alpha} \Big) \Big\|_{L^2}\leq\frac{\epsilon}{1+\alpha} T_n^{1+\alpha}+ o(T_n^{1+\alpha}).
\]
    Since $\epsilon$ is arbitrary, we deduce
    \[
    \mathbbm{1}_{E_R}  \Big(T_n^{-1-\alpha}
    \int_1^{T_n} X_t^{(2-p)\beta} P(Y_t) \d t -  \frac{c_4}{1+\alpha}   \Big) \overset{L^2}{\underset{n\to \infty}\longrightarrow} 0.
    \]
    
    Since $T_n\sim T_{n+1}$ and  $\sup \{ t^{-\alpha} X_t^{(2-p)\beta} P(Y_t)  : t\geq 1\}<\infty$ on the event $E_R$, we easily deduce that the convergence extends to $T
    \in\R_+$. Moreover, we can also make the integral start from $0$ rather than $1$: 
    \[\mathbbm{1}_{E_R} T^{-1-\alpha}
    \int_0^{T} X_t^{(2-p)\beta} P(Y_t) \d t \overset{L^2}{\underset{T\to \infty}\longrightarrow} \mathbbm{1}_{E_R} c_4 \frac{1}{1+\alpha} .\]
    Since $R$ is arbitrary and by the strong law in~\eqref{def:c1} we have $\mathbb{P}(\bigcup_{R>0} E_R)=1$, the claimed convergence in probability holds (note $\frac{1}{1+\alpha}=\frac{1+\beta}{1+3 \beta}$).
\end{proof}

\section{Central limit theorem for the horizontal process}
\label{sec:central}
We assume that $\beta>-\frac{1}{3}$ in this entire section. We emphasize again that \emph{this is not for technical reasons}. 
We now introduce the functions which will help us study the asymptotic behaviour of $X$. 
First, we define $g:\cD\to \RP$ and $B:\RP\to \RP$ by
\[g(x,y)\coloneqq x+  \frac{s_0}{2c_0}\frac{|y|_d^2}{b(x)}, \qquad B(x)\coloneqq \int_0^x b_0(x')\d x',\]
where $b_0$ is equal to $b$ on $[1,\infty)$, supported on $[\frac{1}{2},\infty)$, and smooth on $[0,\infty)$.

In order to study the second order behaviour of $X$, we analyse the process
$\xi=B(g(Z))$.
It\^o's formula implies the following semimartingale decomposition for  $\xi$: 
\begin{equation}
\label{eq:itoXi}
\xi_t=\xi_0+\int_0^t \mu(Z_s)\d s+ \int_0^t \Lambda (Z_s) \d L_s+M_t,
\end{equation}
where the quadratic variation of the local martingale $M$ is given by $[M]_t= \int_0^t f(Z_s) \d s$. 
The exact expressions for the functions $\mu, f:\mathcal{D}\to \mathbb{R}$ and $\Lambda: \partial\mathcal{D}\to \mathbb{R}$ are given in Appendix \ref{Appendix:62} below, where we also give the proof of the following lemma, which contains all the information we require about these functions.

\begin{lemma}
    \label{le:deterministicEstimations}
    The functions $g, \mu, f:\mathcal{D}\to \mathbb{R}$ and $\Lambda: \partial\mathcal{D}\to \mathbb{R}$ are continuous and the following asymptotic equalities hold:
    \begin{align}
    g(x,y)&\underset{\substack{x\to \infty\\ (x,y)\in \cD} }=
    x +O(x^{\beta}), \label{eq:g} \\
    \mu(x,y)&\underset{\substack{x\to \infty\\ (x,y)\in \cD} }=
    \frac{s_0 \os}{2 c_0} +o(x^{\frac{\beta-1}{2}}), \label{eq:mu} \\
    \Lambda(x,y)&\underset{\substack{x\to \infty\\ (x,y)\in \partial \cD} }=o( x^\frac{3\beta-1}{2}),
    \label{eq:lambda}\\
        f(x,y)&\underset{\substack{x\to \infty\\ (x,y)\in \cD} }= a_\infty^2 x^{2 \beta } Q(\frac{y}{x^\beta} )
    +o(x^{2\beta}), \label{eq:covariance}
    \end{align}
where
$Q: \mathbb{R}^d \to \mathbb{R}$  is the quadratic polynomial given by
$$Q(\mathpzc{y})\coloneqq
 \Sigma^{\infty}_{0,0}+ \frac{2 s_0}{c_0 a_\infty} \sum_{i=1}^d \Sigma^{\infty}_{0,i} \mathpzc{y}_i + \frac{s_0^2}{c_0^2 a^2_\infty}  \sum_{i,j=1}^d \Sigma^{\infty}_{i,j} \mathpzc{y}_i \mathpzc{y}_j.$$
\end{lemma}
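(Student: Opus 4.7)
The plan is to read off explicit expressions for $\mu$, $\Lambda$, and $f$ from It\^o's formula applied to $\xi_t=B(g(Z_t))$, and then estimate each term using the asymptotic expansions of $b$, $\sigma$, and $\phi$ from Assumptions~\eqref{hyp:D3}, \eqref{hyp:C}, and~\eqref{hyp:V+}. Writing $h\coloneqq B\circ g$, so that $\nabla h=b_0(g)\nabla g$ and $\Hess h=b_0'(g)\,\nabla g\otimes\nabla g+b_0(g)\Hess g$, one obtains
\[
\mu(z)=\tfrac{1}{2}\tr\bigl(\Hess h(z)\,\Sigma(z)\bigr),\quad \Lambda(z)=\langle\nabla h(z),\phi(z)\rangle,\quad f(z)=\langle\nabla h(z),\Sigma(z)\nabla h(z)\rangle.
\]
The bound~\eqref{eq:g} is immediate since $|y|_d\le b(x)$ on $\cD$ forces $|y|_d^2/b(x)\le b(x)=O(x^\beta)$. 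This, together with Assumption~\eqref{hyp:D3}, yields the auxiliary estimates $b_0(g(x,y))/b(x)=1+o(x^{(\beta-1)/2})$ and $b(x)/x^\beta=a_\infty+o(x^{(\beta-1)/2})$, used throughout.

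I would then tabulate the partial derivatives of $g$ on $\cD$: $\partial_x g=1+O(x^{\beta-1})$, $\partial_{y_i}g=(s_0/c_0)y_i/b(x)=O(1)$, $\partial_{y_iy_j}g=(s_0/c_0)\delta_{ij}/b(x)=O(x^{-\beta})$, $\partial_{xy_i}g=O(x^{-1})$, and $\partial_{xx}g=O(x^{\beta-2})$. For~\eqref{eq:mu}, only the diagonal term $\tfrac{1}{2}b_0(g)\sum_{i=1}^d\partial_{y_iy_i}g\,\Sigma_{ii}(z)=\tfrac{s_0}{2c_0}(b(g)/b(x))\sum_i\Sigma_{ii}(z)$ contributes at leading order. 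Invoking the rate from~\eqref{hyp:C}, namely $\sum_i\Sigma_{ii}(z)=\os+o(x^{(\beta-1)/2})$, and combining with the asymptotic for $b(g)/b(x)$ yields the leading constant $s_0\os/(2c_0)$, while every remaining term in the trace is $O(x^{\beta-1})$ or smaller, hence $o(x^{(\beta-1)/2})$ for $\beta<1$. For~\eqref{eq:covariance}, only $\nabla g^\tra\Sigma\nabla g$ matters; substituting $\partial_{y_i}g=(s_0/(c_0 a_\infty))(y_i/x^\beta)+o(x^{(\beta-1)/2})$, obtained by unwinding $b(x)/x^\beta$, together with $\Sigma(z)=\Sigma^\infty+o(1)$, gives $\nabla g^\tra\Sigma\nabla g=Q(y/x^\beta)+o(1)$ with $Q(y/x^\beta)$ uniformly bounded on $\cD$; multiplying by $b_0(g)^2=a_\infty^2 x^{2\beta}(1+o(1))$ produces~\eqref{eq:covariance}.

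The main obstacle, and the only step requiring genuine cancellation rather than order-of-magnitude bookkeeping, is~\eqref{eq:lambda}. On the boundary parametrised by $y=b(x)u$ for $u\in\Sp^{d-1}$, one has $\partial_x g(x,b(x)u)=1-\tfrac{s_0}{2c_0}b'(x)$ and $\partial_{y_i}g(x,b(x)u)=(s_0/c_0)u_i$, while~\eqref{hyp:V+} supplies $\phi_0(x,b(x)u)=s_0+o(x^{(\beta-1)/2})$ and $\langle u,\phi^{(d)}(x,b(x)u)\rangle=-c_0+o(x^{(\beta-1)/2})$. The naive bound $\Lambda=O(x^\beta)$ is far too weak; instead the constant terms $s_0\cdot 1$ and $(s_0/c_0)\cdot(-c_0)=-s_0$ in $\langle\nabla g,\phi\rangle$ cancel exactly, leaving $\langle\nabla g,\phi\rangle=-\tfrac{s_0^2}{2c_0}b'(x)+o(x^{(\beta-1)/2})=o(x^{(\beta-1)/2})$, where the last step uses $b'(x)=O(x^{\beta-1})$. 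Multiplying by $b_0(g)=O(x^\beta)$ then produces~\eqref{eq:lambda}. The overall subtlety that deserves care throughout is to propagate the precise $x^{-(1-\beta)/2}$ rates from~\eqref{hyp:C} and~\eqref{hyp:V+} rather than merely qualitative convergence, since these exponents are exactly tuned to the target bounds~\eqref{eq:mu}--\eqref{eq:lambda}.
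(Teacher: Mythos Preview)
Your proposal is correct and follows essentially the same approach as the paper: explicit Itô-formula expressions for $\mu$, $\Lambda$, $f$, a tabulation of the first- and second-order partials of $g$, use of the quantitative rates in \eqref{hyp:C} and \eqref{hyp:V+} to control the $o(x^{(\beta-1)/2})$ error in $\mu$, and the key $s_0-(s_0/c_0)c_0=0$ cancellation on $\partial\cD$ for $\Lambda$. The paper's write-up is organized in the same order and uses the same intermediate estimate $b_0(g(x,y))=a_\infty x^\beta+o(x^{(3\beta-1)/2})$ that you derive via $b_0(g)/b(x)=1+o(x^{(\beta-1)/2})$.
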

Recall from the discussion preceding Assumption~\eqref{hyp:C} that the asymptotic statements in Lemma~\ref{le:deterministicEstimations} mean 
\[ 
\limsup_{x\to \infty} \sup_{y: (x,y)\in \mathcal{D}} \frac{|g(x,y)-x|}{x^\beta} <\infty, 
\quad
\limsup_{x\to \infty} \sup_{y: (x,y)\in \mathcal{D}} \frac{|\mu(x,y)- \frac{s_0 \bar{\sigma}^2}{2 c_0} |}{x^{\frac{\beta-1}{2} }} =0, \text{ \emph{etc.}}
\]

A crucial point of our strategy is that the function $g$ has been chosen so that its gradient is almost orthogonal to the reflection vector field, making the inner product $\langle \phi(z), \nabla g( z)\rangle$ nearly zero (when the process is far away from the origin), and consequently the function $\Lambda$ very small. The result of this tuning via the function $g$ is that the contribution to $\xi$ from the local time term in $\eqref{eq:itoXi}$ is itself exceptionally small, when $t$ is large, which allows to bound this term in a rather imprecise way and still obtain a good estimate for $\xi_t$. 

We will now estimate separately the martingale $M$, the smooth drift $\int \mu(Z_s)\d s$, and the local time contribution $\int \Lambda(Z_s) \d L_s$, relying on our estimates of the functions $f$, $\mu$ and $\Lambda$ in Lemma~\ref{le:deterministicEstimations} and the following elementary result, whose proof is omitted for brevity.

\begin{lemma}
    \label{le:deterministicEstimationsIntegral}
    Let $\beta\in(-\frac{1}{3},1)$. Then, $\frac{\beta-1}{2(1+\beta)}>-1$ and $\frac{2\beta}{1+\beta}>-1$. 
    For any $t_0\geq 0$ and any continuous function $h:[t_0,\infty) \to \R$, the following implications hold:
    \begin{align}
   h(t) \underset{t\to \infty}= o( t^{\frac{\beta-1}{2(1+\beta)} }) &\implies \int_{t_0}^t h(s)\d s \underset{t\to \infty}=o( t^{\frac{1}{2}+\frac{\beta}{1+\beta}}),
    \label{eq:deterministicEstimationsIntegral1}\\
  h(t) \underset{t\to \infty}= o( t^{\frac{2\beta}{1+\beta} }) &\implies \int_{t_0}^t h(s)\d s \underset{t\to \infty}=o( t^{1+\frac{2\beta}{1+\beta}}).
    \label{eq:deterministicEstimationsIntegral2}
    \end{align}
\end{lemma}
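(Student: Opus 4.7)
First I would handle the two elementary inequalities. Since $1+\beta>0$ for $\beta\in(-\tfrac13,1)$, multiplying through by $2(1+\beta)$ shows that $\frac{\beta-1}{2(1+\beta)}>-1$ is equivalent to $\beta-1>-2(1+\beta)$, i.e.\ to $3\beta>-1$, which is the hypothesis; an analogous calculation (multiplying by $1+\beta$) shows that $\frac{2\beta}{1+\beta}>-1$ is also equivalent to $\beta>-\tfrac13$. Noting the identity $\tfrac12+\tfrac{\beta}{1+\beta}=1+\tfrac{\beta-1}{2(1+\beta)}$, both implications~\eqref{eq:deterministicEstimationsIntegral1}--\eqref{eq:deterministicEstimationsIntegral2} then reduce to the following single general claim: if $\alpha>-1$ and $h:[t_0,\infty)\to\R$ is continuous with $h(t)=o(t^\alpha)$ as $t\to\infty$, then $\int_{t_0}^t h(s)\,\d s=o(t^{1+\alpha})$.

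To prove the claim I would fix $\epsilon>0$ and choose $T=T(\epsilon)\geq\max(t_0,1)$ such that $|h(s)|\leq\epsilon s^\alpha$ for all $s\geq T$, then split the integral as
\[
\Bigl|\int_{t_0}^t h(s)\,\d s\Bigr|\leq\int_{t_0}^T |h(s)|\,\d s+\epsilon\int_T^t s^\alpha\,\d s\leq C_\epsilon+\frac{\epsilon}{1+\alpha}\,t^{1+\alpha},
\]
where the constant $C_\epsilon<\infty$ by continuity of $h$ on the compact interval $[t_0,T]$ and the final bound uses $1+\alpha>0$. Dividing through by $t^{1+\alpha}$ and sending $t\to\infty$ gives $\limsup_{t\to\infty}t^{-(1+\alpha)}|\int_{t_0}^t h(s)\,\d s|\leq\epsilon/(1+\alpha)$. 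Since $\epsilon>0$ is arbitrary, the limsup is zero, establishing the claim and hence both parts of the lemma by applying it once with $\alpha=\tfrac{\beta-1}{2(1+\beta)}$ and once with $\alpha=\tfrac{2\beta}{1+\beta}$.

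There is no substantive obstacle here; the content is pure calculus, and the only conceptual point worth stressing is that the hypothesis $\beta>-\tfrac13$ is precisely what keeps the exponents $\tfrac{\beta-1}{2(1+\beta)}$ and $\tfrac{2\beta}{1+\beta}$ strictly above $-1$, so that $\int_{t_0}^t s^\alpha\,\d s\asymp t^{1+\alpha}$ as $t\to\infty$ rather than converging to a finite limit. I would therefore present the argument inline and in a handful of lines in Appendix~\ref{Appendix:52}.
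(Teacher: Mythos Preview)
Your proof is correct and complete; the paper itself omits the proof entirely, calling the result elementary, so there is no approach to compare against. One small remark: you mention placing the argument in Appendix~\ref{Appendix:52}, but that appendix in the paper is devoted to Lemma~\ref{le:estimTechnic}, not to this lemma.
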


\subsection{Estimation of the martingale part}
Recall that $[M]_t= \int_0^t f(Z_s) \d s$.
\begin{lemma}
\label{le:variance1}
    Let $\beta>-1/3$. As $T\to \infty$, 
    $T^{-1-\frac{2\beta}{1+\beta} } [M]_T$ converges in probability to the constant
    \[
    S^2\coloneqq  \frac{1+\beta}{1+3\beta}a_\infty^2 c_1^{2\beta}\Big( \Sigma^\infty_{0,0}+ \frac{2 s_0}{c_0} \sum_{j=1}^d  \Sigma_{0,j}^{\infty} \int_{\BB^d} y_j
    \d \mu_y+\frac{s_0^2}{c_0^2} \sum_{j,k=1}^d \Sigma^\infty_{j,k} \int_{\BB^d} y_j y_k
    \d \mu_y\Big).
    \]
\end{lemma}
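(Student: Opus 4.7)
My approach would be to decompose $[M]_T = \int_0^T f(Z_s)\,\d s$ using the asymptotic expansion in~\eqref{eq:covariance}, identify the leading piece via the ergodic statement Proposition~\ref{prop:int}, and dispose of the remainder using Lemma~\ref{le:deterministicEstimationsIntegral}.

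First, \eqref{eq:covariance} provides a function $\eta:\RP\to\RP$ with $\eta(x)\to 0$ as $x\to\infty$ such that
\[ \bigl|f(x,y) - a_\infty^2 x^{2\beta} Q(y/x^\beta)\bigr| \leq \eta(x)\, x^{2\beta} \]
for all $(x,y)\in\cD$ with $x$ sufficiently large. Set $P \coloneqq a_\infty^2 Q$ and split $P = P_0 + P_1 + P_2$ by degree; a direct computation gives
\[ \tilde P \;=\; P_0 + a_\infty P_1 + a_\infty^2 P_2 \;=\; a_\infty^2\Bigl( \Sigma^\infty_{0,0} + \tfrac{2 s_0}{c_0}\textstyle\sum_{i} \Sigma^\infty_{0,i}\, y_i + \tfrac{s_0^2}{c_0^2}\textstyle\sum_{i,j}\Sigma^\infty_{i,j}\, y_i y_j\Bigr), \]
so that $\tfrac{1+\beta}{1+3\beta}\,c_1^{2\beta}\int_{\BB^d}\tilde P\,\d\mu = S^2$. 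Since $\beta>-1/3$, Proposition~\ref{prop:int} applied to $P$ then yields
\[ T^{-1-\frac{2\beta}{1+\beta}} \int_0^T a_\infty^2 X_s^{2\beta} Q(Y_s/X_s^\beta)\,\d s \;\overset{(proba)}{\underset{T\to\infty}{\longrightarrow}}\; S^2. \]

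It remains to control the error term. By the strong law~\eqref{def:c1}, $X_s\to\infty$ almost surely, hence $\eta(X_s)\to 0$ a.s., while $X_s^{2\beta} = O(s^{2\beta/(1+\beta)})$ a.s.; consequently $\eta(X_s)\, X_s^{2\beta} = o(s^{2\beta/(1+\beta)})$ almost surely. Since $\beta>-1/3$, implication~\eqref{eq:deterministicEstimationsIntegral2} of Lemma~\ref{le:deterministicEstimationsIntegral} yields almost surely
\[ \int_0^T \bigl| f(Z_s) - a_\infty^2 X_s^{2\beta} Q(Y_s/X_s^\beta) \bigr|\,\d s \;=\; o\bigl(T^{1+\frac{2\beta}{1+\beta}}\bigr), \]
after absorbing into an $O(1)$ contribution the finite initial segment where the uniform bound on $f$ may not yet be effective. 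Dividing by $T^{1+\frac{2\beta}{1+\beta}}$ and combining with the previous display gives the lemma. I do not expect a substantial obstacle here: the only mild subtlety is that Lemma~\ref{le:deterministicEstimationsIntegral} is stated pathwise and deterministically, but this causes no difficulty since the strong law~\eqref{def:c1} converts the pointwise asymptotic on $f$ into an a.s.~pointwise bound on the integrand of the desired order.
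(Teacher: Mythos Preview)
Your proof is correct and follows essentially the same approach as the paper's own proof: both use the asymptotic~\eqref{eq:covariance} to split $[M]_T$ into a main term handled by Proposition~\ref{prop:int} and a remainder controlled via~\eqref{eq:deterministicEstimationsIntegral2}, with the strong law~\eqref{def:c1} translating the spatial estimate on $f$ into the required temporal $o(s^{2\beta/(1+\beta)})$ bound. Your explicit verification that $\tilde P$ matches the bracket in $S^2$ is a detail the paper leaves implicit, but otherwise the arguments coincide.
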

\begin{proof}
Since almost surely, $X_s\underset{s\to \infty}\longrightarrow\infty$ and $s^{-\frac{1}{1+\beta}} X_s$ is bounded away from $0$, \eqref{eq:covariance} implies that, almost surely, 
\[ f(Z_s)\underset{s\to \infty}{=}a_\infty^2 X_s^{2\beta}Q(\frac{Y_s}{X_s^\beta} ) +o( s^\frac{2\beta}{1+\beta}). \]
By applying \eqref{eq:deterministicEstimationsIntegral2} to the continuous function $s\mapsto  f(Z_s)-a_\infty^2 X_s^{2\beta}Q(\frac{Y_s}{X_s^\beta} )  $, we deduce that almost surely,
\begin{equation}
\label{eq:quad_var_conv}
T^{-1-\frac{2\beta}{1+\beta}} \big(
[M]_T - a_\infty^2 \int_0^T X_s^{2\beta} Q(\frac{Y_s}{X_s^\beta})   \d s\big) \underset{T\to \infty}\longrightarrow 0.
\end{equation}
By applying Proposition~\ref{prop:int} with the quadratic polynomial $Q$, we deduce that for $\beta>-\frac{1}{3}$, in probability, $T^{-1-\frac{2\beta}{1+\beta} } \int_0^T X_t^{2\beta} Q(\frac{Y_t}{X_t} )\d t\to S^2$ as $T\to\infty$.
By~\eqref{eq:quad_var_conv}, it follows that $ T^{-1-\frac{2\beta}{1+\beta} } [M]_T$ converges in probability to $S^2$, provided $\beta>-\frac{1}{3}$.
\end{proof}
\begin{lemma}
\label{le:variance2}
    Let $N$ be a continuous local martingale, and assume that for some positive constants $\gamma$ and $V$, as $t\to\infty$, $t^{-2\gamma} [N]_t$ converges in probability to $V$. Then, $t^{-\gamma} N_t$ converges in distribution to a Gaussian distribution with variance $V$ and mean zero.
\end{lemma}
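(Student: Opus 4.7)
The plan is to apply the Dambis--Dubins--Schwarz time-change theorem and combine it with Brownian scaling. Since $V>0$ and $t^{-2\gamma}[N]_t \to V$ in probability, we have $[N]_\infty = \infty$ almost surely. Hence there exists (possibly on an enlarged probability space) a standard Brownian motion $B$ such that $N_t = B_{[N]_t}$ for all $t \in \RP$.

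For each $t > 0$, the Brownian scaling property gives that the process $W^{(t)} = (W^{(t)}_s)_{s \in \RP}$ defined by $W^{(t)}_s \coloneqq t^{-\gamma} B_{t^{2\gamma} s}$ is a standard Brownian motion. Moreover,
\[
t^{-\gamma} N_t \;=\; t^{-\gamma} B_{[N]_t} \;=\; W^{(t)}_{[N]_t / t^{2\gamma}} .
\]
Since $W^{(t)}_V \sim \mathcal{N}(0,V)$ for every $t > 0$, by Slutsky's lemma it suffices to prove that
\begin{equation}
\label{eq:plan62}
W^{(t)}_{[N]_t / t^{2\gamma}} - W^{(t)}_V \;\xrightarrow[t\to\infty]{(\text{proba})}\; 0 .
\end{equation}

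To establish~\eqref{eq:plan62}, I would use the uniform modulus of continuity of Brownian motion at a fixed point. Fix $\epsilon, \delta > 0$. For each $\eta > 0$, by the decomposition
\[
\mathbb{P}\bigl(| W^{(t)}_{[N]_t/t^{2\gamma}} - W^{(t)}_V | > \epsilon\bigr)
\;\leq\; \mathbb{P}\bigl(|[N]_t/t^{2\gamma} - V| > \eta\bigr)
 + \mathbb{P}\Bigl(\sup_{s : |s-V| \leq \eta} |W^{(t)}_s - W^{(t)}_V| > \epsilon \Bigr),
\]
and since the law of the second term does not depend on $t$ (because $W^{(t)}$ is a standard Brownian motion for every $t$), one can first choose $\eta = \eta(\epsilon,\delta)$ small enough that the modulus-of-continuity term is at most $\delta/2$, and then use the hypothesis $t^{-2\gamma}[N]_t \to V$ in probability to make the first term at most $\delta/2$ for all sufficiently large $t$. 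This yields~\eqref{eq:plan62} and hence the lemma.

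The only subtle point is ensuring that the DDS time-change applies without needing to worry about when $[N]_\infty < \infty$, which is avoided since $[N]_t \to \infty$ in probability forces $[N]_\infty = \infty$ a.s. Everything else is a routine application of Slutsky combined with Brownian continuity at $V$; I do not anticipate a significant obstacle. (An equally valid alternative would be to invoke a standard martingale CLT such as~\cite[Thm.~VIII.2.4]{jacod-shiryaev} directly, but the DDS approach is essentially self-contained.)
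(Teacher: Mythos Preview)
Your proof is correct and follows essentially the same route as the paper: both apply the Dambis--Dubins--Schwarz representation and then use the uniform continuity of Brownian motion near the deterministic level $V$, together with the hypothesis $t^{-2\gamma}[N]_t\to V$ in probability, to conclude. The only cosmetic difference is that you apply DDS once to $N$ and then rescale via $W^{(t)}_s=t^{-\gamma}B_{t^{2\gamma}s}$, whereas the paper applies DDS to the rescaled martingale $T^{-\gamma}N$ for each fixed $T$; the resulting estimate and the Slutsky-type conclusion are identical.
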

\begin{proof}
    Let $\epsilon>0$. Let $\delta>0$ be such that, for $W$ a standard one-dimensional Brownian motion, 
    \[
    \mathbb{P}( \sup_{t\in[ V-\delta, V+\delta] } |W_t-W_V|\geq \epsilon )\leq \frac{\epsilon}{2}.
    \]
    Such a $\delta$ exists by continuity of the Brownian motion. 
    Let $T_0$ be such that for all $T\geq T_0$, 
    \[
    \mathbb{P}(    |T^{-2\gamma} [N]_T-V|\geq \delta )\leq \frac{\epsilon}{2}.
    \]
    
    For a given $T\geq T_0$, the process $t\mapsto T^{-\gamma} N_t$ is a continuous local martingale with quadratic variation equal to $V_t\coloneqq T^{-2\gamma} [N]_t$. By Dambis--Dubins-–Schwarz theorem applied to the local martingale $T^{-\gamma} N$  there exists a Brownian motion $W$ (on a possibly extended probability space) such that for all $t$, $W_{V_t}=  T^{-\gamma} N_t$. In particular, $T^{-\gamma} N_T=W_{V_T}=W_{T^{-2\gamma} [N]_T}$. 
    Thus, 
    \begin{align}
    \nonumber
    \mathbb{P}(    |T^{-\gamma} N_T-W_{V}|\geq \epsilon)
    & =\mathbb{P}(    |W_{T^{-2\gamma} [N]_T}-W_{V}|\geq \epsilon)\\
    &\leq \mathbb{P}(    |T^{-2\gamma} [N]_T-V|\geq \delta )
    +\mathbb{P}( \sup_{t\in[ V-\delta, V+\delta] } |W_t-W_V|\geq \epsilon )\leq \epsilon.
    \label{eq:prohorov_distance}
    \end{align}
   
Pick arbitrary $a\in\R$ and $\epsilon'>0$.     
    Since $W_V$ is Gaussian,
    there exists $\epsilon\in(0,\epsilon'/2)$ such that $\mathbb{P}(a\leq W_V\leq a+\epsilon)\leq \epsilon'/2$. By~\eqref{eq:prohorov_distance} there exits $T_0>0$ such that for all $T\geq T_0$ we have
\begin{align*}
    \mathbb{P}(T^{-\gamma} N_T\leq a)&\leq \mathbb{P}(W_V\leq a+\epsilon)+ \mathbb{P}(    |T^{-\gamma} N_T-W_{V}|\geq \epsilon)\\
    & \leq \mathbb{P}(W_V\leq a+\epsilon)+
    \epsilon\leq \mathbb{P}(W_V\leq a)+\epsilon'/2+\epsilon\leq \mathbb{P}(W_V\leq a)+\epsilon'.
\end{align*}    
    This inequality (for $-W_V, -T^{-\gamma} N_T$ and $-a$) also yields $\mathbb{P}(W_V\leq a)-\epsilon'\leq\mathbb{P}(T^{-\gamma} N_T\leq a)$, implying 
    $|\mathbb{P}(T^{-\gamma} N_T\leq a)-\mathbb{P}(W_V\leq a)|\leq \epsilon'$ for all $T\geq T_0$ as claimed in the lemma.
\end{proof}

\begin{corollary}
\label{coro:variance}
    As $T\to \infty$, $T^{-\frac{1}{2}-\frac{\beta}{1+\beta}} M_T$ converges in distribution to a Gaussian distribution with variance $S^2$. 
\end{corollary}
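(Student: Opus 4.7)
My plan is to obtain Corollary~\ref{coro:variance} as a direct consequence of Lemmas~\ref{le:variance1} and~\ref{le:variance2}, which together already assemble all the needed ingredients. Specifically, I would set $N = M$, the continuous local martingale in the semimartingale decomposition~\eqref{eq:itoXi}, take the scaling exponent $\gamma \coloneqq \tfrac{1}{2} + \tfrac{\beta}{1+\beta}$, and identify the variance $V \coloneqq S^2$ with the constant appearing in Lemma~\ref{le:variance1}.

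The key arithmetic observation, which I would verify explicitly, is that the exponents match:
\[
2\gamma \;=\; 1 + \frac{2\beta}{1+\beta},
\]
so that the convergence in probability
\[
T^{-2\gamma}[M]_T \;=\; T^{-1-\frac{2\beta}{1+\beta}}[M]_T \;\overset{(proba)}{\underset{T\to\infty}{\longrightarrow}}\; S^2
\]
provided by Lemma~\ref{le:variance1} is exactly of the form required as the hypothesis of Lemma~\ref{le:variance2}. Note that the hypothesis $\beta > -\tfrac{1}{3}$ is needed precisely to ensure $S^2 < \infty$ and that Lemma~\ref{le:variance1} applies, but no further restriction on $\beta$ is needed in Lemma~\ref{le:variance2} itself.

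Applying Lemma~\ref{le:variance2} with these choices then yields
\[
T^{-\gamma} M_T \;=\; T^{-\frac{1}{2}-\frac{\beta}{1+\beta}} M_T \;\overset{(d)}{\underset{T\to\infty}{\longrightarrow}}\; \mathcal{N}(0,S^2),
\]
which is exactly the claim of Corollary~\ref{coro:variance}. There is no substantial obstacle here: the corollary is essentially a bookkeeping step that combines the quadratic-variation asymptotics (already the hard content, proved via Proposition~\ref{prop:int} and the estimate~\eqref{eq:covariance}) with the classical Dambis--Dubins--Schwarz-based CLT for continuous martingales formalized in Lemma~\ref{le:variance2}. The only minor care is to record the exponent identity $2\gamma = 1 + \tfrac{2\beta}{1+\beta}$ so that the scalings match verbatim.
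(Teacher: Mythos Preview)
Your proposal is correct and matches the paper's proof essentially verbatim: the paper also derives the corollary directly from Lemma~\ref{le:variance1} and Lemma~\ref{le:variance2} with the choice $\gamma=\tfrac{1}{2}+\tfrac{\beta}{1+\beta}$. The exponent identity $2\gamma = 1 + \tfrac{2\beta}{1+\beta}$ you highlight is precisely the bookkeeping step that makes the combination work.
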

\begin{proof}
This follows from Lemma~\ref{le:variance1} and Lemma~\ref{le:variance2} (with $\gamma=\frac{1}{2}+\frac{\beta}{1+\beta}$).
\end{proof}

\subsection{The local time part}
\label{subsec:loc_time}

By~\cite[Thm~2.2(ii), Eq.~(2.11)]{MMW}, there exists a constant $c_2>0$ such that $ L_t\sim c_2 t^{\frac{1}{1+\beta}} $ almost surely. Let $C>0$ be such that for all $t\geq 1$, $L_t \leq C t^{\frac{1}{1+\beta}}$ almost surely.
Recall \eqref{eq:lambda}: $\Lambda(x,y)= o(x^{\frac{3\beta-1}{2}})$ for $(x,y)\in\partial \cD$.
Since, by~\eqref{def:c1}, $X_t\sim c_1t^{\frac{1}{1+\beta}}$ as $t\to\infty$ almost surely, 
for an arbitrary $\epsilon>0$, there exists  $T_1$ such that for all $T\geq T_1$, $\mathbbm{1}_{Z_T\in \partial \cD}|\Lambda(Z_T)|\leq \epsilon  T^\frac{3\beta-1}{2(1+\beta) } $.
As for any $\mathcal{C}^1$ function $h:[T_1,T]\to\R$, Fubini's theorem implies the integration-by-parts equality
$\int_{T_1}^Th(t)\d L_t=h(T)(L_T-L_{T_1})+L_{T_1} (h(T)-h(T_1))-\int_{T_1}^Th'(t)L_t\d t$, setting $h(t)\coloneqq \epsilon  t^\frac{-1+3\beta}{2(1+\beta) }$ we get almost surely
\begin{align*}
\int_0^T \Lambda(Z_t) \d L_t
&\leq O(1)+ \int_{T_1}^T \epsilon  t^\frac{-1+3\beta}{2(1+\beta) } \d L_t \\
&= O(1) +\epsilon   T^\frac{-1+3\beta}{2(1+\beta) } L_T  + \epsilon \frac{1-3\beta}{2(1+\beta)} \int_{T_1}^T L_t  t^{\frac{3\beta-1}{2(1+\beta) }-1   }  \d t \\
&= O(1) +\epsilon   T^\frac{-1+3\beta}{2(1+\beta) } L_T  + \epsilon \frac{1-3\beta}{2(1+\beta)}C \int_{T_1}^T t^{\frac{3\beta-1}{2(1+\beta) }-1 +\frac{1}{1+\beta}  } \d t\\
&= \epsilon \big(c_2+C\frac{1-3\beta}{1+3\beta}\big) T^{ \frac{1+3\beta}{2(1+\beta) }  }+o(T^{ \frac{1+3\beta}{2(1+\beta) }  })\quad\text{as $T\to\infty$.}
\end{align*}
Since $\epsilon$ is arbitrary, for all $\beta\in (-\frac{1}{3}, 1)$, almost surely, 
\begin{equation}
\label{eq:localTime}
\int_0^T \Lambda(Z_t) \d L_t= o (T^{ \frac{1+3\beta}{2(1+\beta) }  } ) \quad\text{as $T\to\infty$.}
\end{equation}

\subsection{The drift part}
Since almost surely $X_t\underset{t\to \infty}\longrightarrow \infty$ and $X_t=O( t^{\frac{1}{1+\beta}})$, by \eqref{eq:mu},  almost surely \[\mu(Z_t)=\frac{s_0 \os}{2 c_0}+o(t^{\frac{\beta-1}{2(1+\beta)}}).\] 
By applying \eqref{eq:deterministicEstimationsIntegral1} to the continuous function $t\mapsto 
\mu(Z_t)-\frac{s_0 \os}{2 c_0}$, we deduce that 
almost surely,
\begin{equation}
\label{eq:drift}
\int_0^T \mu(Z_t)\d t=\frac{s_0 \os}{2c_0}  T+o( T^{\frac{1+3\beta}{2(1+\beta)}} ).
\end{equation}

\subsection{Conclusion}
\label{subsec:Conclusion}

\begin{proof}[Proof of Theorem~\ref{th:main1}] By substituting~\eqref{eq:localTime} and~\eqref{eq:drift} into the semimartingale decomposition~\eqref{eq:itoXi} of $\xi$, we obtain, almost surely,
\begin{equation}
\label{eq:xi1}
\xi_t=B(g(Z_t))= \frac{s_0\os}{2c_0}  t+t^{\frac{1}{2}+\frac{\beta}{1+\beta}}  \tilde{M}_t+o(t^\frac{1+3\beta}{2(1+\beta)} ),
\end{equation}
where, by Corollary~\ref{coro:variance},  $\tilde{M}_t \coloneqq t^{-\frac{1}{2}-\frac{\beta}{1+\beta}} M_t $ converges in distribution to  a centred Gaussian random variable $N$ with variance $S^2$ given in Lemma~\ref{le:variance1}. Note also that $\frac{1+3\beta}{2(1+\beta)}=\frac{1}{2}+ \frac{\beta}{1+\beta}$.

Since almost surely, $X_t\sim c_1 t^{\frac{1}{1+\beta}}$ and, by~\eqref{eq:g}, $g(x,y)=x(1+O(x^{\beta-1}))$ as $x\to\infty$, we get 
$g(Z_t)=X_t(1+O(t^\frac{\beta-1}{1+\beta}))$ almost surely. By substituting this into $B(x)\underset{x\to \infty}= \frac{a_\infty}{1+\beta}x^{1+\beta}+o(x^{\frac{3\beta+1}{2} }) $, we obtain almost surely
\begin{equation}
\label{eq:xi2}
\xi_t
=\frac{a_\infty}{1+\beta} X_t^{1+\beta} +o( t^\frac{1+3\beta}{2(1+\beta)})\qquad\text{as $t\to\infty$.}
\end{equation}
By equating \eqref{eq:xi1} with \eqref{eq:xi2}, we get, almost surely,
\[
\frac{a_\infty}{1+\beta} X_t^{\beta+1}\underset{t\to \infty}=  \frac{s_0\os}{2c_0}   t +  t^{\frac{1}{2}+\frac{\beta}{1+\beta}}  \tilde{M}_t+o(t^{\frac{1}{2}+ \frac{\beta}{1+\beta} }).
\]
Recall from~\eqref{def:c1} that 
$c_1=( \frac{(1+\beta) s_0\os}{2 a_\infty c_0})^{\frac{1}{1+\beta}}$ and denote $\upsilon\coloneqq \frac{1}{2}-\frac{\beta}{1+\beta}>0$. Hence for a process $K$ satisfying $K_t\to0$ almost surely as $t\to0$, we have
\begin{equation}
\label{eq:idk2}
X_t=  c_1 t^{\frac{1}{1+\beta}} \Big(1+ \frac{1+\beta}{c_1^{1+\beta}a_\infty} t^{-\upsilon } (\tilde{M}_t + K_t)  \Big)^{\frac{1}{1+\beta}}.
\end{equation}
Note that the  factor in brackets on the right-hand side of~\eqref{eq:idk2} is very close to $1$ as $t\to\infty$. However care must be taken as $\tilde{M}_t$ converges only in distribution and not almost surely.
Since $\frac{1}{1+\beta}-\upsilon=\frac{1}{2}$, the Taylor approximation implies
\begin{align}
\nonumber
 X_t &=c_1
t^{\frac{1}{1+\beta}} \Big(1 +  \frac{1}{c_1^{1+\beta}a_\infty} t^{-\upsilon }(\tilde{M}_t+K_t) +C' t^{-2\upsilon }(\tilde{M}_t+K_t)^2(1+\zeta_t)^{\frac{1}{1+\beta}-2}  \Big) 
\\
&=c_1
t^{\frac{1}{1+\beta}} +  \frac{c_1^{-\beta}}{a_\infty} t^{\frac{1}{2} }(\tilde{M}_t+K_t) +C' t^{\frac{1}{2}-\upsilon }(\tilde{M}_t+K_t)^2(1+\zeta_t)^{\frac{1}{1+\beta}-2}, 
\label{eq:expansion_withou_E_t}
\end{align}
where  $\zeta_t$ is in the interval between $0$ and $\frac{1+\beta}{c_1^{1+\beta} a_\infty} t^{-\upsilon }(\tilde{M}_t+K_t)$ and $C'$ is some constant.

Denote for some $\delta\in (0,\upsilon)$ the event $E_t\coloneqq \{|\tilde{M}_t| \leq t^\delta\}$.
Since $\tilde{M}_t$ converges in distribution, we have $\mathbb{P}(E_t) \underset{t\to \infty}\longrightarrow 1$ (i.e.~$\mathbbm{1}_{E_t}$ converges in probability to~$1$). 
Since $K_t\to0$ as $t\to\infty$ almost surely and $\mathbbm{1}_{E_t}|\tilde M_t|\leq t^{\delta-\upsilon}$, the process
$\tilde K_t\coloneqq \frac{c_1^{-\beta}}{a_\infty} K_t+C' t^{-\upsilon }(\tilde{M}_t+K_t)^2(1+\zeta_t)^{\frac{1}{1+\beta}-2}$
tends to zero: $\mathbbm{1}_{E_t}\tilde K_t\to0$ as $t\to\infty$ almost surely. By~\eqref{eq:expansion_withou_E_t} we obtain
\[ 
\mathbbm{1}_{E_t} \frac{ X_t -c_1 t^{\frac{1}{1+\beta}} }{\sqrt{t}}= 
\mathbbm{1}_{E_t}\frac{c_1^{-\beta}}{a_\infty}\tilde{M}_t+ \mathbbm{1}_{E_t}\tilde K(t).
\]
Since, as $t\to\infty$, $\tilde M_t$ converges weakly to a centred Gaussian variable $N$ with variance $S^2$ given in Lemma~\ref{le:variance1} above, 
$\mathbbm{1}_{E_t}\overset{(proba)}\longrightarrow1$ and $\mathbbm{1}_{E_t}\tilde K_t\to0$  almost surely,
we get
\begin{equation*}
\frac{X_t -c_1 t^\frac{1}{1+\beta} }{\sqrt{t}} \underset{t\to \infty}\longrightarrow  \frac{c_1^{-\beta}}{a_\infty} N.
\end{equation*}
Having established the assumption in Proposition~\ref{prop:joint}, Theorem~\ref{th:main1} follows.
\end{proof}

To conclude the paper, let us show that the condition $\beta>-1/3$ in Theorem~\ref{th:main1} is optimal in the sense that it cannot be relaxed to $\beta>\beta_c$ for any $\beta_c\in(-1,-1/3)$.
\begin{proposition}
    \label{prop:nogo}
    If Assumptions~\eqref{hyp:D1}, \eqref{hyp:C}, \eqref{hyp:V+} hold and Assumption \eqref{hyp:D3} holds with $\beta\in(-1,-\frac{1}{3})$, then 
    $t^{-\frac{1}{2}} (X_t-c_1 t^{\frac{1}{1+\beta}})$ does \textbf{not} converge in distribution. 
\end{proposition}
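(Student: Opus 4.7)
The plan is to reuse the semimartingale decomposition~\eqref{eq:itoXi} and the pointwise estimates of Lemma~\ref{le:deterministicEstimations}, neither of which requires $\beta>-1/3$, but to exploit that for $\beta\in(-1,-1/3)$ the three error integrals become \emph{almost surely convergent} rather than merely of smaller order. This will yield an almost-sure expansion of $X_T-c_1T^{1/(1+\beta)}$ at a scale strictly larger than $\sqrt T$, which is incompatible with tightness.

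First, I would show that for $\beta<-1/3$ all three error terms in~\eqref{eq:itoXi} stabilise almost surely. The arithmetic to check is that the exponents $2\beta/(1+\beta)$ (controlling $[M]_T$ via~\eqref{eq:covariance}) and $(\beta-1)/(2(1+\beta))$ (controlling the drift remainder via~\eqref{eq:mu}) are both strictly less than $-1$ exactly when $\beta<-\tfrac13$. Combined with the strong law $X_t\sim c_1t^{1/(1+\beta)}$ and the uniform bound $|Y_t|_d\leq b(X_t)=O(t^{\beta/(1+\beta)})$ from~\cite{MMW}, this gives $[M]_\infty=\int_0^\infty f(Z_s)\,\d s<\infty$ a.s.\ (hence $M_T\to M_\infty$ a.s.\ by martingale convergence) and $\int_0^T\bigl(\mu(Z_s)-\tfrac{s_0\os}{2c_0}\bigr)\d s\to A_\infty$ a.s. For the local-time term, integration by parts as in Section~\ref{subsec:loc_time} but now with majorant $h(t)\asymp t^{(3\beta-1)/(2(1+\beta))}$ shows that $h(T)L_T\to 0$ and $\int_0^\infty|h'(t)|L_t\,\d t<\infty$ precisely when $\beta<-\tfrac13$, giving $\int_0^T\Lambda(Z_s)\,\d L_s\to\Lambda_\infty$ a.s. Assembling,
\[ \xi_T=\frac{s_0\os}{2c_0}\,T+C_\infty+o(1)\quad\text{a.s.,}\qquad C_\infty\coloneqq\xi_0+M_\infty+A_\infty+\Lambda_\infty. \]

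Next, I would invert via~\eqref{eq:xi2}. For $\beta<-\tfrac13$ the error $o(T^{(1+3\beta)/(2(1+\beta))})$ in~\eqref{eq:xi2} vanishes, so combined with the previous display and the identity $c_1^{1+\beta}=(1+\beta)s_0\os/(2a_\infty c_0)$ one obtains $X_T^{1+\beta}-c_1^{1+\beta}T\to\tfrac{1+\beta}{a_\infty}C_\infty$ a.s. A first-order Taylor expansion of $u\mapsto u^{1/(1+\beta)}$ at $c_1^{1+\beta}T$ then gives
\[ X_T-c_1T^{1/(1+\beta)}=\frac{c_1^{-\beta}}{a_\infty}\,C_\infty\,T^{-\beta/(1+\beta)}+o\bigl(T^{-\beta/(1+\beta)}\bigr)\quad\text{a.s.} \]
Crucially, $-\beta/(1+\beta)>\tfrac12$ exactly when $\beta<-\tfrac13$, so on $\{C_\infty\neq 0\}$ the ratio $|X_T-c_1T^{1/(1+\beta)}|/\sqrt T$ diverges almost surely. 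By Fatou's lemma, $\liminf_{T\to\infty}\mathbb{P}\bigl(|X_T-c_1T^{1/(1+\beta)}|/\sqrt T>K\bigr)\geq\mathbb{P}(C_\infty\neq 0)$ for every $K>0$, so if $\mathbb{P}(C_\infty\neq 0)>0$ no weak limit can exist, since any candidate probability measure would have to put mass at least $\mathbb{P}(C_\infty\neq 0)$ on every $\{|x|>K\}$.

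The main obstacle is therefore to show $\mathbb{P}(C_\infty\neq 0)>0$. The plan is to use uniform ellipticity in~\eqref{hyp:C}: the quadratic form $Q$ in~\eqref{eq:covariance} is positive definite, so $f>0$ on $\cD$ and $[M]_\infty>0$ a.s. A Dambis--Dubins--Schwarz time change combined with the strong Markov property at a large deterministic time $T_0$ represents the martingale increment $M_\infty-M_{T_0}$ as a genuinely stochastic Gaussian-type contribution that cannot be a.s.\ cancelled by the absolutely-convergent drift and local-time tails $A_\infty-A_{T_0}$ and $\Lambda_\infty-\Lambda_{T_0}$ (which are of bounded variation and hence cannot absorb the martingale part, by uniqueness of the semimartingale decomposition applied to $\xi_{T_0+\cdot}-\frac{s_0\os}{2c_0}(T_0+\cdot)$). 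Formalising this non-cancellation is the delicate step; modulo it, all the ingredients are already present in Section~\ref{sec:central}.
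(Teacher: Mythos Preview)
Your approach is genuinely different from the paper's and more ambitious: you aim for an almost-sure asymptotic $X_T - c_1T^{1/(1+\beta)} \sim \kappa C_\infty T^{-\beta/(1+\beta)}$, which would in fact establish the stronger strong-limit behaviour hinted at in Section~\ref{sec:heuristic}. Steps 1--4 are essentially correct; Lemma~\ref{le:deterministicEstimations} does not require $\beta>-1/3$, and your convergence arguments for $M_T$, the drift remainder, and the local-time term are sound.

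The gap is step 5, and your proposed fix does not work. Uniqueness of the semimartingale decomposition determines the martingale and bounded-variation parts of the \emph{process} $t\mapsto \xi_{T_0+t} - c(T_0+t)$, but says nothing about whether their \emph{terminal values} $M_\infty - M_{T_0}$ and $(A_\infty - A_{T_0}) + (\Lambda_\infty - \Lambda_{T_0})$ can cancel; both are measurable with respect to $\sigma((W_s-W_{T_0})_{s\geq T_0}, Z_{T_0})$, and there is no structural obstruction from this alone. The DDS representation gives a continuous conditional law for the martingale tail, but the BV tails are equally random given $\mathcal{F}_{T_0}$, so no non-cancellation follows.

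The paper's proof sidesteps this entirely with a short time-shift coupling: take two copies $Z, Z'$ with the same law, coupled so that $Z_t = Z'_{t+1}$ for $t\geq 1$ on an event $E$ of positive probability (possible since the total-variation distance between the laws of $Z_1$ and $Z'_2$ is strictly less than one). On $E$, an elementary computation gives
\[
t^{\frac{\beta}{1+\beta}}\bigl(X_t - c_1t^{\frac{1}{1+\beta}}\bigr) - (t+1)^{\frac{\beta}{1+\beta}}\bigl(X'_{t+1} - c_1(t+1)^{\frac{1}{1+\beta}}\bigr) \longrightarrow \frac{c_1}{1+\beta} \neq 0,
\]
so both quantities cannot converge to $0$ in probability---which they would if $t^{-1/2}(X_t - c_1t^{1/(1+\beta)})$ converged in distribution (Slutsky, using $\beta/(1+\beta) < -1/2$). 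Incidentally, this same coupling fills your gap: on $E$ one has $C_\infty = C'_\infty + s_0\os/(2c_0)$, so $C_\infty$ and $C'_\infty$ (which have the same law) cannot both vanish almost surely.
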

\begin{proof} 
We are going to prove that, for all $\beta\in(-1,1)$, on an event of positive probability,   $t^{\frac{\beta}{1+\beta}} (X_t-c_1 t^{\frac{1}{1+\beta}})$ does not converge to $0$. If (and only if)
$\beta<-\frac{1}{3}$, we have $\frac{\beta}{1+\beta}=1-\frac{1}{1+\beta}<-\frac{1}{2} $, implying $t^{-\frac{1}{2}} (X_t-c_1 t^{\frac{1}{1+\beta}})$ does not converge in distribution as $t\to\infty$.

Let $Z$, $Z'$ be two solutions of the SDER \eqref{eq:SDER}, coupled in such a way that, in an event $E$ of positive probability, for all $t\geq 1$, $Z_t=Z'_{t+1}$. Such a coupling exists since the total variation distance between the distributions of $Z_1$ and $Z'_{2}$ is strictly smaller than $1$.
Since $X_t\sim c_1 t^{\frac{1}{1+\beta}}$ as $t\to\infty$, almost surely on the event $E$ we have
\begin{align*}
& t^{\frac{\beta}{1+\beta}} (X_t-c_1 t^{\frac{1}{1+\beta}})
- (t+1)^{\frac{\beta}{1+\beta}} (X'_{t+1}-c_1 (t+1)^{\frac{1}{1+\beta}})\\
=& t^{\frac{\beta}{1+\beta}}X_t( 1-(1+t^{-1})^{\frac{\beta}{1+\beta}} ) 
+c_1   
\underset{t\to\infty}\longrightarrow 
c_1  (-\frac{\beta}{1+\beta} +1 )\neq 0.
\end{align*}
Thus, it cannot hold that both $t^{\frac{\beta}{1+\beta}} (X_t-c_1 t^{\frac{1}{1+\beta}})$ and 
    $(t+1)^{\frac{\beta}{1+\beta}} (X'_{t+1}-c_1 (t+1)^{\frac{1}{1+\beta}})$ converge in probability to $0$. 
\end{proof}

\section*{Funding} AM is supported in part by
EPSRC grants  EP/V009478/1 and  EP/W006227/1. IS is funded by  EPSRC grant  EP/V009478/1. 
 AW is supported in part by EPSRC grant EP/W00657X/1. A part of this work was carried
 out during the programme ``Stochastic systems for anomalous diffusion'' (July--December 2024) hosted by the  Isaac Newton Institute, under EPSRC grant EP/Z000580/1.

\appendix

\section{Deterministic computations}
\label{Appendix:deterministic}

\subsection{Proof of Lemma~\ref{le:smooth}}
  For the fist point, notice first that the considered set is clearly $\cC^1$ on any open subset which contains no point $(0,x,y)$. 
  
  For $h>0$, let $x_h$ be the unique value such that   $\cDtot\cap (\{(h,x_h)\}\times \R^d)$ consists of a single point $(h,x_h,0)$, i.e.~$x_h\coloneqq -\frac{h^{-\gamma}}{b(h^{-\gamma})}$, and set $x_h\coloneqq-\infty$ for $h\leq 0$. For $h>0$ and $x\geq x_h$, set $f(x,h)= \frac{b(b(h^{-\gamma})x + h^{-\gamma}   )}{b(h^{-\gamma})}$. For $h<0$, set $f(h,x)=1$ for all $x\in \R$. For $y' \in \R^{d-1}$ with $|y'|\leq f(h,x)$, let $g(h,x,y')= \sqrt{f(h,x)^2-|y'|^2}$.
  Then $\partial \cDtot$ equals
\begin{align*}
    \partial \cDtot & = \{(h,x,y): x \geq x_h, 
|y|^2= f(h,x)\}\\
& =\{(h,x,y_1,y'): x \geq x_h, |y'|\leq f(h,x), y_1=\pm g(h,x,y')   \}.
\end{align*}
From rotational invariance, it suffices to show that for all $x>x_h$, $g$ is $\cC^1$ in a small neighbourhood of $(0,x,0)$,  which in turn follows from $f$ being $\cC^1$ in a small neighbourhood of $(0,x)$. Let us prove this. 
First,
  \begin{align*}
  \partial_h f(x,h)
  =&b(h^{-\gamma})^{-2}
\big( 
  -b(b(h^{-\gamma})x+h^{-\gamma}   )(-\gamma)h^{-\gamma-1} b'(h^{-\gamma})\\
  &\hspace{1.5cm} 
  + b'(b(h^{-\gamma})x+h^{-\gamma}   )b(h^{-\gamma})(-\gamma)h^{-\gamma-1}(1+b'(h^{-\gamma}))
  \big)\\
  =& \frac{\gamma h^{2\beta \gamma-\gamma-1}(1+o(1)) }{a_\infty^2}\Big(b'(h^{-\gamma})b(b(h^{-\gamma})x+h^{-\gamma}   )  -b'(h^{-\gamma})b(h^{-\gamma})\\
  &\hspace{1.5cm}   
  +b'(h^{-\gamma})b(h^{-\gamma})-b'(b(h^{-\gamma})x+h^{-\gamma}   )b(h^{-\gamma})\\
  &\hspace{1.5cm} + b(h^{-\gamma})b'(h^{-\gamma}) b'(b(h^{-\gamma})x+h^{-\gamma}   )   )\Big).
  \end{align*}
  Using
  \[ b(h^{-\gamma})=a_\infty h^{-\beta \gamma}+o(h^{-\gamma\frac{3\beta-1}{2} } ), \qquad
   b'(h^{-\gamma})=\beta a_\infty h^{\gamma-\beta \gamma}+o(h^{-\gamma\frac{3\beta-3}{2} } ),\]
  as $h\to 0$, we deduce
  \[ b(b(h^{-\gamma})x+h^{-\gamma}   )=a_\infty h^{-\beta \gamma}+o(h^{-\gamma\frac{3\beta-1}{2} } )+O( h^{\gamma-2\gamma \beta}), \]
  and
  \[ b'(b(h^{-\gamma})x+h^{-\gamma}   )=\beta a_\infty h^{\gamma-\beta \gamma}+o(h^{-\gamma\frac{3\beta-3}{2} } )+O( h^{2 \gamma-2\gamma \beta}).\]
  Using these four estimations and the fact that $\gamma\geq \frac{2}{1-\gamma}>\frac{1}{1-\gamma}$, we deduce that
  \[ h^{2\beta \gamma-\gamma-1} (b'(h^{-\gamma})b(b(h^{-\gamma})x+h^{-\gamma}   )  -b'(h^{-\gamma})b(h^{-\gamma}) )
  = o(1), \]
  \[ h^{2\beta \gamma-\gamma-1} (b'(h^{-\gamma})b(h^{-\gamma})-b'(b(h^{-\gamma})x+h^{-\gamma}   )b(h^{-\gamma}) )
  = o(1) ,\]
  and
  \[ h^{2\beta \gamma-\gamma-1} b(h^{-\gamma})b'(h^{-\gamma}) b'(b(h^{-\gamma})x+h^{-\gamma}   )
  = o(1) ,\]
  from which it follows that $\partial_h f(x,h)\underset{h\to 0, h>0}\longrightarrow 0=\lim_{h\to 0, h<0 }\partial_h f(x,h)  $ for all $x>x_h$.
  Beside,
  $\partial_x f(x,h) = b'(b(h^{-\gamma} x +h^{-\gamma}))\sim_{h\to 0} a_\infty \beta h^{-\gamma(\beta-1)}\underset{h\to 0}\longrightarrow 0$.
  It follows that $f$ is $\cC^1$ on $\{ (h,x): h\leq 0 \mbox{ or } x>x_h \}  $, from which we conclude that $\partial \cDtot$ is $\cC^1$.
    
  As for $\sigmatot$, we have
  \[\partial_x \sigmatot(h,x,y)=\Diag(0_\R,b(h^{-\gamma} )  (\partial_x \sigma)(b(h^{-\gamma} )x +h^{-\gamma}) ,   b(h^{-\gamma} ) y)     \]
  Using first Assumption \eqref{hyp:D3} and then Assumption \eqref{hyp:S}, as $x\to\infty$ we have
\begin{align*}
  &\| b(h^{-\gamma} )  (\partial_x \sigma)(b(h^{-\gamma} )x +h^{-\gamma}) ,   b(h^{-\gamma} ) y  ) \|\\ 
 &\sim
   \| a_\infty (b(h^{-\gamma} )x +h^{-\gamma})^\beta   (\partial_x \sigma)(b(h^{-\gamma} )x +h^{-\gamma})  ,   b(h^{-\gamma} ) y )\|\\
  & \leq (1+o(1)) a_\infty \lim_{r\to \infty} \sup_y r^\beta \| (\partial_x \sigma)(r,y)\|=0.
\end{align*}
  We deduce
  \[ \lim_{h\to 0} \partial_x \sigmatot(h,x,y) =0.\]
  We similarly deduce that \[ \lim_{h\to 0} \partial_y \sigmatot(h,x,y) =0,\]
  whilst \[ \lim_{h\to 0} \partial_h \sigmatot(h,x,y) =0\]
  is proved by also using the fact that $\gamma>\frac{1}{\epsilon}$.
  It follows that $\sigmatot$ is $\cC^1$.

  The computations for the reflection vector field $\phitot$ are analogous as the ones above and are omitted for brevity. This concludd the proof of Lemma~\ref{le:smooth}.

\subsection{Construction of the coupling windows}
\label{sec:s-s-s}

The aim of this section is to prove Lemma~\ref{le:sequenceDet}. 
The proof proceeds by taking a function $\mathpzc{s}_1$
and showing that one can construct a function $\mathpzc{s}_2 \leq \mathpzc{s}_1$ which possesses specified growth and smoothness properties, captured by the  following condition on a (deterministic) function $\mathpzc{s}:(0,\infty) \to (0,\infty)$.
\begin{description}
\item
[\namedlabel{hyp:*}{$*$}] The function $\mathpzc{s}$ is non-decreasing, with $\lim_{T \to \infty} \mathpzc{s} (T) = \infty$, and continuously differentiable, with derivative $\mathpzc{s}'$. Furthermore, as $T\to \infty$, $\mathpzc{s}(T) = O( \log T )$ and $\mathpzc{s}'(T) = o(T^{-\frac{2\beta}{1+\beta}})$. 
\end{description}
The next result shows that functions which satisfy~\eqref{hyp:*} are readily available.

\begin{lemma}
    \label{le:sequenceDetPrelim}
    Assume that $\mathpzc{s}_1:(0,\infty) \to (0,\infty)$ is such that $\lim_{T \to \infty} \mathpzc{s}_1(T)=\infty$. Then, there exists $\mathpzc{s}_2:(0,\infty) \to (0,\infty)$ satisfying~\eqref{hyp:*} and such that $\mathpzc{s}_2\leq \mathpzc{s}_1$.
\end{lemma}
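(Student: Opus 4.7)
The plan is to build $\mathpzc{s}_2$ by hand as a $C^1$ interpolation between integer heights $n$ and $n+1$ on successive intervals $[T_n,T_{n+1}]$, with the sequence $(T_n)$ chosen to encode all four properties in~\eqref{hyp:*} simultaneously.

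I would first replace $\mathpzc{s}_1$ by its non-decreasing lower envelope $g(T):=\inf_{t\geq T}\mathpzc{s}_1(t)$, which is non-decreasing, dominated by $\mathpzc{s}_1$, and still tends to $\infty$. I would then recursively pick an increasing sequence $T_1<T_2<\cdots$ tending to infinity and satisfying: (i) $g(T)\geq n+1$ for all $T\geq T_n$, (ii) $T_n\geq e^n$, and (iii) $T_{n+1}\geq 2T_n$. Each $T_n$ exists because $g\to\infty$: take any $\tilde T_n$ past which $g\geq n+1$, then set $T_n:=\max(\tilde T_n,e^n,2T_{n-1})$. Fix once and for all a smooth non-decreasing $\psi:\R\to[0,1]$ with $\psi\equiv 0$ on $(-\infty,0]$, $\psi\equiv 1$ on $[1,\infty)$, and $\psi'(0)=\psi'(1)=0$, and define
\[
\mathpzc{s}_2(T):=n+\psi\!\Bigl(\tfrac{T-T_n}{T_{n+1}-T_n}\Bigr)\ \text{for}\ T\in[T_n,T_{n+1}],\qquad \mathpzc{s}_2(T):=1\ \text{for}\ T\in(0,T_1].
\]
The vanishing of $\psi'$ at both endpoints makes $\mathpzc{s}_2$ of class $C^1$ across every $T_n$; monotonicity and $\mathpzc{s}_2(T_n)=n\to\infty$ are automatic; by (i), for $T\in[T_n,T_{n+1}]$ one has $\mathpzc{s}_2(T)\leq n+1\leq g(T)\leq\mathpzc{s}_1(T)$; and by (ii), $\mathpzc{s}_2(T)\leq n+1\leq 1+\log T$, so $\mathpzc{s}_2(T)=O(\log T)$.

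The only real point is the derivative bound. On $[T_n,T_{n+1}]$, property (iii) gives
\[
|\mathpzc{s}_2'(T)|\ \leq\ \frac{\|\psi'\|_\infty}{T_{n+1}-T_n}\ \leq\ \frac{2\|\psi'\|_\infty}{T_{n+1}}.
\]
Setting $\alpha:=2\beta/(1+\beta)$, the hypothesis $\beta<1$ forces $\alpha<1$, i.e.~$\alpha-1<0$. Since $T\in[T_n,T_{n+1}]$ and $T_n\geq T_{n+1}/2$, the two cases $\alpha\geq 0$ and $\alpha<0$ both yield $T^\alpha\leq C_\alpha\,T_{n+1}^\alpha$ uniformly, so
\[
T^\alpha\,|\mathpzc{s}_2'(T)|\ \leq\ 2C_\alpha\|\psi'\|_\infty\,T_{n+1}^{\alpha-1}\ \xrightarrow[n\to\infty]{}\ 0,
\]
giving $\mathpzc{s}_2'(T)=o(T^{-\alpha})=o(T^{-2\beta/(1+\beta)})$, which completes \eqref{hyp:*}.

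The only (mild) obstacle is juggling the three requirements on $(T_n)$ together with the derivative constraint; this is exactly why I build slack into the recursion via (iii). The underlying structural fact making everything close is the observation $\beta<1\Rightarrow 2\beta/(1+\beta)<1$: the derivative scale $1/T_{n+1}$ produced by doubling $T_n$ sits strictly below the target scale $T^{-2\beta/(1+\beta)}$, so there is margin to spare and no finer control of $(T_n)$ is required.
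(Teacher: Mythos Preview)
Your construction is correct and complete on $[T_1,\infty)$, and it takes a genuinely different route from the paper's proof. The paper builds $\mathpzc{s}_2$ by a two-step smoothing: first $i(\mathpzc{s}_1)(t):=\min\bigl[\log(1+t),\inf_{u\ge t}\mathpzc{s}_1(u)\bigr]$ to force monotonicity, the logarithmic cap, and the pointwise bound, then two applications of the Ces\`aro-type average $I(g)(t):=t^{-1}\int_0^t g(u)\,du$ to gain $C^1$ regularity and the derivative decay; all verifications are declared ``elementary'' and omitted. Your approach instead discretises the target: you pick a doubling sequence $(T_n)$ encoding the four constraints and interpolate by a fixed smooth step $\psi$. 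What your route buys is an explicit, checkable verification of every clause of~\eqref{hyp:*}; what the paper's route buys is a one-line formula that avoids any case analysis on the sign of $\alpha=2\beta/(1+\beta)$ (the derivative bound for $I(g)$ follows from $|(I g)'(t)|\le 2\|g\|_\infty/t$ when $g$ is bounded, together with $g=O(\log t)$). Both hinge on the same structural fact you isolate at the end, namely $\alpha<1$.

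One small oversight: on $(0,T_1]$ you set $\mathpzc{s}_2\equiv 1$, but nothing in the hypotheses prevents $\mathpzc{s}_1$ from dipping below~$1$ there, so the global inequality $\mathpzc{s}_2\le\mathpzc{s}_1$ may fail on that initial segment. This is harmless for the application (only large~$T$ matters in Lemma~\ref{le:sequenceDet}), and is trivially patched, e.g.\ by replacing the constant~$1$ with a smooth non-decreasing function equal to $\min\bigl(g(t),1\bigr)$ near~$0$ and to~$1$ near~$T_1$, or by simply noting that the statement is only used asymptotically.
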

\begin{proof}
    One can take for example $\mathpzc{s}_2= I ( I (i(\mathpzc{s}_1)))$, where \[i(f): t \mapsto \min \Bigl[ \log (1+t),  \inf_{u \geq t} f(u) \Bigr], \qquad I(g): t  \to \frac{1}{t}\int_0^t g(u)\d u.\]
    The desired properties are elementary to check.
\end{proof}
\begin{lemma}
    \label{le:sequenceDet}
    Let $\mathpzc{s}_1:(0,\infty) \to (0,\infty)$ is such that $\lim_{T \to \infty} \mathpzc{s}_1(T)=\infty$, and $\theta:(0,\infty)\to (0,\infty)$ such that $\theta(T)\sim_{T\to \infty} C T^{\frac{ 2 \beta}{1+\beta}}$ for some $C>0$ and $\beta\in (-1,1)$. Then, there exists $T_0>0$, 
    $\mathpzc{s}_2:(0,\infty) \to (0,\infty)$ and $S:(0,\infty)\to (0,\infty)$, such that 
    $4 \mathpzc{s}_2\leq \mathpzc{s}_1$, 
    $|S(T)-T|=O( T^\frac{2\beta}{1+\beta}\log T ) $ and $\mathpzc{s}_2(T)\to\infty$ as $T \to \infty$  
    and, for all $T>T_0$, we have
    \begin{equation}
    \label{eq:incluDet}
    [T, T+ \frac{1}{4}q(T)]
    \subset [S(T)+\frac{1}{2}  q(S(T)) , S(T)+  q(S(T))], \quad \text{where} \quad q \coloneqq \theta \cdot \mathpzc{s}_2.
    \end{equation}
    The functions $\mathpzc{s}_2$ and $S$ (but not $T_0$) can be chosen so that they depend on $\mathpzc{s}_1$ but not on $\theta$.   
\end{lemma}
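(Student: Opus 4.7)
The overall strategy is to apply Lemma~\ref{le:sequenceDetPrelim} to obtain a well-behaved $\mathpzc{s}_2$ bounded above by $\mathpzc{s}_1/4$, and then choose $S(T)$ so that $T-S(T)$ sits asymptotically in the middle of the interval of gaps permitted by the target inclusion~\eqref{eq:incluDet}. Set $\alpha\coloneqq 2\beta/(1+\beta)$; since $\beta\in(-1,1)$, we have $\alpha<1$ (and $\alpha-1<0$).

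First, apply Lemma~\ref{le:sequenceDetPrelim} to $T\mapsto \mathpzc{s}_1(T)/4$ to produce $\mathpzc{s}_2:(0,\infty)\to(0,\infty)$ satisfying~\eqref{hyp:*} with $\mathpzc{s}_2\leq \mathpzc{s}_1/4$; in particular $\mathpzc{s}_2$ is non-decreasing and $\mathcal{C}^1$, $\mathpzc{s}_2(T)\to\infty$, $\mathpzc{s}_2(T)=O(\log T)$, and $\mathpzc{s}_2'(T)=o(T^{-\alpha})$. Define, for $T$ large enough that the right-hand side is positive,
\[
S(T)\coloneqq T - \tfrac{5C}{8}\,T^{\alpha}\,\mathpzc{s}_2(T),
\]
with any reasonable positive extension to small $T$. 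Since $\mathpzc{s}_2(T)=O(\log T)$, this gives $|S(T)-T|=O(T^{\alpha}\log T)$. Writing $q=\theta\mathpzc{s}_2$, the inclusion in~\eqref{eq:incluDet} is equivalent to the pair of inequalities
\[
q(S(T))/2 \;\leq\; T-S(T) \;\leq\; q(S(T))-q(T)/4,
\]
so it suffices to show that both $(T-S(T))/q(T)$ and $q(S(T))/q(T)$ tend to a common value in $(1/2,3/4)$ as $T\to\infty$.

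The core of the argument is establishing $q(S(T))/q(T)\to 1$, which splits into two factors. For $\theta$, since $\alpha-1<0$ we have $(T-S(T))/T=O(T^{\alpha-1}\log T)\to 0$, so $S(T)/T\to 1$ and the hypothesis $\theta(T)\sim CT^{\alpha}$ yields $\theta(S(T))/\theta(T)\to 1$. For $\mathpzc{s}_2$, the mean value theorem combined with the decay rate of its derivative gives
\[
0 \;\leq\; \mathpzc{s}_2(T)-\mathpzc{s}_2(S(T)) \;\leq\; (T-S(T))\sup_{u\in[S(T),T]}\mathpzc{s}_2'(u) \;=\; O(T^{\alpha}\mathpzc{s}_2(T))\cdot o(T^{-\alpha}) \;=\; o(\mathpzc{s}_2(T)),
\]
so $\mathpzc{s}_2(S(T))/\mathpzc{s}_2(T)\to 1$. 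Multiplying the two estimates gives $q(S(T))/q(T)\to 1$. Combining this with the definition of $S$ and $\theta(T)\sim CT^{\alpha}$ yields $(T-S(T))/q(T)=(5C/8)\cdot(T^{\alpha}/\theta(T))\to 5/8$; since $5/8\in(1/2,3/4)$ and $q(S(T))/q(T)\to 1$, both required inequalities hold for all $T\geq T_0$ for some sufficiently large $T_0$, which completes the inclusion.

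The main delicate point is the last sentence of the lemma: $\mathpzc{s}_2$ clearly depends only on $\mathpzc{s}_1$, but $S$ involves the constant $C$. This is acceptable in the intended application (Lemma~\ref{le:sequenceRand}) because when $\theta$ is the random function $T\mapsto 4b(X_T)^2$, its asymptotic constant $C=4a_\infty^2 c_1^{2\beta}$ is deterministic by~\eqref{def:c1} and Assumption~\eqref{hyp:D3}; the randomness of $\theta$ enters only through the error $\theta(T)-CT^{\alpha}$, which is absorbed into the (possibly random) threshold $T_0$. Thus $\mathpzc{s}_2$ and $S$ are deterministic and the lemma's conclusion applies pathwise.
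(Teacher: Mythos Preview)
Your proof is correct and follows essentially the same strategy as the paper's: obtain a regular $\mathpzc{s}_2$ via Lemma~\ref{le:sequenceDetPrelim}, then choose $S$ so that $T-S(T)$ is asymptotically $\kappa\,CT^{\alpha}\mathpzc{s}_2(T)$ for some fixed $\kappa\in(1/2,3/4)$. The paper takes $\kappa=2/3$ and defines $S$ implicitly as the inverse of $f(u)=u+\tfrac{2}{3}Cu^{\alpha}\mathpzc{s}_2(u)$, whereas you take $\kappa=5/8$ and define $S(T)=T-\tfrac{5C}{8}T^{\alpha}\mathpzc{s}_2(T)$ explicitly; your route is marginally more direct since it bypasses the invertibility check, and your use of the mean-value theorem with the $\mathpzc{s}_2'(T)=o(T^{-\alpha})$ bound from~\eqref{hyp:*} makes the $q(S(T))/q(T)\to 1$ step transparent. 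Your final paragraph correctly identifies (and resolves) the dependence of $S$ on the asymptotic constant $C$---the paper's $S$ has exactly the same dependence, and in both cases the point is that $C$ is deterministic in the application to Lemma~\ref{le:sequenceRand}.
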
    
\begin{remark}
By the last sentence, we mean that the correct order between the quantifiers is in fact \[\forall \mathpzc{s}_1, \exists (\mathpzc{s}_2, S): \forall \theta, \exists T_0: \forall T\geq T_0, \dots.\]
Lemma~\ref{le:sequenceRand} is obtained by applying Lemma~\ref{le:sequenceDet} to a deterministic function $\mathpzc{s}_1$ and the random function $\theta(T)=4 b(X_T)^2$. Thus, we are provided with functions $\mathpzc{s}_2, S$ which are deterministic but a time $T_0$ which is random.  
\end{remark}

\begin{proof}[Proof of Lemma~\ref{le:sequenceDet}]
    First let $\mathpzc{s}_2$ be any function obtained by applying Lemma~\ref{le:sequenceDetPrelim} to the function $\mathpzc{s}_1$.
    Set then $f: u\mapsto u+ \frac{2}{3} C u^\frac{2\beta}{1+\beta} \mathpzc{s}_2(u)$. Then $f'(t)\underset{t\to \infty}\longrightarrow 1$, so that there exists $S_1$ such that $f$ is increasing on $[S_1,\infty)$. Let $T_1=f(S_1)$, and define $S$ to be the inverse of $f:[S_1,\infty)\to [T_1,\infty)$.
    Since $f(t)\underset{t\to \infty}\sim t$,  it follows $S(t)\underset{t\to \infty}\sim t$.

    Let $S_2$ be such that for all $S\geq S_2$, \[\theta(S)S^{-\frac{2\beta}{1+\beta}} \in [\frac{8}{9} C , \frac{4}{3}   C ], \qquad \text{i.e.} \qquad 
\frac{3}{4} \theta(S) \leq   C S^{\frac{2\beta}{1+\beta}}\leq \frac{9}{8} \theta(S).
    \] 
    Set $T_2 \coloneqq f(S_2)$, 
    $T_0\coloneqq\max(T_1,T_2)$. 
    
    For $T\geq  T_0$,
    \begin{align*}
    T=f(S(T))
    &= S(T)+\frac{2}{3} C S(T)^\frac{2\beta}{1+\beta} \mathpzc{s}_2( S(T) )\\
    &\geq S(T)+\frac{2}{3}\cdot\frac{3}{4} \theta(S(T)) \mathpzc{s}_2( S(T) )\\
&= S(T)+\frac{1}{2} \theta(S(T)) \mathpzc{s}_2( S(T) ),
    \end{align*}
and
   \begin{align*}
    T+ \frac{1}{4}\theta(T)\mathpzc{s}_2(T)
    &=S(T)+ \frac{2}{3} C S(T)^\frac{2\beta}{1+\beta} \mathpzc{s}_2(S(T))
    + \frac{1}{4} \theta(T)\mathpzc{s}_2(T)\\
    &\leq     S(T)+ \frac{2}{3}\cdot \frac{9}{8}  \theta(S(T)) \mathpzc{s}_2(S(T))
    + \frac{1}{4} \theta(S(T))  \mathpzc{s}_2(T) \\
    &=     S(T)+   \theta(S(T)) \mathpzc{s}_2(S(T)),
    \end{align*}
    which concludes the proof.
\end{proof}

\subsection{Proof of Lemma~\ref{le:estimTechnic} }
\label{Appendix:52}

Recall $T_1\coloneqq 1$, $T_{n+1}\coloneqq T_n+C^2T_n^\frac{2\beta}{1+\beta} \mathpzc{s}_4(T_n)$, and recall that $\mathpzc{s}_4(T)\geq 1$ diverges to $\infty$ as $T\to \infty$ more slowly than any positive power of $T$. We have set  $\alpha=\frac{2\beta}{1+\beta}>-1$, and the goal is to prove, as $n\to \infty$,
\begin{equation}
\label{eq:temp:app}
\sum_{k=1}^n (T_{k+1}-T_k) |T^\alpha_k-T^\alpha_{k+1}|=o(T_n^{1+\alpha})\quad \& \quad \sum_{k=1}^n (T_{k+1}-T_k) T^\alpha_k =\frac{T_n^{1+\alpha}}{1+\alpha} (1+o(1)).
\end{equation}

The first asymptotic equality implies the second, by comparison to the integral: if the first asymptotic equality holds, 
for $\alpha>0$ we get 
\begin{align*}
\sum_{k=1}^n (T_{k+1}-T_k) T^\alpha_k 
& \leq 
\sum_{k=1}^n \int_{T_k}^{T_{k+1}}  t^\alpha  \d t 
 \leq \sum_{k=1}^n (T_{k+1}-T_k) T^\alpha_{k+1}\\
&= \sum_{k=1}^n (T_{k+1}-T_k) T^\alpha_k +o (T_n^{1+\alpha}),\quad\text{as $n\to\infty$,}
\end{align*}
where the last equality follows from the first asymptotic equality in \eqref{eq:temp:app}. 
Thus, as $n\to\infty$,
\[ 
\sum_{k=1}^n (T_{k+1}-T_k) T^\alpha_k = \int_{T_1}^{T_{n+1}}  t^\alpha  \d t +o (T_n^{1+\alpha})=\frac{T_{n+1}^{1+\alpha}}{1+\alpha} (1+o(1))=\frac{T_n^{1+\alpha}}{1+\alpha} (1+o(1)).
\]

For $\alpha<0$, the same computation but with inequalities all reversed also allows to conclude. For $\alpha=0$, both bounds are trivial.

We now prove the first bound in~\eqref{eq:temp:app}.
Note that, as $k\to \infty$, \begin{equation} \label{eq:temp:Talpha} T^\alpha_{k+1}-T^\alpha_{k}=T^\alpha_{k}(  \big(\frac{T_{k+1}}{T_k}\big)^\alpha-1)\sim \alpha  T^\alpha_{k}(\frac{T_{k+1}}{T_k}-1)=\alpha  T_{k}^{\alpha-1} (T_{k+1}-T_k)=o(T_{k}^{\alpha})
.\end{equation}
Recall for $f_k,g_k>0$, such that  $f_k=o(g_k)$, we have $\sum_{k=1}^n f_k=O(1)$ or $\sum_{k=1}^n f_k=o(\sum_{k=1}^n g_k)$. In the case $\alpha\geq 0$, i.e.~$\beta>0$, 
since 
\[ \sum_{k=1}^n  T_{k}^{\alpha}  (T_{k+1}-T_k)\leq \sum_{k=1}^n  T_{n}^{\alpha}  (T_{k+1}-T_k)=T_n^\alpha (T_{n+1}-T_1)\sim T_{n}^{1+\alpha}\]
and $( T^\alpha_{k+1}-T^\alpha_{k})(T_{k+1}-T_k)=o( T_{k}^{\alpha}  (T_{k+1}-T_k)) $,
we deduce indeed 
\[ 
\sum_{k=1}^n( T^\alpha_{k+1}-T^\alpha_{k})(T_{k+1}-T_k) =o(T_{n}^{1+\alpha}).
\]

The case $\alpha<0$ requires extra work. 
First, we claim that 
there exists a constant $C_2$ such that for all $k$, $T_k\geq C_2 k^\frac{1+\beta}{1-\beta}$. Indeed, set $a_k= T_k^\frac{1-\beta}{1+\beta}$. Then, the recurrence relation defining $T_{k+1}$ gives 
\[
a_{k+1}-a_k= a_k\Big( \big(1+C^2 a_k^{-1} \mathpzc{s}_4(T_k) \big)^\frac{1-\beta}{1+\beta} -1 \Big)\sim C^2  \frac{1-\beta}{1+\beta}  \mathpzc{s}_4(T_k) \geq  C^2  \frac{1-\beta}{1+\beta} , 
\]
where the asymptotic equivalence comes from the fact that $a_k^{-1} \mathpzc{s}_4(T_k) \to 0$ as $k\to \infty$. 
It follows that $a_k\geq a_1+C^2  \frac{1-\beta}{1+\beta} (k-1)$ for all $k$. Hence there exists a constant $C_3>0$ such that for all positive integers $k$ we have $a_k\geq C_3 k$, implying $T_k\geq C_2 k^\frac{1+\beta}{1-\beta}$ for $C_2\coloneqq  C_3^\frac{1+\beta}{1-\beta}$.

By~\eqref{eq:temp:Talpha}, there exists a constant $C_4>0$ such that for all $k$, 
$T^\alpha_{k}-T^\alpha_{k+1} \leq C_4    T_{k}^{\alpha-1} (T_{k+1}-T_k)$. Since $\beta<0$, it holds 
$ \frac{1+\beta}{1-\beta}(3\alpha-1)= \frac{5\beta-1}{1-\beta}<-1$. Let $\epsilon>0$ be such that $ \theta\coloneqq \frac{1+\beta}{1-\beta}(3\alpha-1+2\epsilon)$ remains smaller than $-1$. Let then $C_5$ be such that for all $k$, $C^2 \mathpzc{s}_4(T_k)\leq C_5 T_k^\epsilon$, so that $T_{k+1}-T_k\leq C_5 T_k^{\alpha+\epsilon}$. Then, since $3\alpha+2\epsilon-1<0$, we get
\begin{align*}(T_{k+1}-T_k) ( T_{k}^\alpha-T_{k+1}^\alpha)
& \leq C_4 (T_{k+1}-T_k)^2 T_{k}^{\alpha-1}\\
&\leq C_4C_5^2 T_{k}^{3\alpha+2\epsilon-1}
\leq C_4C_5^2 C_2^{3\alpha+2\epsilon-1} k^{\frac{1+\beta}{1-\beta}(3\alpha+2\epsilon-1)} = C_6 k^{\theta}.
\end{align*}
Since $\theta<-1$,
\[ 
\sum_{k=1}^n (T_{k+1}-T_k) ( T_{k}^\alpha-T_{k+1}^\alpha)
\leq C_6 \sum_{k=1}^n k^{\theta}=O(1)=o(T_n^{1+\alpha})\quad\text{as $n\to\infty$.}
\]

\subsection{Proof of the deterministic limits in Lemma~\ref{le:deterministicEstimations} }

From the It\^o formula applied to $\xi=B(g(Z))$, and recalling that $B'=b_0$, and that the quadratic variation of $Z$ is given by $\d \langle Z_i, Z_j\rangle_t = \Sigma_{i,j}(Z_t) \d t $, we obtain the following formula for the functions $\mu, \Lambda, f$:
\begin{align}
f&\coloneqq (b_0\circ g)^2\| \Sigma^{\frac{1}{2}}\nabla g\|_{d+1}^2,\qquad
\Lambda  \coloneqq (b_0\circ g)  \langle \phi, \nabla g\rangle,\nonumber \\
\mu &\coloneqq\frac{1}{2}( b_0\circ g\Delta_{\Sigma} g+b_0'\circ g ) \|\Sigma^{\frac{1}{2} } \nabla g\|_{d+1}^2), \quad
\text{where} \quad \Delta_{\Sigma} g \coloneqq
\sum_{i,j=0}^d \Sigma_{i,j}\partial_i\partial_j g. \label{def:DeltaSigma}
\end{align}
Note that in the definition of $\Delta_{\Sigma} g$ all coordinates of the vector $(x,y)\in\R^{1+d}$ play the same role and hence it is more convenient to denote the partial derivatives by
$\partial_0=\partial_x$ and $\partial_i=\partial_{y_i}$ for $i\in\{1,\ldots,d\}$.

\label{Appendix:62}
    The asymptotic properties of $g$ follow directly from the fact $b(x)\sim a_\infty x^\beta$. 
   Using then $g(x,y)\underset{x\to \infty}{=}x(1+O(x^{\beta-1}))$ and $b(x) \underset{x\to \infty}=a_\infty {x}^{\beta} +o({x}^{\frac{3\beta-1}{2}})$, we have
    \begin{equation}
    \label{eq:bg}
    b(g(x,y))\underset{x\to \infty}=a_\infty x^\beta+o({x}^{\frac{3\beta-1}{2}}).
    \end{equation}
    Recall that limits as
     $x\to \infty$ are uniform in the $y$ component (as stated above Assumption~\eqref{hyp:C}).
    Moreover,
    \begin{equation}
    \label{eq:dbg}
    b'(g(x,y))\underset{x\to \infty}\sim a_\infty \beta x^{\beta-1}.
    \end{equation}
    Exact and asymptotic  expressions for the derivatives of $g$ up to order $2$ are given as follows:
    \begin{alignat}{3}
   & \partial_x g (x,y)&&=1-\frac{s_0}{2c_0}\frac{|y|^2 b'(x)}{b(x)^2} &&
    \underset{x\to \infty}{=}1+o( x^{\beta-1 } )   , \label{eq:temp:g1}\\
    &\partial_{y_i} g (x,y)&& =\frac{s_0}{c_0} \frac{y_i}{b(x) }&&\underset{x\to \infty}{=}\frac{s_0}{c_0 a_\infty} \frac{y_i}{x^{\beta}}+o(x^{\frac{-1-\beta}{2} } )\label{eq:temp:g2}\\
    &\partial_x\partial_x g(x,y)&& \underset{x\to \infty}{=} \frac{s_0}{2c_0}  \|y\|^2 \Big( \frac{2 b'(x)^2}{b(x)^3}-\frac{b''(x)}{b(x)^2} \Big)&&
    \underset{x\to \infty}{=}O(x^{\beta-2})\underset{x\to \infty}{=}o(x^{\frac{-1-\beta}{2} }), \label{eq:temp:g3}\\
    &\partial_x\partial_{y_i} g(x,y)&&= - \frac{s_0}{c_0}  y_i \frac{b'(x)}{b(x)^2}&&
    \underset{x\to \infty}{=}O(x^{-1 })\underset{x\to \infty}{=}o(x^{\frac{-1-\beta}{2} }), \label{eq:temp:g4}\\
    &\partial_{y_i}\partial_{y_j} g(x,y)&&=\frac{s_0}{c_0} \frac{\delta_{i,j}}{b(x)}&&\underset{x\to \infty}{=}\delta_{i,j} \frac{s_0}{c_0 a_\infty} x^{-\beta}+o(x^{\frac{-1-\beta}{2} }).\label{eq:temp:g5}
    \end{alignat}
    From the asymptotic  properties in~\eqref{eq:temp:g1} and \eqref{eq:temp:g2}, and the convergence of $\Sigma(z)$ as $z\to \infty$, we obtain 
    \begin{align}
    \| \Sigma^{\frac{1}{2}}\nabla g(x,y)\|_{d+1}^2&= \langle \nabla g(x,y), \Sigma(x,y)\nabla g(x,y)\rangle \nonumber \\
    &= \Sigma^{\infty}_{0,0}+ \frac{2 s_0}{c_0 a_\infty} \frac{1}{x^\beta} \sum_{i=1}^d \Sigma^{\infty}_{0,i} y_i + \frac{s_0^2}{c_0^2 a^2_\infty} \frac{1}{x^{2\beta}} \sum_{i,j=1}^d \Sigma^{\infty}_{i,j} y_i y_j + o(1). \label{eq:scalargood}\\
    &=O(1). \label{eq:scalarbad}
    \end{align}
    Equation \eqref{eq:covariance} follows directly from \eqref{eq:bg} and \eqref{eq:scalargood}.

   Recall the definition  \eqref{def:DeltaSigma} of $\Delta_\Sigma$. Using \eqref{eq:temp:g3}, \eqref{eq:temp:g4}, \eqref{eq:temp:g5}, and the fact that $\Sigma$ is bounded, we obtain 
    \[\Delta_\Sigma g(x,y)
    = \sum_{i=1}^d \Sigma_{i,i}(x,y) \frac{s_0}{c_0 a_\infty} x^{-\beta}+ o(x^{\frac{-1-\beta}{2}}).\]
    Using the fast convergence  of $\tr(\Sigma)-\Sigma_{0,0}$ to $\os$ (Assumption~\eqref{hyp:C}), we deduce 
    \begin{equation}
    \label{eq:Delta}
    \Delta_\Sigma g(x,y)=\frac{s_0 \os}{c_0 a_\infty}x^{-\beta}+ o(x^{\frac{-1-\beta}{2}}).
    \end{equation}

    Combining the asymptotic properties \eqref{eq:bg}, \eqref{eq:Delta}, \eqref{eq:dbg}, and \eqref{eq:scalarbad}, we obtain
    \begin{align*}
    \mu(x,y)&= \frac{1}{2} \big( a_\infty x^\beta + o( x^\frac{3\beta-1}{2} ) \big)\big( \frac{s_0 \os}{c_0 a_\infty}x^{-\beta} +o ( x^\frac{-\beta-1}{2} ) \big)+O(x^{\beta-1})\nonumber \\
    &= \frac{s_0 \os}{2 c_0} +o(x^{\frac{\beta-1}{2}}),
    \end{align*}
    as claimed. 

    We finally estimate $\Lambda$. Using the asymptotic behaviour of the first order derivatives of $g$, \eqref{eq:temp:g1} and \eqref{eq:temp:g2}, 
    we get $\nabla g(x,y)=(1+o(x^{\beta-1}),\frac{s_0}{c_0}\frac{y}{b(x)})$.
    The fact that $\phi$ is bounded and $ \langle\phi^{(d)}_\infty (u), u \rangle=-c_0<0$ for all $u\in\mathbb{S}^{d-1}$, and using then the fast convergence of $\phi$ (Assumption~\eqref{hyp:V+}), for $(x,y)\in\partial \cD$ we obtain 
    \begin{align*}
    \langle \phi(x,y), \nabla g(x,y) \rangle
    &= 
    \langle (s_0,\phi^{(d)}_\infty (\frac{y}{b(x)})), \nabla g(x,y) \rangle + o( x^\frac{\beta-1}{2}) \\
   & = 
   s_0+o(x^{\beta-1})+\frac{s_0}{c_0 } \langle\phi^{(d)}_\infty (\frac{y}{b(x)}), \frac{y}{b(x)} \rangle+ o( x^\frac{\beta-1}{2})\\
    &=o( x^\frac{\beta-1}{2})\quad\text{as $x\to\infty$.}
    \end{align*}
    The last estimation \eqref{eq:lambda} follows from this and \eqref{eq:bg}.

\bibliographystyle{plain}
\bibliography{bib.bib}

\end{document}